\documentclass[12pt]{amsart}
\usepackage[a4paper,margin=2cm]{geometry}
\usepackage[OT2,OT1]{fontenc}
\usepackage[oldstyle]{libertine}
\usepackage[backend=bibtex,
	sorting=anyt,
	isbn=false,
	url=false,
	doi=false,
	maxbibnames=100
	]{biblatex}
\addbibresource{homoquo.bib}
\usepackage[dvipsnames]{xcolor}
\tolerance=9999
\hbadness=9999 


\usepackage{mathtools,stmaryrd,amssymb}

\newcommand{\kk}{\Bbbk}

\usepackage{microtype}
\usepackage[colorlinks,final,hyperindex]{hyperref}

\makeatletter
\AtBeginDocument{%
  \let\[\@undefined
  
\DeclareRobustCommand{\[}{\begin{equation}}%
  \let\]\@undefined
  
  \DeclareRobustCommand{\]}{\end{equation}}%
}

\definecolor{Chocolat}{rgb}{0.36, 0.2, 0.09}
\definecolor{BleuTresFonce}{rgb}{0.215, 0.215, 0.36}
\definecolor{EgyptianBlue}{rgb}{0.06, 0.2, 0.65}

\hypersetup{citecolor=BleuTresFonce, 
			urlcolor=EgyptianBlue, 
			linkcolor=Chocolat}

\mathtoolsset{showonlyrefs=true}

\usepackage{tikz}
\usepackage{pgf}
\usetikzlibrary{fit}

\pgfdeclarelayer{bg}    
\pgfsetlayers{bg,main}  

\theoremstyle{plain}
\newtheorem{theorem}{Theorem}[subsection]
\newtheorem{proposition}[theorem]{Proposition}
\newtheorem{lemma}[theorem]{Lemma}
\newtheorem{corollary}[theorem]{Corollary}

\theoremstyle{remark}

\newtheorem{remark}[theorem]{Remark}
\newtheorem{definition}[theorem]{Definition}

\newcommand{\Hycomm}{\mathrm{HyperCom}}
\newcommand{\ExtHycomm}{\mathrm{HC}}
\newcommand{\BV}{\mathrm{BV}}
\newcommand{\Gerst}{\mathrm{Gerst}}
\newcommand{\Grav}{\mathrm{Grav}}

\newcommand{\bBV}[1]{\mathrm{BV}_{#1}}

\newcommand{\bHycomm}[1]{\mathrm{HyperCom}_{#1}}

\newcommand{\ncHycomm}{\mathrm{ncHyperCom}}
\newcommand{\ncExtHycomm}{\mathrm{ncHC}}
\newcommand{\ncBV}{\mathrm{ncBV}}
\newcommand{\ncGerst}{\mathrm{ncGerst}}
\newcommand{\ncGrav}{\mathrm{ncGrav}}

\newcommand{\bncBV}[1]{\mathrm{ncBV}_{#1}}

\newcommand{\bncHycomm}[1]{\mathrm{ncHyperCom}_{#1}}

\newcommand{\lmHycomm}{\mathrm{tHyperCom}}
\newcommand{\lmExtHycomm}{\mathrm{tHC}}
\newcommand{\lmBV}{\mathrm{tBV}}
\newcommand{\lmGerst}{\mathrm{tGerst}}
\newcommand{\lmGrav}{\mathrm{tGrav}}

\newcommand{\blmBV}[1]{\mathrm{tBV}_{#1}}

\newcommand{\blmHycomm}[1]{\mathrm{tHyperCom}_{#1}}

\newcommand{\lmk}{\mathsf{h}}
\newcommand{\lml}{\mathsf{l}}
\newcommand{\lmm}{\mathsf{m}}
\newcommand{\lmn}{\mathsf{n}}
\newcommand{\lmg}{\mathsf{g}}

\DeclareMathOperator{\Hom}{Hom}




\newcommand{\ac}{\scriptstyle \text{\rm !`}}

\newcommand\cyrillic[1]{
	{\fontencoding{OT2}\fontfamily{wncyr}\selectfont #1}
		}

\newcommand\mathcyr[1]{\text{\cyrillic{#1}}}
\newcommand\Sha{\textnormal{\mathcyr{Sh}}} 
	
	\usepackage{titletoc}

\begin{document}

\title[Generalized cohomological field theories in the higher order formalism]{Generalized cohomological field theories\\ in the higher order formalism}

\author{Vladimir Dotsenko}
\address{Institut de Recherche Math\'ematique Avanc\'ee, UMR 7501, 
Universit\'e de Strasbourg et CNRS, 7 rue Ren\'e-Descartes, 67000 
Strasbourg, France}
\email{vdotsenko@unistra.fr}

\author{Sergey Shadrin}
\address{Korteweg-de Vries Institute for Mathematics, University of 
Amsterdam, P. O. Box 94248, 1090 GE Amsterdam, The Netherlands}
\email{s.shadrin@uva.nl}

\author{Pedro Tamaroff}
\address{Non-linear Algebra Group, Max-Planck-Institut f\"ur 
Mathematik in den Naturwissenschaften, Inselstra\ss e 22, 04103 
Leipzig, Germany}
\email{tamaroff@mis.mpg.de}

\begin{abstract} 
In the classical Batalin--Vilkovisky formalism, the BV operator $\Delta$ is a differential operator of order two with respect to 
the commutative product. In the differential graded setting, it is known that if the BV operator is homotopically trivial, then there 
is a tree level cohomological field theory induced on the homology; this is a manifestation of the fact that the 
homotopy quotient of the operad of BV algebras by $\Delta$ is represented by the operad of hypercommutative algebras.  
In this paper, we study generalized Batalin--Vilkovisky algebras where the operator $\Delta$ is of the given finite order. In that case,
we unravel a new interesting algebraic structure on the homology whenever $\Delta$ is homotopically trivial. 
We also suggest that the sequence of algebraic structures arising in the higher order formalism is a part of a ``trinity'' of remarkable 
mathematical objects, fitting the philosophy proposed by Arnold in the 1990s. 
\end{abstract}

\maketitle

{
\tableofcontents
}

\section{Introduction} 

\subsection{Homotopy quotients by operators of higher order}
Using ideas coming from the BCOV theory \cite{MR1301851}, Barannikov and Kontsevich gave in \cite{MR1609624} a method 
to construct, out of a differential graded Batalin--Vilkovisky (dg BV) algebra satisfying certain conditions, a formal Dubrovin--Frobenius manifold
(see also \cite{MR2291791,MR2359274} for other proofs of this result). 
They applied it in the case of the dg BV algebra of polyvector fields on a Calabi--Yau manifold $X$, in which case their results produce 
a structure of a formal Dubrovin--Frobenius manifold on the Dolbeault cohomology of $X$. There is a ``mirror partner'' of the
latter result for the de Rham cohomology of a symplectic manifold satisfying the hard Lefschetz theorem, due to Merkulov \cite{MR1637093}. In principle, one can drop some pieces of structure of a Dubrovin--Frobenius 
manifold, like the inner product (then one gets a flat F-manifold structure on the cohomology of $X$) and the unit (then the resulting structure is that of a hypercommutative 
algebra, in which case the result of Merkulov is available for any Poisson manifold, see \cite{MR3323198}). In this case, the underlying algebraic statement follows from the explicit description of the homotopy quotient of the operad $\BV$ by the Batalin--Vilkovisky operator $\Delta$, established by Drummond-Cole and Vallette \cite{MR3029946}; we recall that the homotopy quotient encodes dg BV algebras in which the action of $\Delta$ on the homology is trivial. In general, a homotopy quotient of an operad is not a very tractable thing: it is an object of the homotopy category of dg operads, and to represent it one needs to consider either an operad with nontrivial differential or an operad with zero differential but nontrivial higher structures. Miraculously, the homotopy quotient of $\BV$ by $\Delta$ can be represented by a non-differential operad, meaning that an unexpected formality theorem holds. The operad in question is the operad of hypercommutative algebras $\Hycomm$. A hypercommutative algebra is a graded vector space $A$ equipped with infinitely many fully symmetric operations $m_n$, $n\geqslant 2$, of degree $2(n-2)$ that satisfy a system of relations stating that the $\hbar$-linear binary operation
 \[
a,b\mapsto \sum_{n\geqslant 0}\frac{\hbar^n}{n!}m_{n+2}(a,b,x,\ldots,x)\in A[[\hbar]]
 \]  
is associative for any $x\in A$ \cite{MR1363058}; a multilinear operadic version of these relations is
 \[
\sum_{\substack{I\sqcup J =\{1,\ldots,n\} \\ i\in I;\, j,k\in J}} m_{I\sqcup\{\star\}}\circ_\star m_{J}=\sum_{\substack{I\sqcup J = \{1,\ldots,n\} \\ j\in I;\, i,k\in J}} m_{I\sqcup\{\star\}}\circ_\star m_{J} . 
 \]

Algebraically, a BV algebra may be defined as a commutative algebra equipped with a unary operation $\Delta$ of homological degree one which squares to zero and is a differential 
operator of order at most two with respect to the commutative product. About 20 years ago Losev posed the following question: what would change in the result of homotopical trivialization if one considers as an 
input an analog of a BV algebra, where the ``BV operator'' is a differential operator of an arbitrary finite order, not necessarily of order two. Losev suggested that while operators 
of order other than two would not be relevant for string theory applications, the answer might be algebraically meaningful and beautiful. 
(In fact, higher order BV-like operators did actually emerge in the theoretical physics literature, see, for example, \cite{batber1,batber2}.) The mathematical proposal to generalize BV 
algebras in this way goes back to Akman \cite{MR1466615} who also highlighted the importance of Koszul's definition of higher order differential operators \cite{MR837203} in 
this context. The higher order condition also emerges when considering the ``commutative homotopy BV algebras'' \cite{MR1764440}, however, in this paper we view an operator of higher order as
an object of its own merit rather than a higher homotopy for an operator of order two.

In the past decade or so, a number of results emerged that strongly suggest that the case of an operator of order different from two may be both tractable and interesting. Most of these results 
are related to the Givental group action on the space of hypercommutative algebra structures on a given vector space $V$, or, more generally, the space $\Hom(\Hycomm,\mathcal O)$ for an arbitrary target 
operad $\mathcal O$ (and not merely the endomorphism operad of $V$). In particular, considering a commutative associative algebra as a trivial example of a hypercommutative algebra, one can use 
the Givental group action in order to formulate for $\Delta$ the condition of being an operator of order at most two~\cite{MR2568443}, and the condition of being an operator of given finite 
order \cite{MR3019721}. It is also possible to use the Givental group action to give an explicit formula for the quasi-isomorphism between $\Hycomm$ and the homotopy quotient of $\BV$ by $\Delta$ 
\cite{KMS}. This suggests that one may be able to use the Givental group action to compute the homotopy quotient by an operator $\Delta$ by higher order, thus answering the question of Losev. This 
is done in the present paper. The resulting description of the homotopy quotient is as simple and beautiful as it is in the case of order two. Namely, in the case of order $k$, the homotopy quotient is once again represented by a non-differential operad; an algebra over that operad is a graded vector space $A$ equipped with infinitely many fully symmetric operations $m_n$, $n=2+t(k-1)$, $t\geqslant 0$, of degree $d=2t$ that satisfy a system of relations stating that the $\hbar$-linear binary operation
 \[
a,b\mapsto \sum_{\substack{t\geqslant 0, \\ n=t(k-1)}}\frac{\hbar^n}{n!}m_{n+2}(a,b,x,\ldots,x)\in A[[\hbar]]
 \]  
is associative for any $x\in A$. The relation between the arity and the degree is displayed in Figure~1. Note that the case $k=2$ gets us back to the usual operad $\Hycomm$.

\begin{center}
\begin{figure}[t]
\begin{tikzpicture}[scale = 0.5]

\draw[help lines, color=gray!30, dashed] 
	(-0.5,-0.5) grid (14.5,12.5);

\draw[-stealth,line width = .5 pt ] 
	(0,0) -- (15.5,0);
\draw[-stealth,line width = .5 pt] 
	(0,0) -- (0,13);

\foreach \x in {1,2,...,8}
	{\node (\x) at (2*\x-2,-0.5) {\x};}
\foreach \x in {2,4,6,8,10,12}
	{\node (2\x) at (-0.5,\x) {\x};}
	
\foreach \d in {1,2,3} 
\draw[line width = 1.25 pt] (2,0) -- (14, 12/\d);
\draw[line width = 1.25 pt] (2,0) -- (2, 12);

\foreach \d in {1,2,3}
   \pgfmathsetmacro\result{ceil{6/\d}} 
  \foreach \t in {0,...,\result}
   \node (\d\t ) at (2+ 2*\d*\t, 2*\t) {$\bullet$};
   
   \foreach \t in {0,...,6}
   \node (A\t) at (2,2*\t) {$\bullet$} ;

\node (10) at (-0.5,13) {$d$};
\node (11) at (15.5,-0.5) {$n$};
\end{tikzpicture} 
\caption{The lines $a=2+(k-1)d$ and their points corresponding to the generators for $k=1,2,3,4$.}
\end{figure}
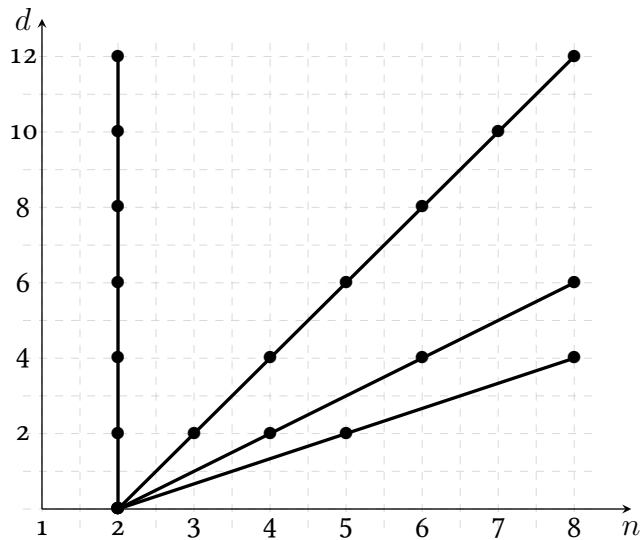
\end{center}

Methods of this paper are entirely algebraic. In particular, our work offers an important methodological feature: an elementary algebraic approach to computations with psi-classes and Givental symmetries 
(in the case of tree level cohomological field theories) which completely bypasses the intersection theory on moduli spaces of curves. The geometry behind the generalized cohomological field theories and the higher order formalism is a topic of our ongoing project that will be featured in a separate paper.  

\subsection{Arnold's trinity of algebraic gravity packages}

In fact, our work led us to study three different types of operad-like structures, each of which is organised into what we are tempted to call an algebraic gravity package, a collection of algebraic 
structures related to what is conventionally called a cohomological field theory. As we mentioned above, in the case of actual cohomological field theories, 
we drop the condition of existence of a unit and a scalar product on the space of 
primary fields, thus restricting ourselves to the tree level case, and making (symmetric) operads an algebraic structure of interest. The basic 
example here is the operad $\Hycomm$ of hypercommutative algebras, considered together with its close relatives: the operad $\BV$ (whose relation to $\Hycomm$ 
we explained above), the operad $\Gerst$ of Gerstenhaber algebras, and the operad $\Grav$ of gravity algebras. One very particular aspect of the algebraic operad $\Hycomm$ 
that comes from geometry and accounts for many remarkable phenomena is the existence of psi-classes on the moduli spaces of stable curves. Those classes control the 
excess intersection theory of the stratifications of moduli spaces, capture the choice of homotopy trivialization of the circle action when the operad of moduli spaces is 
realized as the homotopy quotient of the operad of framed little disks, and give rise to a huge group of automorphisms of the space of representations of $\Hycomm$ called the Givental 
group~\cite{KMS,DSV-circle}.
Another instance of an algebraic gravity package was found in~\cite{DSV-ncHyperCom}; the corresponding algebraic structure is that of a non-symmetric operad. In this case, we have 
a system of operads $\ncHycomm$, $\ncBV$, $\ncGerst$, $\ncGrav$, whose interaction and interrelations repeat those in the symmetric case. In particular, $\ncHycomm$ is the homology operad of the 
non-symmetric operad formed by brick manifolds $\mathcal{B}(n)$, and those manifolds are also equipped with a version of psi-classes whose properties repeat, with the necessary modifications, 
the properties of the psi-classes on the moduli spaces of curves, including an analogue of the Givental group action in this context. 
The third instance of an algebraic gravity package has been only partly known, and is discussed in detail in this article. In this case, the 
corresponding algebraic structure is that of a twisted associative algebra. The part of the picture which is best known is given by a 
topological twisted associative algebra consisting of the Losev--Manin spaces $\mathcal L\mathcal M(n)$ and, on the algebraic level, 
the Losev--Manin cohomological field theories~\cite{ShaZvo-LMCohft}. Note that the spaces $\mathcal L\mathcal M(n)$ once again have a natural 
stratification and the corresponding psi-classes that control the excess intersection theory and give rise to an analog of the Givental 
group action. It turns out that there also exist twisted associative algebra analogues of the operads $\BV$, $\Gerst$, and $\Grav$. We 
introduce them in this paper, noting however that these objects essentially capture the structures observed in topological quantum 
mechanics~\cite{LosevPolyubin1,Lysov}.

An important metamathematical idea proposed by Arnold in the 1990s\footnote{In fact, the first two authors of the paper were present, still as high school students, at 
one of the first lectures he gave on this topic --- or at least one of the first lectures of which there are lecture notes \cite{Arnold1,Arnold2}} is that interesting mathematical objects and theories systematically come in threes. Arnold referred to these triples of objects as ``trinities''; this name appears to have become an accepted technical 
term~\cite{trinities1,MR3910357,trinities2,trinities3}. We argue that the three algebraic gravity packages we discuss are an instance of the same phenomenon. 
One of the most speculative and least developed themes in the theoretical physics literature is a possibly existing range of Gromov--Witten style theories with the sources whose dimensions could be reduced to $1$, $2$, and $4$, with real, complex, and hyperk\"ahler structures, respectively. The case of the source varieties of dimension $2$ with complex structure is the standard Gromov--Witten theory, and it is fully developed and provided a lot of instances of all algebraic structures involved in the symmetric operad instance of algebraic gravity package~\cite{KontMan,Get1,Get2}. 
We hope that the algebraic structures that we present in this paper can help to find a proper setup for $1$-dimensional real case (which we relate to the twisted associative algebra structure, as the similar algebraic setup is emerging in symplectic field theory~\cite{SFT}) and $4$-dimensional hyperk\"ahler case (see, for example, the first steps in that direction in~\cite{LMNS-four,LNS-issues}: though these works do not provide any relevant algebraic structure at all, a wild hope would be that the right Gromov--Witten type theory in this framework would feature the ns operad structures that we mentioned above), and for now we can only point to a few spots in the literature where the related discussions are initiated, see e.g.~\cite{soukhanov2015operad,chekeres2021theory}. 

Another less direct way to check whether talking about Arnold's trinities is the right way to unify the results in the parallel theories (on the levels of the twisted associative algebras, symmetric operads, and non-symmetric operads) that we are presenting is to check whether the additional structures and/or properties uniformly extend to all three theories at hand. Remarkably, it is indeed the case for the results that we discussed above on homotopy quotients by operators of higher order; they also appear to be valid in all three instances of algebraic gravity packages; in particular, in each of those cases the homotopy quotient is represented by a non-differential object exhibiting the same pattern as in the case of quotients by higher order operators described above. The structure of this paper is intended to highlight those similarities: we present the full story for each of the algebraic structures, letting the reader compare them and be convinced of the general pattern present here, which we believe to be unique for the three theories we present. Moreover, there are striking similarities in the combinatorics and geometry of the corresponding topological spaces, which we shall address elsewhere. 

\medskip

\noindent\textbf{Acknowledgements.} We thank Basile Coron, Paul Laubie, Andrey Losev, Sergei Merkulov and Bruno Vallette for useful discussions. 
V.~D. was supported by Institut Universitaire de France, by University of Strasbourg Institute for Advanced Study (USIAS) through the Fellowship USIAS-2021-061 within the French national program ``Investment for the future'' (IdEx-Unistra), and by the French national research agency project ANR-20-CE40-0016. S.~S. was supported by the Netherlands Organization for Scientific Research. S.~S. was also partially supported by International Laboratory of Cluster Geometry NRU HSE, RF Government grant, ag.~№ 075-15-2021-608 dated 08.06.2021

\section{Recollections and preparatory results}

In this section, we recall the key relevant 
definitions and results used in the paper. The
conjectural Arnold trinity we consider is a trinity of 
operad-like structures: the twisted associative 
algebra of Losev--Manin spaces, the symmetric 
operad of genus zero Deligne--Mumford spaces, and 
the non-symmetric operad of brick manifolds; this 
trinity is studied using Gr\"obner bases and certain techniques 
of homotopical 
algebra, most notably the Koszul duality (and its 
inhomogeneous version). 

This dictates the structure of our 
recollections section. 
We start with summarizing the 
necessary definitions of the operad-like 
structures we consider, briefly recall the Koszul 
duality theory for those structures, and then 
give a concise summary of what the reader should 
know about the trinity of protagonists of the 
paper. 

\subsection{Conventions}

We work over the ground field $\kk$ of zero 
characteristic. Vector spaces we consider usually 
carry homological degrees, and we view them as 
chain complexes with zero differential; an 
important consequence of the presence of 
homological degrees is the Koszul sign rule for 
symmetry isomorphisms of tensor products. All 
chain complexes are homologically graded, with 
the differential of degree $-1$. To handle 
suspensions of chain complexes, we use the formal 
symbol $s$ of degree $1$, and let $sC_\bullet=\kk 
s\otimes C_\bullet$; similarly, the symbol 
$s^{-1}$ implements desuspensions. We use the 
``topologist's notation'' $\underline{n}$ for 
$\{1,2,\ldots,n\}$. 

\subsection{Species, L-species, and monoidal structures}

The existing terminology in the current literature on operad-like structures has mostly converged, but there is still a large variety of choices used when naming the underlying objects. Throughout this paper, we choose to follow the suit of the foundational paper of Joyal \cite{MR633783} and use the term ``species'' when talking about contravariant functors from the category of finite sets with bijections as morphisms (these are also called symmetric collections, symmetric sequences, and $\mathbb{S}$-modules in the literature), and the term ``L-species'' when talking about contravariant functors from the category of finite totally ordered sets with order preserving bijections as morphisms (these are also called non-symmetric collections, non-symmetric sequences, and $\mathbb{N}$-modules in the literature). Note that a species is completely determined by its values on all finite sets $\underline{n}$ for $n\geqslant 0$; if we consider these spaces with their canonical orders (induced from the ambient order of $\mathbb{Z}$), and the same applies to L-species.

The species we consider take values in one of two types of symmetric monoidal categories (whose monoidal product is denoted $\otimes$ and whose unit is denoted $\mathbf{1}$): the ``set-theoretic'' ones (the category of sets with all maps as morphisms, the category of topological spaces with continuous maps as morphisms etc.; in this case, the monoidal structure is given by the Cartesian product) and the ``$\kk$-linear'' ones (the category of $\kk$-vector spaces with linear maps as morphisms, the category of chain complexes with chain maps as morphisms etc.; in this case, the monoidal structure is given by the tensor product). 
For any species $\mathcal A$, we shall refer to $I$ as the ``set of arguments'' of an element of $\mathcal A(I)$, and to $|I|$ as the ``arity'' of such element, even though in general elements of $\mathcal A$ are not operations acting somewhere. 

\subsubsection{Monoidal structures on species}
There are several monoidal structures which are of crucial importance for us. 
The \emph{Cauchy product} of species $\mathcal A$ and $\mathcal B$ is defined by the formula
 \[
(\mathcal A\otimes\mathcal B)(I)=\coprod_{\substack{J,K\subseteq I,\\ I=J\sqcup K}}\mathcal A(J)\otimes\mathcal B(K). 
 \]
The Cauchy product makes the category of species into a symmetric monoidal category with the unit $\mathbb{K}$, the species for which $\mathbb{K}(\varnothing)=\mathbf{1}$ and $\mathbb{K}(I)=0$ for $I\ne\varnothing$. Monoids in this category are referred to as \emph{twisted associative algebras}. 

The \emph{composite product} of species $\mathcal A$ and $\mathcal B$ is defined by the formula
 \[
(\mathcal A\circ\mathcal B)(I)=\coprod_{n\ge 0}\mathcal A(\underline{n})\otimes_{S_n}(\mathcal B^{\otimes n})(I).
 \]
Here on the right $\mathcal B^{\otimes n}$ is the $n$-fold Cauchy tensor product of $\mathcal B$ (by definition, $\mathcal B^{\otimes 0}=\mathbb{K}$), and $\otimes_{S_n}$ is the coequalizer taking into account the right $S_n$-action on $\mathcal A(\underline{n})$ arising from the contravariance of $\mathcal A$ and the left $S_n$-action on $\mathcal B^{\otimes n}$ which exists since the category of species equipped with the Cauchy product is symmetric monoidal. The composite product makes the category of species into a (very much not symmetric) monoidal category with the unit $\mathbf{1}$, the species for which $\mathbf{1}(I)=\mathbf{1}$ if $|I|=1$ and $\mathbf{1}(I)=0$ for $|I|\ne 1$. Monoids in this category are referred to as \emph{symmetric operads}.

\begin{remark}
The notion of a twisted associative algebra is equivalent to the notion of a left module over the associative operad, for the monoidal structure on the category of species given by the composite product $\circ$; this version of the definition can be used to define twisted $\mathcal P$-algebras for any operad $\mathcal P$. In particular, the embedding of the Lie operad into the associative operad means that every twisted associative algebra $\mathcal A$ has a structure of a twisted Lie algebra given, for two elements $\mathsf{x}\in\mathcal A(I)$ and $\mathsf{y}\in\mathcal A(J)$ by $[\mathsf{x},\mathsf{y}] = \mathsf{x}\mathsf{y}-(-1)^{|\mathsf{x}||\mathsf{y}|}\mathsf{y}\mathsf{x}\in \mathcal A(I\sqcup J)$. We shall use this Lie algebra structure on several occasions throughout the paper
\end{remark}

There are direct analogues of these two monoidal structure for L-species.  
The \emph{Cauchy product} of L-species $\mathcal A$ and $\mathcal B$ is defined by the formula
 \[
(\mathcal A\otimes\mathcal B)(I)=\coprod_{\substack{J,K\subseteq I,\\ I=J+K}}\mathcal A(J)\otimes\mathcal B(K). 
 \]
Here $+$ denotes the ordinal sum: $J+K$ is the ordered set in which every element of $J$ is less than every element of $K$. The Cauchy product makes the category of L-species into a symmetric monoidal category with the unit which we denote by the same letter $\mathbb{K}$, since it is defined by the same rule, even though formally it is an object of a different category. Monoids in this category form a category which is equivalent to the category of non-negatively graded associative algebras; they will not play a special role in this paper and are only defined to give a concise definition of the composite product that follows.

The \emph{composite product} of L-species $\mathcal A$ and $\mathcal B$ is defined by the formula 
 \[
(\mathcal A\circ\mathcal B)(I)=\coprod_{n\ge 0}\mathcal A(\underline{n})\otimes(\mathcal B^{\otimes n})(I).
 \]
The composite product makes the category of species into a (very much not symmetric) monoidal category with the unit which we denote by the same symbol $\mathbf{1}$, since it is defined by the same rule, even though formally it is an object of a different category. Monoids in this category are referred to as \emph{non-symmetric operads}. 

\subsubsection{Shuffle monoidal structures}
There are also two other monoidal structures on L-species whose main purpose within our work is to reduce questions about species to questions about L-species. 
The \emph{shuffle product} of L-species $\mathcal A$ and $\mathcal B$ is defined by the formula
 \[
(\mathcal A\otimes_{\Sha}\mathcal B)(I)=\coprod_{\substack{J,K\subset I,\\ I=J\sqcup K}}\mathcal A(J)\otimes\mathcal B(K). 
 \]
The shuffle product makes the category of L-species into a symmetric monoidal category with the unit $\mathbb{K}$. Monoids in this category are referred to as \emph{shuffle algebras}. 

The \emph{shuffle composite product} of L-species $\mathcal A$ and $\mathcal B$ with $\mathcal B(\varnothing)=0$ is defined by the formula 
 \[
(\mathcal A\circ_{\Sha}\mathcal B)(I)=\coprod_{n\ge 0}\mathcal A(\underline{n})\otimes(\mathcal B^{(n)})(I).
 \]
Here $\mathcal B^{(n)}$ is the ``divided tensor power'' defined by the formula 
 \[
(\mathcal B^{(n)})(I)=
	\coprod_{
		\substack{(I_1,\ldots,I_n)\vdash I
			\\ \text{shuffle partitions}}}
	\mathcal B(I_1)\otimes\cdots \otimes\mathcal B(I_n).
 \]
as the sum runs through all {shuffle partitions}
$I_1\sqcup\cdots\sqcup I_n=I$ of $I$ into 
nonempty subsets; the shuffle condition
requires that $\min(I_s)<\min(I_{s+1})$ for
all $s\in \underline{n-1}$.
The composite product makes the category of species into a (very much not symmetric) monoidal category with the unit $\mathbf{1}$. Monoids in this category are referred to as \emph{shuffle operads}. 

\subsubsection{The forgetful functor}
The importance of the shuffle monoidal structures 
above comes from the following favorable properties they exhibit. To state them, let us use the forgetful functor $\mathcal A \mapsto {\mathcal A}^{\mathrm{f}}$ from the category of finite ordered sets to the category of finite sets which literally forgets the order of a set. This functor defines a forgetful functor from species to L-species given by 
 \[
\mathcal A^{\mathrm{f}}(I)=\mathcal A(I^{\mathrm{f}}).
 \]  
For any two species $\mathcal A$ and $\mathcal B$, we have an isomorphism of L-species
 \[
(\mathcal A\otimes\mathcal B)^{\mathrm{f}}\cong \mathcal A^{\mathrm{f}}\otimes_{\Sha}\mathcal B^{\mathrm{f}} . 
 \]
For any species $\mathcal B$ with $\mathcal B(\varnothing)=0$, we have an isomorphism of L-species
 \[
(\mathcal B^{\mathrm{f}})^{(n)}\cong ((\mathcal B^{\otimes n})_{S_n})^{\mathrm{f}},
 \]
where on the right the coinvariants are computed by a coequalizer taking into account the left $S_n$-action on $\mathcal B^{\otimes n}$ which exists since the category of species equipped with the Cauchy product is symmetric monoidal. This formula easily implies that for any species $\mathcal A$ and $\mathcal B$ with $\mathcal B(\varnothing)=0$ we have an isomorphism of L-species
 \[
(\mathcal A\circ\mathcal B)^{\mathrm{f}}\cong \mathcal A^{\mathrm{f}}\circ_{\Sha}\mathcal B^{\mathrm{f}} .
 \]
These isomorphisms allow to define Gr\"obner bases for twisted associative algebras and for symmetric operads. Due to the presence of symmetries, the usual approach to Gr\"obner bases (relying on an ordering of monomials in the free monoid) fails, see \cite[Prop.~4.2.2.4]{MR3642294} in the case of twisted associative algebras and \cite[Prop.~5.2.2.5]{MR3642294} in the case of symmetric operads. However, the monoidality of the forgetful functor allows us to regard any twisted associative algebra as a shuffle algebra and any symmetric operad as a shuffle operad; the forgetful functor forgets the symmetries but remembers other key properties (cardinality, dimension etc.), so this point of view ends up being highly beneficial. We refer the reader to the monograph \cite{MR3642294} for a systematic information on the arising notions of Gr\"obner bases and to the articles \cite{MR2667136,MR3095223} where this point of view originally emerged (the shuffle monoidal structures for L-species were known to combinatorialists \cite{MR1629341}, but have not been used for algebraic or homotopical purposes). 

\subsection{Bar-cobar duality, Koszul duality, and Gr\"obner bases}

In this section, we offer a brief recollection of some of the key methods of this paper. It is not sensible to offer a stand-alone course in homotopical algebra, so a reader who has not worked with Koszul models of algebras and operads before will end up at a disadvantage; we however hope that this section offers some useful intuition that will allow one to better comprehend the logic of the main part of the paper. 

\subsubsection{Bar-cobar duality}
As we alluded above, the protagonists of this paper are a certain twisted associative algebra, a certain symmetric operad, and a certain non-symmetric operad. Each of these is a monoid in a monoidal category; moreover, these monoids are augmented, that is, equipped with a morphism to the trivial monoid (the unit of the monoidal structure) identical on the unit. In each of these three cases, our monoids can be described using binary operations only; it is clear for the Cauchy product, and well known for (both symmetric and non-symmetric) operads, where the corresponding binary operations are called ``partial compositions'' \cite[Sec.~5.3.4]{MR2954392}. This allows one to define bar constructions of augmented monoids; the bar construction $\mathsf{B}(M)$ of a monoid $M$ is a differential graded cofree conilpotent comonoid co-generated by $s\overline{M}$ (suspension of the kernel of augmentation), and has a differential which is the coderivation that is a sum of elements obtained by ``computing all possible binary operations''. Similarly, for a conilpotent comonoid $C$ one can define its cobar construction $\Omega(C)$ which is the differential graded monoid freely generated by $s^{-1}C$ with the differential arising from the comonoid structure of $C$ (expressed in terms of binary (co)operations). These constructions can actually be slightly modified to allow for differential graded $M$ and $C$, and it turns out that the bar construction of an augmented monoid and the cobar construction of a conilpotent comonoid form a pair of adjoint functors implementing a Quillen equivalence between the corresponding categories. We refer the reader to \cite{MR2954392,MR4105949} for details. 

\subsubsection{Koszul duality}
There is an important class of situations for which 
the bar-cobar adjunction has a simple ``shadow'' in 
the realm of non-differential (co)monoids. Suppose 
that $\mathcal A=\bigoplus_{n\ge 0} \mathcal A_n$ is a non-
negatively graded monoid with $\mathcal A_0=\mathbf{1}$ 
being the unit; such monoids are automatically 
augmented. The bar construction $\mathsf{B}(\mathcal A)$ 
has two gradings, the ``weight'' grading arising from 
the grading of $\mathcal A$, and the syzygy degree which 
counts the number of the suspension symbols $s$ used 
to form an element. The monoid $\mathcal A$ is said to be 
Koszul if the homology of its bar construction is 
concentrated on the diagonal, that is if for each $n
\ge 0$, every element of syzygy degree $n$ of 
$H(\mathsf{B}(\mathcal A))$ has weight grading $n$. It is 
easy to show that a Koszul monoid is generated by 
elements of weight~$1$, and has relations of weight~$2$; 
in other words, it is a \emph{quadratic monoid}. 
In general, the homology of the bar construction of a 
monoid is a comonoid, and the diagonal part of the 
homology is a subcomonoid of that comonoid, called the 
\emph{Koszul dual comonoid} of $\mathcal A$ and denoted $
\mathcal A^{\ac}$. The condition that $\mathcal A$ is Koszul means 
that on the level of the homotopy categories, $
\mathcal A^{\ac}$ is a representative of $\mathsf{B}(\mathcal A)$. 
Koszul duality for operads goes back to \cite{MR1301191} 
and is presented in the way useful for our purposes in 
\cite{MR2954392}. Koszul duality for twisted associative 
algebras is less known, so we feel the need to give the 
reader a more extensive set of references. A 
combinatorial viewpoint of the Koszul duality for twisted 
associative algebras can be found in the paper 
\cite{MR2593312}, a recent paper where explicit 
definitions are not given but their meaning shines 
through the applications is \cite{10.1093/imrn/rnaa215}, 
a recent paper where the Koszul duality for twisted 
associative algebras is discussed with the emphasis on 
the cohomology of coalgebras in \cite{tamaroff2020species}. 
The reader is also invited to consult \cite{MR4103332} for a 
completely different viewpoint using operadic categories. 

\subsubsection{Inhomogeneous Koszul duality}
While there are many Koszul monoids, it is fair to say that 
the property of being Koszul is sufficiently rare. The next 
best thing to hope for is an \emph{inhomogeneous Koszul 
presentation} of a monoid. A quadratic-linear presentation 
(that is, a presentation by generators and relations for 
which any relation is a combination of elements of weights 
one and two in generators) for a monoid $\mathcal A$ leads to a 
\emph{differential graded} Koszul dual comonoid $\mathcal A^{\ac}
$; essentially, as a comonoid it is the Koszul dual of the 
quadratic monoid obtained from $\mathcal A$ by forgetting the 
linear parts of the relations, and its differential 
``restores'' those forgotten linear parts. A quadratic-
linear presentation of $\mathcal A$ is said to be 
\emph{inhomogeneous Koszul} if the homology of the cobar 
construction of $\mathcal A^{\ac}$ is isomorphic to $\mathcal A$. This 
notion goes back to the foundational work of Priddy 
\cite{MR265437}; we refer the reader to more recent articles 
\cite{MR2956319,MR1250981} for details, including a version 
for operads.  A word of warning is in order: in the existing 
literature, one often talks about inhomogeneous Koszul 
monoids, though, unlike being Koszul, the inhomogeneous 
version of the Koszul duality is merely a property of a 
presentation, and every monoid admits at least one 
inhomogeneous Koszul presentation if one picks a linear 
basis as a set of generators, and the corresponding
tautological quadratic-linear relations recording the
`multiplication table' of the monoid.

\subsubsection{Dualizing the comonoids}
The presentation above is, in our view, the ``nicest'' way to deal with the Koszul duality and its inhomogeneous version, since one never has to dualize vector spaces, and so it is easier to work with monoids that have infinitely many generators. However, the price to pay is working with comonoids, and not everyone is willing to pay that price. The Koszul duality assigning to each quadratic monoid its Koszul dual (which is also a monoid) has some standard conventions which we shall now recall. In particular, it is important to recall how suspensions of operads and twisted associative algebras are handled. We introduce the operadic suspension $\mathcal S$ and the operadic desuspension $\mathcal S^{-1}$ as the species with components given by the respective formulas $\mathcal S(n)=\Hom_\kk((\kk s)^{\otimes n}, \kk s)$ and $\mathcal S^{-1}(n)=\Hom_\kk((\kk s^{-1})^{\otimes n}, \kk s^{-1})$ (or, if we work with L-species, the L-species with the same components) with the operad structures given by substitution of multilinear maps into each other. We also introduce the twisted associative algebra suspension $\mathsf{S}$ and the twisted associative algebra desuspension $\mathsf{S}^{-1}$ as the species with components given by the respective formulas $\mathsf{S}(n):=(\kk s^{-1})^{\otimes n}$ and $\mathsf{S}^{-1}(n):=(\kk s)^{\otimes n}$. The Koszul dual operad $\mathcal O^!$ of a quadratic operad $\mathcal O$ is the species with components $\mathcal O^!(n):=\mathcal S^{-1}(n)\otimes(\mathcal O^{\ac}(n))^*$ and the operad structure given by the tensor product of two operad structures of the factors. Similarly, the Koszul dual twisted associative algebra $\mathcal A^!$ of an associative algebra $\mathcal A$ is the species with components $\mathcal A^!(n):=\mathsf{S}^{-1}(n)\otimes(\mathcal A^{\ac}(n))^*$ and the twisted associative algebra structure given by the tensor product of two twisted associative algebra structures of the factors. In
particular, if $\nu$ is a generator of $\mathcal{O}$,
the corresponding dual generator of $\mathcal{O}^!$
will have the same arity, but degree $\mathrm{ar}(\nu)-2-|\nu|$ in the operad case, and $\mathrm{ar}(\nu)-1-|\nu|$
in the twisted associative algebra case. 

\subsubsection{Gr\"obner bases and PBW bases}
To conclude, we must explain how Gr\"obner bases enter into 
our considerations. It happens in two different ways. First, 
we use Gr\"obner bases in the more classical fashion: 
knowing a Gr\"obner basis of relations of a monoid allows 
one to find a linear basis of that monoids by looking at 
\emph{normal forms} (cosets of monomials not divisible by 
the leading monomials of elements of the Gr\"obner basis). 
Second, we may use them to prove Koszulness of various 
monoids. Namely, a monoid with a quadratic Gr\"obner basis 
of relations is Koszul, and a Gr\"obner basis of relations 
consisting of quadratic-linear elements gives an 
inhomogeneous Koszul presentation: the first of these claims 
is proved in \cite[Th.~5.1]{MR3084563}, and the second claim 
is obtained from \emph{op. cit.} by a trivial modification. 
In each of these cases, the linear basis of the monoid is 
made of monomials not divisible by certain quadratic 
elements, and such basis is referred to as the PBW-basis of 
$M$ corresponding to the given (quadratic or quadratic-
linear) Gr\"obner basis of relations. For further 
information on PBW bases, we refer the reader to 
\cite{MR265437} and \cite{MR2574993}.

\subsection{Losev--Manin spaces and their homology}

The remarkable algebraic varieties that we recall in this section was probably first studied by Procesi \cite{MR1252661}; they were then rediscovered by Losev and Manin \cite{MR1786500} when searching for a meaningful notion of an extended cohomological field theory; while not historically precise, we shall refer to them as Losev--Manin spaces. In this section, we give all the necessary statements without proofs, and we invite the reader to consult \cite{MR1786500} for details. 

\subsubsection{Losev--Manin moduli spaces}

\begin{definition}
The $n$-th Losev--Manin space $\mathcal L\mathcal M(n)$ is the toric variety associated to the fan of Weyl chambers of type A in $\mathbb{R}^n$ (the subdivision of $\mathbb{R}^n$ in $n!$ convex cones by the diagonal hyperplanes $x_i=x_j$). The dual polytope of this fan is the permutohedron, and so it is not uncommon in the literature to refer to this fan as the permutohedral fan, and to the space $\mathcal L\mathcal M(n)$ as the permutohedral toric variety.  
\end{definition}

It is easy to show that the space $\mathcal L\mathcal M(n)$ admits a stratification with the open strata indexed by ordered set partitions of $\underline{n}$. The stratum $\mathcal L\mathcal M(n,P)$ indexed by a partition $P=B_1\sqcup \cdots\sqcup B_k$ is, as an algebraic variety, isomorphic to
 \[
\prod_{i=1}^k (\mathbb{C}^\times)^{B_i}/\mathbb{C}^\times .
 \]
In plain words, each point $i\in \underline{n}$ is labelled by a nonzero complex number $z_i$, and the points are considered modulo the simultaneous rescaling of labels in each block $B$ by a nonzero complex number~$\lambda_{B}$. The closure of the stratum $\mathcal L\mathcal M(n,P)$ in 
$\mathcal L\mathcal M(n)$ is the union of all strata $\mathcal L\mathcal M(n,P')$ such that the partition $P$ is obtained from $P'$ by merging some groups of neighbouring blocks. 

Classically, the Losev--Manin spaces are indexed by positive integers, but they may as well be indexed by non-empty finite sets, thus forming a species.
The stratification of Losev--Manin spaces leads to a twisted associative algebra structure on $\mathcal L\mathcal M$: the product $\mathcal L\mathcal M(J)\times\mathcal L\mathcal M(K)\to\mathcal L\mathcal M(I)$ arises naturally from the stratification: the closure of the stratum corresponding to the two-block partition $I=J\sqcup K$ is isomorphic to $\mathcal L\mathcal M(J)\times\mathcal L\mathcal M(K)$, and we take the inclusion of the closure of that stratum as the corresponding product map in the monoid $\mathcal L\mathcal M$. Associativity of this product is obvious. 

\subsubsection{Homology and Koszul duality}

The twisted associative algebra structure on the collection of Losev--Manin spaces induces, via the K\"unneth formula, the same kind of structure on the collection of homologies of these spaces. Let us describe the corresponding twisted associative algebra by generators and relations. 

\begin{proposition} \label{prop:homology-LM}
The homology of the twisted associative algebra of Losev--Manin spaces is isomorphic to the \emph{Losev--Manin algebra}, the twisted associative algebra $\lmHycomm$ generated by fully symmetric elements $\lmm_{\underline{t}}$ of arity $t\geqslant 1$ and of homological degree $2t-2$, subject to the relations 
\begin{equation}
\sum_{\substack{\underline{n}=I\sqcup J,\\ i\in I, j\in J}} \lmm_{I} \lmm_{J}
 -  \sum_{\substack{\underline{n}=I\sqcup J,\\ i\in I, j\in J}} \lmm_{J} \lmm_{I}
    = 0
\end{equation}
for all $n\geqslant 2$
and all $i\ne j\in\underline{n}$.
\end{proposition}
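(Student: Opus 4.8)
The plan is to exhibit a morphism of twisted associative algebras $\phi\colon \lmHycomm \to H_\bullet(\mathcal L\mathcal M)$ and to prove that it is an isomorphism by checking that it is well defined, surjective, and that the two sides have matching graded dimensions. On generators, $\phi$ sends $\lmm_{\underline t}$ to the fundamental class $[\mathcal L\mathcal M(t)]\in H_{2t-2}(\mathcal L\mathcal M(t))$; this lands in the correct degree since $\dim_{\mathbb C}\mathcal L\mathcal M(t)=t-1$, and it is a fully symmetric element because the $S_t$-action permuting the labels acts by orientation-preserving homeomorphisms on the connected complex manifold $\mathcal L\mathcal M(t)$, hence trivially on its one-dimensional top homology.

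First I would reinterpret both the products and the relations geometrically. By associativity of the twisted product and the description of stratum closures recalled above, the iterated product $\lmm_{B_1}\cdots \lmm_{B_k}$ is the class of the closure of the stratum $\mathcal L\mathcal M(n,P)$ attached to the ordered partition $P=(B_1,\ldots,B_k)$, that is, the class of the corresponding torus-orbit closure in the permutohedral toric variety; in particular a two-block product $\lmm_S\lmm_{S^{c}}$ is the class $[D_S]$ of the boundary divisor indexed by the ray $\overline{e_S}$ of the braid fan. To see that $\phi$ is well defined I would check that the stated relations hold in homology, where they are precisely the linear equivalences of toric divisors: the character $x_i-x_j$ lies in the character lattice $M$ and pairs with $\overline{e_S}$ to give $1$ when $i\in S\not\ni j$, $-1$ when $j\in S\not\ni i$, and $0$ otherwise, so the principal divisor of $x_i-x_j$ reads $\sum_{S\ni i,\,S\not\ni j}[D_S]-\sum_{S\ni j,\,S\not\ni i}[D_S]=0$, which upon writing $[D_S]=\lmm_S\lmm_{S^c}$ becomes exactly the relation in the statement. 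Surjectivity of $\phi$ is then immediate, since the homology of a smooth projective toric variety is spanned by the classes of its torus-orbit closures, and each of these is of the form $\lmm_{B_1}\cdots\lmm_{B_k}$.

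It remains to prove that $\phi$ is injective, equivalently that the quadratic relations above generate all relations; this is the main obstacle, and I would attack it by a graded dimension count. On the geometric side, the Poincar\'e polynomial of the smooth projective toric variety $\mathcal L\mathcal M(n)$ is the Eulerian polynomial $\sum_{\sigma\in S_n}q^{\mathrm{des}(\sigma)}$ (the $h$-polynomial of the permutohedron), so $\dim H_\bullet(\mathcal L\mathcal M(n))=n!$, with the Betti number in degree $2k$ equal to the number of permutations of $\underline n$ with $k$ descents. On the algebraic side, I would apply the forgetful functor to regard $\lmHycomm$ as a shuffle algebra and compute a Gr\"obner basis of its relations in the framework set up in the recollections; the expectation is that the displayed quadratic relations already form a quadratic Gr\"obner basis, so that $\lmHycomm$ is Koszul and its PBW basis is given by the monomials avoiding the leading terms of the relations. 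The crux is to identify these normal forms combinatorially and to verify that, in each arity $n$ and homological degree, their number equals the corresponding Eulerian number; checking that all S-polynomial overlaps reduce to zero and that the normal monomials biject with permutations by descents is where the real work lies. Once this is in place, $\phi$ is a surjection onto a space of the same total dimension $n!$ with matching graded pieces, and is therefore forced to be an isomorphism.
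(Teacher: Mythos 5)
Your strategy coincides with the paper's: build the surjection $\lmHycomm\twoheadrightarrow H_\bullet(\mathcal L\mathcal M)$ from the stratification, use $\dim H_\bullet(\mathcal L\mathcal M(n))=n!$, and close the argument with a normal-form count on the algebraic side. Your verification that the relations hold in homology via the principal divisors of the characters $x_i-x_j$ is a nice self-contained substitute for the paper's citation of Losev--Manin. However, you stop exactly at the decisive step: you write that ``the expectation is that the displayed quadratic relations already form a quadratic Gr\"obner basis'' and that identifying the normal forms ``is where the real work lies'' --- but you do not do that work, so as written the proof is incomplete. The paper supplies the missing ingredients explicitly: first it reduces the space of relations in arity $n$ to the $(n-1)$-dimensional span of $R_{i,i+1}$ using $R_{i,k}+R_{k,j}=R_{i,j}$; then it orders subsets of $\underline{n}$ by the reverse dictionary order on their increasing lists, so that the leading term of $R_{i,i+1}$ is $\lmm_{\{1,\ldots,i\}}\lmm_{\{i+1,\ldots,n\}}$; the normal monomials are then the $\lmm_{I_1}\cdots\lmm_{I_p}$ with $\max(I_k)>\min(I_{k+1})$, which biject with permutations by reading a permutation as its sequence of maximal increasing runs. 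Without an explicit order, explicit leading terms, and this bijection, the dimension count has no content.

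You also make the remaining step harder than it needs to be. For the upper bound $\dim\lmHycomm(n)\leqslant n!$ you do \emph{not} need to check that all S-polynomials reduce to zero, nor do you need the graded (Eulerian-number) refinement: the monomials not divisible by the leading terms of \emph{any} set of relations always span the quotient, whatever the status of confluence, so counting them already gives the upper bound, and the total dimension $n!$ on both sides forces the surjection to be an isomorphism. (The fact that the relations do form a quadratic Gr\"obner basis then comes for free from the sharpness of the bound, and is what the paper exploits later to prove Koszulness.) So the one genuine gap to fill is the explicit choice of ordering and the run-decomposition bijection; once you add those, your argument closes and agrees with the paper's.
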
  

\begin{proof}
We shall denote by $\lmm_{\underline{t}}$ the fundamental class of the 
space $\mathcal L\mathcal M(\underline{t})$. Note that the product $\lmm_{I} 
\lmm_{J}\in\mathcal L\mathcal M(\underline{n})$ is the fundamental class of the 
closure of the stratum corresponding to the two-block partition 
$\underline{n}=I\sqcup J$, and so the relations of $\lmHycomm$ should be 
viewed as linear relations between the closures of strata indexed by 
$2$-partitions. From the original work of Losev and Manin it already 
follows that these relations hold in the homology of $\mathcal L\mathcal M$, so 
there is a surjective map of twisted associative algebras 
 \[
\lmHycomm\twoheadrightarrow H_\bullet(\mathcal L\mathcal M). 
 \]
Moreover, the total dimension of the homology is equal to the number of chambers of its fan, that is $n!$, and thus in order to establish that the existing surjection is an isomorphism it is sufficient to prove that $n!\geqslant \dim\lmHycomm(\underline{n})$, since a tight upper bound does not leave any room for a kernel. Let us outline an argument using Gr\"obner bases for twisted associative algebras \cite{MR3642294}. The ordering we use here was found in \cite{Laubie}; another ordering that works will be discussed in Proposition \ref{prop:bLMHycomm-Koszul} below. First, we find a linear basis of defining relations. For that, we note that for the relators  
 \[ 
R_{i,j}=\sum_{\substack{\underline{n}=I\sqcup J,\\ i\in I, j\in J}} \lmm_{I} \lmm_{J}
 -  \sum_{\substack{\underline{n}=I\sqcup J,\\ i\in I, j\in J}} \lmm_{J} \lmm_{I}
 \]
of $\lmHycomm$, we have $R_{i,k}+R_{k,j}=R_{i,j}$, and therefore the basis of the vector space of relations is given by the relations $R_{i,i+1}$, so that there are $n-1$ relations in the given arity $n$. Next, we shall define an ordering of monomials of the corresponding free shuffle algebra. For each subset $I$ of $\underline{n}$, let us list its elements in increasing order, and let us order subsets by the reverse dictionary order of these lists. Then the leading term of the relation $R_{i,i+1}$ is $\lmm_{\{1,\ldots,i\}}\lmm_{\{i+1,\ldots,n\}}$. It is clear that the monomials in the corresponding shuffle algebra that are not divisible by these leading terms are 
 \[
\lmm_{I_1}\lmm_{I_2}\cdots\lmm_{I_p}
 \]
where $\max(I_k)>\min(I_{k+1})$ for all $k$. Such elements of arity $n$ are in an obvious one-to-one correspondence with permutations of $\{1,\ldots,n\}$: each permutation may be uniquely written as a sequence of increasing sequences for which the last element of a sequence and the first element of the next one form a descent. This implies that the normal monomials of arity $n$ with respect to the leading terms of quadratic relations of $\lmHycomm$ span a vector space of dimension $n!$, and therefore the dimension of $\lmHycomm(n)$ is at most $n!$. As we indicated before, this is enough to conclude that the surjection $\lmHycomm\twoheadrightarrow H_\bullet(\mathcal L\mathcal M)$ is an isomorphism.
\end{proof}

%

Our next result is concerned with Koszul duality for our twisted associative algebra. The following result can also be established geometrically in the spirit of \cite[Sec.~4.6]{MR1363058} and of \cite[Sec.~5.1.5]{DSV-ncHyperCom}; we chose to present an algebraic proof since some aspects of that proof will be useful in one of the main results of this article.
 
\begin{proposition}\label{prop:tHycommDual}
The twisted associative algebra $\lmHycomm$ is 
Koszul. The twisted associative algebra 
desuspension of its Koszul dual twisted 
associative algebra is generated by fully 
symmetric elements $\lmg_{t}$ for $t\geqslant 0$ 
all of homological degree $1$, subject to 
the relations 
\begin{align}
\sum_{i\in I} 
	\lmg_{\{i\}}\lmg_{\underline{n}
		\setminus\{i\}} &=\lmg_I\lmg_J 
	\text{ for $\underline{n}=I\sqcup J$ 
		and $|I|>1$}\\
\sum_{i\in\underline{n}}\lmg_{\{i\}}
\lmg_{\underline{n}\setminus\{i\}}&=0 .
\end{align}
\end{proposition}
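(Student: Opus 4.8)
The plan is to extract Koszulness from the Gröbner basis already produced for Proposition~\ref{prop:homology-LM}, and then to compute the Koszul dual by the quadratic-duality recipe recalled above: I fix the generators by a degree count and identify the defining relations with the orthogonal complement of the relations of $\lmHycomm$. For the first assertion, observe that the computation behind Proposition~\ref{prop:homology-LM} yields more than an inequality. The relators $R_{i,i+1}$ are quadratic, their leading terms $\lmm_{\{1,\ldots,i\}}\lmm_{\{i+1,\ldots,n\}}$ leave exactly $n!$ normal monomials in arity $n$, and $n!=\dim\lmHycomm(\underline{n})$. As the number of normal monomials equals the dimension, those monomials form a linear basis and the family $\{R_{i,i+1}\}$ is already a Gröbner basis of relations; being quadratic, it is a quadratic Gröbner basis, so $\lmHycomm$ is Koszul by \cite[Th.~5.1]{MR3084563}.

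For the dual I would first pin down the generators. The generating species $V$ has $V(\underline{t})=\kk\lmm_t$ for $t\geqslant 1$, concentrated in the even degree $2t-2$. Applying the recipe $\lmHycomm^!(n)=\mathsf{S}^{-1}(n)\otimes(\lmHycomm^{\ac}(n))^*$ together with the displayed degree formula, the generator dual to $\lmm_t$ lies in arity $t$ and degree $t-1-(2t-2)=1-t$, and the further twisted associative algebra desuspension shifts this to $(1-t)+t=1$. Thus the desuspended dual is generated by one fully symmetric degree-$1$ element of each arity $\geqslant 1$; in the indexing of the statement, where $\lmg_t$ has arity $t+1$, these are precisely the $\lmg_t$ with $t\geqslant 0$. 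It is worth noting already that the $\lmm_t$ are even whereas the $\lmg_t$ are odd; this parity flip is exactly what turns the original relations, which are graded commutators $\lmm_I\lmm_J-\lmm_J\lmm_I$ of even elements, into dual relations carrying all-plus signs.

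The decisive step is to identify the relation space $R^{\perp}\subseteq V^*\otimes V^*$, the annihilator of the relation space $R$ of $\lmHycomm$ under the suspension-twisted pairing (whose signs are trivial here because the $\lmm_t$ are even). Writing a general weight-two element as $\sum_{\underline{n}=S\sqcup T}c_{S,T}\,\lmg_S\lmg_T$, orthogonality to every $R_{i,j}$ amounts to the condition $\sum_{i\in S,\,j\in T}c_{S,T}=\sum_{j\in S,\,i\in T}c_{S,T}$ for all $i\neq j$ (the sums running over decompositions $\underline{n}=S\sqcup T$). I would first confirm that the two proposed families lie in $R^{\perp}$: for the first relation both sides of this condition collapse to the indicator of $\{i,j\}\subseteq I$, and for the second relation both sides equal $1$. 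I would then match dimensions. In arity $n$ one has $\dim(V^*\otimes V^*)(\underline{n})=2^n-2$ and $\dim R(\underline{n})=n-1$, hence $\dim R^{\perp}(\underline{n})=2^n-n-1$; the first relation, indexed by ordered splittings $\underline{n}=I\sqcup J$ with $2\leqslant|I|\leqslant n-1$, contributes $\sum_{p=2}^{n-1}\binom{n}{p}=2^n-n-2$ elements, and the second relation adds one more, for a total of $2^n-n-1$. These relations are linearly independent, since each instance of the first relation is the only one containing the basis vector $\lmg_I\lmg_J$ with $|I|\geqslant 2$, while the second relation involves only the vectors $\lmg_{\{i\}}\lmg_{\underline{n}\setminus\{i\}}$; hence they form a basis of $R^{\perp}$ and the presentation is as claimed.

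The main obstacle I anticipate is not any single computation but the bookkeeping that binds them: tracking the operadic and twisted suspensions so that the generators truly land in degree $1$ and the dual relations emerge with the stated signs on the nose rather than merely up to sign, and being careful that the arity range in the first relation excludes the empty block, so that the counts $2^n-n-2$ and $2^n-n-1$ align exactly with $\dim R^{\perp}(\underline{n})$.
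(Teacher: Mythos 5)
Your proposal is correct and follows essentially the same route as the paper: Koszulness is deduced from the quadratic Gröbner basis produced in the proof of Proposition~\ref{prop:homology-LM}, and the dual presentation is obtained by checking that the proposed relations annihilate the relations of $\lmHycomm$ under the (sign-free) pairing and then matching dimensions against the annihilator of the $(n-1)$-dimensional relation space. Your write-up is in fact slightly more explicit than the paper's on the count $2^n-n-2$ versus $2^n-n-1$ and on the linear independence of the proposed relations, both of which check out.
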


\begin{proof}
The Gr\"obner basis found in the proof of Proposition \ref{prop:homology-LM} is quadratic, and so implies that the shuffle algebra associated to $\lmHycomm$ is Koszul (monomial quadratic shuffle algebras are Koszul because one can explicitly construct a linear resolution of the trivial module \cite{MR3095223}, and the passage from Koszulness of monomial algebras to Koszulness of algebras with a Gr\"obner basis is done by a usual spectral sequence argument). Because of the properties of the forgetful functor from twisted associative algebras to shuffle algebras \cite{MR3642294}, we conclude that the twisted associative algebra $\lmHycomm$ is Koszul.\\

The Koszul duality between the two twisted associative algebras is established as follows. First, we note that the relations listed above annihilate the relations of $\lmHycomm$ under the pairing between the tensor squares of spaces of generators (the desuspension ensures that we must compute the usual pairing without twisting the scalar products by the sign representations of the symmetric groups). Next, it is clear that all these relations but the last one correspond to subsets of cardinalities $2,\ldots,n-1$, so they span a subspace whose dimension is equal to the dimension of the annihilator of the $(n-1)$-dimensional space of quadratic relations of $\lmHycomm$ in arity~$n$, and hence coincides with it. 
\end{proof}

To highlight similarity with the corresponding results for the Deligne--Mumford spaces, we shall denote the Koszul dual twisted associative algebra of $\lmHycomm$ by $\lmGrav$.

\subsection{Genus zero Deligne--Mumford moduli spaces and their homology}\label{sec:DMrecall}

\subsubsection{Genus zero Deligne--Mumford moduli spaces}

\begin{definition}
The $n$-th genus zero moduli space $\mathcal M_{0,n}$ parametrises smooth complex curves of genus $0$ with $n$ distinct marked points labelled $1, \ldots, n$. The $n$-th \emph{Deligne--Mumford moduli space} $\overline{\mathcal M}_{0,n}$ stable complex stable curves of genus $0$ with $n$ distinct marked points labelled $1, \ldots, n$. By definition, $\overline{\mathcal M}_{0,n}=\varnothing$ for $n<3$. 
\end{definition}

It is well known that the space $\overline{\mathcal M}_{0,n}$ admits a stratification with the open strata indexed by trees with the set of leaves $\underline{n}$. The stratum $\mathcal M_{0,n}(T)$ indexed by tree $T$ is, as an algebraic variety, isomorphic to
 \[
\prod_{v\in V(T)} \mathcal M_{0,n(v)} ,
 \]
where $V(T)$ is the set of internal vertices of $T$, and where $n(v)$ is the number of edges incident to the vertex $v$. The closure of the stratum $\mathcal M_{0,n}(T)$ in 
$\mathcal M_{0,n}$ is the union of all strata $\mathcal M_{0,n}(T')$ such that the tree $T$ is obtained from $T'$ by collapsing some internal edges. 

Let us shift indices by one and consider the space  $\overline{\mathcal M}_{0,1+n}$. We shall label the leaves of trees indexing the stratification by $\{0,1,\ldots,n\}$; the label $0$ will be always chosen as the root of the tree, and we consider $\overline{\mathcal M}_{0,1+n}$ just with the action of $S_n$ permuting the leaves. The Deligne--Mumford spaces $\overline{\mathcal M}_{0,1+n}$ are indexed by positive integers, but they may as well be indexed by non-empty finite sets, thus forming a species $\overline{\mathcal M}_0$. If we formally adjoint the unit, the stratification of Deligne--Mumford spaces leads to a symmetric operad structure on the species $\overline{\mathcal M}_0$. Specifically, to compute the operadic composition of the closed stratum corresponding to a tree $T$ with $k$ leaves and the closed strata corresponding to the trees $T_1$, \ldots, $T_k$, we form the tree obtained by grafting $T_1$, \ldots, $T_k$ at the leaves of $T$, and use the inclusion of the corresponding closed stratum. The symmetric operad axioms are automatic.


\subsubsection{(Co)homology and Koszul duality}

The operad structure on the collection of Deligne--Mumford spaces induces, via the K\"unneth formula, the same kind of structure on the collection of homologies of these spaces. Let us describe the corresponding operad  by generators and relations. 

\begin{definition}
The \emph{hypercommutative operad}, denoted $\Hycomm$, is the operad generated by fully symmetric operations $m_t$ of all possible arities $t\geq 2$ and of homological degrees $2t-4$, subject to the relations (for each arity $n\geqslant 3$ and each choice of $i,j,k\in\underline{n}$ such that $|\{i,j,k\}|=3$)
\begin{equation}\label{eq:Hycom}
\sum_{\substack{I\sqcup J =\underline{n} 
	\\ i\in I;\, j,k\in J}} 
		m_{I\sqcup\{\star\}}\circ_\star m_{J}
			=\sum_{\substack{I\sqcup J = \underline{n} \\ j\in I;\, i,k\in J}} m_{I\sqcup\{\star\}}\circ_\star m_{J} .
\end{equation}
\end{definition}

\begin{proposition}[Theorem~3.3 in \cite{MR1363058}]
The homology of the operad of Deligne--Mumford spaces is isomorphic to $\Hycomm$.
\end{proposition}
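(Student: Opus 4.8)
The plan is to follow the same two-step strategy as in the proof of Proposition~\ref{prop:homology-LM}: first exhibit a surjection from the abstractly presented operad $\Hycomm$ onto the homology operad $H_\bullet(\overline{\mathcal M}_0)$, and then match total dimensions arity by arity via an upper bound coming from a Gr\"obner basis. For the surjection I would identify the generator $m_t$ with the fundamental class $[\overline{\mathcal M}_{0,1+t}]$; this is consistent because $\overline{\mathcal M}_{0,1+t}$ has real dimension $2t-4$, exactly the homological degree of $m_t$. Under this identification each composite $m_{I\sqcup\{\star\}}\circ_\star m_J$ is the fundamental class of the image of the gluing map $\overline{\mathcal M}_{0,|I|+2}\times\overline{\mathcal M}_{0,|J|+1}\to\overline{\mathcal M}_{0,n+1}$, which is a boundary divisor. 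Surjectivity is then the statement that the fundamental classes of closed boundary strata span the whole homology, which holds for these spaces (for instance by Keel's presentation of the cohomology ring), exactly as the spanning of strata classes was used tacitly in the Losev--Manin case.

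To see that the map is well defined, that is that the relation~\eqref{eq:Hycom} holds in homology, I would run the standard WDVV argument. Fixing $i,j,k$, consider the forgetful morphism $\pi\colon\overline{\mathcal M}_{0,n+1}\to\overline{\mathcal M}_{0,4}$ retaining only the markings $0,i,j,k$. On $\overline{\mathcal M}_{0,4}\cong\mathbb P^1$ the three boundary points are homologous since any two points of $\mathbb P^1$ are. Pulling back (via Poincar\'e duality) the equality of the boundary points $\{0,i\}\,|\,\{j,k\}$ and $\{0,j\}\,|\,\{i,k\}$ and decomposing $\pi^*$ of each into the boundary divisors of $\overline{\mathcal M}_{0,n+1}$ produces precisely the two sides of~\eqref{eq:Hycom}: the divisors appearing in $\pi^*[\{0,i\}\,|\,\{j,k\}]$ are exactly the $m_{I\sqcup\{\star\}}\circ_\star m_J$ with $i\in I$, $j,k\in J$, and symmetrically on the other side. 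This gives $\Hycomm\twoheadrightarrow H_\bullet(\overline{\mathcal M}_0)$, hence $\dim\Hycomm(n)\ge\dim H_\bullet(\overline{\mathcal M}_{0,n+1})$.

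For the reverse inequality I would observe that the presentation of $\Hycomm$ is quadratic --- each relation is a combination of composites of exactly two generators, even though there are infinitely many generators --- and pass to the associated shuffle operad through the forgetful functor, just as in the twisted associative case. After fixing an admissible ordering of shuffle tree monomials I would compute the leading terms of the relations~\eqref{eq:Hycom}, verify that these relations form a Gr\"obner basis (checking that all S-polynomials reduce to zero, completing the basis if necessary), and count the normal forms in each arity. Comparing the normal-form count with the total Betti numbers of $\overline{\mathcal M}_{0,n+1}$, which are accessible from the stratification recursion or Keel's presentation, would give the matching upper bound $\dim\Hycomm(n)\le\dim H_\bullet(\overline{\mathcal M}_{0,n+1})$; together with the surjection this forces an isomorphism. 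As a bonus, a quadratic Gr\"obner basis certifies that $\Hycomm$ is Koszul, with Koszul dual the gravity operad $\Grav$, in perfect parallel with Proposition~\ref{prop:tHycommDual}.

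The main obstacle is the operadic Gr\"obner basis computation. Unlike Proposition~\ref{prop:homology-LM}, where monomials are words and the leading terms $\lmm_{\{1,\ldots,i\}}\lmm_{\{i+1,\ldots,n\}}$ together with their normal forms are transparent, here one works with shuffle tree monomials whose divisibility is genuinely tree-shaped and combinatorially far more intricate; choosing an order for which the relations have tractable leading terms, and ruling out further S-polynomial obstructions, is the delicate point. A secondary difficulty is arranging the normal-form count so that it visibly reproduces the known total Betti numbers of the Deligne--Mumford spaces rather than merely bounding the dimension from above.
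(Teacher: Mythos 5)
The paper does not prove this proposition at all: it is recalled as Theorem~3.3 of Getzler \cite{MR1363058} and used as a black box, so your argument is necessarily a different route --- namely, an actual proof. Your outline is correct and is essentially the modern Gr\"obner-basis reproof of Getzler's theorem. The surjection step is sound: the degree bookkeeping for $m_t\mapsto[\overline{\mathcal M}_{0,1+t}]$ checks out, the WDVV/Keel pullback argument is exactly what makes \eqref{eq:Hycom} hold among boundary-divisor classes, and spanning by closed strata classes follows from Keel's presentation. For the upper bound, note that normal monomials with respect to \emph{any} set of leading terms of relations always span the quotient, so you get an upper bound even before confirming the Gr\"obner property; the only danger is that the count fails to match the Betti numbers if completion were needed. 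But the ``main obstacle'' you flag is already settled in the literature the paper leans on: Proposition~\ref{prop:DMHycommDual} exhibits a quadratic Gr\"obner basis for the desuspended dual $\Grav$ (via the weight grading, the word-operad order, and the path-permutation order), and by the standard duality of Gr\"obner bases the opposite ordering does the job for $\Hycomm$; the matching of the normal-form count with $\dim H_\bullet(\overline{\mathcal M}_{0,n+1})$ is \cite[Th.~5.16]{MR3084563}. So your plan closes, at the cost of importing that computation, whereas the paper simply outsources the whole statement to \cite{MR1363058}; what your route buys is a uniform algebraic treatment matching Proposition~\ref{prop:homology-LM} and, as you note, the Koszulness of $\Hycomm$ as a by-product.
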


The Koszul dual operad of $\Hycomm$ is denoted $\Grav$ and is referred to as the operad of gravity algebras. To avoid unnecessary signs, it is advantageous to describe its operadic desuspension. 

\begin{proposition}[Theorem~4.6 in \cite{MR1363058}]\label{prop:DMHycommDual}
The operad $\Hycomm$ is Koszul. The operadic desuspension of its Koszul dual operad is generated by fully symmetric elements $g_n$ of all possible arities $n\geq 2$, each of homological degree $1$ and subject to the relations 
\begin{align}
\sum_{\{i,j\}\subset J} g_{\underline{n}\setminus\{i,j\}\sqcup\{\star\}}\circ_\star g_{\{i,j\}} &= g_{I\sqcup\{\star\}}\circ_\star g_{J} \text{ for 
$\underline{n}=I\sqcup J$ and $|J|>2$} \label{eq:Grav1}\\
\sum_{\{i,j\}\subset\underline{n}} g_{\underline{n}\setminus\{i,j\}\sqcup\{\star\}}\circ_\star g_{\{i,j\}}&=0 .\label{eq:Grav2}
\end{align}
\end{proposition}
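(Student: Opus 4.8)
The plan is to follow, in the operadic setting, the same two-step strategy used for the twisted associative algebra $\lmHycomm$ in Propositions~\ref{prop:homology-LM} and~\ref{prop:tHycommDual}: first prove that $\Hycomm$ is Koszul by producing a quadratic Gr\"obner basis of its relations, and then identify its Koszul dual by a pairing computation backed by a dimension count. Since the symmetries obstruct the monomial machinery, I would at each stage pass to the associated shuffle operad $\Hycomm^{\mathrm{f}}$, using the monoidality of the forgetful functor recalled above; Koszulness, the dimensions of the components, and the Koszul dual are all faithfully detected after forgetting the symmetries. Here one regards $\Hycomm$ as a weight-graded operad in which every generator $m_t$ has weight~$1$ and the relations \eqref{eq:Hycom} are weight-homogeneous of weight~$2$, so that ``quadratic'' and ``Koszul'' make sense in the sense recalled in the preliminaries.

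For Koszulness I would first reduce the relations \eqref{eq:Hycom} to a linear basis in each arity, just as the relators of $\lmHycomm$ were reduced from the full family $R_{i,j}$ to the $R_{i,i+1}$: the relations indexed by triples $\{i,j,k\}$ satisfy linear dependencies that let one retain only a spanning subfamily of the expected size. I would then fix an admissible ordering of tree monomials in the free shuffle operad generated by the $m_t$ (for instance a path-lexicographic order refined by arity and compatible with the weight grading), read off the leading terms of the chosen relations, and check that these leading terms already form a Gr\"obner basis, i.e.\ that every $S$-polynomial reduces to zero. The tree monomials avoiding the resulting leading terms then bound $\dim\Hycomm(n)$ from above; matching this bound with $\dim H_\bullet(\overline{\mathcal M}_{0,1+n})$ forces the bound to be tight, and the quadratic Gr\"obner basis criterion cited above (\cite[Th.~5.1]{MR3084563}) yields Koszulness of $\Hycomm^{\mathrm{f}}$, hence of $\Hycomm$.

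To identify the dual, I would compute $\Hycomm^{\ac}$ as the annihilator of the relations \eqref{eq:Hycom} under the Koszul pairing on the tensor square of the space of generators. By the degree bookkeeping recalled above, the dual generator of $m_t$ has arity $t$ and homological degree $\mathrm{ar}(m_t)-2-|m_t|=t-2-(2t-4)=2-t$ in $\Hycomm^!$; applying the operadic desuspension $\mathcal S^{-1}$, which shifts the arity-$t$ component by $t-1$, lands the generators $g_t$ in degree~$1$ and, as in the proof of Proposition~\ref{prop:tHycommDual}, removes the sign twists so that the plain untwisted pairing is computed. It then remains to verify that the gravity relations \eqref{eq:Grav1} and \eqref{eq:Grav2} pair to zero against every relation \eqref{eq:Hycom}, and that they exhaust the annihilator; the first is a direct pairing computation, and for the second I would compare dimensions, noting that \eqref{eq:Grav1} is indexed by the splittings $\underline n=I\sqcup J$ with $|J|>2$ and \eqref{eq:Grav2} contributes a single further relation, exactly mirroring the count carried out at the end of the proof of Proposition~\ref{prop:tHycommDual}.

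The main obstacle is the Koszulness step, and within it the dimension count. In the twisted associative case the surviving normal monomials were in bijection with permutations, so the target was simply $n!$; here the relevant cardinality is the total Betti number of $\overline{\mathcal M}_{0,1+n}$, a genuinely nontrivial combinatorial quantity, and matching it with the number of Gr\"obner normal forms requires either an explicit bijection between the surviving tree monomials and a homology basis of $\overline{\mathcal M}_{0,1+n}$ or a generating-function comparison of Poincar\'e series. Verifying that the chosen leading terms really form a Gr\"obner basis (the confluence check for $S$-polynomials in the free shuffle operad) is the second delicate point. Everything else---the linear reduction of the relations, the pairing computation, and the desuspension degree bookkeeping---is routine and proceeds exactly as in the twisted associative algebra case. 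As an alternative, one could simply invoke Getzler's original cohomological argument \cite{MR1363058}, but the Gr\"obner route has the advantage of running uniformly across the three algebraic gravity packages.
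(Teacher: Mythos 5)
Your overall architecture is right, and the second half of your plan (the pairing computation, the desuspension degree bookkeeping, and the dimension count of the annihilator) matches what the paper does as its ``direct inspection'' first step. The problem is that the Koszulness step, which is where essentially all of the content lies, is left unresolved at exactly its two hard points, and your strategic choice makes both of them harder than they need to be. You propose to run the Gr\"obner machinery directly on $\Hycomm$, guessing ``for instance a path-lexicographic order refined by arity,'' and then to match the normal monomials against $\dim H_\bullet(\overline{\mathcal M}_{0,1+n})$. There is no evidence that such an ordering makes the relations \eqref{eq:Hycom} into a Gr\"obner basis, and the paper's actual proof shows that the ordering required is far from generic: it works on the Koszul dual (gravity) side, first imposing a weight grading that assigns weight $1$ to $g_p$ for $p>2$ and weight $0$ to $g_2$, then refining via a word-operad map to the monoid of quantum monomials $\langle x,y,q\mid xq=qx,\,yq=qy,\,yx=xyq\rangle$, and only then extending to a total order by reverse path-permutation lexicographic comparison. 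This carefully engineered order is what forces the right-hand sides of \eqref{eq:Grav1} and the monomial $g_{n-1}\circ_{n-1}g_2$ in \eqref{eq:Grav2} to be the leading terms, after which the normal monomials are simply counted and compared with the known dimension of $\Hycomm^!(n)$ from \cite[Th.~5.16]{MR3084563}; Koszulness of $\Hycomm$ itself is then obtained for free by passing to the opposite ordering.

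Working on the gravity side is not a cosmetic choice: the normal forms there are iterated left combs of $g_2$ plugged into a single top corolla, and the dimension of $\Hycomm^!(n)$ is already available in the literature, whereas on the $\Hycomm$ side you would have to produce an explicit bijection between surviving tree monomials and a homology basis of $\overline{\mathcal M}_{0,1+n}$ or compare Poincar\'e series, a nontrivial task you correctly flag but do not carry out. So the gap is concrete: you have not exhibited an ordering for which the relations form a Gr\"obner basis, and you have not completed the dimension comparison that certifies it. Your fallback of citing Getzler's Theorem~4.6 would of course establish the statement, but it abandons the Gr\"obner-theoretic route whose details (in particular the ordering and the leading terms of \eqref{eq:Grav1}--\eqref{eq:Grav2}) the paper reuses later for the higher-order operads $\bHycomm{k}^!$.
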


\begin{proof}
As a first step, one checks by a direct inspection that the operadic desuspension of the Koszul dual operad is defined precisely by the relations listed in the statement. It is thus sufficient to check that the shuffle operad obtained from the symmetric operad with these relations by applying the forgetful functor is Koszul. For that, we shall use a particular admissible ordering of monomials in the free shuffle operad generated by the elements $g_p$. First, one considers a weight grading that assigns weight $1$ to each generator $g_p$ with $p>2$, and weight $0$ to the generator $g_2$. This leads to a partial order which compares weights of monomials. This already ensures that for $|J|<n-1$, the monomial on the right hand side of \eqref{eq:Grav1} is the leading monomial of that relation. To proceed, one uses word operads \cite{MR4114993}, and considers the monoid of ``quantum monomials'' $\mathsf{QM}=\langle x,y,q\mid xq=qx,yq=qy,yx=xyq\rangle$ and the map from the free shuffle operad generated by the elements $g_p$ to the word operad associated to $\mathsf{QM}$ sending $g_2$ to $(y,y)$ and $g_p$ to $(x,x,\ldots,x)$ for $p>2$. This leads to a partial order which compares the elements of the word operad associated to the given monomials. This ensures that for $|J|=n-1$, the monomial on the right hand side of \eqref{eq:Grav1} is the leading monomial of that relation. Finally, one considers the total order extension of the partial order defined so far using the reverse path-permutation lexicographic ordering \cite[Def.~5.4.1.8]{MR3642294}, so that the leading term of \eqref{eq:Grav2} is $g_{n-1}\circ_{n-1}g_2$. It is established in \cite[Th.~5.16]{MR3084563} that the number of monomials of arity $n$ is equal to the dimension of $\Hycomm^!(n)$, so the defining relations we consider form a Gr\"obner basis. By duality, the same is true for $\Hycomm$ if one considers the opposite ordering for the corresponding free shuffle operad.
\end{proof}

\subsection{Brick manifolds and their homology}

The algebraic varieties that we recall in this section were first studied by Escobar \cite{MR3466428} as a particular case of more general varieties arising in the context of subword complexes for Coxeter groups. The non-symmetric operad structure on the collection of this varieties was studied in detail by the first two authors and Vallette in \cite{DSV-ncHyperCom}. 

\subsubsection{Brick manifolds}

\begin{definition}
The $n$-th \emph{brick manifold} $\mathcal B(n)$ is the toric 
variety associated to the dual fan of Loday's realisation 
of the $n$-th associahedron $K^{n-2}$.
\end{definition}

It is known \cite[Prop.~5.1.3]{DSV-ncHyperCom} that the space $\mathcal B(n)$ admits a stratification with the open strata indexed by planar rooted trees with $n$ leaves. The stratum $\mathcal B(n,T)$ indexed by tree $T$ is, as an algebraic variety, isomorphic to
 \[
(\mathbb{C}^\times)^{n-2-n(T)},
 \]
where $n(T)$ is the number of internal edges of $T$. The closure of the stratum $\mathcal B(n,T)$ in 
$\mathcal B(n)$ is the union of all strata $\mathcal B(n,T')$ such that the tree $T$ is obtained from $T'$ by collapsing some internal edges. 

The brick manifolds are indexed by positive integers, but the definition is amenable to allow any non-empty finite ordered sets of indices, thus leading to an L-species $\mathcal B$.
If we formally adjoin a unit, the stratification of brick manifolds leads to non-symmetric operad structure on~$\mathcal B$. Specifically, to compute the operadic composition of the closed stratum corresponding to a tree $T$ with $k$ leaves and the closed strata corresponding to the trees $T_1, \ldots, T_k$, we form the tree obtained by grafting $T_1, \ldots, T_k$ at the leaves of $T$, and use the inclusion of the corresponding closed stratum. The non-symmetric operad axioms are automatic.

\subsubsection{Homology and Koszul duality}

The non-symmetric operad structure on the collection of brick manifolds induces, via the K\"unneth formula, the same kind of structure on the collection of homologies of these spaces. Let us describe the corresponding non-symmetric operad  by generators and relations. 

\begin{definition}
The \emph{noncommutative hypercommutative operad}, 
denoted $\ncHycomm$, is the non-symmetric operad 
generated by operations $\mu_t$ of all possible 
arities $t\geqslant 2$ and of homological degrees 
$2t-4$, subject to the relations (for each arity 
$n\geqslant 3$ and each choice of 
$k\in \underline{n-1}$)
\begin{equation}\label{eq:ncHycomm}
\sum_{i=1}^{k-1} m_{n-k+i}\circ_i m_{k-i+1}=\sum_{i=k+1}^n m_{n-i+k}\circ_k m_{i-k+1} .
\end{equation}
\end{definition}

\begin{proposition}[Theorem 5.1.1. in \cite{DSV-ncHyperCom}]
The homology of the non-symmetric operad of brick manifolds is isomorphic to $\ncHycomm$.
\end{proposition}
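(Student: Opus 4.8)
The plan is to reproduce the two-step scheme of the proof of Proposition~\ref{prop:homology-LM}: first produce a surjection of non-symmetric operads $\ncHycomm\twoheadrightarrow H_\bullet(\mathcal B)$, and then bound $\dim\ncHycomm(n)$ from above by the total dimension of $H_\bullet(\mathcal B(n))$, so that a tight count leaves no room for a kernel.

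First I would identify the generator $\mu_t$ with the fundamental class of $\mathcal B(t)$ and note that the partial composition $\mu_a\circ_i\mu_b$ is the fundamental class of the closure of the codimension-one stratum $\mathcal B(n,T)$ indexed by the planar tree $T$ obtained by grafting the corolla $\mu_b$ onto the $i$-th leaf of the corolla $\mu_a$ (here $n=a+b-1$, and the degree $2(a-2)+2(b-2)=2(n-3)$ matches the real codimension $2$). The defining relations~\eqref{eq:ncHycomm} are then linear relations among fundamental classes of closures of strata indexed by trees with a single internal edge; exactly as in the Losev--Manin case, one checks that they hold in $H_\bullet(\mathcal B)$ by the geometric analysis of these boundary divisors carried out in \cite{DSV-ncHyperCom}, and this yields the desired surjection. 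Since all quadratic tree monomials of a fixed arity $n$ carry the same homological degree $2(n-3)$, while $\mu_n$ sits in degree $2(n-2)$, the relations are genuinely \emph{quadratic}, so there is no inhomogeneous part to worry about.

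The target dimension is computed geometrically. The space $\mathcal B(n)$ is a smooth projective toric variety whose fan is the normal fan of Loday's realisation of the associahedron $K^{n-2}$; its homology is therefore concentrated in even degrees, and its total dimension equals the number of maximal cones of the fan, that is, the number of vertices of the associahedron. This is the Catalan number $\frac1n\binom{2n-2}{n-1}$, the number of planar binary trees with $n$ leaves. It thus suffices to show that $\dim\ncHycomm(n)$ is at most this Catalan number.

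For the upper bound I would invoke Gr\"obner bases for non-symmetric operads, which here are more directly available than in the symmetric or twisted associative settings: the monomials are honest planar rooted trees with vertices decorated by the generators $\mu_t$, and one may order them directly by an admissible monomial order, such as a path-lexicographic order (see \cite[Ch.~5]{MR3642294}), without first passing to a shuffle structure. The heart of the argument, and the step I expect to be the main obstacle, is to choose this order so that the leading terms of~\eqref{eq:ncHycomm} cut out a family of normal tree monomials counted in each arity by the Catalan number, and to verify that all critical pairs reduce, i.e.\ that~\eqref{eq:ncHycomm} is a quadratic Gr\"obner basis. Granting this, $\dim\ncHycomm(n)$ equals $\frac1n\binom{2n-2}{n-1}=\dim H_\bullet(\mathcal B(n))$, so the surjection above is an isomorphism; as a bonus, the quadratic Gr\"obner basis shows that $\ncHycomm$ is Koszul.
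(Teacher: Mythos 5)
Your sketch is correct and follows essentially the route of the original proof: the paper itself does not reprove this statement but cites Theorem~5.1.1 of \cite{DSV-ncHyperCom}, and your argument is precisely the non-symmetric transplant of the paper's proof of Proposition~\ref{prop:homology-LM} (surjection from the presented operad onto the homology via fundamental classes of boundary strata, then a normal-monomial count matching the total Betti number, here the Catalan number $\frac1n\binom{2n-2}{n-1}$ of vertices of the associahedron). The Gr\"obner basis step you single out as the main obstacle is exactly what \cite[Th.~4.2.1]{DSV-ncHyperCom} supplies, and in the present paper it is recovered in Proposition~\ref{prop:NCHycommDual} by establishing the quadratic Gr\"obner basis on the Koszul dual side and dualizing. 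One small simplification: for the upper bound you only need that the monomials not divisible by the chosen leading terms of~\eqref{eq:ncHycomm} span the quotient and are Catalan-many, which is automatic without checking that critical pairs reduce; confluence is only needed for the bonus Koszulness claim.
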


The Koszul dual non-symmetric operad of $\ncHycomm$ is denoted $\ncGrav$ and is referred to as the operad of noncommutative gravity algebras. To avoid unnecessary signs, it is advantageous to describe its operadic desuspension. The following result was proved in \cite{DSV-ncHyperCom}; we recall the proof since some of its aspects will be useful for us. 

\begin{proposition}\label{prop:NCHycommDual}
The non-symmetric operad $\ncHycomm$ is Koszul. The operadic desuspension $\mathcal S^{-1}\ncGrav$ of its Koszul dual operad $\ncGrav$ is generated by operations $\gamma_t$ of all possible arities $t \geqslant 2$ and of homological degrees $1$ each, 
subject to the relations 
\begin{align}
\sum_{j=r}^{r+p-2}\gamma_{n-1}\circ_j\gamma_2 &=\gamma_{n-p+1}\circ_r\gamma_p  \,\text{ for all  $3\leqslant p\leqslant r+p-1\leqslant n$,}\label{eq:nsGrav1}\\
\sum_{j=1}^{n-1}\gamma_{n-1}\circ_j\gamma_2 &=0 \, \text{ for all $n\geqslant 3$}.\label{eq:nsGrav2}
\end{align}
\end{proposition}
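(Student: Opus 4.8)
The plan is to establish the two assertions---the identification of the quadratic dual and Koszulness---separately, following the template already used for $\Hycomm$ in Proposition~\ref{prop:DMHycommDual}, while exploiting the fact that in the non-symmetric world no passage to shuffle operads is needed and Gr\"obner bases apply directly to tree monomials.

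First I would pin down the presentation of $\mathcal{S}^{-1}\ncGrav$. The generators $\gamma_t$ are dual to the generators $\mu_t$ of $\ncHycomm$ and have the same arity; the operadic desuspension is chosen precisely so that all $\gamma_t$ sit in homological degree $1$ and the pairing between quadratic tree monomials $\mu_a\circ_i\mu_b$ and $\gamma_a\circ_i\gamma_b$ carries no extra signs. The relations of $\mathcal{S}^{-1}\ncGrav$ are then the annihilator of the span of the relators \eqref{eq:ncHycomm} inside the arity-$n$ quadratic component of the free non-symmetric operad. A direct linear-algebra check---pairing the proposed relations \eqref{eq:nsGrav1}--\eqref{eq:nsGrav2} against each relator \eqref{eq:ncHycomm} indexed by $k\in\underline{n-1}$---shows that the two families annihilate one another, and a dimension count comparing the number of relators of $\ncHycomm$ in arity $n$ with the number of proposed dual relations shows that the annihilator is spanned exactly by \eqref{eq:nsGrav1}--\eqref{eq:nsGrav2}.

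Second, and this is the crux, I would prove Koszulness by exhibiting a quadratic Gr\"obner basis for $\mathcal{S}^{-1}\ncGrav$; by \cite[Th.~5.1]{MR3084563} a quadratic Gr\"obner basis implies Koszulness, and by Koszul duality this transfers back to $\ncHycomm$. I would build an admissible ordering on tree monomials $\gamma_a\circ_i\gamma_b$ in stages, mirroring the $\Hycomm$ proof: begin with a weight grading assigning weight $0$ to $\gamma_2$ and weight $1$ to every $\gamma_t$ with $t>2$, which already forces the right-hand side $\gamma_{n-p+1}\circ_r\gamma_p$ of \eqref{eq:nsGrav1} to dominate whenever $p<n-1$ (there the right-hand side carries two heavy generators against one on the left); refine using a word-operad comparison analogous to the quantum-monomial monoid $\mathsf{QM}$ to settle the boundary case $p=n-1$; and finally tie-break with a reverse path-lexicographic order so that the leading term of \eqref{eq:nsGrav2} is the extreme composition $\gamma_{n-1}\circ_{n-1}\gamma_2$. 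With these leading terms fixed, I would show that the normal monomials---tree monomials divisible by none of the chosen leading terms---in each arity $n$ are counted by $\dim\ncHycomm^!(n)$, leaving no room for further reductions, so that the relations form a Gr\"obner basis.

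The hard part will be the combinatorial bookkeeping of the Gr\"obner step: verifying that the chosen leading terms genuinely arise from a single admissible order on non-symmetric tree monomials, and matching the count of the resulting normal forms with the Koszul-dual dimension $\dim\ncHycomm^!(n)$. As in the symmetric case, the delicate points are the boundary relation $p=n-1$ of \eqref{eq:nsGrav1} and the relation \eqref{eq:nsGrav2}, whose leading terms are invisible to the weight grading alone and require the word-operad refinement; once the normal-monomial count is shown to agree with $\dim\ncHycomm^!(n)$, Koszulness of both $\ncHycomm$ and $\ncGrav$ follows at once.
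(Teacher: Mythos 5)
Your proposal follows essentially the same route as the paper: a direct inspection identifying the quadratic dual presentation, followed by exhibiting a quadratic Gr\"obner basis whose normal monomials are counted against $\dim\ncHycomm^!(n)$, with Koszulness transferred to $\ncHycomm$ by duality and the opposite ordering. The only difference is that the paper does not construct the ordering itself but cites \cite[Th.~4.2.1]{DSV-ncHyperCom}, where the leading term of \eqref{eq:nsGrav2} is taken to be $\gamma_{n-1}\circ_1\gamma_2$ rather than your $\gamma_{n-1}\circ_{n-1}\gamma_2$; this is an immaterial mirror-image choice provided the normal-monomial count is carried out consistently with it.
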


\begin{proof}
As a first step, one checks by a direct inspection that the operadic desuspension of the Koszul dual operad is defined precisely by the relations listed in the statement.
It is shown in \cite[Th.~4.2.1]{DSV-ncHyperCom} that there exists an ordering of monomials in the free non-symmetric operad generated by the elements $\gamma_p$ for which the monomial on the right hand side of \eqref{eq:nsGrav1} is the leading monomial of that relation, and the monomial $\gamma_{n-1}\circ_1\gamma_2$ is the leading term of  \eqref{eq:nsGrav2}, and that the normal monomials with respect to these leading terms form a PBW basis of the corresponding operad, so the defining relations we consider form a Gr\"obner basis. By duality, the same is true for $\ncHycomm$ if one considers the opposite ordering for the corresponding free non-symmetric operad.
\end{proof}

\section{Homotopy quotients in the case of twisted associative algebras}

\subsection{Twisted associative differential operators}

We begin with defining what plays the role of Batalin--Vilkovisky algebras and their analogues for higher order derivations in the context of twisted associative algebras. For that, we use interpretation of higher order derivations via the  ``Koszul braces'', and we use the construction of the latter using the twisting procedure and gauge symmetries \cite{MR3510210,MR3629664,MR3385702} to introduce their suitable analogue in the case of twisted associative algebras. 

\begin{definition}
Let $\mathcal A$ be a twisted associative algebra and let $\mathsf{m}\in\mathcal A(\underline{1})_0$ satisfy $[\mathsf{m}_{\{1\}},\mathsf{m}_{\{2\}}]=0$. For an element $\mathsf{f}\in\mathcal A(\varnothing)$, we define the $k$-th \emph{Koszul-like brace} $\mathsf{b}_k^\mathsf{f}\in\mathcal A(\underline{k})$ recursively by the formulas
\begin{align*}
\mathsf{b}_0^\mathsf{f} &:=\mathsf{f},\\
\mathsf{b}_{k+1}^\mathsf{f} &:=[\mathsf{b}_{k}^\mathsf{f},\mathsf{m}_{\{k+1\}}] .
\end{align*}
\end{definition}

As an example, $\mathsf{b}_1^\mathsf{f}=\mathsf{f}\mathsf{m}_{\{1\}}-\mathsf{m}_{\{1\}}\mathsf{f}$, and 
\begin{align*}
\mathsf{b}_2^\mathsf{f}&=(\mathsf{f}\mathsf{m}_{\{1\}}-\mathsf{m}_{\{1\}}\mathsf{f})\mathsf{m}_{\{2\}}-\mathsf{m}_{\{2\}}(\mathsf{f}\mathsf{m}_{\{1\}}-\mathsf{m}_{\{1\}}\mathsf{f})\\
&=  
\mathsf{f}\mathsf{m}_{\{1\}}\mathsf{m}_{\{2\}}-\mathsf{m}_{\{1\}}\mathsf{f}\mathsf{m}_{\{2\}}-\mathsf{m}_{\{2\}}\mathsf{f}\mathsf{m}_{\{1\}}+\mathsf{m}_{\{2\}}\mathsf{m}_{\{1\}}\mathsf{f} .
\end{align*}

\begin{definition} For a twisted associative algebra $\mathcal A$ and the given element $\mathsf{m}\in\mathcal A(\underline{1})_0$ with 
 \[
[\mathsf{m}_{\{1\}},\mathsf{m}_{\{2\}}]=0 ,
 \] 
we say that an element $\mathsf{f}\in\mathcal A(\varnothing)$ is of differential order at most $k$ (with respect to $\mathsf{m}$) if $\mathsf{b}_{k}^\mathsf{f}=0$.
\end{definition}

In the ``classical'' case, the operad of Batalin--Vilkovisky algebras can be defined as the operad controlling a commutative algebra equipped with an odd element squaring to zero which is a differential operator of order at most two. Mimicking this definition, we define the following algebraic object.

\begin{definition}
The twisted associative algebra $\lmBV$ is generated by an element $\Delta$ or arity $0$ and homological degree $1$ and an element $\lmm$ of arity $1$ and homological degree $0$ subject to the relations
\begin{align*}
\Delta^2 &=0,\\
[\lmm_{\{1\}},\lmm_{\{2\}}]&=0,\\
\mathsf{b}_{2}^\Delta &=0.
\end{align*}
\end{definition}

It is well known that in the case of operads, the operad of Batalin--Vilkovisky algebras admits a particularly useful quadratic-linear presentation. Let us introduce a similar presentation in our case. 

\begin{proposition}\label{prop:tBV-qlin}
Consider the twisted associative algebra generated by an element $\Delta$ or arity $0$ and homological degree $1$, an element $\lmm$ of arity $1$ and homological degree $0$, and an element $\lml$ of arity $1$ and homological degree $1$ subject to the relations
\begin{align*}
\Delta^2 &= 0  &
[\lmm_{\{1\}},\lmm_{\{2\}}]&=0
\\
[\Delta,\lmm_{\{1\}}]&=\lml_{\{1\}} &
[\Delta,\lml_{\{1\}}]&=0 
\\
[\lml_{\{1\}},\lmm_{\{2\}}]&=0 
&[\lml_{\{1\}},\lml_{\{2\}}]&=0. 
\end{align*}
This twisted associative algebra is isomorphic to $\lmBV$. 
\end{proposition}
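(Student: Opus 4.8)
The plan is to exhibit an explicit isomorphism between the quadratic-linear presentation given in the statement and the original presentation of $\lmBV$ by showing that the extra generator $\lml$ is redundant: it is forced to equal the Koszul-like brace $\mathsf{b}_1^\Delta=[\Delta,\lmm_{\{1\}}]$. First I would set up the map in the ``easy'' direction. Let $\mathcal{B}$ denote the twisted associative algebra with the quadratic-linear presentation in the statement. There is an obvious algebra homomorphism $\varphi\colon\lmBV\to\mathcal{B}$ sending $\Delta\mapsto\Delta$ and $\lmm\mapsto\lmm$; I must check it is well defined, i.e. that the three relations of $\lmBV$ hold in $\mathcal{B}$. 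The relations $\Delta^2=0$ and $[\lmm_{\{1\}},\lmm_{\{2\}}]=0$ are literally among the relations of $\mathcal{B}$, so only $\mathsf{b}_2^\Delta=0$ needs verification. Using $\mathsf{b}_1^\Delta=[\Delta,\lmm_{\{1\}}]=\lml_{\{1\}}$ and the definition $\mathsf{b}_2^\Delta=[\mathsf{b}_1^\Delta,\lmm_{\{2\}}]=[\lml_{\{1\}},\lmm_{\{2\}}]$, the relation $[\lml_{\{1\}},\lmm_{\{2\}}]=0$ of $\mathcal{B}$ gives exactly $\mathsf{b}_2^\Delta=0$.

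Next I would construct the inverse homomorphism $\psi\colon\mathcal{B}\to\lmBV$ by sending $\Delta\mapsto\Delta$, $\lmm\mapsto\lmm$, and crucially $\lml\mapsto\mathsf{b}_1^\Delta=[\Delta,\lmm_{\{1\}}]$. The work here is to verify that all six relations of $\mathcal{B}$ are sent to identities that hold in $\lmBV$. The relations $\Delta^2=0$ and $[\lmm_{\{1\}},\lmm_{\{2\}}]=0$ again hold by definition of $\lmBV$. The relation $[\Delta,\lmm_{\{1\}}]=\lml_{\{1\}}$ becomes a tautology under $\psi$. The relation $[\lml_{\{1\}},\lmm_{\{2\}}]=0$ becomes $[\mathsf{b}_1^\Delta,\lmm_{\{2\}}]=\mathsf{b}_2^\Delta=0$, which is a defining relation of $\lmBV$. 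For $[\Delta,\lml_{\{1\}}]=0$ I would compute $[\Delta,\mathsf{b}_1^\Delta]=[\Delta,[\Delta,\lmm_{\{1\}}]]$ and use the graded Jacobi identity together with $\Delta^2=0$ (equivalently $[\Delta,\Delta]=2\Delta^2=0$ in the twisted Lie structure recalled in the remark after the definition of twisted associative algebras) to see this vanishes. Finally, $[\lml_{\{1\}},\lml_{\{2\}}]=0$ translates to $[\mathsf{b}_1^\Delta\text{ in slot }1,\mathsf{b}_1^\Delta\text{ in slot }2]=0$, which I would deduce by expanding the brackets and repeatedly applying the Jacobi identity, $\Delta^2=0$, and $[\lmm_{\{1\}},\lmm_{\{2\}}]=0$.

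The last step is to confirm that $\varphi$ and $\psi$ are mutually inverse. On generators this is immediate: $\psi\varphi$ fixes $\Delta$ and $\lmm$, and $\varphi\psi$ fixes $\Delta,\lmm$ and sends $\lml\mapsto[\Delta,\lmm_{\{1\}}]\mapsto\lml$ using the relation $[\Delta,\lmm_{\{1\}}]=\lml_{\{1\}}$ in $\mathcal{B}$. Since both maps are algebra homomorphisms and agree with inverse bijections on generators, they are inverse isomorphisms.

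The main obstacle I anticipate is the careful bookkeeping of degrees, signs, and argument-sets in the twisted setting when verifying $[\Delta,\lml_{\{1\}}]=0$ and especially $[\lml_{\{1\}},\lml_{\{2\}}]=0$. In a twisted associative algebra the bracket $[\mathsf{x},\mathsf{y}]=\mathsf{x}\mathsf{y}-(-1)^{|\mathsf{x}||\mathsf{y}|}\mathsf{y}\mathsf{x}$ lives in $\mathcal{A}(I\sqcup J)$, so one must track which arguments the nested braces act on; $\lml$ has odd degree $1$, so signs in the graded Jacobi identity are genuinely at play, and the relation $[\lml_{\{1\}},\lml_{\{2\}}]=0$ expands into several terms quartic in $\Delta$ and $\lmm_{\{i\}}$ whose cancellation relies on simultaneously invoking $\Delta^2=0$, the commutation of the $\lmm$'s, and the Jacobi identity. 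Establishing these identities is a direct but sign-sensitive computation, which is why the quadratic-linear presentation is valuable: it repackages the single higher-order relation $\mathsf{b}_2^\Delta=0$ into purely quadratic and linear relations at the cost of the auxiliary generator $\lml$.
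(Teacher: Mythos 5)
Your proposal is correct and follows essentially the same route as the paper: eliminate the redundant generator via $\lml_{\{1\}}=[\Delta,\lmm_{\{1\}}]$ and check that the remaining relations reduce to (or follow from) those of $\lmBV$, with $[\Delta,\lml_{\{1\}}]=0$ coming from $\Delta^2=\tfrac12[\Delta,\Delta]$ and Jacobi, and $[\lml_{\{1\}},\lmm_{\{2\}}]=0$ being exactly $\mathsf{b}_2^\Delta=0$. The only cosmetic difference is that you package the elimination as a pair of mutually inverse homomorphisms, and for $[\lml_{\{1\}},\lml_{\{2\}}]=0$ the paper notes the slightly cleaner point that it follows by Jacobi directly from the two identities already established, rather than from a fresh expansion.
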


\begin{proof}
The third defining relation allows us to eliminate the element $\lml_{\{1\}}$. The relation $[\Delta,\lml_{\{1\}}]=0$ becomes $[\Delta,[\Delta,\lmm_{\{1\}}]]=0$, which follows from $\Delta^2=0$ by noticing that $\Delta^2=\frac12[\Delta,\Delta]$ and using the Jacobi identity. The relation $[\lml_{\{1\}},\lmm_{\{2\}}]=0$ becomes $[[\Delta,\lmm_{\{1\}}],\lmm_{\{2\}}]=0$, which is precisely $\mathsf{b}_{2}^\Delta=0$. Finally, the relation $[\lml_{\{1\}},\lml_{\{2\}}]=0$ becomes $[[\Delta,\lmm_{\{1\}}],[\Delta,\lmm_{\{2\}}]]=0$, which, thanks to the Jacobi identity, follows from $[\Delta,[\Delta,\lmm_{\{1\}}]]=0$ and $[[\Delta,\lmm_{\{1\}}],\lmm_{\{2\}}]=0$. We see that after the elimination of the redundant element, we end up with the defining relations of $\lmBV$. 
\end{proof}

\begin{remark}
The quadratic-linear presentation of $\lmBV$ we just obtained leads to a useful observation. The evaluation $\lmBV(V)$ of this species on a vector space $V$ produces an associative algebra which is made of $S(V\oplus sV)$ and $\kk[\Delta]/(\Delta^2)$: precisely, it is the semidirect product for which $\Delta$ acts on $S(V\oplus sV)$ as the Koszul differential. In this way, one can say that $\lmBV$ is the ``universal Koszul complex''. This structure has also appeared in ``topological quantum mechanics'', see \cite{Lysov}. The twisted associative algebra may also interpreted as a subalgebra of the twisted associative algebra of differential operators on the tensor algebra of the super vector space $\kk^{1\mid 1}$, see \cite{MR2734329}. Finally, forgetting about $\Delta$ leads to a natural definition of the twisted associative algebra $\lmGerst$, which fits in the general picture of algebraic gravity packages alluded to in the introduction.
\end{remark}

Examining our definition, it is very easy to generalise it to use other values of the differential order. 

\begin{definition}
The twisted associative algebra $\blmBV{k}$ is generated by an element $\Delta$ or arity $0$ and homological degree $1$ and an element $\lmm$ of arity $1$ and homological degree $0$ subject to the relations
\begin{align*}
\Delta^2&=0,\\
[\lmm_{\{1\}},\lmm_{\{2\}}]&=0,\\
\mathsf{b}_{k}^\Delta&=0.
\end{align*}
\end{definition}

Our goal in this section is to determine the homotopy quotient of $\blmBV{k}$ by $\Delta$. For the time being, this calculation may be seen as a toy model for two similar calculations of the following sections, dealing with generalisations of Batalin--Vilkovisky algebras and their noncommutative versions. We argue that the three calculations follow the exact same pattern, and so one may as well learn that pattern in the case of twisted associative algebras which exhibit slightly simpler combinatorics than the two other cases. We shall see that the description of the homotopy quotient for the given differential order $k$ merges, in a certain way, the cases of $k=1$ and $k=2$, and so we start with discussing those two cases. 

\subsection{The case of differential order one}\label{sec:orderone} The twisted associative algebra $\blmBV{1}$ is generated by an element $\Delta$ of arity $0$ and homological degree $1$ and an element $\lmm$ of arity $1$ and homological degree $0$ subject to the relations
\begin{equation}
\Delta^2=[\lmm_{\{1\}},\lmm_{\{2\}}]=[\Delta,\lmm_{\{1\}}]=0.
\end{equation}
It is a (homogeneous) quadratic twisted associative algebra, and one can immediately see that it is Koszul; in fact, it has a quadratic Gr\"obner basis for which the corresponding PBW basis consists of elements 
 \[
\lmm_{\{1\}}\cdots\lmm_{\{n\}}  \text{ and } \Delta\lmm_{\{1\}}\cdots\lmm_{\{n\}} \text{ for $n\geqslant 0$.}
 \]
The Koszul dual twisted associative coalgebra $\blmBV{1}^{\ac}$ has a PBW basis consisting of the elements 
 \[
u_{n,r}:=(s\Delta)^r(s\lmm_{\{1\}})\cdots (s\lmm_{\{n\}})  
	\text{ for $r$ and $n\geqslant 0$},
 \]
with the obvious ``shuffle'' coproduct 
 \[
u_{\underline{n},r}\mapsto \sum_{r_1+r_2=r}\sum_{I_1\sqcup I_2=\underline{n}} u_{I_1,r_1}\otimes u_{I_2,r_2}.
 \]
The homotopy quotient of $\blmBV{1}$ by $\Delta$ is represented by the dg twisted associative algebra which is the quotient of the cobar construction $\Omega(\blmBV{1}^{\ac})$ by the two-sided ideal generated by the elements $s^{-1}u_{0,r}$, $r>0$. Thus quotient creates from the cobar construction another free twisted associative algebra equipped with a quadratic differential: it is generated by the elements $s^{-1}u_{\underline{n},r}$ with $n>0$, and the differential 
 \[
\partial(s^{-1}u_{\underline{n},r})=\sum_{r_1+r_2=r}\sum_{\substack{I_1\sqcup I_2=\underline{n}\\ I_1,I_2\ne\varnothing }} s^{-1}u_{I_1,r_1}\otimes s^{-1}u_{I_2,r_2}.
 \]
This differential is the differential of the cobar construction of a twisted associative coalgebra which is the quotient of $\blmBV{1}^{\ac}$ by the \emph{subspace} spanned by the elements $u_{\varnothing,r}$ for all $r>0$. The linear dual twisted associative algebra is generated by the elements $u_{\{1\},p}$ for $p\geqslant 0$, of arity $1$ and homological degree $2p+1$ subject to the relations
\begin{align*}
[u_{\{1\},p},u_{\{2\},q}] &=0,\\
u_{\{1\},p}u_{\{2\},q} &=u_{\{1\},p-1}u_{\{2\},q+1}.
\end{align*}
It is easy to see that this algebra is Koszul; in fact, it has a quadratic Gr\"obner basis for which the linear basis $\{u_{\underline{n},r}\}_{n>0}$, corresponds to a PBW basis $\{u_{\{1\},0}\cdots u_{\{n-1\},0} u_{\{n\},r}\}$. Therefore, the homology of its cobar complex is the Koszul dual twisted associative algebra which has generators $\lmm^{2p}$ of arity $1$ and homological degree $2p$ for $p\geqslant 0$ with the relations 
 \[ 
\left[\sum_{p=0}^\infty \lmm_{\{1\}}^{2p},\sum_{q=0}^\infty \lmm_{\{2\}}^{2q}\right]=0.
 \]
To make sense of these relations, one has to separate the terms by total homological degree $2d\geqslant 0$ into infinitely many relations 
 \[
\sum_{p+q=d}\left[\lmm_{\{1\}}^{2p},\lmm_{\{2\}}^{2q}\right]=0
 \]
involving finitely many terms each. We established the following result.

\begin{theorem}
The homotopy quotient of $\blmBV{1}$ by $\Delta$ is represented by the twisted associative algebra with generators $\lmm^{2p}$ of arity $1$ and homological degree $2p$ for $p=0,1,2,\dots$ subject to the relations 
 \[
\sum_{p+q=d}[\lmm_{\{1\}}^{2p},\lmm_{\{2\}}^{2q}]=0 \text{
for $d\geqslant 0$}.
 \]
\end{theorem}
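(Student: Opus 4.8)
The plan is to assemble the homotopy quotient directly from the bar--cobar machinery, following the chain of identifications set up in the discussion above. By definition, the homotopy quotient of $\blmBV{1}$ by $\Delta$ is represented by the quotient of the cobar construction $\Omega(\blmBV{1}^{\ac})$ by the two-sided ideal generated by the arity-zero elements $s^{-1}u_{0,r}$ with $r>0$, so the task is to identify the homology of this dg twisted associative algebra with the non-differential object in the statement.

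First I would record that $\blmBV{1}$ is Koszul, using the quadratic Gr\"obner basis whose normal forms are the monomials $\lmm_{\{1\}}\cdots\lmm_{\{n\}}$ and $\Delta\lmm_{\{1\}}\cdots\lmm_{\{n\}}$; this yields the explicit Koszul dual coalgebra $\blmBV{1}^{\ac}$ with PBW basis $u_{n,r}$ and shuffle coproduct. Performing the homotopy quotient kills the arity-zero cogenerators, leaving a free twisted associative algebra on the elements $s^{-1}u_{\underline{n},r}$ with $n>0$, whose quadratic differential is visibly the cobar differential of the quotient coalgebra obtained from $\blmBV{1}^{\ac}$ by discarding the subspace spanned by $u_{\varnothing,r}$, $r>0$.

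The heart of the argument is then a Koszul-duality computation. Dualizing this quotient coalgebra produces the auxiliary twisted associative algebra generated by the arity-one elements $u_{\{1\},p}$ of degree $2p+1$, subject to $[u_{\{1\},p},u_{\{2\},q}]=0$ and $u_{\{1\},p}u_{\{2\},q}=u_{\{1\},p-1}u_{\{2\},q+1}$, and I would check that this algebra is Koszul by exhibiting a quadratic Gr\"obner basis whose normal forms are exactly the monomials $u_{\{1\},0}\cdots u_{\{n-1\},0}u_{\{n\},r}$. Koszulness then identifies the homotopy quotient with the Koszul dual algebra, and computing that dual gives generators $\lmm^{2p}$ of arity $1$ and degree $2p$ together with the single generating relation $[\sum_{p}\lmm_{\{1\}}^{2p},\sum_{q}\lmm_{\{2\}}^{2q}]=0$; separating this identity by total homological degree $2d$ produces the finitely supported relations $\sum_{p+q=d}[\lmm_{\{1\}}^{2p},\lmm_{\{2\}}^{2q}]=0$ recorded in the statement.

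The main obstacle I anticipate is bookkeeping rather than conceptual. One must track homological degrees and Koszul signs through each dualization so as to land precisely on generators $\lmm^{2p}$ of degree $2p$, and one must argue that the infinite sums in the generating relation are harmless---namely that after splitting by total degree each relation involves only finitely many summands. The one genuinely load-bearing verification is that the intermediate Gr\"obner bases are quadratic, since this is what licenses both Koszulness claims and guarantees that no higher syzygies obstruct the string of quasi-isomorphisms.
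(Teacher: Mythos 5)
Your proposal follows the paper's own argument step for step: the same quadratic Gr\"obner basis and PBW basis for $\blmBV{1}$, the same identification of the homotopy quotient with the cobar construction of the quotient coalgebra obtained by discarding the $u_{\varnothing,r}$, the same dual algebra on the $u_{\{1\},p}$ with its quadratic Gr\"obner basis and PBW basis $u_{\{1\},0}\cdots u_{\{n-1\},0}u_{\{n\},r}$, and the same final Koszul-duality identification. This is correct and essentially identical to the paper's proof.
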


\subsection{The case of differential order two}\label{sec:ordertwo} Our arguments here are mimicking those of \cite{MR3029946}. 
The first step is to follow the path of \cite{MR2956319} and to apply the inhomogeneous Koszul duality theory \cite{MR1250981}. Specifically, we start with the quadratic-linear presentation of $\lmBV$ found in Proposition \ref{prop:tBV-qlin}. One has to show that this presentation of $\lmBV$ is inhomogeneous Koszul, so we have a model of this algebra given by the cobar construction of a certan dg twisted associative coalgebra. The next step is to use that model to determine the homotopy quotient. For that, one computes the homology of the differential induced by $d_1$, determine sufficient information about the minimal model of $\lmBV$, and then take the quotient of the latter by the two-sided ideal generated by the elements $s^{-1}(\delta^r)^\vee$ for $r>0$. 

The fact that the quadratic-linear presentation of $\lmBV$ is inhomogeneous Koszul is established by a direct calculation. To make the further steps, we shall temporarily dualise everything, and work with the Koszul dual dg twisted associative algebra; it is generated by an element $\delta$ of arity $0$ and homological degree $-2$, an element $\lmn$ of arity $1$ and homological degree $-1$, and an element $\lmk$ of arity $1$ and homological degree $-2$ subject to the relations 
\begin{align}
[\lmn_{\{1\}},\lmn_{\{2\}}]&=[\lmn_{\{1\}},\lmk_{\{2\}}]=[\lmk_{\{1\}},\lmk_{\{2\}}]=0,\\
[\delta,\lmn_{\{1\}}]&=[\delta,\lmk_{\{1\}}]=0,
\end{align}
and additionally it has a differential $\partial$ given by 
 \[
\partial(\delta)=\partial(\lmn_{\{1\}})=0, \quad \partial(\lmk_{\{1\}})=\delta\lmn_{\{1\}} .
 \]
On the level of coalgebras, we would compute the homology of the differential on the Koszul dual coalgebra and then use homological perturbation to construct a new differential on the cobar complex. Since we working with duals, we need to compute the homology algebra with respect to the above differential, and then construct a twisted $A_\infty$-algebra structure on that homology using homotopy transfer. 

A $\kk$-linear basis of the component $\lmBV^!(n)$ is given by the elements 
\begin{align*}
    \delta^r \lmk_{\{i_1\}}\cdots \lmk_{\{i_p\}}\lmn_{\{j_1\}}\cdots \lmn_{\{j_q\}} & \text{ for all $r\geqslant 0$, 
    each $I\sqcup J=\underline{n}$ with}
    \\   I=\{i_1<\cdots < i_p\} &\text{ and } J=\{j_1<\cdots < j_q\} .
\end{align*}
Since the differential $\partial$ is ``almost'' the Koszul differential, the homology of $\lmBV^!(\underline{n})$ almost vanishes: it is equal to the span of all $\delta^r$ in arity $0$ and, in each arity $n>0$, to the vector space spanned by the elements
 \[
u_{{\underline{n}},J}:= \left(\sum_{u=1}^p\lmk_{i_1}\cdots\lmn_{i_u}\cdots\lmk_{i_p}\right)\lmn_{j_1}\cdots\cdots\lmn_{j_q} 
 \]
with $I=\{i_1,\ldots,i_p\}\subset\underline{n}$, $J=\{j_1,\ldots,j_q\}=\underline{n}\setminus I$. Such an element may be informally thought of as $\frac{1}{\delta}\partial(\lmk_{i_1}\cdots\lmk_{i_p})\lmn_{j_1}\cdots\lmn_{j_q}$; it would have been absent in the homology of the actual Koszul complex but is not a boundary of $\partial$ because of the absence of $\delta$ in it. 

As it was the case in \cite{MR2956319}, we do not really need to know the \emph{full} twisted $A_\infty$-structure on the homology. Since we shall take the quotient of the cobar construction by $s^{-1}(\delta^r)^\vee$ for $r>0$, it is only necessary for us to compute the $A_\infty$-operations $\mu_t(u_{\underline{n_1},J_1},\ldots,u_{\underline{n_t},J_t})$. Since one can choose the contracting homotopy for $d_1$ which vanishes on elements that do not contain $\delta$, we see that $\mu_t(u_{\underline{n_1},J_1},\ldots,u_{\underline{n_t},J_t})=0$ for $t\geqslant 3$, so the twisted $A_\infty$-algebra structure on the quotient is simply a twisted associative algebra structure. Moreover, this algebra is multiplicatively generated by the elements $\lmg_{\underline{n}}:=u_{{\underline{n}},\varnothing}$, and these elements manifestly satisfy the relations
\begin{align}
\sum_{i\in I} \lmg_{\{i\}}
	\lmg_{\underline{n}\setminus\{i\}}
		&=\lmg_I\lmg_J \text{ for all $\underline{n}=I\sqcup J$ with $|I|>1$},\\
\sum_{i\in\underline{n}}
	\lmg_{\{i\}}
		\lmg_{\underline{n}\setminus\{i\}} &=0 \text{ for all $n\geqslant 3$}.
\end{align}
which are precisely the relations of the twisted associative algebra $\lmHycomm^!$ determined in Proposition \ref{prop:tHycommDual}. 
Indeed, the vanishing of
 \[
\sum_{i\in\underline{n}}\lmg_{\{i\}}\lmg_{\underline{n}\setminus\{i\}}
 \]
is just another way to say that the Koszul differential squares to zero, and
 \[
\sum_{i\in I} \lmg_{\{i\}}\lmg_{\underline{n}\setminus\{i\}}=
\sum_{i\in I} \lmn_{\{i\}}\left(\lmg_{I\setminus\{i\}}\lmk_{j_1}\cdots\cdots\lmk_{j_q}+ 
\prod_{i'\in I, i'\ne i}\lmk_{I'}\lmg_J\right)=\lmg_I\lmg_J,
 \]
since we already know that $\sum_{i\in I} \lmn_{\{i\}}\lmg_{I\setminus\{i\}}=0$. 

From that proposition, we know that the relations of that twisted associative algebra form a Gr\"obner basis, leading to a PBW basis of the quotient of the form 
 \[
\lmg_{\{1,\ldots, p_1\}}\lmg_{\{p_1+1,\ldots, p_1+p_2\}}\cdots\lmg_{\{p_1+\cdots+p_{m-1}+1,\ldots, p_1+\cdots+p_m\}}.
 \]
The cardinality of the set of such elements for fixed arity $n$ is equal to the number of compositions of $n$ with non-zero parts, which, by the usual ``stars-and-bars'' method, is equal to $2^{n-1}$. It remains to note that the dimension of the vector space spanned by the elements $u_{\underline{n},I}$ is also equal to $2^{n-1}$: the total number of elements $\lmk_{\{i_1\}}\cdots \lmk_{\{i_p\}}\lml_{\{j_1\}}\cdots \lml_{\{j_q\}}$ is $2^{n}$, and we take the image of the acyclic Koszul differential on that space, reducing the dimension by half. This shows that the algebra we consider is precisely $\lmHycomm^!$, and the homotopy quotient we wish to compute is the homology of the cobar construction of its dual, that is $\lmHycomm$. We established the following result.

\begin{theorem}
The homotopy quotient of $\lmBV=\blmBV{2}$ by $\Delta$ is represented by $\lmHycomm$.  \qed
\end{theorem}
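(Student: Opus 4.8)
The plan is to follow the strategy of Drummond--Cole and Vallette \cite{MR3029946}, transported into the twisted associative setting via the inhomogeneous Koszul duality recalled above. The starting point is the quadratic-linear presentation of $\lmBV$ from Proposition~\ref{prop:tBV-qlin}. First I would verify that this presentation is inhomogeneous Koszul; this is a finite verification that can be carried out by a direct calculation, or equivalently by exhibiting a quadratic-linear Gr\"obner basis of the relations. Granting this, the inhomogeneous Koszul duality theory produces a concrete model of $\lmBV$ as the cobar construction of a differential graded Koszul dual twisted associative coalgebra, and the homotopy quotient by $\Delta$ is then represented by the quotient of (a minimal model of) that cobar construction by the two-sided ideal generated by the elements $s^{-1}(\delta^r)^\vee$ for $r>0$.

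To make the homology computation tractable, I would dualize and work instead with the Koszul dual dg twisted associative algebra $\lmBV^!$, whose generators $\delta,\lmn,\lmk$ and differential $\partial(\lmk_{\{1\}})=\delta\lmn_{\{1\}}$ are described above. The crucial observation is that $\partial$ is ``almost'' the Koszul differential, so the homology of $\lmBV^!(\underline{n})$ is very small: in arity $0$ it is the span of the powers $\delta^r$, while in each positive arity it is spanned by the explicit cycles $u_{\underline{n},J}$ written out above. One then transfers the associative structure onto this homology, obtaining a twisted $A_\infty$-algebra.

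The heart of the argument is to control the transferred higher operations. By choosing the contracting homotopy for the internal differential so that it vanishes on all elements not containing $\delta$, every operation $\mu_t$ with $t\geqslant 3$ vanishes on the relevant generators $u_{\underline{n_i},J_i}$; consequently, after passing to the quotient by the ideal generated by $s^{-1}(\delta^r)^\vee$, the induced structure is an honest twisted associative algebra rather than a genuinely homotopical one. This algebra is multiplicatively generated by the elements $\lmg_{\underline{n}}:=u_{\underline{n},\varnothing}$, and a direct computation shows that they satisfy precisely the defining relations of $\lmHycomm^!$ from Proposition~\ref{prop:tHycommDual}. A dimension count then forces an isomorphism: the PBW basis of $\lmHycomm^!$ from that proposition has $2^{n-1}$ elements in arity $n$ (compositions of $n$), which matches the dimension of the span of the $u_{\underline{n},I}$, namely half of the $2^{n}$ monomials $\lmk\cdots\lmk\,\lmn\cdots\lmn$ cut out as the image of the acyclic Koszul differential. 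Since the homotopy quotient is the homology of the cobar construction of the dual of $\lmHycomm^!$, and $\lmHycomm$ is Koszul, this homology is $\lmHycomm$, as claimed.

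I expect the main obstacle to be the two interlocking homological verifications: establishing that the quadratic-linear presentation of $\lmBV$ is inhomogeneous Koszul, and showing that the transferred $A_\infty$-structure collapses, i.e.\ that all higher operations $\mu_t$ with $t\geqslant 3$ vanish on the elements of interest. The latter rests entirely on the careful choice of a contracting homotopy supported on $\delta$-divisible elements, and confirming that this choice is compatible with the quotient by the ideal of $\delta$-powers is the most delicate point of the reasoning.
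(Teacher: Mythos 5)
Your proposal follows essentially the same route as the paper's own argument in its Section on the case of differential order two: the quadratic-linear presentation of $\lmBV$, inhomogeneous Koszul duality, dualizing to the dg algebra $(\lmBV^!,\partial)$ with $\partial(\lmk_{\{1\}})=\delta\lmn_{\{1\}}$, the choice of contracting homotopy vanishing on $\delta$-free elements to kill the higher transferred operations, identification of the quotient homology with $\lmHycomm^!$ via the relations and the $2^{n-1}$ dimension count, and the final passage back through the cobar construction using Koszulness of $\lmHycomm$. The steps you flag as delicate are exactly the ones the paper treats by direct calculation, so the plan is sound and matches the published proof.
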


\subsection{Extended Losev--Manin algebra and Givental symmetries} 

To proceed with the computation of homotopy quotients, we shall introduce a new construction that will allow us to describe the end result. In a way, the twisted associative algebra that we shall describe here is a mix of the answers for the differential orders $1$ and $2$; as we shall see below, the homotopy quotient for every order emerges as its quotient. 

\begin{definition}
The twisted associative algebra $\lmExtHycomm$, which we shall refer to as the \emph{extended Losev--Manin algebra} is the twisted associative algebra generated by fully symmetric elements $\lmm_t^{2p}$ of all possible arities $t\geqslant 1$ and all possible non-negative even homological degrees $2p\geqslant 0$, subject to the relations that can be described as follows. Consider the sums of all generators of a fixed arity $\overline \lmm_t\coloneqq \sum_{p=0}^\infty \lmm_t^{2p}$, and impose, for each arity $n\ge 2$ and each choice of $i,j\in\underline{n}$, the relation
\begin{equation}
\sum_{\substack{\underline{n}=I\sqcup J,\\ i\in I, j\in J}} \overline \lmm_I \overline \lmm_J
=  \sum_{\substack{\underline{n}=I\sqcup J,\\ i\in I, j\in J}} \overline \lmm_J \overline \lmm_I
   .
\end{equation}
Note that if each operation $\overline \lmm_t$ were of homological degree $2t-2$, these identities would define the Losev--Manin twisted associative algebra from Proposition \ref{prop:homology-LM}. In our case, each of these operations is an infinite sum of operations of different homological degrees, and so one should separate these identities by homological degrees into infinitely many relations involving finitely many terms each; the same is implicit in most formulas that follow.
\end{definition}

In the case of moduli spaces and the Losev--Manin spaces, it is well known that the so called psi-classes play important role in algebraic and geometric results involving those spaces. Let us define a formal algebraic version of those classes in the case of $\lmExtHycomm$, where there is no longer a geometric interpretation of those as cohomology classes. We start with a preparatory lemma.

\begin{lemma}\label{lm:action-indep}
Each of the two elements
\[
	\sum_{\substack 
		{I\sqcup J = \underline{n} 
			\\  j\in J
			} 
				}
	\overline \lmm_I  \overline \lmm_{J}
	 , \quad
\sum_{\substack 
	{I\sqcup J = 
		\underline{n} \\ 
		j\in J
		}
			} 
		 \overline \lmm_J 
		 \overline \lmm_I  
\]
do not depend on the choice of $j\in 
\underline{n}$.
\end{lemma}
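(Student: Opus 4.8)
The plan is to reduce the statement to a single symmetric reading of the defining relation of $\lmExtHycomm$. For distinct $i,j\in\underline{n}$, introduce the shorthands
\[
A_{i,j}:=\sum_{\substack{I\sqcup J=\underline{n}\\ i\in I,\,j\in J}}\overline\lmm_I\,\overline\lmm_J,\qquad
A'_{i,j}:=\sum_{\substack{I\sqcup J=\underline{n}\\ i\in I,\,j\in J}}\overline\lmm_J\,\overline\lmm_I,
\]
so that the defining relation reads simply $A_{i,j}=A'_{i,j}$. The key preliminary observation is that swapping the roles of the two blocks $I$ and $J$ is a bijection of the summation index set and involves no Koszul signs (the factors $\overline\lmm_I$ and $\overline\lmm_J$ live in different components of the Cauchy product, so this is a pure reindexing, not a commutation inside the algebra). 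This reindexing gives the bookkeeping identities $A_{i,j}=A'_{j,i}$ and $A_{j,i}=A'_{i,j}$. Combining them with the relation $A_{i,j}=A'_{i,j}$ yields the two symmetries
\[
A_{i,j}=A_{j,i}\qquad\text{and}\qquad A'_{i,j}=A'_{j,i},
\]
valid for every pair $i\ne j$.

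With these in hand, the lemma follows by a decomposition argument; I assume $n\ge 2$, the case $n=1$ being vacuous. Write $E_j:=\sum_{I\sqcup J=\underline{n},\,j\in J}\overline\lmm_I\,\overline\lmm_J$ for the first element of the statement (the term with $I=\varnothing$ vanishing, as there is no generator of arity $0$), and fix an auxiliary index $i\ne j$. Splitting $E_j$ according to whether $i$ lands in $I$ or in $J$ gives $E_j=A_{i,j}+B_{i,j}$, where $B_{i,j}:=\sum_{i,j\in J}\overline\lmm_I\,\overline\lmm_J$ is manifestly symmetric in $i$ and $j$, since the condition ``$i,j\in J$'' is. The identical split of $E_i$ gives $E_i=A_{j,i}+B_{j,i}=A_{j,i}+B_{i,j}$, whence $E_j-E_i=A_{i,j}-A_{j,i}=0$ by the first symmetry above. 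As any two indices can be compared in this way, $E_j$ is independent of $j$.

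The second element, $F_j:=\sum_{I\sqcup J=\underline{n},\,j\in J}\overline\lmm_J\,\overline\lmm_I$, is handled identically: the same split produces $F_j=A'_{i,j}+B'_{i,j}$ and $F_i=A'_{j,i}+B'_{i,j}$ with $B'_{i,j}:=\sum_{i,j\in J}\overline\lmm_J\,\overline\lmm_I$ symmetric, so that $F_j-F_i=A'_{i,j}-A'_{j,i}=0$ by the second symmetry. I do not expect a genuine obstacle in this argument; the only point demanding care is the preliminary step, namely keeping the block-swap $I\leftrightarrow J$ strictly as a reindexing of the sum rather than as a commutation (which would introduce signs). As in the definition of $\lmExtHycomm$, all identities are read off degree by degree, but since every manipulation preserves homological degree this introduces no complication.
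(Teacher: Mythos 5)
Your proof is correct and follows essentially the same route as the paper's: both arguments split the sum over the position of an auxiliary index $i$ (the paper's $j'$), observe that the part with $i,j\in J$ is symmetric, and reduce the remaining discrepancy to the defining relation of $\lmExtHycomm$. Your explicit bookkeeping of the block-swap reindexing $A_{i,j}=A'_{j,i}$ (with the remark that it carries no Koszul signs) just makes precise a step the paper leaves implicit when it quotes the relation in the form $\sum_{j'\in I,\,j\in J}\overline\lmm_I\overline\lmm_J=\sum_{j\in I,\,j'\in J}\overline\lmm_I\overline\lmm_J$.
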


\begin{proof}
We shall show that the elements of the twisted associative algebra $\lmExtHycomm$ given by the formulas
 \[
\sum_{\substack {I\sqcup J = \underline{n} \\ j\in J} } \overline \lmm_I  \overline \lmm_J,  \quad
\sum_{\substack {I\sqcup J = \underline{n} \\ j\in J} } \overline \lmm_J  \overline \lmm_I,  
 \]
do not depend on the choice of $j\in\underline{n}$. We shall do it for the first element, since the argument for the second one is completely analogous. To show that
 \[
\sum_{\substack {I\sqcup J = \underline{n} \\ j\in J} } \overline \lmm_I  \overline \lmm_J
=\sum_{\substack {I\sqcup J = \underline{n} \\ j'\in J} } \overline \lmm_I  \overline \lmm_J,
 \]
we note that both sides are obtained from the defining relation
 \[
\sum_{\substack {I\sqcup J = \underline{n} \\ j'\in I, j\in J} } \overline \lmm_I  \overline \lmm_J
=\sum_{\substack {I\sqcup J = \underline{n} \\ j\in I, j'\in J} } \overline \lmm_I  \overline \lmm_J
 \]
of $\lmExtHycomm$ by adding to both sides the same element
 \[
\sum_{\substack {I\sqcup J = \underline{n} \\ j,j'\in J} } \overline \lmm_I  \overline \lmm_J.
 \]
This completes the proof of the
lemma.
\end{proof}

\begin{proposition}\label{prop:psi-bimodule}
If, for a formal variable $\psi$ of homological degree $-2$, we let 
\begin{gather}
\psi \overline\lmm_{\underline{n}}:= \sum_{\substack {I\sqcup J = \underline{n} \\ j\in J }} \overline \lmm_I  \overline \lmm_J,\label{eq:leftact}  \\ 
\overline\lmm_{\underline{n}}\psi :=\sum_{\substack {I\sqcup J = \underline{n} \\ j\in J } }\overline \lmm_{J} \overline \lmm_I  \label{eq:rightact} 
\end{gather}
for some $j\in\underline{n}$, these 
formulas define a $\kk[\psi]$-bimodule 
structure on~$\lmExtHycomm$. 
\end{proposition}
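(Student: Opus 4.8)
The plan is to promote the formulas \eqref{eq:leftact} and \eqref{eq:rightact}, which a priori only prescribe the action of $\psi$ on the classes $\overline\lmm_{\underline n}$, to a pair of commuting linear endomorphisms $L_\psi$ and $R_\psi$ of the whole twisted associative algebra $\lmExtHycomm$. Since $\kk[\psi]$ is a polynomial algebra on a single generator $\psi$ of even degree, a $\kk[\psi]$-bimodule structure amounts to nothing more than a pair of commuting linear operators, namely left and right multiplication by $\psi$; so once $L_\psi$ and $R_\psi$ are constructed, the only remaining axiom is $L_\psi R_\psi=R_\psi L_\psi$. Throughout I use that every $\overline\lmm$ and $\psi$ sits in even homological degree, so that the Koszul signs attached to the symmetry of the Cauchy product are all trivial and may be suppressed.

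First I would define the operators on the free twisted associative algebra. Writing a monomial as a product $\overline\lmm_{A_1}\cdots\overline\lmm_{A_k}$ of generator classes (everything read off degree by degree from the $\lmm_t^{2p}$), let $L_\psi$ replace the leftmost factor $\overline\lmm_{A_1}$ by $\psi\overline\lmm_{A_1}$ and let $R_\psi$ replace the rightmost factor $\overline\lmm_{A_k}$ by $\overline\lmm_{A_k}\psi$, sending the empty monomial to $0$; on a single class $\overline\lmm_{\underline n}$ this recovers \eqref{eq:leftact} and \eqref{eq:rightact}. By Lemma~\ref{lm:action-indep} the prescribed values $\psi\overline\lmm_{A_1}$ and $\overline\lmm_{A_k}\psi$ do not depend on the auxiliary index $j$, so $L_\psi$ and $R_\psi$ are unambiguous on the free algebra.

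The crux is to check that $L_\psi$ and $R_\psi$ descend to $\lmExtHycomm$, i.e.\ that they preserve the two-sided ideal generated by the relators $\rho^{M}_{i,j}$ (the differences of the two sides of the defining relation attached to a set $M$ and indices $i\ne j$). A generic ideal element has the shape $u\,\rho^{M}_{i,j}\,v$ with $u,v$ monomials, and the only case which is not immediate is the one where the relator sits at the extreme left (for $L_\psi$) or extreme right (for $R_\psi$), the other cases leaving the relator intact and acting inside $u$ or $v$. For $L_\psi$ this reduces to proving
\[
\sum_{\substack{M=I\sqcup J\\ i\in I,\ j\in J}}(L_\psi\overline\lmm_I)\,\overline\lmm_J=\sum_{\substack{M=I\sqcup J\\ i\in I,\ j\in J}}(L_\psi\overline\lmm_J)\,\overline\lmm_I
\]
in $\lmExtHycomm$. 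Expanding $L_\psi$ via \eqref{eq:leftact} rewrites both sides as sums over ordered triples $A\sqcup B\sqcup C=M$; grouping according to the outermost block $A$ and setting $M'=M\setminus A$, each side becomes $\overline\lmm_A$ times, respectively, the left- and right-hand sides of the defining relation for $M'$, so their difference is $\sum_A\overline\lmm_A\,\rho^{M'}_{i,j}$ and lies in the ideal. The argument for $R_\psi$ is the mirror image. I expect this grouping step to be the main obstacle, precisely because the two sums look genuinely different at the level of the free algebra and coincide only after the defining relations on the smaller sets $M'$ are invoked; getting the bookkeeping of blocks and reference indices right is the heart of the matter.

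Finally I would verify commutativity. On a monomial of length at least two, $L_\psi$ and $R_\psi$ modify distinct (leftmost and rightmost) factors and hence commute, while both composites annihilate the unit. The only substantive case is a single class $\overline\lmm_{\underline n}$: expanding \eqref{eq:leftact} and \eqref{eq:rightact} once each and keeping the same index $j$ throughout (legitimate by Lemma~\ref{lm:action-indep}), one finds that both $L_\psi R_\psi\overline\lmm_{\underline n}$ and $R_\psi L_\psi\overline\lmm_{\underline n}$ equal
\[
\sum_{\substack{\underline n=X\sqcup Y\sqcup Z\\ j\in Y}}\overline\lmm_X\,\overline\lmm_Y\,\overline\lmm_Z,
\]
the sum over ordered triples whose middle block contains $j$. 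This gives $L_\psi R_\psi=R_\psi L_\psi$ and completes the construction of the $\kk[\psi]$-bimodule structure.
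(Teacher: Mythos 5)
Your proof is correct and follows essentially the same route as the paper's: independence of the auxiliary index via Lemma~\ref{lm:action-indep}, compatibility with the defining relations by expanding into sums over ordered triples of blocks and recognizing the difference as a combination of relators on smaller sets, and commutativity of the two actions by identifying both composites with the sum over ordered triples whose middle block contains the reference element. The only difference is presentational — you make explicit the lift to the free algebra, the action on outermost factors of general monomials, and the preservation of the two-sided ideal in all positions, which the paper leaves implicit.
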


\begin{proof}
From Lemma \ref{lm:action-indep}, it 
follows that the right hand sides of our 
formulas do not depend on the choice of
$j\in\underline{n}$. Let us verify that 
Formulas \eqref{eq:leftact} and \eqref{eq:rightact} are compatible with 
the relations of $\lmExtHycomm$, that is 
the action on each relation produces an 
element that vanishes as a consequence of 
some relations. 
We shall check this for the left action 
(the argument for the right action is analogous). We should check that
 \[
\sum_{\substack {I\sqcup J = \underline{n} \\ i\in I, j\in J} } \psi\overline \lmm_I  \overline \lmm_J
=\sum_{\substack {I\sqcup J = \underline{n} \\ j\in I, i\in J} } \psi\overline \lmm_I  \overline \lmm_J.
 \]
Let us use the formula for the action using $i\in I$ on the left, and the formula for the action using $j\in I$ on the right. We obtain
 \[
\sum_{\substack {I'\sqcup I''\sqcup J = \underline{n} \\ i\in I'', j\in J} } \overline \lmm_{I'}\overline \lmm_{I''}  \overline \lmm_J
=\sum_{\substack {I'\sqcup I''\sqcup J = \underline{n} \\ j\in I'', i\in J} } \overline \lmm_{I'} \overline \lmm_{I''}   \overline \lmm_J,
 \]
which, once rewritten as
 \[
\sum_{\substack {I'\sqcup I''\sqcup J = \underline{n} \\ i\in I'', j\in J} } \overline \lmm_{I'}[\overline \lmm_{I''}, \overline \lmm_J]
=0, 
 \]
is easily seen to follow from the defining relations. 

It remains to check that the two actions define a bimodule action, for which it is enough to check that
$
(\psi\overline \lmm_{\underline{n}})\psi=\psi(\overline \lmm_{\underline{n}}\psi)$, 
or, in other words, fixing some $i\in\underline{n}$, that
 \[
\left(\sum_{\substack {I\sqcup J = \underline{n} \\ i\in J} } \overline \lmm_I  \overline \lmm_J\right)\psi=\psi\left(\sum_{\substack {I\sqcup J = \underline{n} \\ i\in I} } \overline \lmm_I  \overline \lmm_J\right). 
 \]
Using that same $i$ once again, we obtain
 \[
\sum_{\substack {I\sqcup J'\sqcup J'' = \underline{n} \\ i\in J'} } \overline \lmm_I  \overline \lmm_{J'}\overline \lmm_{J''}=\sum_{\substack {I'\sqcup I''\sqcup J = \underline{n} \\ i\in I''} } \overline \lmm_{I'} \overline \lmm_{I''}  \overline \lmm_J,
 \]
which are manifestly equal. 
\end{proof}

Let us show how the psi-classes can be used to define symmetries of representations of $\lmExtHycomm$ that generalise the Givental action \cite{ShaZvo-LMCohft}. Precisely, let $\mathcal A$ be an arbitrary dg twisted associative algebra, and consider the set of dg twisted associative algebra morphisms $\Hom(\lmExtHycomm,\mathcal A)$. For a formal variable $z$ of degree $-2$, let us consider the complete Lie algebra $\mathfrak{g}_{\mathcal A}$ which is the kernel of the ``augmentation''
 \[
\bigoplus_{p=0}^\infty \mathcal A(\varnothing)_{2p}\otimes\kk
	\llbracket z\rrbracket
		\twoheadrightarrow \mathcal A(\varnothing)_0
 \]
annihilating elements of $\mathcal A$ of positive homological degrees as well as all positive powers of $z$. The reader is invited to consult \cite{dotsenko2018twisting} for information on complete Lie algebras and the corresponding exponential map. 
We shall now define an action of $\mathfrak{g}_{\mathcal A}$ by infinitesimal symmetries of $\Hom(\lmExtHycomm,\mathcal A)$. For that, we put
\begin{equation}\label{eq:lm-Givental-action}
((rz^p).f) (\overline \lmm_{n}) =  
r f(\psi^{p}\overline \lmm_{n}) + (-1)^{p-1} f(\overline \lmm_{n}\psi^p) r  
+ \sum_{i+j=p-1} (-1)^{j+1} \sum_{I \sqcup J = \underline{n}} f(\overline \lmm_{J}\psi^{j}) r f(\psi^i\overline \lmm_I),
\end{equation}
and extend it linearly to $\mathfrak{g}_{\mathcal A}$. Let us show that this indeed defines infinitesimal symmetries of $\Hom(\lmExtHycomm,\mathcal A)$, that is, it allows us to 
deform morphisms to order one.

\begin{lemma}\label{lem:lm:GiventalLie} 
For any $f\in \Hom(\lmExtHycomm,\mathcal A)$ we have $f+\epsilon r.f\in \Hom_{\kk[\epsilon]/(\epsilon^2)}(\lmExtHycomm[\epsilon]/(\epsilon^2),\mathcal A[\epsilon]/(\epsilon^2))$, and for all $\lambda_1,\lambda_2\in \mathfrak{g}_{\mathcal A}$, we have
 \[
[\lambda_1,\lambda_2].f=\lambda_1.(\lambda_2.f)-\lambda_2.(\lambda_1.f)
 \]  
\end{lemma}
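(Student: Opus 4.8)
The plan is to establish the two assertions in order, using the first to give meaning to the iterated actions appearing in the second.

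For the first assertion, recall that since $\lmExtHycomm$ is presented by the generators $\overline\lmm_t$ together with its quadratic relations, a $\kk[\epsilon]/(\epsilon^2)$-linear map is a morphism of twisted associative algebras precisely when the images of the generators satisfy those relations. Setting $g=f+\epsilon\,r.f$ and expanding the defining relation of $\lmExtHycomm$ modulo $\epsilon^2$, the $\epsilon^0$-component holds because $f$ is a morphism, so the whole claim reduces to the vanishing of the $\epsilon^1$-component
\[
\sum_{\substack{I\sqcup J=\underline{n}\\ i\in I,\ j\in J}}\bigl[(r.f)(\overline\lmm_I)\,f(\overline\lmm_J)+f(\overline\lmm_I)\,(r.f)(\overline\lmm_J)-(r.f)(\overline\lmm_J)\,f(\overline\lmm_I)-f(\overline\lmm_J)\,(r.f)(\overline\lmm_I)\bigr]=0.
\]
Substituting the three summands of \eqref{eq:lm-Givental-action}, every resulting monomial in $\mathcal A$ contains $r$ exactly once, so I would sort the terms by the position of $r$---at the far left, at the far right, or internal. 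The far-left terms factor as $r$ times $\sum_{i\in I,\,j\in J}\bigl(f(\psi^p\overline\lmm_I)f(\overline\lmm_J)-f(\psi^p\overline\lmm_J)f(\overline\lmm_I)\bigr)$; since by Proposition \ref{prop:psi-bimodule} the left $\psi$-action splits the leftmost tensor factor, this bracket is the image under $f$ of $\psi^p$ applied to the defining relation of $\lmExtHycomm$, and hence vanishes. The far-right terms vanish symmetrically using \eqref{eq:rightact}. The internal terms, gathered from the linear $\psi$-insertions and from the graph summand, cancel after a telescoping in the $\psi$-exponents, where Lemma \ref{lm:action-indep} is used to relocate the marked point so that the splittings match up. This proves that $r.f$ is an infinitesimal symmetry, and in particular that it obeys the Leibniz rule $(r.f)(\mathsf{x}\mathsf{y})=(r.f)(\mathsf{x})\,f(\mathsf{y})+f(\mathsf{x})\,(r.f)(\mathsf{y})$.

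For the second assertion I regard $\lambda\mapsto(f\mapsto\lambda.f)$ as assigning to each $\lambda$ a vector field on $\Hom(\lmExtHycomm,\mathcal A)$, so that $\lambda_1.(\lambda_2.f)$ is the derivative of the field $\lambda_2$ along $\lambda_1.f$ at $f$; by bilinearity of both sides it suffices to take $\lambda_a=r_a z^{p_a}$. The Leibniz rule just obtained is exactly what lets one evaluate this: writing $\lambda_a.f=L_{\lambda_a}(f)+Q_{\lambda_a}(f,f)$, where $L$ collects the two linear terms of \eqref{eq:lm-Givental-action} and $Q$ is the bilinear graph term, differentiation gives $\lambda_1.(\lambda_2.f)=L_{\lambda_2}(\lambda_1.f)+Q_{\lambda_2}(\lambda_1.f,f)+Q_{\lambda_2}(f,\lambda_1.f)$, it being understood that $\lambda_1.f$ is evaluated on the products $\psi^\bullet\overline\lmm_\bullet$ through its Leibniz extension. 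I would then substitute $\lambda_1.f=L_{\lambda_1}(f)+Q_{\lambda_1}(f,f)$ and antisymmetrise in $1\leftrightarrow 2$. The contributions that are linear and quadratic in $f$ are the ones that reassemble into $[\lambda_1,\lambda_2].f$: consecutive insertions add their psi-exponents, the index ranges $\sum_{i+j=p_a-1}$ concatenate into $\sum_{i+j=p_1+p_2-1}$, and the two orderings of $r_1,r_2$ combine into the graded commutator $[r_1,r_2]$, which carries no sign since $r_1,r_2$ have even degree; this matches $[\lambda_1,\lambda_2]=[r_1,r_2]z^{p_1+p_2}$ computed in $\mathfrak{g}_{\mathcal A}\subseteq\mathcal A(\varnothing)\llbracket z\rrbracket$.

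The main obstacle, and the true content of the second part, will be the terms cubic in $f$: these arise when the quadratic part $Q_{\lambda_1}(f,f)$ of $\lambda_1.f$ is fed into the quadratic operation $Q_{\lambda_2}$, producing words $f\,r_1\,f\,r_2\,f$ and $f\,r_2\,f\,r_1\,f$, and they must cancel identically upon antisymmetrisation rather than contribute to the commutator. I would prove this by an associativity-type rearrangement: since the left and right $\psi$-actions split respectively the leftmost and rightmost tensor factors, the two nested graph sums coming from $\lambda_1.(\lambda_2.f)$ can be reorganised as a single sum over ordered three-block decompositions of $\underline{n}$ with $r_1$ and $r_2$ inserted at the two cuts and the psi-exponents distributed over the three blocks, and the same reorganisation applied to $\lambda_2.(\lambda_1.f)$ yields term by term the same expression. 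The delicate points throughout are the signs $(-1)^{j+1}$ and keeping precise track of which tensor factor each $\psi$ lands on; these are exactly what make both the cubic cancellation and the emergence of $[r_1,r_2]$ work out on the nose.
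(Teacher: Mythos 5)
Your proof follows essentially the same route as the paper's: the first assertion is verified by sorting the $\epsilon$-linear terms according to the position of $r$ (your far-left/far-right terms are the paper's Types A1/B2, which vanish because the $\psi$-action applied to the defining relation is zero, while the internal terms cancel by the same telescoping of $\psi$-exponents against the quadratic graph summand), and the commutator identity is checked by the same direct computation in which the terms with $r_1$ and $r_2$ inserted at different locations cancel in pairs via the bimodule axioms. Your explicit split of the action into parts linear and quadratic in $f$, with the cubic-in-$f$ cancellation singled out, is just a cleaner bookkeeping of what the paper calls terms applied ``in the same place'' versus ``at different locations''; the content is identical.
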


\begin{proof}
We need to check that for each $n\geqslant 1$, and each $f\in \Hom(\lmExtHycomm,\mathcal A)$, the element
\[ f(\overline\lmm_{\underline{n}})+\epsilon ((rz^p).f)(\overline\lmm_{\underline{n}})\in \mathcal A[\epsilon]/(\epsilon^2) \] 
satisfy the defining relations of $\lmExtHycomm$. For the 
constant coefficient, this is exactly the same property of 
$f$. For the coefficient of $\epsilon$, we need to check 
that 
 \[
\sum_{\substack {I\sqcup J = \underline{n} \\ i\in I, j\in J} } ((rz^p).f)(\overline \lmm_I)  f(\overline \lmm_J)+
f(\overline \lmm_I)  ((rz^p).f)(\overline \lmm_J)
 \]
is symmetric in $i,j\in\underline{n}$. We note that $((rz^p).f)(\overline \lmm_I)  f(\overline \lmm_J)$ is equal to 
\begin{multline}\label{eq:der-left}
\underbrace{r f(\psi^{p}\overline \lmm_I)f(\overline \lmm_J)}_{\textrm{Type A1}}+ (-1)^{p-1} 
\underbrace{f(\overline \lmm_I\psi^p) r f(\overline \lmm_J)}_{\textrm{Type A2}}
+ \sum_{a+b=p-1} (-1)^{b+1} \sum_{I' \sqcup I'' = I} \underbrace{
f(\overline \lmm_{I''}\psi^{b})
	 r f(\psi^a\overline \lmm_{I'})
	 	f(\overline \lmm_J)}_{\textrm{Type A3}} ,
\end{multline}
and that $f(\overline \lmm_I)  ((rz^p).f)(\overline \lmm_J)$ is equal to
\begin{multline}\label{eq:der-right}
\underbrace{f(\overline \lmm_I)
r f(\psi^{p}\overline \lmm_J)}_{\textrm{Type B1}} + (-1)^{p-1}
\underbrace{ f(\overline \lmm_I)f(\overline \lmm_J\psi^p) r }_{\textrm{Type B2}}
+ \sum_{a+b=p-1} (-1)^{b+1} \sum_{J' \sqcup J'' = J} \underbrace{
	f(\overline \lmm_I)f(\overline \lmm_{J''}\psi^{b}) r f(\psi^a\overline \lmm_{J'})}_{\textrm{Type B3}}.
\end{multline}
As shown, we see that $\sum_{\substack {I\sqcup J = \underline{n} \\ i\in I, j\in J} }((rz^p).f)(\overline \lmm_I)  f(\overline \lmm_J)$
contains three types of terms. The terms of
Type A1 add up to 
\[
\sum_{\substack {I\sqcup J = \underline{n}
	 \\ i\in I, j\in J} }
	 	r f(\psi^{p}\overline \lmm_I)
	 		f(\overline \lmm_J) =
	 		r f \psi^{p}\sum_{\substack {I\sqcup J = \underline{n}
	 \\ i\in I, j\in J} }
	 	\overline \lmm_I\overline\lmm_J
\]
which is symmetric in $i$ and $j$ due to the defining
relations of our algebra.  
The terms of Type A3 can be rewritten
using the explicit formulas for the action of $\kk[\psi]$
 from Proposition \ref{prop:psi-bimodule}, as 
follows:
\begin{align*}
	\sum_{I' \sqcup I'' = I}
		 f(\overline \lmm_{I''}\psi^{b}) r 
		f(\psi^a\overline \lmm_{I'})
			f(\overline \lmm_J) 		
															&= 
	\sum_{\substack {I\sqcup J = \underline{n}
	 \\ i\in I, j\in J} }\sum_{I' \sqcup I'' = I}
		 f(\overline \lmm_{I''}\psi^{b}) r 
		f(\psi^a\overline \lmm_{I'}\overline \lmm_J)
				\\											 &= 	 		\sum_{\substack 
		{ I \subseteq 
			\underline{n}\\ i\in I\supseteq I''}
	 			}
	 			\sum_{
	 				\substack{
	 	I' \sqcup J = 
	 \underline{n}\smallsetminus I''\\  j\in J}}
		 f(\overline \lmm_{I''}\psi^{b}) r 
		f(\psi^a\overline \lmm_{I'}\overline \lmm_J)
		\\ &= 
	\sum_{\substack 
		{ I\subseteq \underline{n}\\ i\in I\supseteq I''}
	 			} 
	 			f(\overline \lmm_{I''}\psi^{b}) r 
		f(\psi^{a+1}\overline \lmm_{\underline{n}\smallsetminus I''}	).	 
\end{align*}
At the same time, this term can be rewritten
in the following form
\[
\sum_{\substack 
		{ A\sqcup B = \underline{n}\\ i\in A}
	 			} 
	 			f(\overline \lmm_A\psi^{b}) r 
		f(\psi^{a+1}\overline \lmm_{B}	)	
		+
		\sum_{\substack 
		{ A\sqcup B = \underline{n}\\ i\in B}
	 			} 
	 			f(\overline \lmm_A\psi^{b}) r 
		f(\psi^{a+1}\overline \lmm_{B}	).	 
\]
In a similar fashion, we see that in
$\sum_{\substack {I\sqcup J = \underline{n} \\ i\in I, j\in J} }f(\overline \lmm_I)  ((rz^p).f)(\overline \lmm_J)$
the terms of Type B2 add up to the element \[
\sum_{\substack {I\sqcup J = \underline{n}
	 \\ i\in I, j\in J} }
	 	f(\overline \lmm_I)
	 		f(\overline \lmm_J\psi^p)r =
	 		f\sum_{\substack {I\sqcup J = \underline{n}
	 \\ i\in I, j\in J} }
	 	\overline \lmm_I\overline\lmm_J\psi^p r
\]
that is symmetric in $i$ and $j$, and the terms
of Type B3 can be rewritten into the sum
\[
\sum_{\substack 
		{I\sqcup J \subseteq \underline{n}\\ j\in J\supseteq J'}
	 			} 
	 			f(
	 		\overline{\lmm}_{\overline{n}\smallsetminus J'} \psi^{b+1}) r f(\psi^a\overline{\lmm}_{J'}) = 
	 		\sum_{\substack 
		{A\sqcup B \subseteq \underline{n}\\ j\in A}
	 			} 
	 			f(
	 		\overline{\lmm}_A \psi^{b+1}) r f(\psi^a\overline{\lmm}_B) +
	 		\sum_{\substack 
		{A\sqcup B \subseteq \underline{n}\\ j\in B}
	 			} 
	 			f(
	 		\overline{\lmm}_A \psi^{b+1}) r f(\psi^a\overline{\lmm}_B).	 		
\]
In both terms of Type A2 and B1, we can further
divide the sums in four situations: 
\begin{enumerate}
\item The case when $i,j\in A$
\item The case when $i\in A,j\in B$,
\item The case when $j\in A,i\in B$ and,
\item The case when $i,j\in B$. 
\end{enumerate}
We see that the terms (1)
and (2)
where $i$ and $j$ are ``separated by $r$''
cancel out, either in pairs or with the
terms of Type A1 and B2, while the
terms (2) and (3) yields summands that
are symmetric in $i$ and $j$.
This completes the proof of the first assertion. 

The commutator formula is checked by direct 
inspection: when computing 
$\lambda_1.(\lambda_2.f)-\lambda_2.(\lambda_1.f)$, 
there are 
terms where $\lambda_1$ and $\lambda_2$ are 
applied ``in the same place'' and terms where 
they are applied ``at different locations'' (on 
the left and on the right of an element in the 
image of $f$). The latter terms come in pairs 
which cancel each other due to the 
$\kk[\psi]$-bimodule axioms for 
$\lmExtHycomm$ and the 
specific signs in 
Equation~\eqref{eq:lm-Givental-action}. 
\end{proof}

\subsection{The higher order versions of the twisted associative algebra \texorpdfstring{$\lmHycomm$}{lmHycomm}}

We are now ready to introduce the twisted associative algebra which will turn out to represent the homotopy quotient of $\blmBV{k}$ by $\Delta$.

\begin{definition}
Let $k\geqslant 1$. The twisted associative algebra $\blmHycomm{k}$ is defined as the quotient of the extended Losev--Manin algebra $\lmExtHycomm$ by the ideal generated by all $\lmm_{\underline{t}}^{2m}$ where either $t\not\equiv 1\pmod{k-1}$ or $t= 1+p(k-1)$ for some $p\geqslant 0$ but $p\ne m$.
\end{definition}

Note that this definition means that the homological degree $2m$ of each generator $\lmm_{\underline{t}}^{2m}$ of the twisted associative algebra $\blmHycomm{k}$ determines its arity.  In particular, the twisted associative algebra $\blmHycomm{k}$ is always generated by a collection of elements of degrees $0,2,4,\ldots$ whose arities vary depending on~$k$. For example,
\begin{itemize}
	\item $\blmHycomm{1}$ is generated by the elements $\lmm_{\{1\}}^{2m}$ of arity $1$ (for $m=0,1,2,\dots$) subject to the relations  guaranteeing that $\overline \lmm = \sum_{m=0}^\infty \lmm_{\{1\}}^{2m}$ satisfies $[\overline \lmm_{\{1\}}, \overline \lmm_{\{2\}}]=0$,
	\item $\blmHycomm{2}$ is generated by the elements $\lmm_{\underline{1+m}}^{2m}$ (for $m=0,1,2,\dots$), and is isomorphic to $\lmHycomm$.
\end{itemize}
In general, the twisted associative algebra $\blmHycomm{k}$ has the 
following relations, for all values of $d\geqslant 0$, the 
corresponding value $n=2+d(k-1)$, and all $i_1\ne i_2\in
\underline{n}$: 
\begin{equation}
     \sum_{\substack{\underline{n}=I_1\sqcup I_2,\\ i_1\in I_1, i_2\in I_2}}  \lmm^{2d_1}_{I_1}\lmm^{2d_2}_{I_2}=
     \sum_{\substack{\underline{n}=I_1\sqcup I_2,\\ i_1\in I_1, i_2\in I_2}}  \lmm^{2d_2}_{I_2}\lmm^{2d_1}_{I_1}.
\end{equation}
where we further require that $|I_1|=1+d_1(k-1), |I_2|=1+d_2(k-1)$
in both sums. Let us establish that this twisted associative algebra is Koszul, and determine its Koszul dual.

\begin{proposition}\label{prop:bLMHycomm-Koszul}
The suspension of the twisted associative algebra $\blmHycomm{k}^!$ is generated by elements $\lmg_{\underline{1+m(k-1)}}$ of homological degrees $1+2m(k-2)$ for $m\geqslant 0$, each of which generates the trivial representation of the respective symmetric group, subject to the relations 
 \begin{align*}
\sum_{i\in I} \lmg_{\{i\}}\lmg_{\underline{n}\setminus\{i\}} &=\lmg_I\lmg_J \text{ for $\underline{n}=I\sqcup J$ and $|I|>1$ with $|I|\equiv |J|\equiv 1\pmod{(k-1)}$}\\
\sum_{i\in\underline{n}}\lmg_{\{i\}}\lmg_{\underline{n}\setminus\{i\}} &=0 \text{ for $n\geqslant 2$}.
\end{align*}
 This twisted associative algebra is Koszul.   
\end{proposition}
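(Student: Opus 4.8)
The plan is to reproduce, for general $k$, the two-step strategy used for the Losev--Manin algebra in Propositions~\ref{prop:homology-LM} and~\ref{prop:tHycommDual}: first exhibit a quadratic Gröbner basis for $\blmHycomm{k}$, which forces Koszulness of $\blmHycomm{k}$ and hence of its Koszul dual, and then pin down the dual by an orthogonality computation followed by a dimension count.

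First I would record the quadratic data. Since every generator $\lmm^{2m}_{\underline t}$ has arity $t\equiv 1\pmod{k-1}$, a product of two generators has arity $\equiv 2\pmod{k-1}$, so the only arities carrying quadratic monomials are those of the form $n=2+d(k-1)$; moreover, in each such arity every quadratic monomial $\lmm^{2d_1}_{I_1}\lmm^{2d_2}_{I_2}$ with $d_1+d_2=d$ has the same homological degree $2d$, so the defining relations are already homogeneous and need no separation by degree. Exactly as in Proposition~\ref{prop:homology-LM}, one checks the purely formal additivity $R_{i,l}+R_{l,j}=R_{i,j}$ of the relators (valid verbatim here, since the congruence constraints on block sizes are carried uniformly through the computation), so that in arity $n=2+d(k-1)$ the relations $R_{i,i+1}$ form a basis and the relation space is $(n-1)$-dimensional. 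I would then pass through the forgetful functor to the associated shuffle algebra, choose an admissible monomial order adapted to the congruence constraint on block sizes, identify the leading term of each $R_{i,i+1}$, and check that all quadratic overlaps reduce to zero; a quadratic shuffle Gröbner basis gives Koszulness of the shuffle algebra, which transfers to $\blmHycomm{k}$ by monoidality of the forgetful functor~\cite{MR3642294}, and dually to the algebra in the statement.

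To identify the Koszul dual, I would work with the desuspension, so that the pairing carries no twist by the sign representations (as in Proposition~\ref{prop:tHycommDual}); this is also what makes each dual generator $\lmg_{\underline{1+m(k-1)}}$ fully symmetric. A direct pairing on quadratic monomials shows that both families of relations of the $\lmg$-algebra annihilate every $R_{i_1,i_2}$; this is the computation of Proposition~\ref{prop:tHycommDual} performed verbatim, with all sums restricted to subsets of cardinality $\equiv 1\pmod{k-1}$. For the count, let $M(n)$ be the number of ordered partitions $\underline{n}=P\sqcup Q$ into nonempty blocks with $|P|\equiv|Q|\equiv 1\pmod{k-1}$. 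The relations of the first type are indexed by ordered partitions with $|I|>1$; discarding the $n$ partitions with $|I|=1$ leaves $M(n)-n$ of them, linearly independent since they have distinct leading monomials $\lmg_I\lmg_J$, and the single relation of the second type contributes one more, for a total of $M(n)-(n-1)$. This matches the codimension of the $(n-1)$-dimensional relation space of $\blmHycomm{k}$, so the listed relations exhaust the annihilator and the $\lmg$-algebra is the Koszul dual. Applying the rule $\mathrm{ar}(\nu)-1-|\nu|$ to $\nu=\lmm^{2m}_{\underline{1+m(k-1)}}$ and adding the arity through the desuspension gives $\bigl(1+m(k-1)-1-2m\bigr)+\bigl(1+m(k-1)\bigr)=1+2m(k-2)$, confirming the degrees in the statement.

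The main obstacle is the Gröbner-basis step. In the case $k=2$ the normal monomials are in bijection with permutations and the leading term of $R_{i,i+1}$ is the contiguous split $\lmm_{\{1,\ldots,i\}}\lmm_{\{i+1,\ldots,n\}}$; for $k>2$ such a split need not meet the congruence $|I_s|\equiv 1\pmod{k-1}$, so both the admissible order and the resulting normal forms become more delicate, and confluence of all S-polynomials has to be verified with the congruence conditions in force. By contrast the homogeneity observation, the additivity of the relators, the orthogonality pairing, and the dimension bookkeeping are all routine and close up uniformly in $k$.
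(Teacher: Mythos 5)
Your identification of the Koszul dual (the orthogonality pairing and the dimension count of the annihilator against the $(n-1)$-dimensional relation space) matches the paper's argument. The problem is the Koszulness step, which you attempt on the wrong side and then leave unfinished. You propose to exhibit a quadratic Gr\"obner basis for $\blmHycomm{k}$ itself, mimicking Proposition~\ref{prop:homology-LM}, and you correctly observe in your last paragraph that this breaks down for $k>2$: the leading term $\lmm_{\{1,\ldots,i\}}\lmm_{\{i+1,\ldots,n\}}$ of $R_{i,i+1}$ need not satisfy the congruence constraints, the normal forms are no longer indexed by permutations, and confluence of the S-polynomials ``has to be verified.'' That verification is precisely the content of the proposition, and you cannot fall back on a dimension count as in Proposition~\ref{prop:homology-LM}, because there the target dimension $n!$ was known in advance from the geometry of the permutohedral variety, whereas no a priori dimension formula for $\blmHycomm{k}(n)$ is available here. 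So as written the proposal establishes the shape of the dual but not the Koszul property.

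The paper closes this gap by working on the dual side throughout. Ignoring homological degrees, the relations of $\blmHycomm{k}^!$ form a \emph{subset} of the relations of $\lmHycomm^!$, and one chooses the order on monomials in the $\lmg$'s that first compares cardinalities $|I|$ and then compares subsets lexicographically. For this order the leading terms are $\lmg_I\lmg_J$ for the first family and $\lmg_{\{n\}}\lmg_{\underline{n-1}}$ for the second, the normal monomials $\lmg_{\{i_1\}}\cdots\lmg_{\{i_p\}}\lmg_J$ (with $i_1<\cdots<i_p$ and $n\in J$) number exactly $2^{n-1}=\dim\lmHycomm^!(n)$, so the relations of $\lmHycomm^!$ form a Gr\"obner basis and all S-polynomials reduce to zero by the Diamond Lemma. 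The decisive observation is that, for this order, a relation of $\lmHycomm^!$ belongs to $\blmHycomm{k}^!$ if and only if its leading term is a monomial of $\blmHycomm{k}^!$; hence reducing an S-polynomial of two relations of the subalgebra never leaves the subalgebra, and confluence for $\blmHycomm{k}^!$ is inherited from confluence for $\lmHycomm^!$ with no new computation. If you want to rescue your write-up, replace your primal-side Gr\"obner step with this inheritance argument on the dual side (Koszulness of $\blmHycomm{k}^!$ gives Koszulness of $\blmHycomm{k}$ for free).
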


\begin{proof}
The argument is very similar to that of Proposition \ref{prop:tHycommDual}, although it is perhaps easier go in the opposite order. First, we note that these relations annihilate the relations of $\blmHycomm{k}$ under the pairing between the tensor squares of spaces of generators (the suspension ensures that we must compute the usual pairing without sign twists); moreover, in each arity $n$ congruent to $2$ modulo $k-1$ (which are the only arities in which relations appear) all these relations but the last one correspond to subsets of cardinalities bigger than $1$ and congruent to $1$ modulo $k-1$, so they span a subspace whose dimension is equal to the dimension of the annihilator of the space of quadratic relations of $\blmHycomm{k}$ in this arity: it is $(n-1)$-dimensional for the reason identical to that discussed in the proof of Proposition \ref{prop:homology-LM}. 

To establish the Koszul property, we note that, if we ignore the ``wrong'' homological degrees, the relations we consider form a subset of the relations of the twisted associative algebra $\lmHycomm^!$, and so it will be convenient for us to work with that algebra. Let us introduce an ordering of monomials slightly different from the one which one can obtain from the proof of Proposition \ref{prop:homology-LM} by using the compatibility of Gr\"obner bases with the Koszul duality. Specifically, let us say that $\lmg_{I}>\lmg_{J}$ if $|I|>|J|$, and in the case of equality, let us compare the two subsets lexicographically. This means that each for each $\underline{n}=I\sqcup J$ with $|I|>1$ the relation
 \[
\sum_{i\in I} \lmg_{\{i\}}\lmg_{\underline{n}\setminus\{i\}} =\lmg_I\lmg_J 
 \]
has leading term $\lmg_I\lmg_J$, and the relation 
 \[
\sum_{i\in\underline{n}}\lmg_{\{i\}}\lmg_{\underline{n}\setminus\{i\}}=0 
 \]
has leading term $\lmg_{\{n\}}\lmg_{\underline{n-1}}$. Monomials that are normal with respect to these leading terms are precisely the monomials 
 \[
\lmg_{\{i_1\}}\cdot\ldots\cdot\lmg_{\{i_p\}}\lmg_{J},
 \]
where $i_1<\dots<i_p$, $\underline{n}=\{i_1,\ldots,i_p\}\sqcup J$, and $n\in J$. Such monomials are in one-to-one correspondence with $J\setminus\{n\}\subset\underline{n-1}$, and so their number is equal to $2^{n-1}$, the dimension of $\lmHycomm^!(n)$. Since the bound given by the normal monomials is sharp, we obtain a Gr\"obner basis. According to the Diamond Lemma criterion \cite[Th.~4.5.1.4]{MR3642294}, all S-polynomials between the relations have the reduced form zero. However, for our choice of the ordering, a relation of $\lmHycomm^!$ is, ignoring the homological degrees, a relation of $\blmHycomm{k}^!$ if and only if its leading term is an element of $\blmHycomm{k}^!$, and so computing the reduced form of an S-polynomial between two such relations does not produce any other elements. Consequently, the Diamond Lemma implies that the relations of $\blmHycomm{k}^!$ form a Gr\"obner basis for the ordering we are considering, and so this twisted associative algebra is Koszul. 
\end{proof}

\begin{corollary}\label{cor:dim-b-lmgrav}
Suppose that $k>1$. The dimension of $\blmHycomm{k}^!(n)$ is equal to $\sum_{p=0}^n\binom{n-1}{p(k-1)}$.
\end{corollary}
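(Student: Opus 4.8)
The plan is to read the dimension directly off the Gröbner basis established in Proposition~\ref{prop:bLMHycomm-Koszul}. Since that proposition exhibits a quadratic Gröbner basis for $\blmHycomm{k}^!$, the dimension of $\blmHycomm{k}^!(n)$ equals the number of normal monomials of arity $n$, so the whole computation reduces to counting those normal monomials.

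First I would recall the shape of the normal monomials of $\lmHycomm^!$ obtained inside the proof of Proposition~\ref{prop:bLMHycomm-Koszul}: they are exactly the monomials
\[
\lmg_{\{i_1\}}\cdots\lmg_{\{i_p\}}\,\lmg_{J},
\]
with $i_1<\cdots<i_p$, $\underline{n}=\{i_1,\ldots,i_p\}\sqcup J$, and $n\in J$, and they are in bijection with arbitrary subsets $J\setminus\{n\}\subseteq\underline{n-1}$, whence the count $2^{n-1}$. The key point, already present in that proof, is that the Gröbner basis of $\blmHycomm{k}^!$ consists precisely of those relations of $\lmHycomm^!$ whose leading term is a valid monomial of $\blmHycomm{k}^!$, i.e. whose factors are honest generators $\lmg_{\underline{1+m(k-1)}}$. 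Consequently the normal monomials of $\blmHycomm{k}^!$ are exactly the normal monomials of $\lmHycomm^!$ that are themselves valid monomials of $\blmHycomm{k}^!$.

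Next I would translate this validity condition into a condition on $J$. Each singleton factor $\lmg_{\{i_s\}}$ has arity $1\equiv 1\pmod{k-1}$ and is therefore always a generator of $\blmHycomm{k}^!$; the only genuine constraint is that $\lmg_{J}$ be a generator, i.e. $|J|\equiv 1\pmod{k-1}$. Writing $J'=J\setminus\{n\}\subseteq\underline{n-1}$, this is equivalent to $|J'|\equiv 0\pmod{k-1}$, so the admissible monomials are in bijection with subsets of $\underline{n-1}$ whose cardinality is a nonnegative multiple of $k-1$. Their number is $\sum_{p\geqslant 0}\binom{n-1}{p(k-1)}$, and since $\binom{n-1}{p(k-1)}=0$ as soon as $p(k-1)>n-1$, this finite sum equals $\sum_{p=0}^{n}\binom{n-1}{p(k-1)}$, which is the claimed formula.

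I do not expect a genuine obstacle here: the corollary is essentially a bookkeeping consequence of Proposition~\ref{prop:bLMHycomm-Koszul}, and the hypothesis $k>1$ is only needed so that $k-1\geqslant 1$ and the congruence classes are meaningful. The one point that deserves to be stated explicitly rather than left implicit is the equivalence ``normal for $\blmHycomm{k}^!$'' $\Leftrightarrow$ ``valid and normal for $\lmHycomm^!$''. Because the leading terms of $\blmHycomm{k}^!$ form a subset of those of $\lmHycomm^!$, normality for $\lmHycomm^!$ immediately gives normality for $\blmHycomm{k}^!$; conversely, if a valid monomial were divisible by some leading term $\lmg_I\lmg_J$ of $\lmHycomm^!$, then $\lmg_I$ and $\lmg_J$, being factors of a valid monomial, would themselves be valid generators, so $\lmg_I\lmg_J$ would be an admissible leading term of $\blmHycomm{k}^!$ and normality for $\blmHycomm{k}^!$ would fail. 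This uses only that divisors of a valid monomial are valid, but it is the single place where the restriction to arities $\equiv 1\pmod{k-1}$ interacts with the Gröbner basis machinery, and I would record it carefully.
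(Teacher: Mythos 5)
Your proposal is correct and follows essentially the same route as the paper: both read the dimension off the PBW basis of $\blmHycomm{k}^!$ established in Proposition~\ref{prop:bLMHycomm-Koszul}, observe that the normal monomials are $\lmg_{\{i_1\}}\cdots\lmg_{\{i_p\}}\lmg_J$ with $n\in J$ and $|J|\equiv 1\pmod{k-1}$, and count them via subsets of $\underline{n-1}$ of cardinality divisible by $k-1$. Your explicit justification that ``normal for $\blmHycomm{k}^!$'' coincides with ``valid and normal for $\lmHycomm^!$'' is a point the paper leaves implicit in the earlier proposition, but it is the same argument.
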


\begin{proof}
We note that the PBW basis of $\blmHycomm{k}^!$ we found consists of all monomials of the form 
 \[
\lmg_{\{i_1\}}\cdot\ldots\cdot\lmg_{\{i_p\}}\lmg_{J},
 \]
where $\underline{n}= I\sqcup J$ is a partition with $|J|\equiv 1\pmod{k}$ and $I =\{i_1,\ldots,i_p\}$, and we further require that $i_1<\cdots<i_p$
and $n\in J$. 
 The number of such monomials is the same as the number of possible sets $J$, which, by removing $n$, are immediately seen to be in one-to-one correspondence with subsets of $\underline{n-1}$ whose cardinality is divisible by $k-1$. 
\end{proof}

\subsection{The case of arbitrary differential order}\label{sec:LM-HigherOrder}

In this section, we shall compute the homotopy quotient of $\blmBV{k}$ by $\Delta$ for any given order $k$. According to \cite[Prop.~2.2]{KMS}, that homotopy quotient can be computed by adjoining the arity zero operations $r_i$
for $i\geqslant 1$, of degree $\deg r_i=2i$, with the differential $\mathrm{d}$ compactly defined by writing 
\begin{equation}
    \mathrm{d} \exp(r(w)) =  \exp(r(w)) (\mathrm{d} + w\Delta).
\end{equation}
Here $w$ is an even formal variable that commutes with all operations $r_i$, as well as with $\mathrm{d}$ and with $\Delta$, and $r(w)= \sum_{i=1}^\infty r_i z^{i(k-1)}$. By abuse of notation, we view $\mathrm{d}$ as an element of arity $0$ and of degree $-1$. The computation consists of two parts. In the first part, inspired by \cite{KMS}, we state the main result using the Givental action on representations of the extended Losev--Manin algebra. In the second part, we prove it using the quadratic-linear Koszul duality theory. 

\subsubsection{Using the Givental action to formulate the main statement }

Let $\mathcal A$ be an arbitrary dg twisted associative algebra, and suppose that $f\colon \lmExtHycomm\to\mathcal A$ is a map that sends all generators except for $\lmm_{\{1\}}^0$ to zero. We shall be using the result of exponentiation of the action of elements of the Lie algebra $\mathfrak{g}_{\mathcal A}$, and to that end we shall introduce a useful language to discuss elements of the form $\exp(r(z)).f$.  

For a given arity $n$, we shall consider ``bamboos'', that is linear graphs of the form

\[
 \begin{tikzpicture}
 \begin{pgfonlayer}{main}
\foreach \x in {0,1,3,4}
  {\node[fill = white] (\x) at (\x,0) {\hphantom{xx}};
   \node [rectangle,draw,fit=(\x),inner sep=0] {};
}
   \node[circle,
  	fill = white, 
  	draw = white, 
  	inner sep = 1 pt,
  	minimum size = 5 pt] at (2,0) {$\cdots$};
  	
\end{pgfonlayer}
	\begin{pgfonlayer}{bg} 
  \foreach \x in {0,1,2,3,4,5}
  \draw[line width = 1.25 pt] (\x-1,0) -- (\x,0);
  \end{pgfonlayer}

 \end{tikzpicture}
\] 
which additionally have tendrils labelled $1,\ldots,n$ distributed between the vertices so that each vertex has at least one tendril (permutations of tendrils attached to the same vertex do not change the object we consider). We always direct this graph from left to right. Each such graph gives rise to an element of $\mathcal A$ as follows. Let us first associate certain data to each half-edge of the bamboo: 
\begin{itemize}
\item the leftmost half-edge (the root of the bamboo graph) is decorated by one of the terms $x_p\psi^p$ of the Taylor series expansion of $\exp(r(\psi))$,
\item the rightmost half-edge (the tip of the bamboo graph) is decorated by one of the terms $y_q\psi^q$ of the Taylor series expansion of $\exp(-r(-\psi))$,
\item each internal edge is decorated by one of the terms $z_{r,s}(\psi')^r(\psi'')^s$ of the Taylor series expansion of 
    \begin{align}
        \frac{\mathrm{Id} - \exp(-r(-\psi'))\exp(r(\psi''))}{\psi'+\psi''}\,,
    \end{align}
   where $\psi'$ and $\psi''$ should be thought of as the psi-classs on the left and the right half-edge, respectively.  
\end{itemize}
To a thus decorated bamboo graph $T$, we can associate an element of $\mathcal A$ denoted by $F(T)$ obtained as the product, in the order of appearance in the graph, of the coefficients $x_p, y_q, z_{r,s} \in \mathcal A(\varnothing)$ and, for each vertex with the attached tendrils labelled $i_1$, \ldots, $i_t$, the elements $f(\lmm^0_{\{i_1\}} \cdots \lmm^0_{\{i_t\}})\in \mathcal A(\{i_1,\ldots,i_t\})$. We also associate to $T$ a combinatorial factor $C(T)$ determined by the powers of $\psi$'s associated to half-edges: for each vertex $v$ such that it has the $a$-th power of the psi-class on the left half-edge and the $b$-th power of the psi-class on the right half-edge, we put $C(v):=\binom{a+b}{a}$, and then define $C(T)$ as the product of $C(v)$ over all vertices.

\begin{lemma}  
The map $\exp(r(z)).f$ on the generators $\lmm_{\underline{n}}^{2p}\in \lmExtHycomm$ is given by $\lmm_{\underline{n}}^{2p} \mapsto \sum_T C(T)F(T)$, where the sum is taken over all decorated bamboos with $n$ tendrils of total degree $2p$. 
\end{lemma}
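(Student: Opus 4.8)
The plan is to present both sides as solutions of the same flow equation in a formal parameter $t$ and to invoke uniqueness. Rescaling $r(z)$ to $t\,r(z)$ and setting $\Phi_t:=\exp(t\,r(z)).f$, the exponential of an element of the pronilpotent Lie algebra $\mathfrak{g}_{\mathcal A}$ satisfies $\frac{d}{dt}\Phi_t=(r(z)).\Phi_t$, the right-hand side being the infinitesimal action \eqref{eq:lm-Givental-action} evaluated at $\Phi_t$, with $\Phi_0=f$. Let $\Psi_t$ be the bamboo sum $\sum_T C(T)F(T)$ with every decoration computed from $t\,r(z)$, so that the root is decorated by $\exp(t\,r(\psi))$, the tip by $\exp(-t\,r(-\psi))$, and each edge by the corresponding $t$-dependent factor. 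At $t=0$ the edge factor vanishes, leaving only single-vertex bamboos; since $f$ kills every generator of arity greater than one, $\Psi_0=f$. As $\mathfrak{g}_{\mathcal A}$ is complete, the flow equation has a unique solution with prescribed initial value, so it suffices to check that $\Psi_t$ obeys $\frac{d}{dt}\Psi_t=(r(z)).\Psi_t$; evaluating at $t=1$ and decomposing by homological degree then gives the value of $\exp(r(z)).f$ on each $\lmm_{\underline n}^{2p}$.

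Writing $\Psi_t=\sum_m t^m\Psi^{(m)}$ according to the total number $m$ of factors $r_i$ appearing in the decorations, the desired identity is equivalent to the recursion $m\,\Psi^{(m)}=L(\Psi^{(m-1)})+\sum_{a+b=m-1}Q(\Psi^{(a)},\Psi^{(b)})$, where $L$ gathers the two linear (root and tip) terms of \eqref{eq:lm-Givental-action} and $Q$ its bilinear (splitting) term. Combinatorially this is a weight-preserving bijection between bamboos with $m$ factors $r_i$ carrying one marked factor and bamboos with $m-1$ factors together with the data of a single insertion. I would treat the three positions of the marked factor separately. Because $r(\psi)$ commutes with $\exp(t\,r(\psi))$, differentiating the root (respectively tip) decoration places the new $r_i$ at the extreme left (respectively right); after replacing $f$ by $\Psi_t$ and applying the left (respectively right) $\psi$-action of Proposition \ref{prop:psi-bimodule} this reproduces exactly $r\,f(\psi^{p}\overline\lmm_n)$ (respectively $(-1)^{p-1}f(\overline\lmm_n\psi^{p})\,r$), the sign in the tip case coming from the expansion of $\exp(-t\,r(-\psi))$.

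The interior case is where I expect the real work to lie. The key structural observation is that the numerator of the edge factor is itself a product of a tip decoration and a root decoration, $\exp(-r(-\psi'))\exp(r(\psi''))$; hence cutting a bamboo at an interior occurrence of $r_i$ separates its edge factor into the factors to the left (which assemble into the tip of the left sub-bamboo), the marked $r_i$, and the factors to the right (which assemble into the root of the right sub-bamboo). This is precisely the shape of the bilinear term $\sum_{I\sqcup J}f(\overline\lmm_J\psi^{j})\,r\,f(\psi^{i}\overline\lmm_I)$. The matching of signs and indices is governed by the partial-fraction identity
\[
\frac{r(-\psi')-r(\psi'')}{\psi'+\psi''}=\sum_i r_i\sum_{j=0}^{i(k-1)-1}(-1)^{j+1}(\psi')^{j}(\psi'')^{\,i(k-1)-1-j},
\]
which produces exactly the sign $(-1)^{j+1}$ and, with $p=i(k-1)$, the constraint that the powers of $\psi'$ and $\psi''$ sum to $p-1$, as in \eqref{eq:lm-Givental-action}.

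The remaining ingredient, and the most delicate bookkeeping, is the combinatorial factor $C(T)=\prod_v\binom{a+b}{a}$: at a vertex with $a$ powers of $\psi$ entering from the left half-edge and $b$ from the right, this binomial records the number of interleavings of the $a$ left and $b$ right $\psi$-actions, which commute by Proposition \ref{prop:psi-bimodule} and whose multiplicity is generated by the Taylor coefficients of the exponential decorations. Verifying that these multiplicities render the bijection above weight-preserving on the nose is the crux; granting it, the recursion holds term by term and the lemma follows. The whole argument runs parallel to the operadic computation of \cite{KMS}, with bamboos in the role of trees.
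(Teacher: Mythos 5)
Your proposal is correct in outline, but it takes a genuinely different route from the paper. The paper's proof is essentially a citation: it invokes \cite{ShaZvo-LMCohft} for the general fact that integrating the infinitesimal action of Lemma~\ref{lem:lm:GiventalLie} yields the bamboo sum, and then only verifies the decorations forced by the specific choice of $f$ --- namely that a vertex decorated by $f(\psi^a\lmm^{2p}_{\underline n}\psi^b)$ is nonzero only when $a+b=p=n-1$, in which case it equals $\binom{a+b}{a}f(\lmm^0_{\{1\}})\cdots f(\lmm^0_{\{n\}})$, the binomial coefficient being either the intersection number $\int_{\mathcal L\mathcal M(n)}\psi_-^a\psi_+^b$ or a direct consequence of Proposition~\ref{prop:psi-bimodule}. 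You instead re-derive the integration statement from scratch via the flow equation $\tfrac{d}{dt}\Psi_t=r(z).\Psi_t$ and uniqueness of its solution in the complete Lie algebra; your three-way analysis of the marked $r_i$ (root, tip, interior) matches the three terms of \eqref{eq:lm-Givental-action}, your sign bookkeeping for the tip is right, and your partial-fraction identity for the edge factor is correct, producing both the sign $(-1)^{j+1}$ and the constraint $i+j=p-1$. What your route buys is self-containedness; what it costs is that the ``crux'' you explicitly defer --- that the multiplicities come out right --- is precisely the vertex computation $f(\psi^a\lmm^{2p}_{\underline n}\psi^b)=\binom{a+b}{a}\prod_i f(\lmm^0_{\{i\}})$, which is the entire content of the paper's proof, so it should be carried out rather than granted: it follows by iterating the bimodule formulas of Proposition~\ref{prop:psi-bimodule} and observing that $f$ survives only on products of singletons. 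One small point of care: for your initial condition $\Psi_0=f$ to hold, $C(v)$ must be read as vanishing unless the psi-exponents at $v$ sum to one less than the number of tendrils at $v$ (as is made explicit in the non-symmetric analogue later in the paper); this is what kills the undecorated single-vertex bamboo for $n>1$.
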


\begin{proof} It is a direct integration of the Lie algebra action defined in Lemma~\ref{lem:lm:GiventalLie}, see~\cite{ShaZvo-LMCohft}. The only thing that one has to check is the decorations specific for our choice of $f$. In general, each vertex $v$ may be decorated by $f(\psi^{a} \lmm_{\underline{n}}^{2p} \psi^b)$, and in our specific case where $f$ vanishes on all generators but $\lmm^0_{\underline{1}}$, this forces both $a+b=p$ and $p=n-1$, and furthermore  
 \[
f(\psi^a \lmm_{\underline{n}}^{2p} \psi^b) = \binom{a+b}{a}  f(\lmm^0_{\{1\}}) \cdots f(\lmm^0_{\{n\}}).
 \] 
The binomial coefficient here can be obtained by noticing that the constraints $a+b=p$ and $p=n-1$ bring us back to the geometric situation, so we get the intersection number $\int_{\mathcal L\mathcal M(n)} \psi_-^{a} \psi_+^{b}$ which is well known to be equal to the binomial coefficient $\binom{a+b}{a}$. Alternatively, one can recover it algebraically from the formulas for the action from Proposition \ref{prop:psi-bimodule}.  
\end{proof}

In the classical case of the operad $\Hycomm$, it was shown in \cite{MR3019721} that the differential order condition can be encoded using the Givental action. That result has been later proved for the case of $\ncHycomm$ in \cite{DSV-ncHyperCom}. It turns out that a similar result holds in our case as well.  

\begin{lemma}
For any map $f\colon \lmExtHycomm\to \mathcal A$ that sends all generators except for $\lmm_{\{1\}}^0$ to zero, the infinitesimal Givental action of $\mathrm{d}+rz^{k-1}$ annihilates $f$ if and only if $r\in\mathcal A(\varnothing)$ is of differential order at most $k$ with respect to $f(\lmm_{\{1\}}^0)\in\mathcal A(\underline{1})$.
\end{lemma}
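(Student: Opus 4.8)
The plan is to compute the infinitesimal action of $\mathrm{d}+rz^{k-1}$ on $f$ explicitly using Equation~\eqref{eq:lm-Givental-action}, evaluate it on the generators $\lmm_{\underline{n}}^{2p}$, and identify the resulting constraint with the vanishing of a Koszul-like brace. Since $f$ sends everything except $\lmm_{\{1\}}^0$ to zero, only very few terms in \eqref{eq:lm-Givental-action} survive; the psi-class actions $\psi^i\overline\lmm_I$ and $\overline\lmm_J\psi^j$ expand, via Proposition~\ref{prop:psi-bimodule}, into sums of products of generators, but after applying $f$ only those summands factoring entirely through $\lmm^0_{\{1\}}$ remain nonzero. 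First I would record that the element $\mathrm{d}+rz^{k-1}$ has $r$ sitting in degree $z$-grading $k-1$, so that when acting via the third (summation) term of \eqref{eq:lm-Givental-action} the powers $i+j=p-1=k-2$ combine with the two psi-class insertions to force the total arity to be exactly $k$; this is the arithmetic that singles out order $k$.

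Next I would evaluate $((\mathrm{d}+rz^{k-1}).f)(\lmm_{\underline{n}}^{2p})$ and observe that it is automatically zero unless $n=k$ and $p=k-1$, for degree and arity reasons dictated by the constraints $a+b=p$, $p=n-1$ from the preceding lemma; in that single surviving arity the expression becomes a fixed linear combination of products $f(\lmm^0_{S_1})\,r\,f(\lmm^0_{S_2})$ over the ways of distributing the $k$ tendrils across the two sides of $r$, with the signs $(-1)^{j+1}$ coming from \eqref{eq:lm-Givental-action}. The claim is then that this combination is, up to a nonzero scalar, exactly the expansion of the brace $\mathsf{b}_k^r$ with respect to $\mathsf{m}:=f(\lmm_{\{1\}}^0)$. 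To see this I would expand the recursive definition $\mathsf{b}_{k}^\mathsf{f}=[\mathsf{b}_{k-1}^\mathsf{f},\mathsf{m}_{\{k\}}]$ into its closed form as an alternating sum $\sum (-1)^{\,\bullet}\,\mathsf{m}_{S_1}\,\mathsf{f}\,\mathsf{m}_{S_2}$ over the $2^{k}$ ways of placing each of the $k$ copies of $\mathsf{m}$ on the left or right of $\mathsf{f}$, with the sign governed by the number placed on the right, precisely as in the displayed formula for $\mathsf{b}_2^{\mathsf{f}}$ in the excerpt. Matching the two sign conventions and the binomial combinatorial factors $C(T)=\binom{a+b}{a}$ against the multiplicities with which each monomial $\mathsf{m}_{S_1}\,r\,\mathsf{m}_{S_2}$ appears in the brace is the crux of the identification.

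The main obstacle I expect is precisely this bookkeeping of signs and multiplicities: showing that after summing over all bamboo decorations (equivalently, over the splittings recorded in the three term types of \eqref{eq:lm-Givental-action}) the coefficient of each monomial $f(\lmm^0_{S_1})\,r\,f(\lmm^0_{S_2})$ agrees with the coefficient of the corresponding monomial in $\mathsf{b}_k^r$. I would handle this by exploiting the bimodule relation $(\psi\overline\lmm)\psi=\psi(\overline\lmm\psi)$ from Proposition~\ref{prop:psi-bimodule} to collapse the nested psi-insertions into a single binomial factor, reducing the identity to the elementary observation that $\sum_{i+j=k-2}\binom{\cdots}{\cdots}$-type coefficients reproduce the binomial expansion implicit in iterating the bracket $[-,\mathsf{m}]$ exactly $k$ times. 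Once the coefficient-by-coefficient match is established, the two directions of the ``if and only if'' follow at once: the infinitesimal action annihilates $f$ on the sole relevant generator iff $\mathsf{b}_k^r=0$, which is by definition the statement that $r$ is of differential order at most $k$ with respect to $f(\lmm_{\{1\}}^0)$.
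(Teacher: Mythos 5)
Your proposal is correct in outline, and it is worth noting that the paper itself gives no argument here: its ``proof'' is the one-line citation ``Similar to [Th.~1, MR3019721]'', deferring to the operadic analogue. What you propose is precisely the direct computation that this citation stands in for, transplanted to the twisted associative setting, and it goes through. Two remarks on the details. First, the binomial factors $C(T)=\binom{a+b}{a}$ you invoke belong to the \emph{exponentiated} (bamboo) formula; for the infinitesimal action they are not needed, since the only psi-insertions occurring in Equation~\eqref{eq:lm-Givental-action} are the one-sided ones $f(\psi^i\overline\lmm_I)$ and $f(\overline\lmm_J\psi^j)$, each of which contributes the monomial $\prod_{a}f(\lmm^0_{\{a\}})$ with coefficient $1$ (the case $a=0$ or $b=0$ of the binomial). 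The whole matching therefore reduces to signs: with $p=k-1$ the three terms of \eqref{eq:lm-Givental-action} assemble, in the unique surviving arity $n=k$, into
\[
((rz^{k-1}).f)(\overline\lmm_{\underline{k}})=\sum_{S_1\sqcup S_2=\underline{k}}(-1)^{|S_1|}\,f(\lmm^0_{S_1})\,r\,f(\lmm^0_{S_2}),
\]
since the term with $J$ to the left of $r$ carries the sign $(-1)^{j+1}=(-1)^{|J|}$, and this is exactly the closed form of the iterated commutator $\mathsf{b}_k^{r}$ with respect to $\mathsf{m}=f(\lmm^0_{\{1\}})$ (your statement that the sign is governed by the number of $\mathsf{m}$'s placed on the \emph{right} differs from this only by the global factor $(-1)^k$, which is irrelevant for the vanishing condition, as you anticipate with ``up to a nonzero scalar''). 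Second, you should dispose of the $\mathrm{d}$-summand of $\mathrm{d}+rz^{k-1}$ explicitly: its action is the $p=0$ case of \eqref{eq:lm-Givental-action}, namely the commutator $[\mathrm{d},f(\overline\lmm_{\underline{n}})]$, which vanishes because $f$ kills all generators of positive arity greater than one and sends $\lmm^0_{\{1\}}$ to a cycle; this is needed for the ``if and only if'' to be exactly the vanishing of $\mathsf{b}_k^r$ and nothing more. With these two points made precise, your argument is complete and, unlike the paper's, self-contained.
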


\begin{proof}
Similar to \cite[Th.~1]{MR3019721}.  
\end{proof}

This result means that for our purposes it is advantageous to set $w=z^{k-1}$ in the formula
 \[
\mathrm{d} \exp(r(w)) =  \exp(r(w)) (\mathrm{d} + w\Delta).
 \]
 describing the differential in the homotopy quotient, since $\mathrm{d} + z^{k-1}\Delta$ is exactly the element whose Givental action encodes compactly the differential order condition. Motivated by this, let us consider only the Givental symmetries $r(z)$ for which 
  \[
r(z)\in \bigoplus_{t \geqslant 1} \mathcal A(\varnothing)_{2t} z^{(k-1)t}\subset \mathfrak{g}_{\mathcal A}.
 \] 

\begin{lemma}\label{lem:LM-InducedMapOfQuotient}The map $\exp(r(z)).f\colon \lmExtHycomm \to \mathcal A$ factors through the quotient map 
 \[
\pi\colon \lmExtHycomm\to \blmHycomm{k}, 
 \]
that is, there exists a map $g\colon \blmHycomm{k} \to\mathcal A$ such that $\exp(r(z)).f = g\pi$. 
\end{lemma}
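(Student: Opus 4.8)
The plan is to use that $g:=\exp(r(z)).f$ is a morphism of dg twisted associative algebras: it arises by exponentiating the Givental Lie algebra action of Lemma~\ref{lem:lm:GiventalLie} on $\Hom(\lmExtHycomm,\mathcal A)$ along the complete Lie algebra $\mathfrak{g}_{\mathcal A}$, so it is again an honest algebra map and is determined by its values on the generators $\lmm^{2m}_{\underline t}$. By construction the kernel of $\pi$ is the two-sided ideal generated by those $\lmm^{2m}_{\underline t}$ with $t\neq 1+m(k-1)$; since $g$ is an algebra map, it is therefore enough to prove that $g$ annihilates each such generator, for then $g$ descends uniquely to the required map on $\blmHycomm{k}$.

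To check this vanishing I would feed each generator into the bamboo formula of the preceding lemma, writing $g(\lmm^{2m}_{\underline t})=\sum_T C(T)\,F(T)$ with the sum over decorated bamboos carrying $t$ tendrils and total degree $2m$. The crux is to show that this index set is empty unless $t=1+m(k-1)$, which I would do through a single bookkeeping of psi-powers. Two constraints combine. Because $f$ is supported on $\lmm^0_{\{1\}}$, the preceding lemma forces a vertex with $n_v$ tendrils to carry half-edge psi-powers $a_v,b_v$ with $a_v+b_v=n_v-1$; summing over the $s$ vertices of the bamboo, the total psi-power carried by all half-edges equals $t-s$. Because $r(z)$ only involves the monomials $z^{(k-1)t}$, every factor $r_t$ entering a decoration is accompanied by a psi-power divisible by $k-1$: the root and tip decorations carry psi-powers $\equiv 0 \pmod{k-1}$, while for an internal edge the generating series $\frac{\mathrm{Id}-\exp(-r(-\psi'))\exp(r(\psi''))}{\psi'+\psi''}$ produces, once the division by $\psi'+\psi''$ lowers the total psi-power of each monomial by one, two half-edges whose combined psi-power is $\equiv -1\pmod{k-1}$. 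Summing these contributions over the $s-1$ internal edges and the two ends, and equating with $t-s$, gives $(k-1)\Sigma=t-1$, where $\Sigma$ is the total $r$-index of the bamboo.

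From here the conclusion is immediate. The relation $(k-1)\Sigma=t-1$ admits a non-negative integer solution only when $t\equiv 1\pmod{k-1}$, so for other arities no bamboo exists and $g(\lmm^{2m}_{\underline t})=0$. When $t\equiv 1\pmod{k-1}$ one has $\Sigma=(t-1)/(k-1)$; since the psi-classes are stripped when forming $F(T)$ and the vertices contribute degree $0$, the degree of $F(T)$ equals $2\Sigma$, so the total-degree requirement $2m$ forces $\Sigma=m$. Combining the two identities yields $t=1+m(k-1)$. Thus every generator outside the surviving family is annihilated by $g$, and $g$ factors through $\pi$, producing the desired $g\colon\blmHycomm{k}\to\mathcal A$.

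The step I expect to be most delicate is the analysis of the internal-edge series: one must check that the division by $\psi'+\psi''$ respects the grading by total $r$-index, so that the coefficient of $(\psi')^r(\psi'')^s$ is homogeneous of the degree claimed above, and that it lowers the total psi-power by exactly one, yielding the congruence $r+s\equiv -1\pmod{k-1}$. Once this homogeneity is secured, the remaining identities are elementary counting; the degenerate case $k=1$ is not covered by the congruence argument and should be matched directly against the order-one computation of Section~\ref{sec:orderone}.
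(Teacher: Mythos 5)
Your proposal is correct and follows essentially the same route as the paper: the paper's proof likewise reduces to showing that nontrivial decorated bamboos with $t$ tendrils and total degree $2m$ exist only for $t=1+m(k-1)$, using the vertex constraint (total psi-power $=t-|V(T)|$) together with the fact that the total degree of the $r$-decorations and the total psi-power plus the number of internal edges are proportional as $2$ is to $k-1$, which is exactly your identity $(k-1)\Sigma=t-1$ combined with $\deg F(T)=2\Sigma$. Your explicit preliminary reduction to generators and your care with the homogeneity of the internal-edge series $\frac{\mathrm{Id}-\exp(-r(-\psi'))\exp(r(\psi''))}{\psi'+\psi''}$ are left implicit in the paper but are correct and welcome.
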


\begin{proof} It is sufficient to show that the nontrivial 
decorated bamboos with $n$ tendrils and total degree $2p$ 
exist only for $n=1+p(k-1)$. From the description of the 
$\kk[\psi]$-bimodule structure in 
Proposition~\ref{prop:psi-bimodule}, it immediately follows 
that for each vertex with $k$ tendrils, the sum of degrees 
of psi-classes decorating this vertex must equal to $k-1$, 
and so the sum of degrees of all psi-classes decorating a 
bamboo graph $T$ is equal to $n-|V(T)| = n-|E(T)|-1$, where 
$V(T)$ is the set of vertices and $E(T)$ is the set of 
internal edges of~$T$. On the other hand, the total degree 
of the operators $r(\psi)$ and the sum of the total degree 
of psi-classes and the number of edges are proportional 
as $2$ is to $k-1$. Hence, the total degree is $2p$ if and only 
if $n-1 = p(k-1)$. 
\end{proof}

Let us now consider the map $f\colon \lmExtHycomm \to 
(\blmBV{k}\vee T(r_1,r_2,\dots), \mathrm{d})$ that sends $
\lmm^0_{\underline{1}}$ to the generator $\lmm$ and all 
other generators to $0$. By Lemma~\ref{lem:LM-InducedMapOfQuotient} there exists a map 
 \[
 g\colon \blmHycomm{k}
 	\to (\blmBV{k}\vee T(r_1,r_2,\dots), \mathrm{d})
 \]
such that $g \pi = \exp(r(z)).f$. 

\begin{theorem}\label{thm:LM-quasiiso-map}  
The map $g\colon \blmHycomm{k}\to 	
(\blmBV{k}\vee T(r_1,r_2,\dots), \mathrm{d})$ 
is a quasi-isomorphism. 
\end{theorem}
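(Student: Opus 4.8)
The plan is to exhibit $g$ as a quasi-isomorphism by comparing both sides through the inhomogeneous Koszul duality machinery, exactly paralleling the order-two computation of Section~\ref{sec:ordertwo} but now organised by the grading that singles out arities $n=1+p(k-1)$. The target $(\blmBV{k}\vee T(r_1,r_2,\dots),\mathrm{d})$ is, by \cite[Prop.~2.2]{KMS}, a model for the homotopy quotient of $\blmBV{k}$ by $\Delta$, so it suffices to show that $g$ induces an isomorphism on homology. First I would record that the source $\blmHycomm{k}$ carries zero differential and, by Proposition~\ref{prop:bLMHycomm-Koszul}, is Koszul with an explicit PBW basis; its total dimension in each arity is computed in Corollary~\ref{cor:dim-b-lmgrav} as $\sum_{p}\binom{n-1}{p(k-1)}$. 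The heart of the argument is then to compute the homology of the target in each arity and match these dimensions.

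The key steps, in order, are as follows. (1) Dualise the quadratic-linear presentation of $\blmBV{k}$ to obtain its Koszul dual dg twisted associative algebra, generalising the generators $\delta,\lmn,\lmk$ and differential $\partial(\lmk)=\delta\lmn$ appearing in the order-two case; the order-$k$ brace relation $\mathsf{b}_k^\Delta=0$ will manifest as a length-$k$ (rather than length-$2$) vanishing condition on the dual side. (2) Compute the homology of this dual dg algebra with respect to the internal differential $\partial$; as in Section~\ref{sec:ordertwo} the differential is ``almost Koszul'', so the homology is spanned by explicit classes $u_{\underline{n},J}$ subject to the constraint that surviving blocks have cardinality $\equiv 1 \pmod{k-1}$, which is precisely the arity constraint defining $\blmHycomm{k}$. (3) Perform homotopy transfer, choosing the contracting homotopy that vanishes on elements without $\delta$, so that all higher transferred operations $\mu_t$ with $t\geqslant 3$ vanish and the transferred structure is an honest twisted associative algebra; identify its generators with the classes $\lmg$ of Proposition~\ref{prop:bLMHycomm-Koszul} and check that they satisfy exactly the relations of $\blmHycomm{k}^!$ (suspended). (4) Take the quotient by the two-sided ideal generated by $s^{-1}(\delta^r)^\vee$ for $r>0$, dualise back, and identify the resulting object with $\blmHycomm{k}$, whence $g$ is the comparison map between two Koszul presentations of the same homology. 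The explicit form of $g$ via decorated bamboos, established in Lemma~\ref{lem:LM-InducedMapOfQuotient}, guarantees that the map produced abstractly coincides with the map $g$ in the statement.

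The final dimension count is what forces $g$ to be an isomorphism: once I know that the transferred algebra is generated in the correct arities and that its relations form a Gr\"obner basis (inherited from $\lmHycomm^!$ by the argument of Proposition~\ref{prop:bLMHycomm-Koszul}, which shows a relation survives precisely when its leading term does), the resulting PBW basis has cardinality matching Corollary~\ref{cor:dim-b-lmgrav} in every arity. Since $g$ is a surjection of graded spaces of equal finite dimension in each arity, it is an isomorphism, and because both source and the homology of the target carry compatible Koszul structures, $g$ is a quasi-isomorphism of dg twisted associative algebras.

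\textbf{Main obstacle.}
The step I expect to be genuinely delicate is (2)--(3): verifying that the brace relation $\mathsf{b}_k^\Delta=0$ of order $k$ dualises and transfers cleanly, and in particular that the transferred product still closes on the classes $u_{\underline{n},\varnothing}$ without generating spurious operations of higher arity. In the order-two case the computation $\sum_{i\in I}\lmn_{\{i\}}\lmg_{I\setminus\{i\}}=0$ was the crucial simplification; here one needs the analogous identity adapted to blocks of size $\equiv 1\pmod{k-1}$, and confirming that the combinatorics of the higher-order brace produces exactly this constraint (and no additional relations spoiling the dimension count) is where the real work lies.
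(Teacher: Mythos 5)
Your steps (1)--(4) faithfully reproduce the paper's computation of the homotopy quotient of $\blmBV{k}$ by $\Delta$ via quadratic-linear Koszul duality (the paper packages this as a separate theorem immediately preceding the one you are proving), and that part of the plan is sound. The genuine gap is in how you connect this abstract computation to the \emph{specific} map $g$ of the statement. You assert that ``the map produced abstractly coincides with the map $g$'' and that this is ``guaranteed'' by Lemma~\ref{lem:LM-InducedMapOfQuotient}; but that lemma only shows that $\exp(r(z)).f$ factors through the quotient $\lmExtHycomm\to\blmHycomm{k}$ (an arity/degree bookkeeping statement) and says nothing about $g$ agreeing with the quasi-isomorphism produced by the bar--cobar and homotopy-transfer machinery. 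Consequently the surjectivity of $g$ on homology --- which your final dimension count presupposes when you write ``since $g$ is a surjection of graded spaces of equal finite dimension'' --- is never actually established.

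The paper closes exactly this gap with three short observations: (i) $g$ is a morphism of dg twisted associative algebras; (ii) the images of the generators of $\blmHycomm{k}$ are nonzero in the homology of $\mathrm{d}$, because the image of $\mathrm{d}$ is contained in the ideal generated by $\Delta$ while the image of a generator under $g$ has a component outside that ideal; (iii) in the homology of $\mathrm{d}$, the component of arity $1+(k-1)t$ and homological degree $2t$ is one-dimensional for each $t\geqslant 0$. Points (ii) and (iii) force $g$ to hit each homology generator up to a nonzero scalar, so (i) yields surjectivity of $g$ on homology; your dimension count (equal finite dimensions in each arity, coming from the abstract identification of the homology of the target with $\blmHycomm{k}$) then upgrades surjectivity to an isomorphism. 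You need to supply an argument of this kind; without it the proof does not go through.
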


The proof of this theorem will be gradually constructed in the next sections.

\subsubsection{Quadratic-linear presentation}

Consider the twisted associative algebra $\blmBV{+\infty}$ generated by an element $\Delta$ of arity $0$ and homological degree $1$ and an element $\lmm$ of arity $1$ and of homological degree $0$ subject only to the relations 
 \[
\Delta^2=0, \quad [\lmm_{\{1\}},\lmm_{\{2\}}]=0.
 \]
(We write $+\infty$ to hint that this algebra is obtained as the limit of $\blmBV{k}$ as $k\to+\infty$, and avoid any possible confusion with the notation $\lmBV_\infty$ for a cofibrant replacement of $\lmBV$.) We shall now construct a different presentation of this algebra which will be useful to study the case of $\Delta$ of finite differential order. Let us denote by $\lml_{\underline{n}}$ the $n$-th Koszul-like brace of $\Delta$ with respect to $\lmm$. A gauge symmetry argument \cite{MR3510210,MR3385702} shows that these elements satisfy the relations
\begin{align}
\sum_{\substack {I\sqcup J = \underline{n} } } \lml_{I}   \lml_{J} = 0, \qquad n\geqslant 0.
\end{align}
It is also clear that the quadratic-linear relations
\begin{align}\label{eq:LMBVquadraticlinear}
    [\lml_I,\lmm_{\{i\}}]=\lml_{I\sqcup\{i\}}
\end{align} 
hold in this algebra.

\begin{proposition}\label{prop:LMBV-infty-Koszul}
The twisted associative algebra with the generators $\lmm_{\underline{1}}$ and $\lml_{\underline{n}}$ for
each $n\geqslant 0$, subject to the relations
 \[
 [\lmm_{\{1\}},\lmm_{\{2\}}]=0,	
	\quad
 [\lml_I,\lmm_{\{i\}}]=\lml_{I\sqcup\{i\}}, 
 \quad
 \sum_{\substack {I\sqcup J = \underline{n} } } \lml_{I}   \lml_{J} = 0 
	\text{ for $n\geqslant 0$.} 
	\]
is isomorphic to $\blmBV{+\infty}$. Moreover, this presentation of $\blmBV{+\infty}$ is inhomogeneous Koszul. 
\end{proposition}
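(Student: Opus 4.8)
The plan is to prove the two assertions in turn: first the isomorphism between the new presentation and $\blmBV{+\infty}$, and then the inhomogeneous Koszulness of that presentation.

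For the isomorphism, I would argue exactly as in the proof of Proposition~\ref{prop:tBV-qlin}. The defining relations $[\lml_I,\lmm_{\{i\}}]=\lml_{I\sqcup\{i\}}$ are quadratic-linear and permit the successive elimination of each generator $\lml_{\underline{n}}$ for $n\geqslant 1$ in terms of $\lml_{\underline{0}}=\Delta$ and the $\lmm_{\{i\}}$; indeed, by definition $\lml_{\underline{n}}=\mathsf{b}^\Delta_n$ is precisely the $n$-th Koszul-like brace, so these relations are tautologically compatible with our identification. Only the generators $\Delta$ and $\lmm$ survive. It then remains to check that, after this elimination, the two families of relations $\sum_{I\sqcup J=\underline{n}}\lml_I\lml_J=0$ reduce to the defining relations of $\blmBV{+\infty}$, namely $\Delta^2=0$ and $[\lmm_{\{1\}},\lmm_{\{2\}}]=0$. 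The relation for $n=0$ reads $\lml_\varnothing\lml_\varnothing=\Delta^2$, giving $\Delta^2=0$ directly, while the higher relations $\sum_{I\sqcup J=\underline{n}}\lml_I\lml_J=0$ follow formally from $\Delta^2=0$ together with $[\lmm_{\{1\}},\lmm_{\{2\}}]=0$ by the gauge-symmetry argument of~\cite{MR3510210,MR3385702} already invoked in the text, so they are consequences rather than new constraints. The commutativity $[\lmm_{\{1\}},\lmm_{\{2\}}]=0$ is retained verbatim. This yields the claimed isomorphism.

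For the inhomogeneous Koszul property, I would exhibit a quadratic-linear Gr\"obner basis, appealing to the criterion recalled in the excerpt that a Gr\"obner basis of quadratic-linear elements gives an inhomogeneous Koszul presentation (obtained from \cite[Th.~5.1]{MR3084563} by the trivial modification mentioned there). Concretely, I would pass to the associated shuffle algebra via the forgetful functor, choose an ordering in which the leading term of $[\lml_I,\lmm_{\{i\}}]=\lml_{I\sqcup\{i\}}$ is $\lml_I\lmm_{\{i\}}$ (so these relations rewrite any occurrence of $\lmm$ to the right of an $\lml$), in which $[\lmm_{\{1\}},\lmm_{\{2\}}]=0$ orders the $\lmm$-generators, and in which the leading term of $\sum_{I\sqcup J=\underline{n}}\lml_I\lml_J=0$ is the appropriate product of two $\lml$'s. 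The resulting normal forms should be the monomials $\lmm_{\{1\}}\cdots\lmm_{\{n\}}$ and $\Delta\lmm_{\{1\}}\cdots\lmm_{\{n\}}$, matching the PBW basis displayed for $\blmBV{1}$ in Section~\ref{sec:orderone} (since $\blmBV{+\infty}$ has, as a vector space, the same two families of monomials). I would verify via the Diamond Lemma \cite[Th.~4.5.1.4]{MR3642294} that all S-polynomials reduce to zero; the homogeneous quadratic part of this Gr\"obner basis then certifies Koszulness of the associated quadratic algebra, and the quadratic-linear basis certifies that the given presentation is inhomogeneous Koszul.

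The main obstacle I anticipate is the Gr\"obner basis verification for the family $\sum_{I\sqcup J=\underline{n}}\lml_I\lml_J=0$: because there are infinitely many generators $\lml_{\underline{n}}$ and the relations mix all of them in every arity, one must choose the monomial ordering with care so that the leading terms are consistent across the $\lml\lml$-relations and do not create uncontrolled overlaps with the quadratic-linear relations $[\lml_I,\lmm_{\{i\}}]=\lml_{I\sqcup\{i\}}$. Confirming that every S-polynomial between an $\lml\lml$-relation and an $\lml\lmm$-relation reduces to zero (rather than generating genuinely new relations) is the delicate point; I would organise this by first establishing that the normal-form count is sharp, matching the $\blmBV{1}$ basis, since a sharp dimension bound leaves no room for missing relations and thereby confirms the Gr\"obner basis.
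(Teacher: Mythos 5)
Your overall strategy --- pass to the shuffle algebra, exhibit a quadratic-linear Gr\"obner basis, and certify it by matching the count of normal monomials against the dimension of $\blmBV{+\infty}$ --- is exactly the paper's, and your first part (eliminating the $\lml_{\underline{n}}$ for $n\geqslant 1$ via the quadratic-linear relations and invoking the gauge-symmetry argument for the higher $\lml\lml$-relations) is sound. However, there is a genuine error in the key step: you assert that $\blmBV{+\infty}$ has, as a vector space, the same basis as $\blmBV{1}$, namely $\lmm_{\{1\}}\cdots\lmm_{\{n\}}$ and $\Delta\lmm_{\{1\}}\cdots\lmm_{\{n\}}$. This is false. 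The algebra $\blmBV{1}$ carries the extra relation $[\Delta,\lmm_{\{1\}}]=0$, which is what collapses its basis to those two families; in $\blmBV{+\infty}$ there is \emph{no} relation between $\Delta$ and $\lmm$, so its basis consists of all alternating words $w_0\Delta w_1\Delta\cdots\Delta w_t$ with the $w_i$ commutative monomials in the $\lmm_{\{j\}}$ and the interior factors $w_1,\ldots,w_{t-1}$ nonempty. The dimension per arity is therefore much larger than $2$, and your claimed sharp bound cannot hold.

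This mistake is also internally inconsistent with your own choice of leading terms: with $\lml_I\lmm_{\{i\}}$ leading for the quadratic-linear relations and only one monomial of the form $\lml_\varnothing\lml_{\underline{n}}$ eliminated per arity by each $\lml\lml$-relation, monomials such as $\lml_{\{1\}}\lml_{\{2\}}$ survive reduction, so the normal forms are not the two families you list. The correct normal monomials (as in the paper) are $\lmm_{\{i_1\}}\cdots\lmm_{\{i_s\}}M$ and $\lmm_{\{i_1\}}\cdots\lmm_{\{i_s\}}M\lml_\varnothing$ with $M$ an arbitrary monomial in the $\lml_I$ of positive arity; these biject with the basis of $\blmBV{+\infty}$ by replacing each factor $\lml_{\{i_1,\ldots,i_p\}}$ of $M$ with $\Delta\lmm_{\{i_1\}}\cdots\lmm_{\{i_p\}}$. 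Once you substitute this correct count, the rest of your argument goes through and coincides with the paper's proof.
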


\begin{proof}
We shall establish that this quadratic-linear presentation is a Gr\"obner basis of relations, which implies that it is inhomogeneous Koszul. We may order monomials in the free shuffle algebra in such a way that the leading monomials of the relations above are
\[
	\lmm_{\{2\}}\lmm_{\{1\}}, \quad
\lml_\varnothing\lml_{\underline{n}} \text{ for $n\geqslant 0$}
	, 
	\quad \lml_I\lmm_{\{i\}}  \text{ for  $i\notin I$}.	 
	\]
The normal monomials with respect to these leading terms are of the form $\lmm_{\{i_1\}}\lmm_{\{i_2\}}\cdots \lmm_{\{i_s\}} M$ or $\lmm_{\{i_1\}}\lmm_{\{i_2\}}\cdots \lmm_{\{i_s\}} M\lml_\varnothing$, where $M$ is any monomial in the free shuffle algebra generated by $\lml_{\{n\}}$, $n>0$. Let us exhibit a one-to-one correspondence between such monomials and elements forming a basis of $\blmBV{+\infty}$. For that, we simply replace each factor $\lml_{\{i_1,\ldots,i_p\}}$ of $M$ by $\Delta\lmm_{\{i_1\}}\cdots\lmm_{\{i_p\}}$; the fact that the elements thus obtained form a basis is clear. In general, the set of normal monomials with respect to the given relations form a spanning set of the quotient, and we just established that in each arity the cardinality of this (finite) set is equal to the dimension of the quotient, so it must form a basis, and the given relations must form a Gr\"obner basis.  
\end{proof}

If we wish to incorporate the finite differential order condition, it is very easy to do using the presentation we obtained:
 \[
\blmBV{k}\cong \blmBV{+\infty}/(\lml_{\underline{k}}). 
 \]
This leads to a quadratic-linear presentation of $\blmBV{k}$: it is generated by $\lmm_{\{1\}}$
and operations $\lml_{\underline{i}}$ for $i< k$
subject to the following relations:
\[\label{eq:truncLMLieinf}
[\lmm_{\{1\}},\lmm_{\{2\}}]=0,
 \quad
 [\lml_I,\lmm_{\{i\}}]=\lml_{I\sqcup\{i\}}
 \text{ for $|I|<k-1$},
 \quad
[\lml_{I},\lmm_{\{i\}}]=0 \text{ for $|I|=k-1$},
\]
and the family of relations
\[\sum_{{I\sqcup J = \underline{n}}} \lml_{I}   \lml_{J} = 0, \qquad 0\leqslant 
	n\leqslant 2k-2 \text{ and $I,J	
		\leqslant k-1$}. \]
Note that some of the terms that we used as leading terms in 
Proposition \ref{prop:LMBV-infty-Koszul}, for example, 
$\lml_\varnothing\lml_{\underline{n}}$ in the second group of 
relations with $k\leqslant n\leqslant 2k-2$, are no longer 
present (only sets with cardinality at most $k-1$ appear in this sum), and so we cannot use the proof of that proposition to 
conclude that our presentation of $\blmBV{k}$ is inhomogeneous 
Koszul. Nevertheless, that statement turns out to be true. 

\begin{proposition}\label{prop:LMBV-k-Koszul}
The above quadratic-linear presentation of the twisted associative algebra $\blmBV{k}$ is inhomogeneous Koszul for all $k$. 
\end{proposition}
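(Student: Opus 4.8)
The plan is to exhibit a monomial order on the associated free shuffle algebra for which the listed quadratic-linear relations form a Gr\"obner basis; by the quadratic-linear version of \cite[Th.~5.1]{MR3084563} (the ``trivial modification'' recalled above) this immediately gives that the presentation is inhomogeneous Koszul. As in the proof of Proposition~\ref{prop:LMBV-infty-Koszul}, I would work in the shuffle algebra on the generators $\lmm_{\{1\}}$ and $\lml_{\underline{i}}$ for $0\leqslant i\leqslant k-1$ (legitimate by monoidality of the forgetful functor) and keep the leading terms of the ``commuting'' relations unchanged: $\lmm_{\{2\}}\lmm_{\{1\}}$ for $[\lmm_{\{1\}},\lmm_{\{2\}}]$, and $\lml_I\lmm_{\{i\}}$ for both families $[\lml_I,\lmm_{\{i\}}]=\lml_{I\sqcup\{i\}}$ (when $|I|<k-1$) and $[\lml_I,\lmm_{\{i\}}]=0$ (when $|I|=k-1$). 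As before these leading terms bring every monomial into the shape $\lmm_{S_0}\,\lml_{A_1}\cdots\lml_{A_r}$, with all $\lmm$'s sorted and collected on the left.

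The real issue, exactly as flagged, is the family $\sum_{I\sqcup J=\underline{n}}\lml_I\lml_J=0$. For $n\leqslant k-1$ all summands survive and one keeps the leading term $\lml_\varnothing\lml_{\underline{n}}$ of Proposition~\ref{prop:LMBV-infty-Koszul}; but for $k\leqslant n\leqslant 2k-2$ the monomial $\lml_\varnothing\lml_{\underline{n}}$ is no longer present (only blocks of size $\leqslant k-1$ occur), so a new leading term must be designated. I would refine the order by the guiding principle of \emph{minimising the size of the left factor} $|I|$ (breaking ties lexicographically), which recovers $\lml_\varnothing\lml_{\underline{n}}$ in the surviving low-arity cases and, for $k\leqslant n\leqslant 2k-2$, singles out the unambiguous leading monomial $\lml_{I_0}\lml_{J_0}$ with $|I_0|=n-(k-1)$ and $|J_0|=k-1$. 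This yields a complete rewriting system, and it remains to verify confluence.

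I expect the confluence check (the Diamond Lemma criterion \cite[Th.~4.5.1.4]{MR3642294}) to be the crux and the main obstacle, precisely because the leading terms of the truncated relations differ from those in $\blmBV{+\infty}$. The overlaps $\lmm\lmm\lmm$ and $\lml_I\lmm_{\{i\}}\lmm_{\{j\}}$ are routine and resolve as before. The substantive S-polynomials have two shapes: the mixed overlaps $\lml_{I_0}\lml_{J_0}\lmm_{\{i\}}$, where applying the $\lml$--$\lml$ rule competes with moving $\lmm_{\{i\}}$ leftwards, and the purely quadratic overlaps $\lml_A\lml_B\lml_C$ in which $\lml_A\lml_B$ and $\lml_B\lml_C$ are both leading. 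In the first case the delicate point is bookkeeping the truncation: moving $\lmm_{\{i\}}$ past $\lml_{J_0}$ with $|J_0|=k-1$ produces $\lml_{J_0\sqcup\{i\}}$, which vanishes in $\blmBV{k}$, so one must check that it is exactly the rule $[\lml_I,\lmm_{\{i\}}]=0$ in top arity that makes the two reduction paths agree. The cleanest organisation of both checks is to observe that every such identity is the image, under the truncation $\lml_{\geqslant k}\mapsto 0$, of the corresponding confluent reduction in $\blmBV{+\infty}$; the whole content of the argument is that this truncation carries reductions to reductions \emph{despite} changing leading terms, which I would verify overlap by overlap.

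Finally I would cross-check the outcome by a dimension count: the normal monomials surviving this rewriting system in each arity always span $\blmBV{k}(\underline{n})$, so if their number matches $\dim\blmBV{k}(\underline{n})$ then there is no room for hidden reductions and the relations automatically form a Gr\"obner basis, exactly as in the dimension argument of Proposition~\ref{prop:homology-LM}. Here the target dimension can be obtained from the PBW basis of $\blmBV{+\infty}$ together with the vanishing of all $\lml$-blocks of size $\geqslant k$ and the resulting truncated quadratic relations, in the same spirit as the comparison of $\blmHycomm{k}^!$ with $\lmHycomm^!$ in Proposition~\ref{prop:bLMHycomm-Koszul}. As an alternative to the explicit confluence verification, one could first prove that the associated quadratic algebra $q\blmBV{k}$ (in which $\lmm_{\{1\}}$ becomes central after discarding the linear parts) is Koszul, and then verify the compatibility conditions of the inhomogeneous Koszul duality criterion \cite{MR1250981} for the linear parts; but since the leading-term analysis is needed for the quadratic part anyway, the Gr\"obner-basis route seems the most economical.
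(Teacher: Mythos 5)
Your overall strategy coincides with the paper's --- exhibit a quadratic(-linear) Gr\"obner basis for this presentation and invoke the quadratic-linear version of the PBW criterion --- and the leading terms you designate for the relations $\sum_{I\sqcup J=\underline{n}}\lml_I\lml_J=0$ in the range $k\leqslant n\leqslant 2k-2$ (left factor of cardinality $n-k+1$, right factor of cardinality $k-1$, ties broken lexicographically) are exactly the ones the paper works with. The difference is in how the two substantive verifications are closed, and this is where your proposal stops. For the purely quadratic overlaps $\lml_A\lml_B\lml_C$ the paper performs no primal-side confluence check at all: it passes to the Koszul dual of the $\lml$-subfamily, where the relations become ``all products $\lmk_I\lmk_J$ with $|I|,|J|<k$ are equal''; there the normal monomials can be enumerated explicitly (products indexed by ordered decompositions of $\underline{n}$ into intervals in which a non-empty interval is never followed by an empty one and only the first non-empty interval may be short), which yields a PBW basis and hence a quadratic Gr\"obner basis, transported back by duality. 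For the mixed overlaps it then makes two short commutator computations showing that rewriting $\lmm$ to the left is compatible both with the $\lml$-relations and with $[\lmm_{\{1\}},\lmm_{\{2\}}]=0$ --- an Ore-extension/distributive-law argument.

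Two concrete problems with the routes you propose instead. First, your organizing principle that ``truncation carries reductions to reductions'' fails naively for precisely the relations at issue: for $k\leqslant n\leqslant 2k-2$ the $\blmBV{+\infty}$-leading term $\lml_\varnothing\lml_{\underline{n}}$ is killed by the truncation $\lml_{\geqslant k}\mapsto 0$, so the reduction in $\blmBV{+\infty}$ does not descend, and the confluence of the new rewriting rules is a genuinely new statement that you have deferred rather than proved. Second, the dimension-count fallback requires an \emph{independent} lower bound on $\dim\blmBV{k}(\underline{n})$; unlike for $\blmBV{+\infty}$, where an explicit basis was exhibited, or for $H_\bullet(\mathcal L\mathcal M)$, where geometry supplies the bound, no such bound is available for the quotient $\blmBV{+\infty}/(\lml_{\underline{k}})$ before a normal form is known, so that argument is circular as stated. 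In short, you have correctly located the crux and the correct leading terms, but the step that actually does the work --- the paper's dualization of the $\lml$-part together with the explicit compatibility computations for $\lmm$ --- is missing from the proposal.
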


\begin{proof}
Let us focus on $k\geqslant 2$: in the case $k=1$, the presentation is actually homogeneous, and we understood it well in Section \ref{sec:orderone}. Our argument will proceed as follows. We shall first deal separately with the relations~\eqref{eq:truncLMLieinf}
and then show that the commutation relations with the subalgebra generated by $\lmm$, which is a one-generated free twisted commutative algebra, give a twisted associative algebra which is obtained as a sort of Ore extension. 

For \eqref{eq:truncLMLieinf}, it is easier to consider the 
suspension of the Koszul dual algebra; it is generated by 
elements $\lmk_{\underline{i}}$ for $i\geqslant 0$, 
of arity $i$ and homological degree $2i-2$, subject to the 
relations
 \[
\lmk_I\lmk_J=\lmk_{I'}\lmk_{J'} 
	\text{ for $I\sqcup J=I'\sqcup J'$ and $|I|, |J|, |I'|, |J'|<k$.}
 \]
Let us consider the ordering of these generators for which we first compare the arities, and if the arities coincide, we list the elements of the indexing set $I$ of each generator $h_I$ in the increasing order, and compare the lists in the dictionary order. Then the elements that are \emph{not} the leading terms of quadratic relations are
\begin{align*} 
\lmk_\varnothing\lmk_{\underline{n}} & & \text{where $n<k$},\\
\lmk_{\{1,2,\ldots,n-k+1\}}\lmk_{\{n-k+2,\ldots,n\}} & &
\text{ where $k\leqslant n\leqslant 2k-2$}. 
\end{align*}
Moreover, one can easily see that the Koszul dual algebra has a PBW basis of the form $\lmk_{I_1}\lmk_{I_2}\cdots\lmk_{I_p}$ where $I_1+\cdots+I_p$ is a decomposition of $\underline{n}$ as an ordered sum of (possibly empty) intervals for which a non-empty interval cannot be followed by an empty one and
only the first non-empty interval can be of length strictly less than $k-1$. This implies that the algebra presented via relations listed in \eqref{eq:truncLMLieinf} also has a PBW basis, and a quadratic Gr\"obner basis of relations.

Let us now choose the ordering for which the generator $\lmm$ is smaller than any of the generators~$\lmk_{\underline{n}}$. To show that our algebra has a Gr\"obner basis, it is sufficient to show that the commutation relations with $\lmm$ are compatible with the relations listed in \eqref{eq:truncLMLieinf} and  with the relation $[\lmm_{\{1\}},\lmm_{\{2\}}]=0$. For the first assertion, it is enough to show that the commutator 
 \[
\left[\sum_{\substack{I'\sqcup I'' = I\\ |I'|, |I''|<k } } \lml_{I'}   \lml_{I''}, \lmm_{\{i\}}\right]=
\sum_{\substack{I'\sqcup I'' = I\\ |I'|, |I''|<k } } [\lml_{I'}, \lmm_{\{i\}}]   \lml_{I''}+
\sum_{\substack{I'\sqcup I'' = I\\ |I'|, |I''|<k } } \lml_{I'}   [\lml_{I''}, \lmm_{\{i\}}]
 \] 
is a combination of \eqref{eq:truncLMLieinf}. But we have
  \[
\sum_{\substack{I'\sqcup I'' = I\\ |I'|, |I''|<k } } [\lml_{I'}, \lmm_{\{i\}}]   \lml_{I''}+
\sum_{\substack{I'\sqcup I'' = I\\ |I'|, |I''|<k } } \lml_{I'}   [\lml_{I''}, \lmm_{\{i\}}]
=
\sum_{\substack{I'\sqcup I'' = I\\ |I'|<k-1, |I''|<k } } \lml_{I'\sqcup \{i\}}   \lml_{I''}+
\sum_{\substack{I'\sqcup I'' = I\\ |I'|<k, |I''|<k-1 } } \lml_{I'}   \lml_{I''\sqcup\{i\}}
  \]
is precisely one of the relations listed in \eqref{eq:truncLMLieinf}. The second assertion is even simpler:
we compute that
 \[
[[\lmk_I,\lmm_{\{i'\}}],\lmm_{\{i''\}}]=
[\lmk_{I\sqcup\{i'\}},\lmm_{\{i''\}}]= 
\lmk_{I\sqcup\{i'\}\sqcup\{i''\}}, 
 \]
so  $[[\lmk_I,\lmm_{\{i'\}}],\lmm_{\{i''\}}]=
[[\lmk_I,\lmm_{\{i''\}}],\lmm_{\{i'\}}]$,
which is compatible with 
 $
[\lmk_I,[\lmm_{\{i'\}},\lmm_{\{i''\}}]]=0$.
 This altogether implies that the quadratic-linear presentation of $\blmBV{k}$ forms a quadratic Gr\"obner basis, which in turn implies that this presentation is inhomogeneous Koszul.  
\end{proof}

\subsubsection{The Koszul dual dg algebra and its homology}
We can now describe the (suspension of the) Koszul dual twisted associative algebra $(\blmBV{k}^{!},\partial)$.  Its underlying algebra is generated by elements $\lmk_{\underline{i-1}}$ for
$i\in \underline{k}$, of arity $i-1$ and of homological degree $2i-4$, and an element $\lmn$ of arity $1$ and of homological degree $1$, subject to the relations
\begin{align}\label{eq:LM-rel-s-koszul}
[\lmn_{\{1\}},\lmn_{\{2\}}] &= 0,\notag \\ 
\lmk_I\lmk_J &=\lmk_{I'}\lmk_{J'} \text{ whenever $I\sqcup J=I'\sqcup J'$,}\\ 
[\lmk_{I}, \lmn_{\{i\}} ] &= 0 \text{ for each $i\notin I$}.   \notag
\end{align}
The presence of quadratic-linear relation in $\blmBV{k}$ means that the Koszul dual algebra has a nonzero differential $\partial$ which is a derivation given on generators by 
\begin{align}
\partial( \lmk_{\varnothing} ) &= 0, \\
\partial(\lmk_{\underline n})  &=  \sum_{i=1}^n \lmk_{\underline n\setminus \{i\}} \lmn_{\{ i\}} \text{ for $1\leqslant n\leqslant k-1$,} \\
\partial(\lmn_{\{1\}})  &= 0.
\end{align}
For the remaining part of the proof, it will be convenient to introduce auxiliary elements $\mathsf{K}_{\underline n}^{2t}$, which is an element of arity $n$ and degree $2t$ that is equal to the product $\lmk_{I_1}\cdots\lmk_{I_p}$ for any choice of partition $\underline{n}=I_1\sqcup \cdots\sqcup I_{n-t}$ with $|I_s|<k$; notice that $t\leqslant n-1$. Moreover, examining the PBW basis for the second family of relations in~\eqref{eq:LM-rel-s-koszul}
obtained in the proof of Proposition~\ref{prop:LMBV-k-Koszul}, we see that if we write $n=q(k-1)+r$ with $0\leqslant r<k-1$ (so that $q$ and $r$ are, respectively, the quotient and the remainder after the Euclidean division of $n$ by $k-1$), we have $t\leqslant q(k-2)+r-1+\delta_{r,0}$, and the algebra $\blmBV{k}^!$ has a basis 
\begin{gather} \label{eq:bLM-altnotation}
\mathsf{K}_{I}^{2t} \lmn_{\{ j_1\}}\cdots 
\lmn_{\{ j_v\}}, \qquad   n\geqslant 0,\quad 1\leqslant j_1<\cdots < j_v\leqslant n, \quad I=\underline n\setminus\{j_1,\ldots,j_v\},\\ \notag
t\leqslant q(k-2)+r-1+\delta_{r,0}\ , \text{ where }\    |I| = q(k-1)+r, \ 0\leqslant r< k-1.
\end{gather}
To imitate what was done in Section \ref{sec:ordertwo} for $k=2$, we shall allow elements $\mathsf{K}_{\underline{n}}^{2t}$ with the original milder restriction $t\leqslant n$, and consider  a bigger chain complex $(\widehat{\blmBV{+\infty}^!},\partial)$ spanned by the elements
\begin{equation} \label{eq:LM-altnotationFULL}
\mathsf{K}_{I}^{2t} \lmn_{\{ j_1\}}\cdots \lmn_{\{ j_v\}},  \quad  0\leqslant n, \quad 1\leqslant j_1<\cdots < j_v\leqslant n, \quad I=\underline n\setminus\{j_1,\ldots,j_v\},
\quad  t\leqslant |I| ,
\end{equation}
with the differential given by
 \[
\partial\colon \mathsf{K}_{I}^{2t} \lmn_{\{ j_1\}}\cdots \lmn_{\{ j_\ell\}}\mapsto  \sum_{i\in I} \mathsf{K}_{ I\setminus\{i\} }^{2t-2} \lmn_{\{ i\}}\lmn_{\{ j_1\}}\cdots \lmn_{\{ j_\ell\}}.
 \]
Note that this complex is a version of the Koszul complex, and it is manifestly acyclic in each arity $n\geqslant 1$. Importantly for us, $(\blmBV{k}^!,\partial)$ may be viewed as a subcomplex of this complex. 

The homology $H(\blmBV{k}^!,\partial)$ can be computed as follows. In arity $0$, it is spanned by the powers $\lmk_{\varnothing}^p$, $p\geqslant 0$. Since the complex $(\widehat{\blmBV{+\infty}^!},\partial)$ is acyclic in each arity $n\geqslant 1$, the homology of its subcomplex $(\blmBV{k}^!,\partial)$ is exactly the subspace of its elements which are boundaries of elements outside of that subcomplex. It remains to note that elements outside of $(\blmBV{k}^!,\partial)$ whose boundaries belong to this subcomplex are precisely 
 \[
\mathsf{K}_{I}^{2t} \lmn_{\{ j_1\}}\cdots \lmn_{\{ j_\ell\}},\]
where $|I|=q(k-1)+1$, $t=q(k-2)+1$ and $q\geqslant 0$.
Their boundaries in $(\widehat{\blmBV{+\infty}^!},\partial)$ are the elements 
 \[
\sum_{i\in I} \mathsf{K}_{ I\setminus\{i\} }^{2t-2} \lmn_{\{ i\}} \lmn_{\{ j_1\}}\cdots \lmn_{\{ j_\ell\}}\]
with the same parameter constraints.
Moreover, the homology is multiplicatively generated by the elements 
 $
\lmg_{\underline{n}}:=\partial(\mathsf{K}_{\underline{n}}^{2t})$,
again with the same parameter constraints (where now $I=\underline{n}$).
As in Section \ref{sec:ordertwo}, the twisted $A_\infty$-algebra structure on elements of the homology of nonzero arity is in fact a twisted associative algebra: all higher operations applied to several such elements vanish.

It remains to show that the elements $\lmg_{\underline{n}}$ satisfy precisely the relations 
described in Proposition \ref{prop:bLMHycomm-Koszul}. The fact that 
these relations are satisfied is checked analogously to how it is 
done for $k=2$ in Section \ref{sec:ordertwo}, so there is a 
surjective map from $\blmHycomm{k}^!$ onto the homology. To 
conclude, we shall argue as follows. If we consider all twisted 
associative algebras here as shuffle algebras (that is, ignore the 
symmetric group actions), there is a contracting homotopy $h$ of $
(\widehat{\blmBV{+\infty}^!},\partial)$ which annihilates all 
elements not divisible by $\lmn_{\{1\}}$ and sends each element $
\mathsf{K}_{I}^{2t} \lmn_{\{1\}}
	\lmn_{\{ j_1\}}\cdots \lmn_{\{ j_\ell\}}$ to $\mathsf{K}_{I\sqcup\{1\}}^{2t+2} \lmn_{\{ j_1\}}\cdots 
\lmn_{\{ j_\ell\}}$; one immediately sees that $\partial h+h
\partial=\mathrm{id}$. This implies that all boundaries in $
(\widehat{\blmBV{+\infty}^!},\partial)$ can be written as boundaries 
of elements    
 \[
\mathsf{K}_{I}^{2t} \lmn_{\{ j_1\}}\cdots \lmn_{\{ j_\ell\}} 
 \]
satisfying the condition $1\in I$. By direct inspection, such boundaries are linearly independent. In our case, we
have that $|I|=q(k-1)+1$ and $t=q(k-2)+1$ for some $q\geqslant 0$, so the dimension of the space of such boundaries is precisely the number of subsets of $\underline{n-1}$ of cardinality divisible by $k-1$, which, according to Corollary \ref{cor:dim-b-lmgrav}, is equal to the dimension of $\blmHycomm{k}^!(n)$. Thus, the surjection from $\blmHycomm{k}^!$ onto the homology must be an isomorphism. As an immediate by-product, we obtain one of the central results of this section.

\begin{theorem}
The homotopy quotient of $\blmBV{k}$ by $\Delta$ is represented by $\blmHycomm{k}$. 
\end{theorem}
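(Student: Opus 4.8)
The plan is to assemble the results of the preceding subsections through the inhomogeneous Koszul duality theory. By \cite[Prop.~2.2]{KMS}, the homotopy quotient of $\blmBV{k}$ by $\Delta$ is represented by the dg twisted associative algebra $(\blmBV{k}\vee T(r_1,r_2,\dots),\mathrm{d})$, so by Theorem \ref{thm:LM-quasiiso-map} it suffices to recognise this object, up to quasi-isomorphism, as $\blmHycomm{k}$. I would argue on the Koszul dual side: Proposition \ref{prop:LMBV-k-Koszul} shows that the quadratic-linear presentation of $\blmBV{k}$ is inhomogeneous Koszul, so $\blmBV{k}$ is quasi-isomorphic to the cobar construction of its Koszul dual dg coalgebra, and forming the homotopy quotient amounts to discarding the arity-zero part generated by $\Delta$.

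The central step is the homology computation of the Koszul dual dg algebra $(\blmBV{k}^!,\partial)$, which is precisely the content established immediately above. Embedding $(\blmBV{k}^!,\partial)$ as a subcomplex of the acyclic Koszul-type complex $(\widehat{\blmBV{+\infty}^!},\partial)$ and using the explicit contracting homotopy $h$, one finds that in every positive arity the homology $H(\blmBV{k}^!,\partial)$ is multiplicatively generated by the classes $\lmg_{\underline{n}}=\partial(\mathsf{K}_{\underline{n}}^{2t})$ subject to exactly the relations of $\blmHycomm{k}^!$ from Proposition \ref{prop:bLMHycomm-Koszul}. The dimension count of Corollary \ref{cor:dim-b-lmgrav} then forces the resulting surjection $\blmHycomm{k}^!\twoheadrightarrow H(\blmBV{k}^!,\partial)$ to be an isomorphism, and since $h$ annihilates every element not divisible by $\lmn_{\{1\}}$, all higher twisted $A_\infty$-operations on these classes vanish, so the homology carries only the twisted associative structure.

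Finally, Proposition \ref{prop:bLMHycomm-Koszul} tells us that $\blmHycomm{k}$ is Koszul, so the homology of the cobar construction of the dual of $H(\blmBV{k}^!,\partial)\cong\blmHycomm{k}^!$ is exactly $\blmHycomm{k}$; this yields the desired identification of the homotopy quotient. The genuinely delicate step is not this final packaging but the homology computation itself: one must correctly isolate those elements of $(\widehat{\blmBV{+\infty}^!},\partial)$ that lie outside the subcomplex yet have boundaries inside it, and verify the arithmetic constraints $|I|=q(k-1)+1$ and $t=q(k-2)+1$ that single out exactly the surviving generators and reconcile their homological degrees with the presentation of Proposition \ref{prop:bLMHycomm-Koszul}.
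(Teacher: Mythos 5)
Your proposal follows the paper's own route essentially verbatim: the theorem is obtained as a by-product of computing $H(\blmBV{k}^!,\partial)$ for the inhomogeneous Koszul dual of the quadratic-linear presentation, via the embedding into the acyclic complex $(\widehat{\blmBV{+\infty}^!},\partial)$, the vanishing of the higher twisted $A_\infty$-operations, and the dimension count of Corollary \ref{cor:dim-b-lmgrav}. One caution: in the paper, Theorem \ref{thm:LM-quasiiso-map} is \emph{deduced from} this theorem rather than the other way around, so your opening appeal to it should be dropped to avoid circularity --- fortunately the remainder of your argument never actually uses it.
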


\subsubsection{Proof of Theorem \ref{thm:LM-quasiiso-map}. } Since the dg twisted associative algebra $(\blmBV{k}\vee T(r_1,r_2,\dots), \mathrm{d})$ computes the homotopy quotient by $\Delta$, its homology is isomorphic to $\blmHycomm{k}$. It remains to make the following simple observations. 
\begin{enumerate}
\item The map $g$ is a morphism of dg twisted associative algebras.
\item The images of the generators of $\blmHycomm{k}$ do not vanish in the homology of $\mathrm{d}$ (the image of $\mathrm{d}$ is contained in the ideal generated by~$\Delta$).
\item In the homology of $\mathrm{d}$, the subspace of elements of arity $1+(k-1)t$ and homological degree $2t$ is one-dimensional for each $t\ge0$.
\end{enumerate}
This implies that the map $g$ is surjective on the homology of $\mathrm{d}$, hence it is a quasi-isomorphism.\qed 

\section{Homotopy quotients in the symmetric operad case}
\subsection{Order of differential operator}

Let us recall the definition of the Koszul braces (see \cite{MR1466615,MR3019721,MR837203} for further details on that), and relate them to the notion of a Batalin--Vilkovisky algebra.
%
\begin{definition}
Let $\mathcal O$ be a symmetric operad, and let $m\in\mathcal O(\underline{2})_0$ be a commutative associative binary operation satisfying. For an element $f\in\mathcal O(\underline{1})$, 
we define the $n$-th \emph{Koszul brace} $b_n^f\in\mathcal  O(\underline{n})$ by the formulas
\[ b_1^f =f,\qquad
b_{n+1}^f = b_n^f\circ_n m - m\circ_1 b_n^f - (m\circ_1 b_n^f).(n,n+1)\text{ for $n\geqslant 1$}.
\]
We say that the element $f\in\mathcal O(\underline{1})$ is of differential order at most $k$ (with respect to $m$) if $b_{k+1}^f=0$. 
\end{definition}

The operad $\BV$ of Batalin--Vilkovisky algebras has two equivalent definitions. One of them says that it is generated by an element $\Delta$ or arity $1$ and homological degree $1$ and a commutative binary operation $m$ of arity $2$ and homological degree $0$ subject to the relations
\[ \Delta^2 =0, \quad
m\circ_1 m =m\circ_2m,\quad\
b_{3}^\Delta =0.
\]
The other one says that the same operad admits a quadratic-linear presentation using two commutative binary operations $m$ and $\ell$ of respective degrees $0$ and $1$, and a unary generator $\Delta$ of degree~$1$, satisfying the relations
\[
\Delta^2 = 0 , \quad 
m\circ_1m =m\circ_2m, \quad 
(\ell\circ_1\ell).(1+(123)+(132))=0 
\]
saying that $\Delta$ is of square zero, $m$ is associative
and $\ell$ is a Lie bracket,
along with the relations
\begin{gather*}
	\Delta\circ_1 m - m\circ_1 \Delta
	- m\circ_2 \Delta =\ell, \\
\ell\circ_2 m - m\circ_1 \ell - 
	m\circ_1 \ell .(23)  = 0, \\
\Delta \circ_1 \ell + 
\ell\circ_1\Delta + \ell\circ_2\Delta = 0 
\end{gather*}
expressing that $\Delta$ is of order two for $m$ and
that $\ell$ is precisely the obstruction for it to be
of order one. The last relation, saying that $\Delta$ is
a derivation for $\ell$, in fact follows from the other
relations. Examining the first of those definitions, it is very easy to generalise it to use other values of the differential order. 

\begin{definition}
The symmetric operad $\bBV{k}$ is generated by an element $\Delta$ or arity $0$ and homological degree $1$ and a commutative binary operation $m$ of arity $2$ and homological degree $0$ subject to the relations
\[ 
\Delta^2=0,\quad
m\circ_1m=m\circ_2m,\quad
b_{k+1}^\Delta=0.
\]
\end{definition}

\subsection{The case of differential order one}\label{sec:DMorderone}
 The symmetric operad $\bBV{1}$ is generated by an element $\Delta$ of arity $0$ and homological degree $1$ and an element $m$ of arity $2$ and homological degree $0$ subject to the relations
\[
\Delta^2=0, \quad
 m\circ_1m=m\circ_2m, 
 \quad \Delta\circ_1m-m\circ_1\Delta-m\circ_2\Delta=0.
\]
It is a (homogeneous) quadratic symmetric operad, and one can immediately see that it is Koszul; in fact, it has a quadratic Gr\"obner basis for which the corresponding PBW basis consists of elements 
 \[
m^{[n]} \text{ and } (\cdots((m^{[n]}\circ_{i_1}\Delta)
	\circ_{i_2}\Delta)\cdots)\circ_{i_p}\Delta
\text{ for $n\ge 0$ and $I=\{i_1,\ldots,i_p\}
	\subset\underline{n+1}$.}
 \]
where we denote by $m^{[n]}$ the $n$-fold composite of the operation 
$m$: $m^{[0]}=\mathrm{id}$, $m^{[n+1]}=m^{[n]}\circ_1 m$.
Briefly, we consider all left combs $m^{[n]}$ with a possibly
empty subset of leaves decorated by $\Delta$.

The Koszul dual twisted associative coalgebra $\blmBV{1}^{\ac}$ has a PBW basis consisting of the elements 
 \[
u_{n,r}:=(s\Delta)^r (sm)^{[n]} \text{ for $r, n\geqslant 0$},
 \]
with the obvious cooperad structure (splitting $r$ into a sum and splitting the set of arguments of $(sm)^{[n]}$ into a disjoint union). The homotopy quotient of $\bBV{1}$ by $\Delta$ is represented by the dg symmetric operad which is the quotient of the cobar construction $\Omega(\bBV{1}^{\ac})$ by the two-sided ideal generated by the elements $s^{-1}u_{0,r}$ for $r>0$. Thus quotient creates from the cobar construction another free symmetric operad equipped with a quadratic differential: it is generated by the elements $s^{-1}u_{\underline{n},r}$ with $n>0$, and the differential as above but with all terms involving $s^{-1}u_{0,r}$ being suppressed. This differential is the differential of the cobar construction of the symmetric cooperad which is the quotient of $\bBV{1}^{\ac}$ by the \emph{subspace} spanned by the elements $u_{0,r}$ for $r>0$. 
The linear dual symmetric operad is generated by the elements $u_{1,p}$ for $p\geqslant 0$, of arity $2$ and homological degree $2p+1$ subject to the relations
\begin{align*}
u_{1,p}\circ_1u_{1,q}+u_{1,p}\circ_2 u_{1,q}&=0,\\
u_{1,p}\circ_1u_{1,q}-u_{1,p-1}\circ_1 u_{1,q+1}&=0.
\end{align*}
It is easy to see that this symmetric operad is Koszul; in 
fact, it has a quadratic Gr\"obner basis for which the
map $u_{n,r}\longmapsto (u_{1,0})^{[n-1]}\circ_1 u_{1,r}$
for $r\geqslant 0$ and $n>0$ gives a bijection between
the corresponding linear and the target PBW basis.
Therefore, the homology 
of its cobar complex is the Koszul dual symmetric operad which 
has generators $m^{2p}$ of arity $2$ and homological degree 
$2p$ for $p\geqslant 0$ with the relations stating that the 
formal sum $\overline{m}=\sum_{p=0}^\infty m^{2p}$ satisfies
 \[ 
	\overline{m}\circ_1\overline{m}=
		\overline{m}\circ_2\overline{m}. 
 \]
To make sense of these relations, one has to separate the terms 
by total homological degree $2d\geqslant 0$ into infinitely 
many relations 
 \[
\sum_{p+q=d}m^{2p}\circ_1m^{2q}=\sum_{p+q=d}m^{2p}\circ_2m^{2q}. 
 \]
involving finitely many terms each. We established the following result.

\begin{theorem}
The homotopy quotient of $\bBV{1}$ by $\Delta$ is represented by the symmetric operad with generators $m^{2p}$ of arity $2$ and homological degree $2p$ for $p=0,1,2,\dots$ subject to the relations 
 \[
\sum_{p+q=d}m^{2p}\circ_1m^{2q}=\sum_{p+q=d}m^{2p}\circ_2m^{2q}	
\text{ for  $d\geqslant 0$}. 
 \]
\end{theorem}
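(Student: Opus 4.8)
The plan is to transport the Koszul-duality computation carried out for $\blmBV{1}$ in Section~\ref{sec:orderone} to the operadic setting, replacing the Cauchy product by operadic partial compositions while keeping the homological bookkeeping unchanged. First I would record that $\bBV{1}$ is a homogeneous quadratic operad which is Koszul, with a quadratic Gr\"obner basis whose PBW basis is given by the left combs $m^{[n]}$ carrying a (possibly empty) subset of their leaves decorated by $\Delta$. This identifies the Koszul dual cooperad $\bBV{1}^{\ac}$ together with its PBW basis $u_{n,r}=(s\Delta)^r(sm)^{[n]}$ and its splitting cocomposition, so that $\Omega(\bBV{1}^{\ac})$ is the minimal model of $\bBV{1}$, equipped with a quadratic cobar differential. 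By the same description of homotopy quotients used in the twisted associative setting, the homotopy quotient of $\bBV{1}$ by $\Delta$ is then represented by this minimal model with the two-sided ideal generated by the unary elements $s^{-1}u_{0,r}$ for $r>0$ collapsed to zero.

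The central reduction is to recognise the resulting dg operad. Killing that ideal removes exactly the summands of the cobar differential producing a factor $s^{-1}u_{0,r}$, leaving the free symmetric operad on the generators $s^{-1}u_{n,r}$ with $n>0$ together with the inherited, still quadratic, differential. I would then identify this complex as the cobar construction $\Omega(\mathcal{C})$ of the cooperad $\mathcal{C}$ obtained from $\bBV{1}^{\ac}$ by collapsing the \emph{subspace} spanned by the $u_{0,r}$ with $r>0$; checking that this subspace (rather than a genuine coideal) is compatible with the induced differential is the one point where the parallel with the associative case must be verified rather than assumed. Passing to the arity-wise graded dual, the operad $\mathcal{C}^*$ is generated by binary elements $u_{1,p}$ of homological degree $2p+1$ subject to $u_{1,p}\circ_1 u_{1,q}+u_{1,p}\circ_2 u_{1,q}=0$ and $u_{1,p}\circ_1 u_{1,q}=u_{1,p-1}\circ_1 u_{1,q+1}$.

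The heart of the argument is to prove that $\mathcal{C}^*$ is Koszul. For this I would pass to the associated shuffle operad and produce a quadratic Gr\"obner basis: the second relation normalises the upper index onto the rightmost binary factor, so that a candidate PBW basis is furnished by the images of the assignment $u_{n,r}\mapsto (u_{1,0})^{[n-1]}\circ_1 u_{1,r}$ for $n>0$, and comparing the number of these normal monomials in each arity with the linear basis $\{u_{n,r}\}_{n>0}$ shows that the bound is sharp, forcing the relations to be a Gr\"obner basis. Granting Koszulness, by Koszul duality the homology of $\Omega(\mathcal{C})$ is the Koszul dual operad of $\mathcal{C}^*$; dualising generators and relations produces binary operations $m^{2p}$ of homological degree $2p$ whose formal sum $\overline{m}=\sum_{p\geqslant 0}m^{2p}$ satisfies $\overline{m}\circ_1\overline{m}=\overline{m}\circ_2\overline{m}$, and separating this identity by total homological degree $2d$ yields the stated relations, completing the identification of the homotopy quotient.

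The step I expect to be the main obstacle is the Koszulness of the intermediate operad $\mathcal{C}^*$: one must exhibit a single admissible ordering on the free shuffle operad for which both defining relations simultaneously acquire the intended leading terms, and then run the arity-by-arity dimension count matching normal monomials against $\dim\mathcal{C}^*$. Secondary care is needed with the operadic (de)suspension signs, which fix the precise output degrees $2p$, and with the distinction between collapsing a subspace and collapsing a coideal, since it is exactly this that guarantees both that the differential on $\Omega(\mathcal{C})$ stays quadratic and that the surviving relation is the symmetric associativity-type identity $\overline{m}\circ_1\overline{m}=\overline{m}\circ_2\overline{m}$ rather than a skew one.
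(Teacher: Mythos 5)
Your proposal is correct and follows essentially the same route as the paper: exhibit the quadratic Gr\"obner basis and PBW basis of left combs for $\bBV{1}$, pass to the Koszul dual cooperad with basis $u_{n,r}=(s\Delta)^r(sm)^{[n]}$, realise the homotopy quotient as the cobar construction of the quotient cooperad obtained by collapsing the subspace spanned by the $u_{0,r}$ with $r>0$, and identify the homology of that cobar complex via the Koszulness of the dual operad generated by the $u_{1,p}$, using the PBW basis $(u_{1,0})^{[n-1]}\circ_1 u_{1,r}$. The points you flag as needing care (subspace versus coideal, the admissible ordering, the degree bookkeeping) are exactly the ones the paper handles implicitly, so nothing is missing.
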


The computation of the homotopy quotient of $\bBV{2}$ (that is, the operad $\BV$) by $\Delta$ was first done in full by Drummond-Cole and Vallette \cite{MR3029946}. Our approach for the case of finite order at least two is completely uniform and closer to \cite{KMS}, and we shall proceed with developing a general formalism instead of focusing on the particular case $k=2$.

\subsection{Extended hypercommutative operad and Givental symmetries}

We shall now define a symmetric operad generalizing the hypercommutative operad of Getzler \cite{MR1363058} which is one of the protagonists of the calculation of homotopy quotients by operators of higher order.

\begin{definition}
The \emph{extended hypercommutative operad}, denoted $\ExtHycomm$, is the symmetric operad generated by fully symmetric operations $m_n^{2p}$ of all possible arities $n\geq 2$ and all possible non-negative even homological degrees $2p\geq 0$, subject to the relations that can be described as follows. Consider the sums of all generators of a fixed arity $\overline m_n\coloneqq \sum_{p=0}^\infty m_n^{2p}$, and impose, for each arity $n\ge 3$ and each choice of three distinct elements $i,j,j'\in\underline{n}$, the condition
\begin{equation}\label{eq:HCrel}
\sum_{\substack{I\sqcup J =\underline{n} \\ i\in I;\, j,j'\in J}} \overline m_{I\sqcup\{\star\}}\circ_\star \overline m_{J}=\sum_{\substack{I\sqcup J = \underline{n} \\ j\in I;\, i,j'\in J}} \overline m_{I\sqcup\{\star\}}\circ_\star \overline m_{J} .
\end{equation}
Note that if each operation $\overline m_n$ were of homological degree $2(n-2)$, these identities would precisely define the operad $\Hycomm$ of hypercommutative algebras discussed in Section \ref{sec:DMrecall}. In our case, each of these operations is an infinite sum of operations of different homological degrees, and so one should separate these identities by homological degrees into infinitely many relations involving finitely many terms each. 
\end{definition}

In the case of the spaces $\overline{\mathcal M}_{0,1+n}$, it is well known that the so called psi-classes play important role in algebraic and geometric results involving those spaces. Let us define a formal algebraic version of those classes in the case of $\ExtHycomm$, where there is no longer a geometric interpretation of those as cohomology classes. We start with a preparatory lemma.

\begin{lemma}\label{DM:action-indep}
The element \[
\sum_{\substack 
	{I\sqcup J_1= \underline{n} \\ 
		j_1,j_2\in J_1	} 
				} 
				\overline m_{I\sqcup\{\star\}}	
					\circ_\star 
				\overline m_{J_1},
 \]
does not depend on the choice of the sequence of distinct elements $j_1,\ldots,j_2\in\underline{n}$. 
Similarly, for each $i\in\underline{n}$, the element
 \[
\sum_{
	\substack{
		I\sqcup J_1 = \underline{n} 
			\\  j_1\in J_1,i\in I
				}
					}
	\overline  m_{J_1\sqcup\{\star\}}
		\circ_\star 
		\overline m_{I} 
 \]
does not depend on the choice of 
$j_1\in\underline{n}$. 
\end{lemma}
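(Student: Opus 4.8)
The plan is to imitate the proof of Lemma~\ref{lm:action-indep}, using the defining relations \eqref{eq:HCrel} of $\ExtHycomm$ to show that each of the two displayed elements is unchanged when one distinguished index is moved while the other is held fixed as a spectator. Since the sum defining the first element is symmetric in $j_1$ and $j_2$ (both are only required to lie in $J_1$), and since for $n\geqslant 3$ any two choices of distinguished indices are joined by a chain of single-index moves (the graph on two-element subsets of $\underline{n}$ with adjacency ``share one element'' is connected), it suffices to prove invariance under replacing one index by another distinct one.

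For the first element I would fix $j_2$ and compare the value at $j_1$ with the value at some $j_1'$ distinct from $j_1$ and $j_2$. In each of the two sums I split the range of summation according to whether the other moving index lies in $J_1$ or in the inner block $I$. The portion in which all of $j_1,j_1',j_2$ lie in $J_1$ is literally the same expression in both sums and hence cancels. The two remaining ``separated'' portions, in which exactly one of $j_1,j_1'$ lies in $I$ while $j_2$ stays in $J_1$, are matched precisely by \eqref{eq:HCrel} applied with the distinguished triple $(i,j,j')=(j_1',j_1,j_2)$: its left-hand side is $\sum_{j_1'\in I;\,j_1,j_2\in J_1}\overline m_{I\sqcup\{\star\}}\circ_\star\overline m_{J_1}$ and its right-hand side is $\sum_{j_1\in I;\,j_1',j_2\in J_1}\overline m_{I\sqcup\{\star\}}\circ_\star\overline m_{J_1}$, which are exactly the two separated portions. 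Equality of the two values follows.

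For the second element the argument is identical with the roles of the inner and outer blocks interchanged, since the operation is now $\overline m_{J_1\sqcup\{\star\}}\circ_\star\overline m_{I}$ with $J_1$ the outer block, $I$ the inner one, and the spectator index is $i$, which must remain in $I$. Fixing $i$ and comparing the values at $j_1$ and $j_1'$, I split each sum by the location of the other moving index; the part with $j_1,j_1'$ both in $J_1$ is common and cancels, while the separated parts are identified by \eqref{eq:HCrel} applied with the triple $(i,j,j')=(j_1,j_1',i)$ and with $J_1$ and $I$ playing the roles of the outer block and the inner block of that relation, respectively.

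The only genuinely delicate point is the bookkeeping of which index plays the role of the spectator $j'$ that must stay in the inner block $J$ throughout \eqref{eq:HCrel}: for the first element this is $j_2$ and for the second it is $i$, and in each move one must check that this index never migrates, so that the defining relation applies verbatim. Once this is confirmed, the cancellation of the common part and the identification of the separated parts are immediate, and the connectivity of single-index moves for $n\geqslant 3$ completes the proof.
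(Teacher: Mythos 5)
Your proof is correct and follows essentially the same route as the paper's: both reduce to a single exchange of one distinguished index (using connectivity of such moves), observe that the two sums share the common portion where all distinguished indices lie in the same block, and identify the two remaining portions with the two sides of the defining relation \eqref{eq:HCrel}. The only blemish is terminological: in the first displayed element $I$ indexes the \emph{outer} operation $\overline m_{I\sqcup\{\star\}}$ and $J_1$ the inner one, so your phrase ``the inner block $I$'' has the roles reversed, though the formulas you actually write down are the correct ones.
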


\begin{proof}
Let us first show that the element 
 \[
\sum_{\substack {I\sqcup J_1= \underline{n} \\ j_1,j_2\in J_1} } \overline m_{I\sqcup\{\star\}}\circ_\star\overline m_{J_1} 
 \]
does not change if we replace $j_2$ by another element $j_3$. To show that 
 \[
\sum_{\substack {I\sqcup J_1 = \underline{n} \\ j_1,j_2\in J_1}} \overline m_{I\sqcup\{\star\}}\circ_\star \overline m_{J_1}=\sum_{\substack {I\sqcup J_1 = \underline{n} \\ j_1,j_3\in J_1}} \overline m_{I\sqcup\{\star\}}\circ_\star \overline m_{J_1} ,
 \]
we note that this equation is obtained by adding the same element
 \[
\sum_{\substack {I\sqcup J_1 = \underline{n} \\ j_1,j_2,j_3\in J_1}} \overline m_{I\sqcup\{\star\}}\circ_\star \overline m_{J_1}
 \]
to both sides of the defining relation \eqref{eq:HCrel} for the elements $j_1,j_2,j_3$. Moving from a pair $\{j_1,j_2\}$ to any other pair can be accomplished by two exchanges like that, so the first assertion is proved.

Let us prove the second assertion. For that, we shall show that the element
 \[
\sum_{\substack {I\sqcup J_1 = \underline{n} \\ j_1\in J_1, i\in I}}  \overline m_{J_1\sqcup\{\star\}}\circ_\star \overline m_{I} 
 \]
does not change if we replace $j_1$ by another element $j_2$.  To show that
 \[  
\sum_{\substack {I\sqcup J_1= 
	\underline{n} \\ j_1\in J_1, i\in I}}  
	\overline m_{J_1\sqcup\{\star\}}\circ_\star
	\overline m_{I}=\sum_{\substack 
		{I\sqcup J_1= \underline{n} \\ j_2\in J_1, i\in I}}  
		\overline m_{J_1\sqcup\{\star\}}\circ_\star
			\overline m_{I} ,
 \]
we note that this equation is obtained by adding the same element
 \[
\sum_{\substack {I\sqcup J_1= 
	\underline{n} \\ j_1,j_2\in J_1, i\in I}}  
		\overline m_{J_1\sqcup\{\star\}}
			\circ_\star\overline m_{I}
 										\]
to both sides of the defining relation \eqref{eq:HCrel} for the elements $j_1,j_2,i$. This completes the proof of
the lemma.
\end{proof}

\begin{proposition}\label{prop:DM-psi-bimodule}
If, for a formal variable $\psi$ of homological degree $-2$, we let 
\begin{align}
\psi\circ_1 \overline m_{\underline{n}}&:= 
\sum_{\substack {I\sqcup J_1 = 
	\underline{n} \\ 
		 j_1,j_2\in J_1} }
		\overline m_{I\sqcup\{\star\}}
			\circ_\star 
				\overline m_{J_1}, 
\label{eq:DMleftact}  \\ 
\overline m_{\underline{n}}\circ_i \psi &:=
\sum_{\substack {I\sqcup J= \underline{n} \\ j_1\in J,i\in I}}
	 	 \overline m_{J\sqcup\{\star\}}
	 	 	\circ_\star  \overline m_{I},	 			\label{eq:DMrightact} 
\end{align}
(for some pair of distinct elements $j_1,j_2\in\underline{n}$ in the first case, and for some $j_1\in\underline{n}$ in the second case) these formulas make $\ExtHycomm$ a symmetric operad over the ring $\kk[\psi]$. 
\end{proposition}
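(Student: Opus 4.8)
The plan is to mirror the proof of Proposition~\ref{prop:psi-bimodule}, upgrading from a $\kk[\psi]$-bimodule on a twisted associative algebra to a $\kk[\psi]$-operad structure, where now $\psi$ may be grafted at the root of an element, via~\eqref{eq:DMleftact}, or at any single leaf, via~\eqref{eq:DMrightact}. The verification splits into three parts: that the defining formulas are well posed; that the resulting root and leaf actions descend to the quotient defining $\ExtHycomm$, i.e.\ they are compatible with the relations~\eqref{eq:HCrel}; and that together they obey the axioms of an operad over $\kk[\psi]$. The first part is immediate, since Lemma~\ref{DM:action-indep} says precisely that the right-hand sides of~\eqref{eq:DMleftact} and~\eqref{eq:DMrightact} are independent of the auxiliary distinguished elements, so both actions are unambiguously defined on the generators $\overline m_{\underline{n}}$, with the action on a composite taken to act on the appropriate outer factor as in the associative case.

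The principal obstacle is the second part. I would apply $\psi\circ_1(-)$ to the difference of the two sides of~\eqref{eq:HCrel} and show that the result lies in the ideal generated by those relations. Substituting~\eqref{eq:DMleftact} and letting $\psi$ act on the top factor turns each term into a sum of doubly nested operadic compositions indexed by ordered three-block decompositions of $\underline{n}$; the task is then to reorganise this sum so that it visibly factors as an outer composition applied to a hypercommutativity relation, exactly as the corresponding computation in Proposition~\ref{prop:psi-bimodule} was rewritten in the commutator form $\sum\overline{\lmm}_{I'}[\overline{\lmm}_{I''},\overline{\lmm}_J]=0$. The combinatorics is heavier here than in the twisted associative case, since the three blocks are grafted along a tree rather than concatenated, so one must track carefully into which block each of the distinguished inputs $i,j,j'$ falls and invoke both independence statements of Lemma~\ref{DM:action-indep} to relocate them as needed; the compatibility of the leaf action $(-)\circ_i\psi$ with~\eqref{eq:HCrel} is checked the same way, performing the rewriting at a leaf rather than at the root.

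For the third part the nontrivial content is commutativity: I would check that the root action commutes with each leaf action, that leaf actions at two distinct leaves commute, and that the structural compositions $\overline m_{I\sqcup\{\star\}}\circ_\star\overline m_{J}$ are $\kk[\psi]$-linear in every slot (iterating a single one-sided action is automatic, so no separate associativity argument is needed). The basic identity $\psi\circ_1(\overline m_{\underline{n}}\circ_i\psi)=(\psi\circ_1\overline m_{\underline{n}})\circ_i\psi$ is handled exactly as the bimodule identity $(\psi\overline{\lmm}_{\underline{n}})\psi=\psi(\overline{\lmm}_{\underline{n}}\psi)$ at the end of Proposition~\ref{prop:psi-bimodule}: after inserting~\eqref{eq:DMleftact} and~\eqref{eq:DMrightact} using one common distinguished element, both sides become the same sum over three-block decompositions of $\underline{n}$ and coincide termwise. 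The remaining commutations and the linearity of composition reduce, in the same routine fashion, to the independence statements of Lemma~\ref{DM:action-indep} together with the associativity of operadic composition, completing the verification that $\ExtHycomm$ is a symmetric operad over $\kk[\psi]$.
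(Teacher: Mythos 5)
Your proposal is correct and follows essentially the same route as the paper: well-definedness from Lemma~\ref{DM:action-indep}, compatibility of both actions with the relations~\eqref{eq:HCrel} via reorganising sums over three-block decompositions, and verification of the mixed and double leaf-action commutation identities, all by direct analogy with the computations in Proposition~\ref{prop:psi-bimodule}. The paper's own proof is exactly this outline, likewise deferring the explicit calculations to the twisted associative algebra case.
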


\begin{proof}
From Lemma \ref{DM:action-indep}, it follows that the right 
hand sides of our formulas do not depend on the choices one can 
make. We have to verify that Formulas \eqref{eq:DMleftact} 
and \eqref{eq:DMrightact} are compatible with the relations of 
$\ExtHycomm$, that is the action on each relation produces an 
element that vanishes as a consequence of some relations, and 
also to check that the two actions define a structure of 
symmetric operad over $\kk[\psi]$, for which it is enough to 
check that
\begin{align*}
(\psi\circ_1\overline m_n)\circ_i \psi
	&=\psi\circ_1(\overline m_n\circ_i\psi) 
		\text{ for all $i\in \underline{n}$,}\\
(\overline m_n\circ_{i}\psi)\circ_{j}\psi
	&=(\overline m_n\circ_{j}\psi)\circ_{i}\psi 
		\text{ for all $i\ne j\in \underline{n}$.}
\end{align*}
Both claims are checked analogously to those checked in the proof of Proposition \ref{prop:psi-bimodule}.
\end{proof}

Let us show how the psi-classes can be used to define symmetries of representations of $\ExtHycomm$ that generalise the Givental action \cite{ShaZvo-LMCohft}. Precisely, let $\mathcal O$ be an arbitrary dg symmetric operad, and consider the set of dg symmetric operad morphisms $\Hom(\ExtHycomm,\mathcal O)$. For a formal variable $z$ of degree~$-2$, let us consider the complete Lie algebra $\mathfrak{g}_{\mathcal O}$ which is the kernel of the ``augmentation''
 \[
\bigoplus_{p=0}^\infty 
	\mathcal O(\underline{1})_{2p}\otimes
		\kk\llbracket z \rrbracket
			\twoheadrightarrow 
				\mathcal O(\underline{1})_0
 \]
annihilating elements of $\mathcal O$ of positive homological degrees as well as all positive powers of $z$. 
We shall now define an action of $\mathfrak{g}_{\mathcal O}$ by infinitesimal symmetries of $\Hom(\ExtHycomm,\mathcal O)$. For that, we put
\begin{align}
	((rz^p).f) (\overline m_{n}) &=  r \circ_1 f(\psi^{p}\overline m_{n}) + (-1)^{p-1} \sum_{i=1}^n f(\overline m_{n}\circ_i\psi^p)\circ_i r 
	\\ \notag
	&+ \sum_{i+j=p-1} (-1)^{j+1} \sum_{I \sqcup J = \underline{n}} f(\overline m_{J\sqcup\{\star\}}\circ_\star\psi^{j} )\circ_{\star} \big(r\circ_1 f(\psi^i \overline m_I)\big),
\end{align}
and extend it linearly to $\mathfrak{g}_{\mathcal O}$. 
It turns out that this way we obtain infinitesimal symmetries 
of $\Hom(\ExtHycomm,\mathcal O)$. By this, we mean to consider
the `square-zero extension' $\ExtHycomm_\varepsilon :=\ExtHycomm[\varepsilon]/(\varepsilon^2)$ over $\kk_\varepsilon :=
\kk[\varepsilon]/(\varepsilon^2)$, and obtain morphisms of 
$\kk_\varepsilon$-linear operads $\ExtHycomm_\varepsilon
\longrightarrow \mathcal O_\varepsilon$, as the following
lemma states.

\begin{lemma}\label{lem:comm:GiventalLie}
For any $f\in \Hom(\ExtHycomm,\mathcal O)$ and any
$r\in \mathfrak g_\mathcal {O}$, we have that 
 \[
f+\varepsilon r.f\in \Hom_{\kk_\varepsilon}(\ExtHycomm_\varepsilon,\mathcal O_\varepsilon) ,
 \]
and for all $\lambda_1,\lambda_2\in \mathfrak{g}_{\mathcal O}$, we have that
$
[\lambda_1,\lambda_2].f=\lambda_1.(\lambda_2.f)-\lambda_2.(\lambda_1.f).$
\end{lemma}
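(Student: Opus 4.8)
The plan is to follow verbatim the strategy of the twisted associative algebra version in Lemma~\ref{lem:lm:GiventalLie}, replacing the product by the operadic partial composition $\circ_\star$ and keeping track of the three distinguished indices $i,j,j'$ that enter the hypercommutative relation \eqref{eq:HCrel}. By linearity of the action it suffices to treat a generating symmetry $rz^p$. To prove the first assertion I would check that the deformed values $f+\varepsilon\,(rz^p).f$ on the generators $\overline m_n$ still satisfy the relations of $\ExtHycomm$ modulo $\varepsilon^2$. The constant coefficient in $\varepsilon$ is exactly the statement that $f$ is a morphism, so all the content sits in the coefficient of $\varepsilon$. Since \eqref{eq:HCrel} equates the configuration with $i\in I,\ j,j'\in J$ to the one with $j\in I,\ i,j'\in J$, and the undeformed relation already holds, the first-order condition is precisely that
\[
\sum_{\substack{I\sqcup J=\underline{n}\\ i\in I;\, j,j'\in J}}\!\Big[((rz^p).f)(\overline m_{I\sqcup\{\star\}})\circ_\star f(\overline m_J)+f(\overline m_{I\sqcup\{\star\}})\circ_\star ((rz^p).f)(\overline m_J)\Big]
\]
be invariant under exchanging $i$ and $j$ (with $j'$ held fixed).

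Expanding each application of the action by its defining formula produces, for each factor, the three families of terms appearing in that formula, paralleling the Types A1--A3 and B1--B3 of the twisted associative proof. The terms coming from the summand $r\circ_1 f(\psi^p\,\cdot)$ applied to the first factor (Type A1) and from the ``leaf'' summand applied to the second factor (Type B2) factor out $r$ composed with $f$ applied to the full sum $\sum \overline m_{I\sqcup\{\star\}}\circ_\star\overline m_J$, hence are symmetric in $i,j$ directly by \eqref{eq:HCrel}. The internal terms coming from the third summand (Types A3 and B3), in which $r$ is inserted along the edge created by $\circ_\star$, are rewritten using the explicit operadic $\kk[\psi]$-action of Proposition~\ref{prop:DM-psi-bimodule}; after reindexing the nested sum over the refined partitions they reorganise into two groups according to which block the distinguished index lands in, each manifestly symmetric. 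The remaining terms (Types A2 and B1), in which $i$ and $j$ are separated by $r$, split into four cases according to the placement of $i$ and $j$ relative to the two blocks: the cases separating $i$ and $j$ cancel in pairs or against the A1/B2 terms, while the cases keeping them on the same side yield symmetric summands. The signs $(-1)^{p-1}$ and $(-1)^{j+1}$ in the action formula are what makes these cancellations occur.

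For the commutator identity I would argue by direct inspection, exactly as in the twisted associative case. Expanding $\lambda_1.(\lambda_2.f)-\lambda_2.(\lambda_1.f)$ yields terms in which the two symmetries act ``at the same place'' on an image of $f$, which reassemble into $[\lambda_1,\lambda_2].f$, and terms in which they act ``at different locations''. The latter come in pairs that cancel by virtue of the $\kk[\psi]$-operad axioms of Proposition~\ref{prop:DM-psi-bimodule} together with the sign conventions of the action formula.

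I expect the main obstacle to be combinatorial bookkeeping rather than anything conceptual. Compared with the twisted associative setting, the operadic composition introduces an internal edge carrying two psi-classes, and the threefold decomposition $I'\sqcup I''\sqcup J$ (and its mirror) must be tracked simultaneously with the position of the grafting leaf $\star$ and of all three distinguished indices $i,j,j'$. The delicate point is to make the reindexing of these nested partition sums line up with the $\kk[\psi]$-action formulas of Proposition~\ref{prop:DM-psi-bimodule}, and to verify that the binomial combinatorial factors implicit in expressions such as $\psi^{a+1}\overline m$ agree on the two sides; conceptually this is identical to the computation already carried out in full for Lemma~\ref{lem:lm:GiventalLie}, only notationally heavier.
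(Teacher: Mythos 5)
Your proposal is correct and takes essentially the same approach as the paper: the paper's proof likewise reduces the first assertion to the symmetry in $i,j$ of the first-order term $\sum_{I\sqcup J}((rz^p).f)(\overline m_{I\sqcup\{\star\}})\circ_\star f(\overline m_J)+f(\overline m_{I\sqcup\{\star\}})\circ_\star ((rz^p).f)(\overline m_J)$ and then simply defers to the twisted associative argument of Lemma~\ref{lem:lm:GiventalLie} and the $\kk[\psi]$-action of Proposition~\ref{prop:DM-psi-bimodule}, exactly as you do. Your write-up in fact supplies more of the term-by-term bookkeeping than the paper itself records.
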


\begin{proof} 
We need to check that for each $p\geqslant 1$, and each $f\in \Hom(\ExtHycomm,\mathcal O)$, the elements 
\[
	f(\overline m_{\underline{n}})+
		\varepsilon ((rz^p).f)(\overline m_{\underline{n}})
			\in \mathcal O_\varepsilon \]
satisfy the defining relations of $\ExtHycomm$. For the 
constant coefficient, this is exactly the same property of $f$. For the coefficient of $\varepsilon$, we need to check that 
 \[
\sum_{\substack{I\sqcup J =\underline{n} \\ i\in I;\, j,j'\in J}} 
((rz^p).f)(\overline m_{I\sqcup\{\star\}})\circ_\star f(\overline m_{J})+f(\overline m_{I\sqcup\{\star\}})\circ_\star ((rz^p).f)(\overline m_{J})
 \] 
is symmetric in $i,j\in\underline{n}$. This is done in the same way as it is done for twisted associative algebras in Lemma \ref{lem:lm:GiventalLie}, using the explicit
formulas for the action of $\kk[\psi]$. The argument establishing the commutator formula is also analogous to the twisted associative algebra case.
\end{proof}

\subsection{The higher order versions of the symmetric operad \texorpdfstring{$\Hycomm$}{Hycomm}}

We are now ready to introduce the symmetric operad which will turn out to represent the homotopy quotient of $\bBV{k}$ by~$\Delta$.

\begin{definition}
Let $k\geqslant 1$. The symmetric operad $\bHycomm{k}$ is defined as the quotient of the extended hypercommutative operad $\ExtHycomm$ by the ideal generated by all $m_{\underline{t}}^{2p}$ where either $t\not\equiv 2\pmod{k-1}$ or $t= 2+v(k-1)$ for some $v\ge 0$ but $v\ne p$.
\end{definition}

Note that this definition means that the homological degree $2p$ of each generator $m_{\underline{t}}^{2p}$ of the symmetric operad $\bHycomm{k}$ determines its arity.  In particular, the symmetric operad $\bHycomm{k}$ is always generated by a collection of elements of non-negative even degrees 
whose arities vary depending on~$k$. For example,
\begin{itemize}
    \item $\bHycomm{1}$ is generated by symmetric binary 
    operations $m_{\underline{2}}^{2p}$ of arity $2$ for
      $p\geqslant 0$ subject to the relations guaranteeing that 
      $\overline m = \sum_{p=0}^\infty m_{\underline{2}}^{2p}$ 
      is associative.
    \item $\bHycomm{2}$ is generated by symmetric operations 
    $m_{\underline{2+p}}^{2p}$ for $p\geqslant 0$, and is 
    isomorphic to $\Hycomm$.
\end{itemize}
In general, the operad $\bHycomm{k}$ has the following relations, for all values of $d\geqslant 0$, the corresponding value $n=3+d(k-1)$, and all triples of distinct elements $i,j,j'\in\underline{n}$: 
\begin{equation}
\sum_{\substack{I\sqcup J =\underline{n} \\ i\in I;\, j,j'\in J}}
	m_{I\sqcup\{\star\}}^{2d_1}\circ_\star m_{J}^{2d_2}=
\sum_{\substack{I\sqcup J =\underline{n} \\ j\in I;\, i,j'\in J}} 
	m_{I\sqcup\{\star\}}^{2d_1}\circ_\star m_{J}^{2d_2},
\end{equation}
where we further require that  $|I|=1+d_1(k-1)$ and $|J|=2+d_2(k-1)$
in both sums. 
Let us establish that this operad is Koszul, and determine its Koszul dual.

\begin{proposition}\label{prop:bHycomm-Koszul}
The suspension of the symmetric operad $\bHycomm{k}^!$ is generated by elements $g_{\underline{2+m(k-1)}}$ of homological degrees $1+2m(k-2)$ for $m\geqslant 0$, each of which generates the trivial representation of the respective symmetric group, subject to the relations 
\begin{align*}
\sum_{\{i,j\}\subset J} 
	g_{\underline{n}\setminus\{i,j\}
		\sqcup\{\star\}}\circ_\star 
			g_{\{i,j\}} &= g_{I\sqcup\{\star\}}
				\circ_\star g_{J} \text{ for $\underline{n}=I\sqcup J$ with $|J|>2$}\\
\sum_{\{i,j\}\subset\underline{n}} 
	g_{\underline{n}\setminus\{i,j\}
		\sqcup\{\star\}}\circ_\star 
			g_{\{i,j\}} &=0  \text{ for $n\geqslant 3$}.
\end{align*}
where in the first family of relations we require
that $|I|\equiv |J|\equiv 2\pmod{k-1}$.
This operad is Koszul.   
\end{proposition}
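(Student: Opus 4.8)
The plan is to transport the proof of Proposition~\ref{prop:bLMHycomm-Koszul} from twisted associative algebras to symmetric operads, with the gravity operad $\Grav$ of Proposition~\ref{prop:DMHycommDual} now playing the role that $\lmHycomm^!$ played there. I would first establish the presentation. Because we pass to the suspension, the pairing between the tensor squares of the spaces of generators is the untwisted one, so checking that the two displayed families annihilate the quadratic relations of $\bHycomm{k}$ is \emph{literally} the computation proving that $\Grav$ is Koszul dual to $\Hycomm$ (Proposition~\ref{prop:DMHycommDual}), restricted to the surviving generators $g_{\underline{t}}$ with $t\equiv 2\pmod{k-1}$; the full symmetry (trivial representation) of the $g$'s is inherited from that of the $m^{2p}$'s exactly as in the classical case. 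The only arities carrying relations are $n\equiv 3\pmod{k-1}$: a relation of $\bHycomm{k}$ pairs $m^{2d_1}_{I\sqcup\{\star\}}$ against $m^{2d_2}_J$, which forces the arities $|I|+1$ and $|J|$ to be $\equiv 2$, hence $n=|I|+|J|\equiv 3$. In each such arity a dimension count entirely analogous to that of Proposition~\ref{prop:bLMHycomm-Koszul}, relying on Getzler's determination of the relation space of $\Hycomm$, shows that the displayed dual relations span precisely the annihilator; for $n\not\equiv 3$ the generator $g_{\underline{n}\setminus\{i,j\}\sqcup\{\star\}}$ has arity $n-1\not\equiv 2$ and so is absent, which is why the relation of type~\eqref{eq:Grav2} is non-trivial only when $n\equiv 3$.

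For Koszulness I would forget the homological degrees and record the decisive structural fact: the relations of $\bHycomm{k}^!$ are then exactly those relations among~\eqref{eq:Grav1}--\eqref{eq:Grav2} of $\Grav$ \emph{every term of which involves only the surviving generators} $g_p$, $p\equiv 2\pmod{k-1}$. I would then reuse verbatim the admissible ordering built in the proof of Proposition~\ref{prop:DMHycommDual} (the weight grading, refined through the quantum-monomial word operad, and then by the reverse path-permutation lexicographic order), restricted to the free shuffle operad on the surviving generators; for this ordering the relations of $\Grav$ form a Gr\"obner basis with leading terms $g_{I\sqcup\{\star\}}\circ_\star g_J$ for~\eqref{eq:Grav1} and $g_{n-1}\circ_{n-1}g_2$ for~\eqref{eq:Grav2}. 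The crucial point, to be verified just as in the twisted associative case, is the dichotomy \emph{a relation of $\Grav$ lies in $\bHycomm{k}^!$ if and only if its leading monomial does}: inspecting the two leading terms, $g_{I\sqcup\{\star\}}\circ_\star g_J\in\bHycomm{k}^!$ forces $|I|+1\equiv|J|\equiv 2$ and hence $n\equiv 3$, which in turn makes every remaining generator in the relation have arity $\equiv 2\pmod{k-1}$.

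With this dichotomy, the Diamond Lemma for shuffle operads \cite{MR3642294} closes the argument. Any critical pair between two relations of $\bHycomm{k}^!$ is also a critical pair in $\Grav$, hence reduces to zero there; but every tree monomial occurring in it uses only surviving generators, so each reduction step divides such a monomial by a leading term that is itself supported on surviving generators, which by the dichotomy forces the relation used to belong to $\bHycomm{k}^!$ and forces its non-leading terms (and therefore the whole reduction) to stay among surviving-generator monomials. Thus the reduction never leaves $\bHycomm{k}^!$ and terminates at $0$, so the relations of $\bHycomm{k}^!$ form a Gr\"obner basis; consequently $\bHycomm{k}^!$, and hence $\bHycomm{k}$, is Koszul, and the associated normal monomials give a PBW basis compatible with the dimension count invoked above.

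The main obstacle I anticipate is not conceptual but a delicate piece of bookkeeping: the order on $\Grav$ is a composite of three partial orders rather than a single monomial order, so the ``relation lies in $\bHycomm{k}^!$ iff its leading term does'' dichotomy must be checked stage by stage, most notably for the borderline instances of~\eqref{eq:Grav1} with $|J|=n-1$, where it is the quantum-monomial refinement and not the weight grading that designates the leading term. One must also confirm that the operadic (sub-tree-monomial) divisibility used in the critical-pair reductions can never graft in a generator of forbidden arity. Both points reduce to tracking arities modulo $k-1$ through the leading-term analysis of Proposition~\ref{prop:DMHycommDual}, which is routine once set up but is where every case distinction resides.
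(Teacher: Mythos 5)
Your proposal is correct and follows essentially the same route as the paper: the same annihilator/dimension count for the presentation, the same reuse of the ordering from Proposition~\ref{prop:DMHycommDual}, the same ``a relation of $\Hycomm^!$ belongs to $\bHycomm{k}^!$ iff its leading term does'' dichotomy, and the same Diamond Lemma conclusion. The extra bookkeeping you flag (tracking arities modulo $k-1$ through the three-stage ordering) is exactly the verification the paper leaves implicit.
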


\begin{proof}
The argument is very similar to that of Proposition \ref{prop:DMHycommDual}. First, we note that these relations annihilate the relations of $\bHycomm{k}$ under the pairing between the weight two components of the respective free operads (the suspension ensures that we must compute the usual pairing without sign twists); moreover, in each arity $n$ congruent to $3$ modulo $k-1$ (which are the only arities in which relations appear) all these relations but the last one correspond to subsets of cardinalities bigger than $2$ and congruent to $2$ modulo $k-1$, so they span a subspace whose dimension is equal to the dimension of the annihilator of the space of quadratic relations of $\bHycomm{k}$ in this arity. To establish the Koszul property, we note that, if we ignore the ``wrong'' homological degrees, the relations we consider form a subset of the relations of the symmetric operad $\Hycomm^!$, and so it will be convenient for us to use the known argument for that operad discussed in the proof of Proposition \ref{prop:DMHycommDual}. For the ordering from that proof, a relation of $\Hycomm^!$ is, ignoring the homological degrees, a relation of $\bHycomm{k}^!$ if and only if its leading term is an element of $\bHycomm{k}^!$, and so computing the reduced form of an S-polynomial between two such relations does not produce any other elements. Consequently, the Diamond Lemma implies that the relations of $\bHycomm{k}^!$ form a Gr\"obner basis for the ordering we are considering, and so this operad is Koszul. 
\end{proof}

The following result is a generalisation of \cite[Lemma 5.17]{MR3084563}. 

\begin{corollary}\label{cor:dim-b-grav}
Suppose that $k>1$. The dimension of $\bHycomm{k}^!(n)$ is equal to the sum of coefficients at powers of $t$ divisible by $k-1$ of the polynomial
 \[
 (2+t)(3+t)\cdots(n-1+t). 
 \]
\end{corollary}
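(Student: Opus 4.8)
The plan is to count the PBW basis of $\bHycomm{k}^!(n)$ afforded by the Gr\"obner basis established in the proof of Proposition~\ref{prop:bHycomm-Koszul}, and to match it against the weight-graded enumeration of the PBW basis of $\Hycomm^!$ recorded in \cite[Lemma~5.17]{MR3084563}. Recall that the proof of Proposition~\ref{prop:bHycomm-Koszul} uses the \emph{same} admissible ordering as in Proposition~\ref{prop:DMHycommDual}, so that the normal monomials of $\bHycomm{k}^!(n)$ are exactly those normal monomials of $\Hycomm^!(n)$ that involve only the generators $g_{2+m(k-1)}$: a tree monomial built from the allowed generators can only be divisible by a leading term that itself uses allowed generators, so it is normal for $\bHycomm{k}^!$ precisely when it is normal for $\Hycomm^!$.

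First I would record the shape of the normal monomials of $\Hycomm^!$. In the Gr\"obner basis of Proposition~\ref{prop:DMHycommDual}, every leading term coming from \eqref{eq:Grav1} is a two-vertex monomial $g_{I\sqcup\{\star\}}\circ_\star g_J$ whose grafted vertex $g_J$ has arity $|J|\geqslant 3$. Consequently no normal monomial can have a vertex of arity greater than $2$ grafted onto another vertex; equivalently, a normal monomial of $\Hycomm^!(n)$ has at most one vertex of arity exceeding $2$, and when present it is the root, carrying binary trees assembled from $g_2$ on its inputs. For such a monomial the weight $w:=\sum_{v}(\mathrm{ar}(v)-2)$ equals $\mathrm{ar}(\text{root})-2$, and by \cite[Lemma~5.17]{MR3084563} the number of normal monomials of $\Hycomm^!(n)$ of weight $w$ equals the coefficient of $t^w$ in $(2+t)(3+t)\cdots(n-1+t)$.

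It then remains to identify the allowed normal monomials with the weight-divisible ones. Since $g_2$ is allowed for every $k$ and the sole vertex of arity greater than $2$ (if any) is the root, a normal monomial uses only the generators $g_{2+m(k-1)}$ if and only if its root has arity congruent to $2$ modulo $k-1$, that is, if and only if $w\equiv 0\pmod{k-1}$. Summing the counts over such weights yields $\dim\bHycomm{k}^!(n)=\sum_{(k-1)\mid w}[t^w]\,(2+t)\cdots(n-1+t)$, as claimed. The delicate point, which I expect to be the main obstacle, is precisely this structural description of the normal forms: one must verify that the additional leading terms coming from \eqref{eq:Grav2}, which have the form $g_{n-1}\circ_{n-1}g_2$ and hence keep the unique non-binary vertex at the root, only prune configurations within a fixed root arity, so that the weight-refined count of \cite[Lemma~5.17]{MR3084563} restricts coefficient-by-coefficient to the allowed generators without mixing arities across congruence classes.
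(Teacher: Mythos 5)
Your structural reduction is exactly the one the paper uses: identify the normal monomials of $\bHycomm{k}^!$ with the normal monomials of $\Hycomm^!$ supported on the allowed generators, observe that such a monomial has a unique non-binary vertex sitting at the root (so that the relevant weight is $\mathrm{ar}(\mathrm{root})-2$ and the admissibility condition is $w\equiv 0\pmod{k-1}$), and then count by root arity. Two remarks. First, the ``delicate point'' you flag at the end is not actually delicate: the leading terms $g_{m}\circ_m g_2$ coming from \eqref{eq:Grav2} only delete monomials, and deletion cannot change the root arity of a surviving monomial, so the weight-refined count restricts coefficient-by-coefficient automatically once the ``unique non-binary vertex at the root'' observation is in place. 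Second, and more substantively, the one step you outsource --- that the number of normal monomials of $\Hycomm^!(n)$ with root arity $p$ equals the coefficient of $t^{p-2}$ in $(2+t)(3+t)\cdots(n-1+t)$ --- is precisely the content the paper supplies rather than cites: the corollary is announced as a \emph{generalisation} of \cite[Lemma 5.17]{MR3084563}, and the proof describes the normal forms explicitly as $g_p(u_1,\ldots,u_{p-1},\mathrm{id}_{\{j\}})$ with the $u_i$ left combs in $g_2$, counts them by summing over compositions $(a_1,\ldots,a_{p-1})$ of $n$ using the closed formula for the number of shuffle permutations of a given type, and after a change of variables recognises the sum as the coefficient of $t^{p-2}$ in the stated polynomial. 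So your argument is complete only if the cited lemma is already stated in weight-graded form (equivalently, as the Poincar\'e polynomial of the gravity component); if it only records the total dimension $n!/2$, the enumeration still has to be carried out, and that enumeration is the real work of the corollary.
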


\begin{proof}
The normal monomials with respect to the leading terms of the Gr\"obner basis from the proof of Proposition \ref{prop:bHycomm-Koszul} are precisely
 \[
g_p(u_1,u_2,\ldots,u_{p-1},\mathrm{id}_{\{j\}}),  
 \]
where all $u_i$ for $i\in \underline{p-1}$ are ``left combs'' obtained by iterations of $g_2$ (these left combs form a basis of the shifted Lie suboperad).

Let us compute the number of monomials like that. To do this, we note 
that we may equivalently enumerate basis elements
 \[
g_{p-1}(u_1,u_2,\ldots,u_{p-1}),  
 \]
where the arity of $u_{p-1}$ is at least~$2$ (this follows from a simple bijection joining $u_{p-1}$ and $\mathrm{id}_{\{j\}}$ into $u_2(u_{p-1},\mathrm{id}_{\{j\}})$). The  number of elements of that kind is equal to
 \[
\sum_{\substack{a_1+\cdots+a_{p-1}=n, \\ a_i\geqslant 1, a_{p-1}\ge2}}\frac{(a_1-1)!(a_2-1)!\cdots(a_{p-1}-1)!a_1a_2\cdots a_{p-1}}{(a_1+a_2+\cdots+a_{p-1})(a_2+\cdots+a_{p-1})\cdots a_{p-1}}\binom{a_1+\cdots+a_{p-1}}{a_1,a_2,\ldots,a_{p-1}}  
 \]
where each factor $(a_i-1)!$ counts the number of left combs of arity~$a_i$, and the remaining factor is known~\cite{MR3203368} to be equal to the number of shuffle permutations of the type $(a_1,\ldots,a_{p-1})$. This can be rewritten in the form 
 \[
\sum_{\substack{a_1+\cdots+a_{p-1}=n, \\ a_i\geqslant 1, a_{p-1}\geqslant 2}}\frac{(a_1+\cdots+a_{p-1}-1)!}{(a_2+\cdots+a_{p-1})(a_3+\cdots+a_{p-1})\cdots a_{p-1}}
 \]
and if we introduce new variables $a_i'=a_i+\cdots+a_{p-1}$, it takes the form
 \[
\sum_{2\leqslant a_{p-1}'<\cdots<a_1'\leqslant n-1}\frac{(n-1)!}{a_2'\cdots a_{p-1}'},
 \]
which clearly is the coefficient of $t^{p-2}$ in the product
 \[
(n-1)!\left(1+\frac{t}{2}\right)\left(1+\frac{t}{3}\right)\cdots\left(1+\frac{t}{(n-1)}\right)=(2+t)(3+t)\cdots(n-1+t), 
 \]
so the dimension of $\bHycomm{k}^!(n)$ is equal to the sum of coefficients of $t^{p-2}$ with $p\equiv 2\pmod{k-1}$ of this polynomial, that is the sum of coefficients at powers of $t$ divisible by $k-1$.
\end{proof}

\subsection{The case of arbitrary differential order}

In this section, we shall compute the homotopy quotient of 
$\bBV{k}$ by $\Delta$ for any given order $k$. As in 
Section \ref{sec:LM-HigherOrder}, that homotopy quotient can 
be computed by adjoining the arity zero operations $r_i$ for 
$i\geqslant 1$, of degree $\deg r_i=2i$, with the differential 
$\mathrm{d}$ compactly defined by writing 
\begin{equation}
    \mathrm{d} \exp(r(w)) =  \exp(r(w)) (\mathrm{d} + w\Delta).
\end{equation}
The computation consists of two parts. In the first part, inspired by \cite{KMS}, we state the main result using the Givental action on representations of the extended hypercommutative operad. In the second part, we prove it using the quadratic-linear Koszul duality theory. 

\subsubsection{Using the Givental action to formulate the main statement}

Let $\mathcal O$ be an arbitrary dg symmetric operad, and suppose that $f\colon \ExtHycomm\to\mathcal O$ is a map that sends all generators except for $m_{\underline{2}}^0$ to zero. We shall use the result of exponentiation of the action of elements of the Lie algebra $\mathfrak{g}_{\mathcal O}$, and to that end we shall introduce a useful language to discuss elements of the form $\exp(r(z)).f$.  

For a given arity $n$, we shall consider non-planar rooted trees with $n$ leaves labelled by $\underline{n}$; such a tree has $n$ half-edges corresponding to leaves and another half-edge corresponding to the root. Each such tree gives rise to an element of $\mathcal O$ as follows. Let us first associate certain data to each half-edge of the tree: 
\begin{itemize}
\item the half-edge corresponding to the root is decorated by one of the terms $x_p\psi^p$ of the Taylor series expansion of $\exp(r(\psi))$,
\item each half-edge corresponding to a leaf is decorated by one of the terms $y_q\psi^q$ of the Taylor series expansion of $\exp(-r(-\psi))$,
\item each internal edge is decorated by one of the terms $z_{r,s}(\psi')^r(\psi'')^s$ of the Taylor series expansion of 
    \begin{align}
        \frac{\mathrm{Id} - \exp(-r(-\psi'))\exp(r(\psi''))}{\psi'+\psi''}\,,
    \end{align}
   where $\psi'$ and $\psi''$ should be thought of as the psi-classs on the half-edge closer to the root and further from the root, respectively.  
\end{itemize}
To a thus decorated tree $T$, we can associate an element of $\mathcal O$ denoted by $F(T)$ obtained as the composition according to the tree $T$, of the coefficients $x_p, y_q, z_{r,s} \in \mathcal O(\underline{1})$ and, for each vertex with the half-edges $e_1$, \ldots, $e_t$, the elements $f((m^0_{\underline{2}})^{[t]})\in \mathcal O(\{i_1,\ldots,i_t\})$. We also associate to $T$ a combinatorial factor $C(T)$ determined by the powers of $\psi$'s associated to half-edges: for each vertex $v$ such that it has the $k_0$-th power of the psi-class on the half-edge corresponding to the root and the power $k_1$, \ldots, $k_t$ of the psi-class on the half-edges corresponding to the leaves, we put $C(v):=\binom{t-2}{k_0,k_1,\dots,k_t}$, and then define $C(T)$ as the product of $C(v)$ over all vertices.

\begin{lemma}\label{lem:comm:treeformula}  The map $\exp(r(z)).f$ on the generators $m_{n}^{2p}\in \ExtHycomm$ is given by $m_{n}^{2p} \mapsto \sum_T C(T)F(T)$, where the sum is taken over all decorated rooted trees with $n$ labeled leaves of  total degree $2p$. 
\end{lemma}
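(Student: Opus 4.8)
The plan is to mimic the twisted associative (bamboo) case treated above: exponentiate the infinitesimal Givental action of Lemma~\ref{lem:comm:GiventalLie} by means of the standard tree formula for the $R$-matrix action, and then specialise the generic vertex decorations to our map $f$, which vanishes on all generators except $m^0_{\underline{2}}$.

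First I would invoke the integration of the Lie-algebra action $(rz^p).f$ into the closed form $\exp(r(z)).f$. As in \cite{ShaZvo-LMCohft} and in the twisted associative case above, integrating the infinitesimal action of Lemma~\ref{lem:comm:GiventalLie} produces a sum over non-planar rooted trees $T$ with $n$ labelled leaves in which the root half-edge carries a term of $\exp(r(\psi))$, each leaf half-edge a term of $\exp(-r(-\psi))$, each internal edge a term of the edge series $\frac{\mathrm{Id}-\exp(-r(-\psi'))\exp(r(\psi''))}{\psi'+\psi''}$, and each vertex $v$ with $t$ incoming half-edges is decorated by $f$ applied to $\overline m_t$ with the prescribed powers of $\psi$ acting at its $t+1$ half-edges through the $\kk[\psi]$-operad structure of Proposition~\ref{prop:DM-psi-bimodule}. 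This closed formula is the operadic incarnation of the classical $R$-matrix action and does not depend on the particular $f$; I would treat it as known.

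The only $f$-specific point, and the heart of the argument, is the evaluation of a single vertex decoration. Because $f$ annihilates every generator except $m^0_{\underline{2}}$, the only surviving contribution to $f$ of $\overline m_t$ decorated by $\psi^{k_0}$ at the root and $\psi^{k_1},\ldots,\psi^{k_t}$ at the $t$ incoming half-edges is the one in which $\overline m_t$ is fully resolved, through Formulas~\eqref{eq:DMleftact} and~\eqref{eq:DMrightact}, into an iterated operadic composite of copies of $m^0_{\underline{2}}$. Expanding the psi-actions shows that this happens exactly when $k_0+k_1+\cdots+k_t=t-2$, and that in that case the decoration equals
\[
\binom{t-2}{k_0,k_1,\ldots,k_t}\, f\big((m^0_{\underline{2}})^{[t]}\big),
\]
so that the multinomial coefficient is precisely the factor $C(v)$. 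I would justify this multinomial either algebraically, by iterating the bimodule relations as in the bamboo computation, or geometrically, by recognising it as the genus-zero intersection number $\int_{\overline{\mathcal M}_{0,t+1}}\psi_0^{k_0}\psi_1^{k_1}\cdots\psi_t^{k_t}$, whose value is the standard multinomial.

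Finally I would assemble the pieces: the product of the vertex coefficients over all vertices is $C(T)=\prod_v C(v)$, the remaining scalars $x_p,y_q,z_{r,s}$ on root, leaves and internal edges combine into $F(T)$, and matching the total homological degree of all decorations against $2p$ confirms that only the trees of total degree $2p$ contribute to $m_n^{2p}$. I expect the main obstacle to be the bookkeeping in the vertex evaluation: establishing cleanly, via Proposition~\ref{prop:DM-psi-bimodule}, that the psi-decorated $\overline m_t$ resolves into the $t$-ary commutative product with exactly the multinomial multiplicity, while keeping the Koszul signs consistent with those built into the edge and leaf series; the degree count and the tree-formula input are routine by comparison with \cite{ShaZvo-LMCohft} and the already-established twisted associative case.
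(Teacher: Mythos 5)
Your proposal matches the paper's proof essentially verbatim: both treat the integration of the infinitesimal action into the sum over decorated trees as standard (citing \cite{ShaZvo-LMCohft} and its relatives), and both reduce the lemma to the single vertex computation $f(\psi^{k_0}m^{2p}_{t}(\psi^{k_1},\ldots,\psi^{k_t}))=\binom{t-2}{k_0,\ldots,k_t}f((m^0_{\underline{2}})^{[t]})$ under the constraints $\sum_i k_i=p$ and $p=t-2$, justified either geometrically by the intersection number $\int_{\overline{\mathcal M}_{0,1+t}}\psi_0^{k_0}\cdots\psi_t^{k_t}$ or algebraically via Proposition~\ref{prop:DM-psi-bimodule}. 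No gaps; this is the same argument.
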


\begin{proof} It is a direct integration of the Lie algebra 
action defined in Lemma~\ref{lem:comm:GiventalLie}. See for 
example \cite{DSV-circle,DSS,KMS,MR2568443}. The only thing 
that one has to check is the decorations specific for our 
choice of $f$. In general, each vertex $v$ may be decorated by 
$f(\psi^{k_0}m^{2p}_{t}(\psi^{k_1}\otimes \psi^{k_2}\otimes
\cdots\otimes \psi^{k_t}))$, and in our specific case where $f$ 
vanishes on all generators but $m_{\underline{2}}^0$, this 
forces $\sum_{i=0}^t k_i=p$ and $p=t-2$, and furthermore
 \[
f(\psi^{k_0}m^{2p}_{t}(\psi^{k_1}, \psi^{k_2},\ldots, \psi^{k_t}))=\binom{t-2}{k_0,k_1,\dots,k_t}f((m^0_{\underline{2}})^{[t]})).
 \]
The multinomial coefficient here can be obtained by noticing that the constraints $\sum_{i=0}^t k_i=p$ and $p=t-2$ bring us back to the geometric situation, so we get the intersection number 
 \[
\int_{\overline{\mathcal M}_{0,1+t}} \psi_0^{k_0}\psi_1^{k_1}\cdots\psi_t^{k_t} ,
 \] 
which is well known to be equal to the multinomial coefficient $\binom{t-2}{k_0,k_1,\dots,k_t}$. Alternatively, one can recover it algebraically from the formulas for the action from Proposition \ref{prop:DM-psi-bimodule}.
\end{proof}

We now recall a result of the first two authors and Vallette allowing to express the differential order condition via the Givental action.

\begin{lemma}[{Theorem 1 in \cite{MR3019721}}]
For any map $f\colon \ExtHycomm\to \mathcal O$ that sends all generators except for $m^0_{\underline{2}}$ to zero, the infinitesimal Givental action of $\mathrm{d}+rz^{k-1}$ annihilates $f$ if and only if $r\in\mathcal O(\underline{1})$ is of differential order at most $k$ with respect to $f(m^0_{\underline{2}})\in\mathcal O(\underline{2})$.
\end{lemma}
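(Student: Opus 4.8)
The plan is to evaluate the infinitesimal action of $\mathrm d+rz^{k-1}$ on $f$ explicitly through the tree-sum description of the Givental action, and to recognise the outcome as a Koszul brace. Since $f$ is a morphism of dg operads and $\ExtHycomm$ carries the zero differential, the commutative associative product $m:=f(m^0_{\underline 2})$ is a cycle; consequently the summand $\mathrm d$ acts trivially on $f$, and it suffices to analyse the Givental action of $rz^{k-1}$. By Lemma~\ref{lem:comm:treeformula} this action is a sum over decorated rooted trees, and its infinitesimal (first order in $r$) part is the contribution of the trees carrying a single $r$-decoration, all remaining vertices being the iterated products $\mu_t:=(m^0_{\underline 2})^{[t-1]}$ produced by $f$. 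The key structural input is that $f$ annihilates every generator except $m^0_{\underline 2}$, so a vertex of arity $t$ survives only when its total $\psi$-degree equals $t-2$, with coefficient $1$ coming from $\int_{\overline{\mathcal M}_{0,1+t}}\psi^{t-2}=1$, exactly as in the proof of Lemma~\ref{lem:comm:treeformula}.

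Next I would pin down the support of this first order action. Tracking the arity against the single power $z^{k-1}$ precisely as in the degree count of Lemma~\ref{lem:LM-InducedMapOfQuotient} (here in the form $n=2+p(k-1)$), a single insertion of $r$ raises the level from $p=0$ to $p=1$, so the action can be nonzero only on the level-$1$ generator $m^{2}_{\underline{k+1}}$ of arity $k+1$; on $m^0_{\underline 2}$ the three groups of terms of the action formula vanish for $k>1$ for arity reasons, and on every generator with $p\geqslant 2$ they vanish since two or more insertions of $r$ would be needed. Thus $(rz^{k-1}).f=0$ if and only if the single element $((rz^{k-1}).f)(m^2_{\underline{k+1}})\in\mathcal O(\underline{k+1})$ vanishes, and both implications of the lemma reduce to analysing this one element.

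It then remains to identify $((rz^{k-1}).f)(m^2_{\underline{k+1}})$ with the brace. Using Proposition~\ref{prop:DM-psi-bimodule} to evaluate the $\psi$-actions, the three summands of the action formula become, respectively, $r\circ_1\mu_{k+1}$, an alternating sum $\pm\sum_{i}\mu_{k+1}\circ_i r$, and the nested terms $\pm\sum_{I\sqcup J=\underline{k+1}}\mu_{|J|+1}\circ_\star\big(r\circ_1\mu_{|I|}\big)$; commutativity and associativity of $m$ collapse all iterated partial compositions into the symmetric operations $\mu_t$. I would then show, by comparing with the defining recursion $b_{n+1}^{r}=b_{n}^{r}\circ_n m-m\circ_1 b_{n}^{r}-(m\circ_1 b_{n}^{r}).(n,n+1)$ and unrolling it over set partitions, that this total equals $b_{k+1}^{r}$ up to a nonzero scalar and an overall sign. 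Hence the action vanishes precisely when $b_{k+1}^{r}=0$, i.e.\ when $r$ is of differential order at most $k$ with respect to $m$. For the base case $k=1$ one checks directly that the element is the Leibniz defect $b_2^{r}=r\circ_1 m-m\circ_1 r-(m\circ_1 r).(1,2)$, while $k=2$ reproduces the classical $\BV$ computation with $b_3^{r}$.

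The main obstacle is the sign and coefficient bookkeeping of this last step: one must match the alternating signs $(-1)^{j+1}$ and the multinomial weights $\binom{t-2}{k_0,\dots,k_t}$ generated by the Givental and $\psi$-class combinatorics against the three-term brace recursion together with its symmetrisations over $S_{k+1}$. Verifying that the multinomial factors collapse — again through the normalisation $\int_{\overline{\mathcal M}_{0,1+t}}\psi^{t-2}=1$ — so that the net contribution is exactly $b_{k+1}^{r}$ with the correct sign is the technical heart of the argument; everything else is the degree counting already carried out for $\lmExtHycomm$ in Lemma~\ref{lem:LM-InducedMapOfQuotient} and for the tree formula in Lemma~\ref{lem:comm:treeformula}.
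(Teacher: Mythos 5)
The paper offers no proof of this lemma at all: it is stated as a citation of Theorem~1 of \cite{MR3019721} (just as its twisted-associative analogue is dispatched with ``similar to \emph{loc.\ cit.}''), so your argument is necessarily a different route, namely a self-contained computation. Your overall strategy is the right one and is essentially how the cited theorem is proved: the action of $\mathrm{d}$ alone vanishes because $f(m^0_{\underline{2}})$ is a cycle, the three groups of terms in the action of $rz^{k-1}$ survive only when every $f$-decorated vertex is binary of internal degree zero, this pins the support of the infinitesimal action to a single generator of arity $k+1$, and the value there is the Koszul brace $b^r_{k+1}$ up to sign.

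Two points nevertheless need repair. First, the supporting generator is $m^{2(k-1)}_{\underline{k+1}}$, not $m^{2}_{\underline{k+1}}$: the per-vertex constraint from the proof of Lemma~\ref{lem:comm:treeformula} is $\sum_i k_i=p$ together with $p=t-2$, so a total $\psi$-degree of $k-1$ forces arity $t=k+1$ \emph{and} internal degree $2(k-1)$ simultaneously. You are reading off the degree label of the arity-$(k+1)$ generator of the quotient $\bHycomm{k}$, whereas the lemma lives over $\ExtHycomm$, where arity and degree of generators are independent parameters. Second, your justification for the vanishing on the remaining generators --- that ``two or more insertions of $r$ would be needed'' --- does not apply to the infinitesimal action, which is linear in $r$ by definition and never inserts it more than once; the correct (and simpler) reason is the same arity/degree constraint, which kills all three groups of terms on every generator other than $m^{2(k-1)}_{\underline{k+1}}$ in one stroke. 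Neither slip is fatal, since the equivalence ``action vanishes iff $b^r_{k+1}=0$'' only uses the arity of the supporting generator. But the final identification of the three-term expression with $b^r_{k+1}$ --- matching the signs $(-1)^{j+1}$ and checking that the multinomial coefficients collapse to $1$ against the recursion $b^r_{n+1}=b^r_n\circ_n m-m\circ_1 b^r_n-(m\circ_1 b^r_n).(n,n+1)$ --- is the entire content of the cited theorem, and you only verify it for $k=1$ and appeal to the classical case for $k=2$; as written, this is a correct plan with the decisive computation still outstanding.
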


Once again, this result means that for our purposes it is advantageous to set $w=z^{k-1}$ in the formula
 \[
\mathrm{d} \exp(r(w)) =  \exp(r(w)) (\mathrm{d} + w\Delta).
 \]
 describing the differential in the homotopy quotient, since $\mathrm{d} + z^{k-1}\Delta$ is exactly the element whose Givental action encodes compactly the differential order condition. Motivated by this, let us consider only the Givental symmetries $r(z)$ for which 
  \[
r(z)\in \bigoplus_{t \geq 1} \mathcal O(\underline{1})_{2t} z^{(k-1)t}\subset \mathfrak{g}_{\mathcal O}.
 \] 

\begin{lemma} \label{lem:DM-InducedMapOfQuotient}The map $\exp(r(z)).f\colon \ExtHycomm \to \mathcal O$ factors through the quotient map 
 \[
\pi\colon \ExtHycomm\to \bHycomm{k}, 
 \]
that is, there exists a map $g\colon \bHycomm{k} \to \mathcal O$ such that $\exp(r(z)).f = g\pi$. 
\end{lemma}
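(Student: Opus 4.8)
The plan is to mirror the proof of the twisted associative analogue, Lemma~\ref{lem:LM-InducedMapOfQuotient}, replacing bamboos by rooted trees. Since $\bHycomm{k}$ is the quotient of $\ExtHycomm$ by the ideal generated by the generators $m_{\underline n}^{2p}$ with $n\neq 2+p(k-1)$, and $\exp(r(z)).f$ is a morphism of symmetric operads (the exponential of the infinitesimal symmetries of Lemma~\ref{lem:comm:GiventalLie}), it is enough to show that $\exp(r(z)).f$ annihilates each such generator: a morphism vanishing on a generating set of an ideal vanishes on the whole ideal, and hence factors through $\pi$, yielding the desired $g$. By Lemma~\ref{lem:comm:treeformula} we have $(\exp(r(z)).f)(m_{\underline n}^{2p})=\sum_T C(T)F(T)$, the sum running over decorated rooted trees with $n$ leaves and total degree $2p$, so it suffices to prove that no nontrivial such decorated tree exists unless $n=2+p(k-1)$.

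First I would record the psi-class constraint. By Proposition~\ref{prop:DM-psi-bimodule} together with the fact that $\dim\overline{\mathcal M}_{0,1+t}=t-2$, a vertex $v$ whose corresponding operation has $t=c(v)$ inputs carries psi-classes on its incident half-edges whose exponents sum to $c(v)-2$. Summing over all vertices and counting each half-edge once (every internal edge contributes two half-edges, one to each endpoint, while each leaf and the root contribute a single half-edge), and using $\sum_v c(v)=n+|E(T)|$ together with $|V(T)|=|E(T)|+1$, one finds that the total psi-degree of $T$ equals
\[ \Psi(T)=\sum_{v}(c(v)-2)=n-|E(T)|-2. \]

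Next I would track homological degrees. Because $r(z)$ is supported on the powers $z^{(k-1)t}$ with $r_t$ of degree $2t$, every operator coefficient decorating a half-edge has homological degree equal to $\tfrac{2}{k-1}$ times its psi-exponent, the one exception being that on each internal edge the division by $\psi'+\psi''$ in the propagator lowers the psi-exponent by one relative to the operator weight; the vertex operations $f((m^0_{\underline 2})^{[t]})$ are of degree $0$. Hence the homological degree of $F(T)$ equals $\tfrac{2}{k-1}\bigl(\Psi(T)+|E(T)|\bigr)$. Substituting the formula for $\Psi(T)$ makes the $|E(T)|$-dependence cancel, giving
\[ \deg F(T)=\tfrac{2}{k-1}\,(n-2), \]
independent of the shape of $T$. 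Therefore a decorated tree of total degree $2p$ forces $n-2=p(k-1)$, that is $n=2+p(k-1)$; for all other $n$ the sum is empty and the generator is sent to zero, which produces the factorisation $\exp(r(z)).f=g\pi$.

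I expect the only delicate point to be the precise bookkeeping on internal edges: one must verify that the ``$+1$'' contributed by each edge (from the denominator $\psi'+\psi''$ of the propagator) is exactly what is needed for the $|E(T)|$-terms to cancel between $\Psi(T)$ and the degree count, so that the resulting degree is genuinely tree-independent. Once this is in place, the proportionality ``$2$ is to $k-1$'' between the operator degree and the sum of the psi-degree and the number of edges is immediate, exactly as in the bamboo case.
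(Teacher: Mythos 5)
Your proposal is correct and follows essentially the same route as the paper: reduce to showing that nontrivial decorated trees of arity $n$ and total degree $2p$ exist only when $n=2+p(k-1)$, then combine the per-vertex psi-degree constraint from Proposition~\ref{prop:DM-psi-bimodule} with the $2:(k-1)$ proportionality between operator degree and psi-weight. Your explicit bookkeeping (total psi-exponent $n-|E(T)|-2$, with each internal edge contributing an extra $+1$ from the denominator of the propagator) is internally consistent and arrives at the same tree-independent degree $\tfrac{2}{k-1}(n-2)$ as the paper's argument.
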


\begin{proof} 
It is sufficient to show that the nontrivial decorated trees with $n$ labeled leaves and the total degree $2p$ exist only for $n=2+(k-1)p$. 

From the description of the $\kk[\psi]$-action in Proposition \ref{prop:DM-psi-bimodule}, it immediately follows that for each vertex with $k$ inputs, the sum of degrees of psi-classes decorating this vertex must equal to $k-2$, and so the sum of degrees of all psi-classes decorating a tree $T$ is equal to $n-2|V(T)| = n-2|E(T)|-2$, where $V(T)$ is the set of vertices and $E(T)$ is the set of internal edges of the tree~$T$.
On the other hand, the total degree of the operators $r(\psi)$ and the sum of the total degree of psi-classes and double of the number of edges are proportional as $2$ to $k-1$. Hence, the total degree is $2p$ if and only if $n-1 = p(k-1)$. 
\end{proof}

Let us now consider the map $f\colon \ExtHycomm \to (\bBV{k}\vee T(r_1,r_2,\dots), \mathrm{d})$ that sends $m^0_{\underline{2}}$ to $m$ and all other generators to $0$. By Lemma~\ref{lem:DM-InducedMapOfQuotient} there exist a map $g\colon \bHycomm{k}\to (\bBV{k}\vee T(r_1,r_2,\dots), \mathrm{d})$ such that $g \pi = \exp(r(z)).f$. 

\begin{theorem} \label{thm:DM-quasiiso-map}  
The map $g\colon \bHycomm{k}\to (\bBV{k}\vee T(r_1,r_2,\dots), \mathrm{d})$ is a quasi-isomorphism. 
\end{theorem}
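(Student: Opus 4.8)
The plan is to transport, essentially verbatim, the argument used for the twisted associative algebra $\blmBV{k}$ in Section~\ref{sec:LM-HigherOrder} to the operadic setting. Since $(\bBV{k}\vee T(r_1,r_2,\dots),\mathrm{d})$ computes the homotopy quotient of $\bBV{k}$ by $\Delta$ by construction, the real content is to identify the homology of this dg operad with $\bHycomm{k}$, and then to check that $g$ induces precisely that identification. First I would record a quadratic-linear presentation of $\bBV{k}$ in the spirit of Proposition~\ref{prop:tBV-qlin}: adjoin the Koszul braces $b^\Delta_n$ as new generators $\ell_{\underline n}$, impose the quadratic-linear relations that recursively define the higher braces in terms of $b^\Delta_n$ and $m$, the quadratic relations satisfied by these braces, and the truncation relation $b^\Delta_{k+1}=0$ encoding the order-$k$ condition. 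I would then prove, mimicking Proposition~\ref{prop:LMBV-k-Koszul}, that this presentation is inhomogeneous Koszul by exhibiting a quadratic Gr\"obner basis of relations in the associated shuffle operad.

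The second step is to compute the homology of the (suspension of the) Koszul dual dg operad $(\bBV{k}^!,\partial)$, in complete analogy with Section~\ref{sec:ordertwo} and the corresponding computation for $\blmHycomm{k}$. I would embed $(\bBV{k}^!,\partial)$ as a subcomplex of a larger Koszul-type complex $(\widehat{\bBV{+\infty}^!},\partial)$ which is acyclic in each positive arity, so that the homology of the subcomplex is identified with the space of elements whose boundary lands inside it. A contracting homotopy at the level of shuffle operads then shows that the relevant boundaries are linearly independent and multiplicatively generated by classes $g_{\underline n}$; one checks directly that these satisfy exactly the relations of $\bHycomm{k}^!$ from Proposition~\ref{prop:bHycomm-Koszul}, and that the transferred $A_\infty$-structure collapses to an honest operad because all higher products applied to such classes vanish. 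Matching the count of surviving boundary classes with the dimension formula of Corollary~\ref{cor:dim-b-grav} shows the resulting surjection $\bHycomm{k}^!\twoheadrightarrow H(\bBV{k}^!,\partial)$ is an isomorphism, so by Koszul duality the homotopy quotient is $\bHycomm{k}$.

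Finally, I would conclude exactly as in the proof of Theorem~\ref{thm:LM-quasiiso-map}, via three observations: (1) the map $g$ is a morphism of dg symmetric operads; (2) the images of the generators $m_{\underline{2+t(k-1)}}^{2t}$ of $\bHycomm{k}$ are nonzero in the homology of $\mathrm{d}$, since the image of $\mathrm{d}$ is contained in the ideal generated by $\Delta$; and (3) in the homology of $\mathrm{d}$, the component of arity $2+(k-1)t$ and homological degree $2t$ is one-dimensional for each $t\geqslant 0$. Together these force $g$ to be surjective on homology, and since the source and target then have the same graded dimensions, $g$ is an isomorphism on homology, i.e.\ a quasi-isomorphism.

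I expect the main obstacle to be the Gr\"obner-basis verification and the construction of the auxiliary complex and its contracting homotopy in the operadic setting. Relative to the twisted associative case, partial compositions and the symmetric group actions (managed through the forgetful functor to shuffle operads, as in the proof of Proposition~\ref{prop:DMHycommDual}) make the monomial bookkeeping and the proof of acyclicity considerably more delicate; the crucial quantitative input that closes the argument is the dimension formula of Corollary~\ref{cor:dim-b-grav}, which must be shown to match the number of surviving boundary classes in $H(\bBV{k}^!,\partial)$.
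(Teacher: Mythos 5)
Your proposal is correct and follows essentially the same route as the paper: a quadratic-linear (inhomogeneous Koszul) presentation of $\bBV{k}$ via the Koszul braces, computation of the homology of $(\bBV{k}^!,\partial)$ inside the acyclic complex $\bBV{+\infty}^!$ with the dimension count of Corollary~\ref{cor:dim-b-grav} closing the identification with $\bHycomm{k}^!$, and then the same three final observations to upgrade surjectivity on homology to a quasi-isomorphism. The only cosmetic difference is that the paper establishes linear independence of the relevant boundary classes by direct inspection of the normal forms rather than by an explicit contracting homotopy, which is the variant used in the twisted associative case you are transporting.
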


As in the case of twisted associative algebras, the proof of this theorem will be gradually constructed in the next sections.

\subsubsection{Quadratic-linear presentation}

Consider the symmetric operad $\bBV{+\infty}$ generated by an element $\Delta$ of arity $0$ and homological degree $1$ and an element $m$ of arity $2$ and of homological degree $0$ subject only to the relations 
 \[
\Delta^2= 0 ,\quad m\circ_1 m-m\circ_2 m=0.
 \]
(As above, we write $+\infty$ to hint that this operad is obtained as the limit of $\bBV{k}$ as $k\to+\infty$, and avoid any possible confusion with the notation $\BV_\infty$ for a cofibrant replacement of $\BV$). We shall now construct a different presentation of this operad which will be useful to study the case of $\Delta$ of finite differential order. Let us denote by $\ell_{\underline{n}}$ the $n$-th Koszul brace of $\Delta$ with respect to $m$. A gauge symmetry argument \cite{MR3510210,MR3385702} shows that these elements satisfy the relations
 \[
\sum_{I\sqcup J=\underline{n}}\ell_{I\sqcup\{\star\}}\circ_\star \ell_J=0.
 \]
The definition of the Koszul braces indicates that the quadratic-linear relation
\begin{align}\label{eq:DMBVquadraticlinear}
\ell_{\underline{n+1}}= 
		\ell_{\underline{n}}\circ_n m
		- m\circ_1 \ell_{\underline{n}}
		- (m\circ_1 \ell_{\underline{n}}).(n,n+1)
\end{align} 
is satisfied (as well as all the relations obtained from it by the action of $S_{n+1}$). 

\begin{proposition}\label{prop:DMBV-infty-Koszul}
The symmetric operad with the generators $m_{\underline{2}}$ and $\ell_{\underline{n}}$, one for each $n\geqslant 0$, subject to the relations
\begin{gather*}
m\circ_1 m-m\circ_2 m =0,\\
\sum_{I\sqcup J=\underline{n}}
	\ell_{I\sqcup\{\star\}}\circ_\star 
		\ell_J =0,\\
		\ell_{\underline{n+1}}= 
		\ell_{\underline{n}}\circ_n m
		- m\circ_1 \ell_{\underline{n}}
		- (m\circ_1 \ell_{\underline{n}}).(n,n+1)
\end{gather*}
is isomorphic to $\bBV{+\infty}$. Moreover, this presentation of $\bBV{+\infty}$ is inhomogeneous Koszul. 
\end{proposition}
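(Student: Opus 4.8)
The plan is to follow the blueprint of Proposition~\ref{prop:LMBV-infty-Koszul}, transporting the computation from shuffle algebras to shuffle operads via the monoidal forgetful functor, so that the Gr\"obner basis machinery for shuffle operads from \cite{MR3642294,MR3084563} applies. Writing $P$ for the operad with the presentation in the statement, I would first record the surjection $\phi\colon P\twoheadrightarrow\bBV{+\infty}$ defined on generators by $m\mapsto m$ and $\ell_{\underline{n}}\mapsto b_n^\Delta$. It is well defined: the associativity relation is tautological, the relation $\ell_{\underline{n+1}}=\ell_{\underline{n}}\circ_n m-m\circ_1\ell_{\underline{n}}-(m\circ_1\ell_{\underline{n}}).(n,n+1)$ is precisely the recursive definition of the Koszul brace, and the relations $\sum_{I\sqcup J=\underline{n}}\ell_{I\sqcup\{\star\}}\circ_\star\ell_J=0$ hold in $\bBV{+\infty}$ by the gauge symmetry argument of \cite{MR3510210,MR3385702} (with the lowest such relation encoding $\Delta^2=0$). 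Since $\ell_{\underline{1}}=\Delta$ and $m$ generate $\bBV{+\infty}$, the map $\phi$ is surjective, and it remains to show that $P$ and $\bBV{+\infty}$ have equal, finite dimension in each arity.

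For the dimension count I would pass to the free shuffle operad on the generators $m$ and $\ell_{\underline{n}}$ and choose an admissible ordering of tree monomials for which the leading terms of the defining relations are, respectively: the right comb $m\circ_2 m$ for the associativity relation (so that $m$-products normalise to left combs $m^{[p]}$); the quadratic part $\ell_{\underline{n}}\circ_n m$ of the brace recursion (so that $m$ is never grafted onto the last input of an $\ell$, while $\ell_{\underline{n+1}}$ is recovered as the lower, linear term); and an extremal monomial of the gauge relation $\sum_{I\sqcup J=\underline{n}}\ell_{I\sqcup\{\star\}}\circ_\star\ell_J$, isolated in analogy with the leading-term analysis for the gravity relation \eqref{eq:Grav2} in the proof of Proposition~\ref{prop:DMHycommDual}. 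The normal monomials with respect to these three leading terms are then combs of $m$'s carrying $\ell$-blocks on their leaves, constrained so as to avoid the two forbidden $\ell\circ m$ and $\ell\circ_\star\ell$ patterns.

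I would then exhibit a bijection between these normal monomials and a linear basis of $\bBV{+\infty}$: replacing each factor $\ell_{\underline{t}}$ by the top term $\Delta\circ_1 m^{[t-1]}$ of its Koszul brace turns a normal monomial into a tree built from $m$'s and $\Delta$'s in which all $m$-products are merged and no two $\Delta$'s are adjacent, and such trees manifestly form a basis of the operad freely generated by a commutative associative $m$ and a square-zero unary $\Delta$ with no compatibility relation, that is of $\bBV{+\infty}$; linear independence is guaranteed by the triangularity of the substitution with respect to the ordering. Matching cardinalities arity by arity forces $\phi$ to be an isomorphism and simultaneously shows that, after passing to the associated graded, the chosen relations form a quadratic Gr\"obner basis. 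By the criterion recalled above (\cite[Th.~5.1]{MR3084563} together with its quadratic-linear modification), this presentation of $\bBV{+\infty}$ is inhomogeneous Koszul.

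The delicate point, which I expect to require genuine work, is the choice of admissible ordering and the verification of the diamond condition. Unlike the linear monomials of the twisted associative case, tree monomials in a free shuffle operad do not carry an obvious order making all three relation families behave well simultaneously; as in the proofs of Propositions~\ref{prop:DMHycommDual} and~\ref{prop:NCHycommDual}, one likely has to combine a weight (or arity) grading with a finer device, such as a word-operad or ``quantum monomial'' refinement, to pin down the leading term of the gauge relation, and then check that every S-polynomial reduces to zero, the interactions between a gauge relation and a brace recursion being the most demanding ones. Once the ordering is fixed and the normal monomials are identified with the basis above, the remaining steps are routine bookkeeping.
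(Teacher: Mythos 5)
Your proposal is correct and follows essentially the same route as the paper: the surjection from the presented operad onto $\bBV{+\infty}$, a shuffle-operad ordering whose leading terms forbid any $m$ grafted into an input of an $\ell$ and any $\ell_{\underline{1}}=\Delta$ placed atop an $\ell$, and the substitution $\ell_{\underline{t}}\mapsto\Delta\circ_1 m^{[t-1]}$ matching normal monomials with the evident basis of $\bBV{+\infty}$. The only remark worth making is that the ``delicate point'' you anticipate --- fixing the ordering precisely and checking that all S-polynomials reduce to zero --- is not actually needed: once the normal monomials are counted and shown to biject with a basis of $\bBV{+\infty}$, the upper bound meets the lower bound, so the relations automatically form a Gr\"obner basis, exactly as your own cardinality-matching step already concludes.
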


\begin{proof}
Let us denote by $\mathcal P$ the operad presented by the given relations. We know that these relations hold in $\bBV{+\infty}$, so there is a surjection of symmetric operads $\mathcal P\twoheadrightarrow\bBV{+\infty}$. We may order monomials in the corresponding free shuffle operad in such a way that the leading monomials of the given relations of $\mathcal P$ and of the relations obtained from them by the symmetric group actions are
\[ m\circ_1 m, 
	\quad (m\circ_1m)(23), 
	\quad 
\ell_{\underline{1}}\circ_1\ell_{\underline{n}}, 	
 \quad
(\ell_{\underline{n}}\circ_p m)(p,n)(q,n+1)
\]
where last relation is present for all $1\leqslant p<q\leqslant n+1$. 
The normal monomials with respect to these leading terms can be 
described as commutative products (using the iterations of the product 
$m$) of elements of the species $\mathcal F\circ\mathcal D$, where $\mathcal F$ is 
the underlying species of the free shuffle operad generated by 
$\ell_{\underline{n}}$ for $n>1$, and $\mathcal D$ is $\kk[\Delta]/\Delta^2$ 
(supported at one-element sets). Let us exhibit a one-to-one 
correspondence between such monomials and elements forming a basis of 
$\bBV{+\infty}$. For that, we simply replace each factor operation 
$\ell_{\{i_1,\ldots,i_p\}}$ by $\Delta\circ_1 m^{[p-1]}_{\{i_1,
\ldots,i_p\}}$; the fact that the elements thus obtained form a basis 
of $\bBV{+\infty}$ is clear. 

In general, the set of normal monomials with respect to the given relations form a spanning set of the quotient operad, so the cardinality of that set in each arity is an upper bound on the dimension of the quotient in that arity. In our case, we have shown that this upper bound is equal to the lower bound given by the dimension of the respective component of $\bBV{+\infty}$, so the operad $\mathcal P$ is isomorphic to $\bBV{+\infty}$, and the given relations form a Gr\"obner basis. Finally, the latter means that our presentation is inhomogeneous Koszul.
\end{proof}

If we wish to incorporate the finite differential order condition, it is very easy to do using the presentation we obtained:
 \[
\bBV{k}\cong \bBV{+\infty}/(\ell_{\underline{k+1}}). 
 \]
This leads to a quadratic-linear presentation of $\bBV{k}$:
\begin{gather}
m\circ_1 m =  m\circ_2 m, \notag\\
\sum_{I\sqcup J=\underline{n}}
		\ell_{I\sqcup\{\star\}}\circ_\star \ell_J 	
		=0 \quad\text{ for $1\leqslant n \leqslant 2k-1$, $|I|\leqslant k-1$ and $|J|\leqslant k$}, \label{eq:truncDMLieinf}\\
	\ell_{\underline{n}}\circ_n m 
		- m\circ_1 \ell_{\underline{n}}
		- (m\circ_1\ell_{\underline{n}}).(n,n+1)
	  =
	  	\begin{cases} \ell_{\underline{n+1}} 
	  		&\text{ for $n<k$,
	  		} \\
	  	0 & \text{ for $n=k$.} 
	  	\end{cases}
	 	\label{brace1}
	 	 \end{gather}
As in the case of twisted associative algebras, some of the terms that we used as leading terms in Proposition \ref{prop:DMBV-infty-Koszul}, for example, $\ell_{\underline{1}}\circ_1\ell_{\underline{n}}$ in the second group of relations with $k+1\leqslant n\leqslant 2k-1$, are no longer present, and so we cannot use the proof of that proposition to conclude that our presentation of $\bBV{k}$ is inhomogeneous Koszul. Nevertheless, that statement turns out to be true. 

\begin{proposition}\label{prop:DMBV-k-Koszul}
The above quadratic-linear presentation of the symmetric operad $\bBV{k}$ is inhomogeneous Koszul for all $k$. 
\end{proposition}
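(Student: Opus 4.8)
The plan is to mirror, step for step, the argument of Proposition~\ref{prop:LMBV-k-Koszul} for the twisted associative algebra $\blmBV{k}$, exhibiting a quadratic Gröbner basis for the given quadratic-linear presentation; by the quadratic-linear version of \cite[Th.~5.1]{MR3084563} recalled in the preliminaries, this is enough to conclude inhomogeneous Koszulness. As there, the case $k=1$ is homogeneous and was settled in Section~\ref{sec:DMorderone}, so I would assume $k\geqslant 2$. Although $\bBV{+\infty}$ was shown to be inhomogeneous Koszul in Proposition~\ref{prop:DMBV-infty-Koszul}, the discussion preceding the statement notes that some of the leading terms used there disappear under the finite truncation, so a fresh Gröbner basis computation is genuinely needed. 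I would organise it in two stages: first the relations \eqref{eq:truncDMLieinf} involving only the Koszul braces $\ell_{\underline{n}}$, and then the brace relations \eqref{brace1} encoding the interaction of the $\ell_{\underline{n}}$ with the associative product $m$, treated as an Ore-type extension.

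For the $\ell$-only part I would pass, exactly as in Proposition~\ref{prop:LMBV-k-Koszul}, to the suspension of the Koszul dual of \eqref{eq:truncDMLieinf}. There the relations become the operadic counterpart of the ``all-composites-coincide'' relations of the $\lmk$-algebra in the twisted associative case: any two one-edge composites of two dual braces sharing the same total leaf set, with both factor-arities below $k$, are equal. I would then fix an admissible path-permutation lexicographic ordering \cite[Def.~5.4.1.8]{MR3642294} refining the comparison of the arities of the two composed factors, and check directly that the normal monomials are an explicit family of ``left-comb'' tree monomials in which the arity constraints of the truncation are built in --- the operadic analogue of the interval decompositions appearing in the $\lmk$-algebra. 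This exhibits a PBW basis, hence a quadratic Gröbner basis for the dual, and dualising back yields a quadratic Gröbner basis for \eqref{eq:truncDMLieinf} itself.

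I would then extend the ordering so that the generator $m$ is smaller than every brace $\ell_{\underline{n}}$; with this choice the leading terms of \eqref{brace1} are the monomials $\ell_{\underline{n}}\circ_n m$ together with their $S_{n+1}$-translates, while $m\circ_1 m$ and $(m\circ_1 m)(23)$ remain the leading terms of associativity. It then suffices to resolve two families of ambiguities. First, the operation extending a brace by one leaf according to \eqref{brace1} --- sending $\ell_{\underline{n}}$ to $\ell_{\underline{n+1}}$ for $n<k$ and to $0$ for $n=k$ --- acts as a derivation on composites; applying it to a relation \eqref{eq:truncDMLieinf} produces a sum $\sum \ell_{I\sqcup\{\star\}}\circ_\star \ell_{J}$ over partitions with cardinalities shifted by one, which is again a $\kk$-linear combination of relations \eqref{eq:truncDMLieinf}, the boundary contribution in which a brace of arity $k$ would be extended vanishing by the second clause of \eqref{brace1}. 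This is the operadic counterpart of the identity, established in Proposition~\ref{prop:LMBV-k-Koszul}, that the commutator of $\sum_{I'\sqcup I''}\lml_{I'}\lml_{I''}$ with $\lmm_{\{i\}}$ lies in the span of \eqref{eq:truncLMLieinf}. Second, I would verify that grafting $m$ at two distinct leaves of a brace yields the same element independently of the order --- the operadic analogue of $[[\lmk_I,\lmm_{\{i'\}}],\lmm_{\{i''\}}]=[[\lmk_I,\lmm_{\{i''\}}],\lmm_{\{i'\}}]$ --- which is exactly the compatibility with $m\circ_1 m=m\circ_2 m$ and resolves the brace--brace and brace--associativity ambiguities simultaneously. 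Once both checks succeed, every S-polynomial reduces to zero, the full quadratic-linear presentation is a Gröbner basis, and the presentation is inhomogeneous Koszul.

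The step I expect to be the main obstacle is the first of these two compatibility checks within the shuffle-operad framework. Unlike the linear twisted associative setting, the relation \eqref{eq:truncDMLieinf} is indexed by a two-block partition in which \emph{each} block carries its own grafting point $\star$, so one must carefully track how grafting $m$ onto a leaf redistributes between the outer and the inner brace, how this interacts with the $S_n$-symmetrisations, and how the arity cutoff at $k$ enters. Confirming that the outcome is precisely a combination of the truncated relations --- with the arity-$k$ boundary terms cancelling exactly because of the vanishing clause of \eqref{brace1} --- is the technical heart of the argument; the remaining verifications are faithful transcriptions of the twisted associative proof.
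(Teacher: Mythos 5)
Your proposal follows essentially the same route as the paper: first a quadratic Gr\"obner basis for the truncated brace relations \eqref{eq:truncDMLieinf} via the Koszul dual and an explicit PBW basis, then resolution of the mixed S-polynomials using the fact that adjoining a leaf via \eqref{brace1} acts as a derivation on the $\ell$-relations (the paper phrases this as the commutator of derivations being a derivation in the convolution Lie algebra) together with compatibility with associativity of $m$, the arity-$k$ boundary terms vanishing because $\ell_{\underline{k+1}}=0$. The only substantive difference is that declaring $m$ smaller than the braces does not by itself order $\ell_{\underline{n}}\circ_n m$ against $m\circ_1\ell_{\underline{n}}$ (they involve the same multiset of generators); the paper resolves this with the word-operad construction over the quantum-monomial monoid $\mathsf{QM}$, which produces an admissible ordering pushing $m$ towards the root, and your ordering would need to be refined in this way for the claimed leading terms to be justified.
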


\begin{proof}
Let us focus on $k\ge 2$: in the case $k=1$, the presentation is actually homogeneous, and we understood it well in Section \ref{sec:DMorderone}. Our argument will proceed as follows. We shall first deal separately with the relations~\eqref{eq:truncDMLieinf}
and then show that the relations describing the interaction of the suboperad generated by $\ell_{\underline{n}}$ for
$n\in\underline{k}$, with the suboperad generated by $m$ (isomorphic to the commutative operad) form a quadratic Gr\"obner basis, defining a filtered distributive law between these operads~\cite{MR3302959}. 

For \eqref{eq:truncDMLieinf}, it is easier to consider the suspension of the Koszul dual operad; it is generated by totally symmetric elements $h_{\underline{i}}$ for
 $i\in \underline{k}$, of arity $i$ and homological degree $2i-4$, subject to the relations
 \[
h_{I\sqcup\{\star\}}\circ_\star h_J= h_{I'\sqcup\{\star\}}\circ_\star h_{J'} \quad \text{ for } I\sqcup J=I'\sqcup J'
 \]
where we further require that
$|I|,|I'|<k$ and $|J|, |J'|\leqslant k$.
Let us consider the path degree-lexicographic ordering of 
monomials in the corresponding free shuffle operad. Then the 
elements that are \emph{not} the leading terms of quadratic 
relations are  
\begin{align*} 
h_{\underline{1}}\circ_1 h_{\underline{n}},
	& \quad n\leqslant k,\\
h_{\underline{n-k+1}}\circ_{n-k+1} h_{\underline{k-1}},
	& \quad k\leqslant n\leqslant 2k-2. 
\end{align*}
It follows that this operad has a PBW basis of the form 
 \[
h_{I_1\sqcup\{\star\}}\circ_\star\cdots\circ_\star h_{I_p\sqcup\{\star\}}
 \]
where $I_1\sqcup\cdots\sqcup I_{p-1}\sqcup(I_p\sqcup\{\star\})$ is a decomposition of $\underline{n}\sqcup\{\star\}$ as a disjoint union of (possibly empty) intervals for which a non-empty interval cannot be followed by an empty one and only the first non-empty interval can be of length strictly less than $k-1$. This implies that the operad presented via relations listed in \eqref{eq:truncDMLieinf} also has a PBW basis, and a quadratic Gr\"obner basis of relations.

To proceed, one uses word operads \cite{MR4114993}, and 
considers the monoid of ``quantum monomials'' $\mathsf{QM}=
\langle x,y,q\mid xq=qx,yq=qy,yx=xyq\rangle$ and the map from 
the free shuffle operad generated by the elements $m$ and 
$\ell_p$ to the word operad associated to $\mathsf{QM}$ sending 
$m$ to $(x,x)$ and $\ell_p$ to $(y,y,\ldots,y)$; the 
corresponding ordering moves the operation $m$ towards the root 
in the normal forms. Since we know that the suboperads 
generated by $m$ and by $\{\ell_p\}_{p\ge 1}$ separately have 
Gr\"obner bases, it remains to check that all ``mixed'' S-
polynomials can be reduced to zero. There are three groups of 
``mixed'' S-polynomials to consider: those of \eqref{brace1} 
with the associativity relations for $m$, those of 
\eqref{brace1} between themselves, and those of \eqref{brace1} 
with the relations for the operations $\ell_p$. In each of 
these cases, the result is obtained by a somewhat tedious but 
direct calculation; for the latter group of S-polynomials which 
is perhaps the hardest to deal with directly, it is beneficial 
to note that the relations between the operations $\ell_p$ in 
the free operad generated by these operations can be viewed as 
commutators in the convolution Lie algebra between the linear 
dual of the commutative operad and the latter free operad, and 
the requisite property of S-polynomials ultimately boils down 
to the property of the commutator of derivations being a 
derivation; the reader is invited to consult \cite[Prop.~3.13]{BaMa} for another interpretation of the same observation. 
Thus, the quadratic-linear presentation of 
$\bBV{k}$ forms a quadratic Gr\"obner basis, which in turn 
implies that this presentation is inhomogeneous Koszul.  
\end{proof}

\subsubsection{The Koszul dual dg operad and its homology}

We can now describe the (suspension of the) Koszul dual operad 
$(\bBV{k}^{!},\partial)$.  Its underlying operad is easily seen 
to be generated by totally symmetric elements $h_{\underline i}$ for 
$i\in \underline{k}$, of arity $i$ and of homological degree $2i-4$, 
and a symmetric element $g$ of arity $2$ and of homological degree 
$1$, subject to the relations
\begin{align}\label{eq:DM-rel-s-koszul}
(g\circ_1 g).(1+(123)+(132)) &=0,\notag\\
h_{I\sqcup\{\star\}}\circ_\star h_J
	- h_{I'\sqcup\{\star\}}\circ_\star h_{J'}
	 &= 0 \quad \text{ for } I\sqcup J=I'\sqcup J', 
\notag\\
g\circ_1 h_{\underline{n}}+ \sum_{i=1}^n (h_{\underline{n}}\circ_n g).(i,n) &=0 \quad \text{ for $n\in \underline{k}$ }.
\end{align}
where as before we require that
$|I|, |I'|<k$ and $|J|, |J'|\leqslant k$.
 The results of the previous section imply that these relations 
give a distributive law between the operad $\mathcal H_k$ generated 
by all the elements $h_{\underline i}$ for $i\in \underline{k}$, 
and the shifted Lie operad generated by the element $g$. The 
presence of quadratic-linear relations in $\bBV{k}$ means that 
the Koszul dual operad has a nonzero differential $\partial$ 
which is a derivation given on generators by 
$\partial(h_{\underline{1}}) = 0$, $\partial(g)  = 0$
and by 
\[
\partial(h_{\underline n})  =  
	\sum_{\{i,j\}\subset\underline{n} 	
\text{ and } i\ne j} 
	(h_{\underline{n-1}}\circ_{n-1} g).((i,n)(j,n+1))
	 	\quad \text{for $2\leqslant n\leqslant k$}.
\]
For the rest of the proof, it will be important to note that we have a chain of inclusions of dg operads
 \[
\mathcal H_1\subset\cdots\subset \mathcal H_k\subset\cdots 
 \]
we denote by $\mathcal H_{+\infty}$ the union of all of them, and by $\bBV{+\infty}^!$ the union of all the dg operads $(\bBV{k}^{!},\partial)$. That latter union, viewed as a chain complex, is homotopy equivalent to the (right) Koszul complex of the shifted Lie operad (since the operad $\mathcal H_{+\infty}$ is the Koszul dual of the dg operad of shifted Lie-infinity algebras); as such, it is acyclic. Thus, the long exact sequence of the homology implies that the homology of the complex $(\bBV{k}^{!},\partial)$ is isomorphic to the subspace of $\bBV{k}^{!}$ obtained as boundaries of elements outside of that subcomplex.

As in the case of the twisted associative algebras, let us
denote by $K_{\underline{n}\sqcup\{\star\}}^{2t}$ an element of arity 
$n+1$ and degree $2t$ that is equal to the composition 
$h_{I_1\sqcup\{\star\}}\circ_\star\cdots\circ_\star 
h_{I_{n-t}\sqcup \{\star\}}$ for any choice of partition 
$\underline{n}=I_1\sqcup \cdots\sqcup I_{n-t}$ with $t\leqslant n$. 
Exactly as in the case of twisted associative algebras, we find that 
the since we are interested in the quotient of the cobar construction 
by $s^{-1}(\delta^r)^\vee$ for $r>0$, and one can choose the 
contracting homotopy for which vanishes on elements that do not 
contain $\delta$, 
the higher structures of an operad up to homotopy 
vanish, and the homology has just an operad structure. As an operad, 
the homology is immediately seen to be generated by the elements 
 \[
g_{\underline{n}}:=\partial(K_{\underline{n}}^{2t}), \quad n=q(k-1)+2,\quad t=q(k-2)+2,\quad q\geqslant 0.
 \] 

It remains to show that the elements $g_{\underline{n}}$ satisfy precisely the relations described in Proposition \ref{prop:bHycomm-Koszul}. The fact that these relations are satisfied is obtained by direct inspection, so there is a surjective map from $\bHycomm{k}^!$ onto the homology. To conclude, we shall argue as follows. From the proof of Corollary \ref{cor:dim-b-grav}, we know the normal forms in $\bHycomm{k}^!(n)$, and so it enough to show that we can find linearly independent elements spanning the homology which are in one-to-one correspondence with those normal forms. For each normal form 
 \[
g_p(u_1,u_2,\ldots,u_{p-1},\mathrm{id}_{\{j\}}),  
 \]
let us consider the corresponding element 
 \[
K_{I}^{2t}\circ(u_1',\ldots, u_{p-1}',\mathrm{id}_{j}),  
 \]
where $u_i'$ is obtained from $u_i$ by replacing $g_2$ with $g$. (Note that $t$ is computed uniquely from $n$.) By a direct inspection, the boundaries of such elements are linearly independent in $\bBV{+\infty}^!$, and therefore the surjection from $\bHycomm{k}^!$ onto the homology must be an isomorphism. As an immediate by-product, we obtain one of the central results of this section.

\begin{theorem}
The homotopy quotient of $\bBV{k}$ by $\Delta$ is represented by $\bHycomm{k}$. 
\end{theorem}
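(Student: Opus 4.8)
The plan is to follow the inhomogeneous Koszul duality pipeline, exactly parallel to the twisted associative case of Section~\ref{sec:ordertwo} and to the treatment of the classical case $k=2$ by Drummond--Cole and Vallette~\cite{MR3029946}. Since the quadratic-linear presentation of $\bBV{k}$ is inhomogeneous Koszul by Proposition~\ref{prop:DMBV-k-Koszul}, the operad $\bBV{k}$ admits a model given by the cobar construction of its Koszul dual dg cooperad, and the homotopy quotient by $\Delta$ is represented by the quotient of this model by the two-sided ideal generated by the duals $s^{-1}(\delta^r)^\vee$, $r>0$. To make this tractable I would dualize and work instead with the Koszul dual dg \emph{operad} $(\bBV{k}^!,\partial)$ described above, compute its homology, and transport the answer back through duality.

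The computation of $H(\bBV{k}^!,\partial)$ is the heart of the matter. First I would exploit the chain of inclusions $\mathcal H_1\subset\cdots\subset\mathcal H_k\subset\cdots$, whose union $\mathcal H_{+\infty}$ is the Koszul dual of the dg operad of shifted Lie-infinity algebras, so that the limiting complex $\bBV{+\infty}^!$ is the acyclic (right) Koszul complex of the shifted Lie operad. The long exact sequence in homology then identifies $H(\bBV{k}^!,\partial)$ with the subspace of $\bBV{k}^!$ consisting of boundaries of elements lying outside the subcomplex, yielding the generators $g_{\underline n}:=\partial(K_{\underline n}^{2t})$ with $n=q(k-1)+2$ and $t=q(k-2)+2$. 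Choosing a contracting homotopy that vanishes on elements not containing $\delta$, as in Section~\ref{sec:ordertwo}, forces all higher transferred operations to vanish, so the homology carries a strict operad structure; a direct inspection shows the $g_{\underline n}$ satisfy the relations of Proposition~\ref{prop:bHycomm-Koszul}, giving a surjection $\bHycomm{k}^!\twoheadrightarrow H(\bBV{k}^!,\partial)$.

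To upgrade this surjection to an isomorphism I would match dimensions. Using the normal forms $g_p(u_1,\dots,u_{p-1},\mathrm{id}_{\{j\}})$ of $\bHycomm{k}^!$ recorded in the proof of Corollary~\ref{cor:dim-b-grav}, I would produce for each such normal form the explicit element $K_I^{2t}\circ(u_1',\dots,u_{p-1}',\mathrm{id}_j)$, obtained by replacing each $g_2$ with $g$, whose boundary represents the corresponding homology class, and check that these boundaries are linearly independent in $\bBV{+\infty}^!$. Since the resulting count then equals $\dim\bHycomm{k}^!(n)$ in every arity, the surjection is an isomorphism of operads $H(\bBV{k}^!,\partial)\cong\bHycomm{k}^!$. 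Finally, because $\bHycomm{k}$ is itself Koszul by Proposition~\ref{prop:bHycomm-Koszul}, transporting this identification back through Koszul duality shows that the homotopy quotient---which is the homology of the cobar construction of the cooperad dual to $\bHycomm{k}^!$---is precisely $\bHycomm{k}$, as claimed.

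The step I expect to be the main obstacle is the linear-independence and dimension-matching argument of the previous paragraph: one must verify that the explicitly constructed boundaries are independent and that no further homology classes survive, so that the sharp upper bound of Corollary~\ref{cor:dim-b-grav} leaves no room for a kernel. The vanishing of the higher operations, though conceptually clean, also demands care, since one has to confirm that the chosen contracting homotopy is compatible with the operadic composition uniformly across all arities.
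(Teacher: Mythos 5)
Your proposal follows the paper's own argument essentially step for step: the same inhomogeneous Koszul duality setup via Proposition~\ref{prop:DMBV-k-Koszul}, the same acyclicity of $\bBV{+\infty}^!$ as the Koszul complex of the shifted Lie operad, the same identification of the homology generators $g_{\underline n}=\partial(K_{\underline n}^{2t})$ with vanishing higher operations, and the same dimension match against the normal forms of Corollary~\ref{cor:dim-b-grav} via the explicit boundaries $K_I^{2t}\circ(u_1',\dots,u_{p-1}',\mathrm{id}_j)$. The approach is correct and coincides with the paper's proof.
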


\subsubsection{Proof of Theorem \ref{thm:DM-quasiiso-map} } The argument is essentially the same as in the proof of Theorem \ref{thm:LM-quasiiso-map}. The dg operad $(\bBV{k}\vee T(r_1,r_2,\dots), \mathrm{d})$ computes the homotopy quotient by $\Delta$, thus its homology is isomorphic to $\bHycomm{k}$. In addition, the following statements are true. 
\begin{enumerate}
\item The map $g$ is a morphism of dg operads.
\item The images of the generators of $\bHycomm{k}$ do not vanish in the homology of $\mathrm{d}$ (the image of $\mathrm{d}$ is contained in the ideal generated by~$\Delta$).
\item In the homology of $\mathrm{d}$, the subspace of elements of arity $2+(k-1)t$ and homological degree $2t$ is one-dimensional for each $t\geqslant 0$.
\end{enumerate}
This implies that the map $g$ is surjective on the homology of $\mathrm{d}$, hence it is a quasi-isomorphism.\qed

\section{Homotopy quotients in the non-symmetric operad case}

\subsection{Order of differential operator}

Let us recall the definition of the B\"orjeson braces (see
\cite{MR3624036,DSV-ncHyperCom,MR3385702} for further details), 
and relate them to the notion of a noncommutative 
Batalin--Vilkovisky algebra.

\begin{definition}
Let $\mathcal O$ be a non-symmetric operad, and let $\mu\in
\mathcal O(\underline{2})_0$ be an associative binary operation.
For an element $\varphi\in\mathcal O(\underline{1})$, we define 
the $k$-th \emph{B\"orjeson brace} $\beta_k^\varphi\in
\mathcal O(\underline{k})$ by the formulas
\begin{align*}
\beta_1^\varphi&:=\varphi, \quad
\beta_2^\varphi:=\varphi\circ_1\mu-\mu\circ_1\varphi-\mu\circ_2\varphi,\\
\beta_3^\varphi&:=\varphi\circ_1\mu\circ_2\mu-\mu\circ_1(\varphi\circ_1\mu)-\mu\circ_2(\varphi\circ_1\mu)+\mu\circ_2(\mu\circ_1\varphi),\\
\beta_n^\varphi&:=\beta_{n-1}^\varphi\circ_2\mu \text{ for $n\geqslant 4$}. 
\end{align*} 
We say that an element $\varphi\in\mathcal O(\underline{1})$ is of differential order at most $k$ (with respect to $\mu$) if $\beta_{k+1}^\varphi=0$.
\end{definition}

The non-symmetric operad $\ncBV$ of noncommutative Batalin--Vilkovisky algebras has two equivalent definitions. One of them says that it is generated by an element $\Delta$ or arity $1$ and homological degree $1$ and an element $\mu$ of arity $2$ and homological degree $0$ subject to the relations
\[ \Delta^2=0, \quad 
\mu\circ_1\mu=\mu\circ_2\mu, \quad
\beta_{3}^\Delta=0.
\] 
The other one says that the same operad admits a quadratic-linear presentation using two binary operations $\mu$ and $\lambda$ of respective degrees $0$ and $1$, and a unary generator $\Delta$ of degree $1$, subject to the relations
\begin{align*}
\Delta\circ_1\Delta 				&=0, 
	& \mu\circ_1\mu-\mu\circ_2\mu &=0,\\
\mu\circ_1\lambda-\lambda\circ_2\mu &=0, 
	& \lambda\circ_1\mu-\mu\circ_2\lambda&=0, \\
\lambda\circ_1\lambda+\lambda\circ_2\lambda &=0, 
	&\Delta\circ_1 \mu-\mu\circ_1\Delta-\mu\circ_2\Delta &=\lambda, \\
& & \Delta\circ_1 \lambda+\lambda\circ_1
	\Delta+\lambda\circ_2\Delta &=0. 
\end{align*}    
Examining the first of those definitions, it is very easy to generalise it to use other values of the differential order. 

\begin{definition}
The non-symmetric operad $\bncBV{k}$ is generated by an element $\Delta$ or arity $0$ and homological degree $1$ and an element $\mu$ of arity $2$ and homological degree $0$ subject to the relations
\[
\Delta^2=0, \quad
\mu\circ_1\mu=\mu\circ_2\mu, \quad
\beta_{k+1}^\Delta=0.
	\]
\end{definition}

\subsection{The case of differential order one}\label{sec:NCorderone}

 The non-symmetric operad $\bncBV{1}$ is generated by an element $\Delta$ of arity $0$ and homological degree $1$ and an element $\mu$ of arity $2$ and homological degree $0$ subject to the relations
\[ 
\Delta^2=0, \quad
\mu\circ_1\mu=\mu\circ_2\mu, 
\quad \Delta\circ_1\mu-\mu\circ_1\Delta-\mu\circ_2\Delta=0.
\]
It is a (homogeneous) quadratic non-symmetric operad, and one can immediately see that it is Koszul; in fact, it has a quadratic Gr\"obner basis for which the corresponding PBW basis consists of elements 
 \[
\mu^{[n]} \text{ and } (\cdots((\mu^{[n]}\circ_{i_1}\Delta)
	\circ_{i_2}\Delta)\cdots)\circ_{i_p}\Delta
\text{ for $n\geqslant 0$ and $I=\{i_1,\ldots,i_p\}
	\subset\underline{n+1}$.}
 \]
where we denote by $\mu^{[n]}$ the $n$-fold composite of the operation 
$\mu$: $\mu^{[0]}=\mathrm{id}$, $\mu^{[n+1]}=\mu^{[n]}\circ_1 \mu$.
As before, we consider all left combs $\mu^{[n]}$ with a possibly
empty subset of leaves decorated by $\Delta$.

The Koszul dual twisted associative coalgebra 
$\bncBV{1}^{\ac}$ has a PBW basis consisting of the elements 
 \[
u_{n,r}:=(s\Delta)^r (s\mu)^{[n]}, \quad r 
	\text{ and } n\geqslant 0,
 \]
with the obvious cooperad structure (splitting $r$ into a sum and 
splitting the set of arguments of $(s\mu)^{[n]}$ into a disjoint 
union). The homotopy quotient of $\bncBV{1}$ by $\Delta$ is 
represented by the dg non-symmetric operad which is the quotient 
of the cobar construction $\Omega(\bncBV{1}^{\ac})$ by the two-
sided ideal generated by the elements $s^{-1}u_{0,r}$ with $r>0$. 
Thus quotient creates from the cobar construction another free 
non-symmetric operad equipped with a quadratic differential: it 
is generated by the elements $s^{-1}u_{\underline{n},r}$ with 
$n>0$, and the differential as above but with all terms involving 
$s^{-1}u_{0,r}$ being suppressed. This differential is the 
differential of the cobar construction of the non-symmetric 
cooperad which is the quotient of $\bncBV{1}^{\ac}$ by the 
\emph{subspace} spanned by the elements $u_{0,r}$ with $r>0$. 
The linear dual non-symmetric operad is generated by the elements 
$u_{1,p}$ with $p\geqslant 0$, of arity $2$ and homological degree $2p+1$ 
subject to the relations
\begin{gather*}
u_{1,p}\circ_1u_{1,q}+u_{1,p}\circ_2 u_{1,q}=0,\\
u_{1,p}\circ_1u_{1,q}=u_{1,p-1}\circ_1 u_{1,q+1}.
\end{gather*}
It is easy to see that this non-symmetric operad is Koszul; in fact, it has a quadratic Gr\"obner basis whose linear basis  corresponds to a PBW basis under the map
$ u_{n,r} \longmapsto \{(u_{1,0})^{[n-1]}\circ_1 u_{1,r}$ where $r \geqslant 0$ and $n>0$. Therefore, the homology of its cobar complex is the Koszul dual non-symmetric operad which has generators $\mu^{2p}$ of arity $2$ and homological degree $2p$ for $p\geqslant 0$ with the relations stating that the formal sum $\overline{\mu}=\sum_{p=0}^\infty \mu^{2p}$ satisfies
 \[ 
\overline{\mu}\circ_1\overline{\mu}=\overline{\mu}\circ_2\overline{\mu}. 
 \]
To make sense of these relations, one has to separate the terms by total homological degree $2d\geqslant 0$ into infinitely many relations 
 \[
\sum_{p+q=d}\mu^{2p}\circ_1\mu^{2q}=\sum_{p+q=d}\mu^{2p}\circ_2\mu^{2q}. 
 \]
involving finitely many terms each. We established the following result.

\begin{theorem}
The homotopy quotient of $\bncBV{1}$ by $\Delta$ is represented by the non-symmetric operad with generators $\mu^{2p}$ of arity $2$ and homological degree $2p$ for $p\geqslant 0$ 
subject to the relations 
 \[
\sum_{p+q=d}\mu^{2p}\circ_1\mu^{2q}=\sum_{p+q=d}\mu^{2p}\circ_2\mu^{2q}, \quad d\geqslant 0. 
 \]
\end{theorem}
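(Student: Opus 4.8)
The plan is to recognize this statement as the third and last instance of a computation already carried out verbatim for twisted associative algebras in Section~\ref{sec:orderone} and for symmetric operads in Section~\ref{sec:DMorderone}, and to follow the identical four-step pattern. First I would observe that $\bncBV{1}$ is a homogeneous quadratic non-symmetric operad, since its three defining relations $\Delta\circ_1\Delta=0$, $\mu\circ_1\mu=\mu\circ_2\mu$ and $\Delta\circ_1\mu-\mu\circ_1\Delta-\mu\circ_2\Delta=0$ are all quadratic in the generators. I would then establish Koszulness by exhibiting a quadratic Gr\"obner basis whose normal monomials are the left combs $\mu^{[n]}$ with an arbitrary subset of leaves decorated by $\Delta$; this simultaneously proves Koszulness and reads off the Koszul dual cooperad $\bncBV{1}^{\ac}$ together with its PBW basis $u_{n,r}=(s\Delta)^r(s\mu)^{[n]}$ and its evident cooperad structure.

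Next I would invoke the general model for homotopy quotients by a single operation: since $\bncBV{1}$ is Koszul, its minimal model is the cobar construction $\Omega(\bncBV{1}^{\ac})$, and the homotopy quotient by $\Delta$ is represented by the quotient of this cobar construction by the two-sided ideal generated by the duals $s^{-1}u_{0,r}$ with $r>0$. The crucial point is that this quotient is again a quasi-free operad carrying a \emph{quadratic} differential, hence is itself the cobar construction of a non-symmetric cooperad, namely the quotient of $\bncBV{1}^{\ac}$ by the subspace spanned by the $u_{0,r}$ with $r>0$. Dualizing then reduces the homology computation to a second application of Koszul duality: the linear dual of this quotient cooperad is the operad generated by $u_{1,p}$ of arity $2$ and degree $2p+1$ subject to the two displayed families of relations, I would verify that it is Koszul via a quadratic Gr\"obner basis, identify its Koszul dual as the operad on generators $\mu^{2p}$ of arity $2$ and degree $2p$ with defining relation $\overline{\mu}\circ_1\overline{\mu}=\overline{\mu}\circ_2\overline{\mu}$, and conclude by separating this relation by total homological degree into the asserted family $\sum_{p+q=d}\mu^{2p}\circ_1\mu^{2q}=\sum_{p+q=d}\mu^{2p}\circ_2\mu^{2q}$.

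The main obstacle is conceptual rather than computational: the passage in the previous paragraph, namely that homotopically killing $\Delta$ is modeled by quotienting $\Omega(\bncBV{1}^{\ac})$ by the ideal generated by the duals of the powers of $\Delta$, and that the induced differential is \emph{exactly} the cobar differential of the quotient cooperad, must be justified carefully. Once that reduction is in place, the remaining work---checking that both Gr\"obner bases are genuinely quadratic and that the degree bookkeeping in the second Koszul duality yields precisely the generators $\mu^{2p}$ and the stated relations---is routine and mirrors the twisted associative and symmetric cases line by line, the non-symmetric setting being if anything simpler since there are no symmetric group actions to track.
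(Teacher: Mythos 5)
Your proposal follows the paper's own proof essentially verbatim: the same quadratic Gr\"obner basis and PBW basis of left combs decorated by $\Delta$, the same identification of the homotopy quotient with the quotient of $\Omega(\bncBV{1}^{\ac})$ by the ideal generated by the $s^{-1}u_{0,r}$, the same recognition of that quotient as the cobar construction of the quotient cooperad, and the same second application of Koszul duality to the operad generated by the $u_{1,p}$. The approach and all intermediate objects coincide with the paper's argument in Section~\ref{sec:NCorderone}, so nothing further is needed.
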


The computation of the homotopy quotient of $\bBV{2}$ (that is, the operad $\BV$) by $\Delta$ was first done in \cite{DSV-ncHyperCom}. Our approach for the case of finite order at least two is completely uniform, and so we shall proceed with developing a general formalism instead of focusing on that particular case.

\subsection{Extended noncommutative hypercommutative operad and Givental symmetries} 

We shall now define a non-symmetric operad generalizing the noncommutative hypercommutative operad of \cite{DSV-ncHyperCom} which is one of the protagonists of the calculation of homotopy quotients by operators of higher order.

\begin{definition}
The \emph{extended noncommutative hypercommutative operad}, 
denoted $\ncExtHycomm$, is the non-symmetric operad generated by 
operations $\mu_t^{2p}$ of all possible arities $t\geqslant 2$ 
and all possible non-negative even homological degrees $2p
\geqslant 0$, subject to the relations that can be described as 
follows. Consider the sums of all generators of a fixed arity $
\overline \mu_t\coloneqq \sum_{p=0}^\infty \mu_t^{2p}$, and impose, for each arity $n\ge 3$ and each choice of $i-1\in \underline{n-2}$, the condition
\begin{equation}\label{eq:ncHCrel}
\sum_{j=1}^{i-1} 
	\overline \mu_{n-i+j}\circ_j \overline \mu_{i-j+1}=
		\sum_{j=i+1}^n
		 \overline \mu_{n-j+i}\circ_i \overline \mu_{j-i+1}.
\end{equation}
In plain words, the sum of of all possible graftings of two corollas of total arity
	$n$, where $i-1$ and $i$ appear at the top corolla, and $i+1$ appears at the bottom corolla,
is equal to the sum	of all possible graftings of two corollas of total arity
	$n$, where $i$ and $i+1$ appear at the top corolla, and $i-1$ appears at the bottom corolla.
\end{definition}

Note that if each operation $\overline \mu_t$ were of homological 
degree $2t-4$, these identities would precisely define the operad 
$\ncHycomm$ of hypercommutative algebras \cite{DSV-ncHyperCom}. 
In our case, each of these operations is an infinite sum of 
operations of different homological degrees, and so one should 
separate these identities by homological degrees into infinitely 
many relations involving finitely many terms each. 

In the case of the spaces $\mathcal B(n)$, one can also define psi-classes that play important role in algebraic and geometric results involving those spaces. Let us define a formal algebraic version of those classes in the case of $\ncExtHycomm$, where there is no longer a geometric interpretation of those as cohomology classes. We start with a preparatory lemma,
whose proof is completely analogous to that of~\ref{DM:action-indep} which we have already given. We note that in that
statement, we are considering sums of grafting of two
corollas of a fixed total arity, and fixing two elements at
the top corolla. In the nc analog, a two element subset 
is replaced by a two element saturated ordered subset:

\begin{lemma}\label{NC:action-indep}
For each $i\in \underline{n-1}$ the sum 
\[ \sum_{i,i+1 \text{ at top}} 
	 \overline{\mu}_{n_1} \circ_j \overline{\mu}_{n_2}  \]
	of all possible graftings of two corollas of total arity
	$n$, where $i$ and $i+1$ appear at the top corolla, is
	independent of $i$.	 
	 Similarly, for every $i\in\underline{n}$ the
	sum	\[ 
	 \sum_{\text{$i$ at top}} 
	 \overline{\mu}_{n_1} \circ_j \overline{\mu}_{n_2}\]
	 consisting of all possible graftings 
	 of two corollas of total arity
	$n$, where $i$ appears at the top corolla, is
	independent of $i$. \qed
\end{lemma}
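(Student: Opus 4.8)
The plan is to mimic the proof of Lemma~\ref{DM:action-indep} verbatim, the only change being that the r\^ole played there by an arbitrary two-element subset $\{j_1,j_2\}\subset\underline{n}$ is now played by a two-element \emph{saturated} (i.e.\ consecutive) block $\{i,i+1\}$, as forced by the planar combinatorics of graftings. Since the defining relation \eqref{eq:ncHCrel} is the exact non-symmetric analogue of \eqref{eq:HCrel}, the same ``add a common term to both sides'' manoeuvre will apply, and the third, ``exchange'' element used there becomes here an over-saturated block of three consecutive leaves.

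First I would prove the first assertion. Write $A_i$ for the sum of all graftings $\overline{\mu}_{n_1}\circ_j\overline{\mu}_{n_2}$ of total arity $n$ in which both $i$ and $i+1$ sit on the top corolla; it suffices to show $A_{i-1}=A_i$. Because the inputs carried by the top corolla always form an interval, I can split $A_{i-1}$ according to whether $i+1$ also lies on the top corolla:
\[
A_{i-1}=\Big[\text{top}\ni i-1,i;\ i+1\text{ on bottom}\Big]+\Big[\text{top}\ni i-1,i,i+1\Big],
\]
and the first summand is exactly the left-hand side of \eqref{eq:ncHCrel} (an interval ending at $i$ automatically contains $i-1$, as both corollas have arity at least two). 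Splitting $A_i$ according to whether $i-1$ lies on the top corolla gives
\[
A_i=\Big[\text{top}\ni i,i+1;\ i-1\text{ on bottom}\Big]+\Big[\text{top}\ni i-1,i,i+1\Big],
\]
whose first summand is the right-hand side of \eqref{eq:ncHCrel}. The two over-saturated terms $\big[\text{top}\ni i-1,i,i+1\big]$ coincide, so \eqref{eq:ncHCrel} yields $A_{i-1}=A_i$; iterating this single step moves any consecutive pair to any other.

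The second assertion is proved by the same exchange mechanism, now applied to a single distinguished leaf rather than to a consecutive pair: I would show that the relevant grafting sum is unchanged when the distinguished index is replaced by a neighbour, by adding to both sides of \eqref{eq:ncHCrel} the common family of graftings in which the two neighbouring indices both sit on the top corolla, and then reading off the desired equality directly from the defining relation. Once translated into the planar language of \cite{DSV-ncHyperCom}, each of these verifications is formally identical to its counterpart in the symmetric case.

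I expect the genuine difficulty to be bookkeeping rather than conceptual. The one point that must be checked with care is that every intermediate expression respects the interval constraint imposed by planarity---a top corolla can only carry a set of \emph{consecutive} leaves---so that each sum produced by the splitting really is a grafting sum of the prescribed shape and the two over-saturated correction terms match on the nose. This is precisely where the replacement of arbitrary two-element subsets by saturated ones is essential, and it is the spot at which a careless transcription of the symmetric argument would fail.
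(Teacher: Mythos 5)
Your treatment of the first assertion is correct and is exactly the argument the paper has in mind (the paper offers no proof beyond declaring the statement analogous to Lemma~\ref{DM:action-indep}): writing $A_i$ for the sum with $\{i,i+1\}$ on top and $L_i$, $R_i$ for the two sides of \eqref{eq:ncHCrel}, the decompositions $A_{i-1}=L_i+C$ and $A_i=R_i+C$, with $C$ the common over-saturated term having $\{i-1,i,i+1\}$ on top, give $A_{i-1}=A_i$; and your observation that an interval ending at $i$ of length at least two must contain $i-1$ is precisely what legitimises identifying the first summands with $L_i$ and $R_i$.

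The second assertion is where the proposal breaks down, and the failure is not mere bookkeeping. Write $B_i$ for the sum with $i$ on top. Splitting $B_i$ by whether $i+1$ lies on top gives $B_i=L_i+A_i$, while splitting $B_{i+1}$ by whether $i$ lies on top gives $B_{i+1}=R_{i+1}+A_i$. Your manoeuvre of adding the common family $A_i$ to both sides of \eqref{eq:ncHCrel} therefore yields $B_i=R_i+A_i$, which is not $B_{i+1}$; the identity you would actually need, $L_i=R_{i+1}$, is not an instance of \eqref{eq:ncHCrel}, and one is left with $B_i-B_{i+1}=R_i-R_{i+1}$, which does not vanish. In fact the assertion as literally stated fails already for $n=3$: the only graftings are $\overline\mu_2\circ_1\overline\mu_2$ (top $=\{1,2\}$) and $\overline\mu_2\circ_2\overline\mu_2$ (top $=\{2,3\}$), so $B_1=B_3=\overline\mu_2\circ_1\overline\mu_2$ while $B_2=B_1+B_3=2B_1\neq B_1$ (and $\overline\mu_2\circ_1\overline\mu_2\neq 0$, as one sees by mapping onto the associative operad). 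This is consistent with Proposition~\ref{prop:NC-psi-bimodule}, where $B_i$ is used to define $\overline\mu_n\circ_i\psi+\psi\circ_1\overline\mu_n$, a quantity that genuinely depends on $i$: the right action vanishes at the extreme leaves but not at internal ones. So no exchange argument can establish the second claim in its present form; the correct planar analogue of the second half of Lemma~\ref{DM:action-indep} must be formulated differently, and this is exactly the spot where, as your own closing paragraph warns, a verbatim transcription of the symmetric argument fails.
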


With this lemma at hand, one can extend the algebra over which 
the ns operad $\ncExtHycomm$ is defined to $\kk[\psi]$ with 
$\deg \psi = -2$, where the action of $\psi$ on $\overline \mu_n$ is given as in the statement of the
following proposition. The proof is completely
analogous to the ones we have already given, so we omit it.
 
\begin{proposition}\label{prop:NC-psi-bimodule}
If, for a formal variable $\psi$ of degree $-2$ we let
\begin{align}
	\psi \circ_1 \overline \mu_n  & = \sum_{i,i+1 \text{ at top}} 
	 \overline{\mu}_{n_1} \circ_j \overline{\mu}_{n_2} 
	& \quad \text{ for any }  i\in \underline{n-1};
	\\
	\overline \mu_n \circ_i \psi+ \psi\circ_1\overline \mu_n &=  
	 \sum_{\text{$i$ at top}} 
	 \overline{\mu}_{n_1} \circ_j \overline{\mu}_{n_2}
	\notag & \quad  \text{ for any } i\in \underline{n},
\end{align}
where $n_1+n_2 = n+1$ and $j$ encodes the graftings described in Lemma \ref{NC:action-indep},
these formulas define a $\kk[\psi]$-bimodule structure
on $\ncExtHycomm$.\qed
\end{proposition}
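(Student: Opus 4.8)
The plan is to reproduce, step for step, the proofs of Propositions \ref{prop:psi-bimodule} and \ref{prop:DM-psi-bimodule}, carrying each manipulation over to the planar (non-symmetric) setting. The only structural change is that the unordered pairs of inputs controlling the $\psi$-action in the symmetric case are replaced throughout by the \emph{saturated consecutive pairs} $\{i,i+1\}$ forced by the linear order, which is exactly the content that Lemma \ref{NC:action-indep} is designed to handle. So the whole argument is organised around three checks: well-definedness, compatibility with the defining relations, and the bimodule axioms.

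First I would dispose of well-definedness. Lemma \ref{NC:action-indep} already guarantees that the grafting sum $\sum_{i,i+1\text{ at top}}\overline\mu_{n_1}\circ_j\overline\mu_{n_2}$ is independent of the chosen index $i\in\underline{n-1}$, so the left action $\psi\circ_1\overline\mu_n$ is unambiguous; likewise the sum $\sum_{i\text{ at top}}\overline\mu_{n_1}\circ_j\overline\mu_{n_2}$ is independent of $i\in\underline{n}$. The right action is then \emph{defined} by the displayed identity, namely $\overline\mu_n\circ_i\psi=\bigl(\sum_{i\text{ at top}}\overline\mu_{n_1}\circ_j\overline\mu_{n_2}\bigr)-\psi\circ_1\overline\mu_n$, and both terms on the right being well-defined makes $\overline\mu_n\circ_i\psi$ well-defined as well.

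Next I would check compatibility with the defining relations \eqref{eq:ncHCrel} of $\ncExtHycomm$: applying the candidate action to a relation must produce an element that already vanishes by the relations. Substituting the grafting formula for $\psi\circ_1$ into a defining relation turns it into a sum over three-corolla planar configurations; regrouping so that the outermost grafting is factored out, the remaining inner expression is, for each fixed outer corolla, precisely a lower-arity instance of \eqref{eq:ncHCrel}. This is the planar counterpart of the collapse to $\overline\lmm_{I'}[\overline\lmm_{I''},\overline\lmm_J]$ in the proof of Proposition \ref{prop:psi-bimodule}, except that here the role of the commutator is played directly by the defining relation equating the two admissible graftings. The compatibility of the right action then follows formally from that of the left action together with the defining identity relating the two.

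Finally I would verify the bimodule axioms, i.e. that the root action and the input actions commute in the expected way and that the right actions at distinct inputs commute: symbolically $(\psi\circ_1\overline\mu_n)\circ_i\psi=\psi\circ_1(\overline\mu_n\circ_i\psi)$ and $(\overline\mu_n\circ_i\psi)\circ_j\psi=(\overline\mu_n\circ_j\psi)\circ_i\psi$ for $i\neq j$. In each case both sides expand into sums over two-fold graftings, and, after using the index-independence of Lemma \ref{NC:action-indep} to normalise the choices, the two expansions match termwise exactly as in Proposition \ref{prop:DM-psi-bimodule}. The main obstacle throughout is purely combinatorial: without the symmetric group action one cannot permute inputs into a canonical position, so one must track at every stage which inputs sit at the top corolla and what the grafting index $j$ is. The decisive ingredient that makes both the termwise matching and the collapse-to-relations step go through is precisely the consecutive-pair independence of Lemma \ref{NC:action-indep}; granting it, every computation in the two earlier proofs transports to the planar setting without essential change, which is why the authors omit the routine verification.
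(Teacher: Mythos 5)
Your proposal is correct and matches the paper's intent exactly: the paper omits the proof of Proposition~\ref{prop:NC-psi-bimodule} altogether, stating only that it is ``completely analogous'' to the ones already given, and your three-step plan (well-definedness via Lemma~\ref{NC:action-indep}, compatibility with the defining relations~\eqref{eq:ncHCrel}, and verification of the bimodule axioms) is precisely the planar transcription of the proofs of Propositions~\ref{prop:psi-bimodule} and~\ref{prop:DM-psi-bimodule}. One minor remark: well-definedness of the right action $\overline{\mu}_n\circ_i\psi$ does not really hinge on the second claim of Lemma~\ref{NC:action-indep}, since in the planar setting a two-corolla grafting is completely determined by the interval of leaves sitting at the top corolla, so --- unlike the symmetric case with its auxiliary choice of $j_1$ --- there is no ambiguity to resolve there; only the left action $\psi\circ_1\overline{\mu}_n$ genuinely needs the consecutive-pair independence.
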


In particular, as it happens in the case of the classical
non-commutative hypercommutative operad, the action at the
first and last leaves are zero~\cite[Proposition 6.1.1]{DSV-ncHyperCom}. The interested reader may check that this implies
that the squares of the actions at internal leaves are also 
zero.
Let us show how the psi-classes can be used to define symmetries 
of representations of $\ncExtHycomm$ that generalise the Givental 
action \cite{ShaZvo-LMCohft}. Precisely, let $\mathcal O$ be an 
arbitrary dg non-symmetric operad, and consider the set of dg 
non-symmetric operad morphisms $\Hom(\ncExtHycomm,\mathcal O)$. For a 
formal variable $z$ of degree $-2$, let us consider the complete 
Lie algebra $\mathfrak{g}_{\mathcal O}$ which is the kernel of the 
``augmentation''
 \[
	\bigoplus_{p=0}^\infty 
		\mathcal O(\underline{1})_{2p}
			\otimes\kk \llbracket z \rrbracket
				\twoheadrightarrow \mathcal O(\underline{1})_0
 \]
annihilating elements of $\mathcal O$ of positive homological degrees as well as all positive powers of $z$. 
We shall now define an action of $\mathfrak{g}_{\mathcal O}$ by infinitesimal symmetries of $\Hom(\ncExtHycomm,\mathcal O)$. For that, we put
\begin{align*}
	((rz^t).f) (\overline \mu_{n}) &=  
		r \circ_1 f(\psi^{t}\circ_1\overline \mu_{n}) + (-1)^{t+1} \sum_{i\in \underline{n}} f(\overline \mu_{n}\circ_i \psi^{t})\circ_i r 
	\\ \notag
	&+ \sum_{i+j=t-1} (-1)^{j+1} 
		\sum_{\substack
			{k ,\,q+2\, \in \underline{n} }
						}
				 f(\overline \mu_{n-q+1}\circ_{k} \psi^j) 
				 	\circ_{k} 
				 		\left(r\circ_1 
				 			f(\psi^i \circ_1 \overline \mu_{q})
				 				\right).
\end{align*}
and extend it linearly to $\mathfrak{g}_{\mathcal O}$. It turns out that this way we obtain infinitesimal symmetries of $\Hom(\ExtHycomm,\mathcal O)$.

\begin{proposition}\label{prop:nc:GiventalLie}
For any $f\in \Hom(\ncExtHycomm,\mathcal O)$ we have 
$
f+\varepsilon r.f\in \Hom_{\kk_\varepsilon}(\ncExtHycomm_\varepsilon,\mathcal O_\varepsilon)$,
and for all $\lambda_1,\lambda_2\in \mathfrak{g}_{\mathcal O}$, we have
 $
[\lambda_1,\lambda_2].f=\lambda_1.(\lambda_2.f)-\lambda_2.(\lambda_1.f)$.
\end{proposition}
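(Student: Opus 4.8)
The plan is to follow verbatim the pattern established in the proofs of Lemma~\ref{lem:lm:GiventalLie} and Lemma~\ref{lem:comm:GiventalLie}; those arguments are structurally identical, and the only genuine difference here is the combinatorial bookkeeping of grafting positions specific to the non-symmetric setting.

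For the first assertion, I would fix an arity $n\geqslant 3$ and an index $i$, and verify that $f(\overline\mu_{\underline n}) + \varepsilon\,((rz^t).f)(\overline\mu_{\underline n})$ satisfies the defining relation~\eqref{eq:ncHCrel} of $\ncExtHycomm$. The constant coefficient is precisely the statement that $f$ is a morphism, so the content lies in the coefficient of $\varepsilon$. Substituting the three-term expression defining $(rz^t).f$ into both sides of~\eqref{eq:ncHCrel} produces, exactly as in the twisted associative case, three types of summands on each side: a summand in which $r$ is grafted at the root through $\psi^t\circ_1\overline\mu$, a summand in which $r$ is grafted at a leaf through $\overline\mu\circ_i\psi^t$, and a summand indexed by $i+j=t-1$ in which $r$ sits on an internal edge. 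I would rewrite the first two contributions using the defining relation~\eqref{eq:ncHCrel} for $f$ itself, so that they become manifestly symmetric in the distinguished positions, and rewrite the edge contributions using the explicit formulas for the $\kk[\psi]$-action of Proposition~\ref{prop:NC-psi-bimodule}, reassembling the two corollas separated by $r$ into a single $\psi$-decorated corolla. The upshot is that the terms in which the two distinguished positions are ``separated by $r$'' cancel, either in pairs or against the root/leaf terms, while the remaining terms are symmetric in the distinguished indices, which is exactly what the relation requires.

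For the commutator formula I would compute $\lambda_1.(\lambda_2.f)-\lambda_2.(\lambda_1.f)$ directly, sorting the resulting terms by whether $\lambda_1$ and $\lambda_2$ act ``in the same place'' (both at the root, both at the same leaf, or both on the same internal edge) or ``at different places''. The same-place terms reproduce the action of the bracket $[\lambda_1,\lambda_2]$ on $f$, while the different-place terms come in pairs that cancel thanks to the $\kk[\psi]$-bimodule axioms of Proposition~\ref{prop:NC-psi-bimodule} together with the signs $(-1)^{t+1}$ and $(-1)^{j+1}$ built into the definition of the action.

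The main obstacle I anticipate is combinatorial rather than conceptual. In the non-symmetric setting the $\psi$-action is defined through grafting sums with a prescribed pair of adjacent inputs ``at the top corolla'' (Lemma~\ref{NC:action-indep}), so, unlike the twisted associative case where one freely manipulates unordered partitions $I\sqcup J$, here every manipulation must respect the planar grafting data, namely the index $j$ subject to $n_1+n_2=n+1$. Verifying that the edge reassembly and the pairwise cancellations respect these grafting positions---in particular that the position-independence granted by Lemma~\ref{NC:action-indep} is applied consistently at each step---is where the bulk of the careful but routine bookkeeping lies. Since the paper already omits the analogous symmetric computation by reference to the twisted associative one, I would present this proof in the same spirit: set up the dictionary between the non-symmetric grafting data and the twisted associative partition data, and then invoke the already-verified pattern.
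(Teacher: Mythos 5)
Your proposal is correct in substance, but it follows a genuinely different route from the paper's own proof. You transplant the twisted-associative computation of Lemma~\ref{lem:lm:GiventalLie} wholesale: for general $t$ you split the $\varepsilon$-coefficient into root, leaf, and internal-edge contributions, reassemble the edge terms via the $\kk[\psi]$-action of Proposition~\ref{prop:NC-psi-bimodule}, and cancel the ``separated by $r$'' terms. The paper instead first exploits simplifications specific to the non-symmetric setting --- the $\psi$-action vanishes at the first and last leaves, and squares of the actions at internal leaves vanish --- so that the formula for $(rz^t).f$ collapses; it then proves the case $t=1$ by an explicit enumeration of five combinatorial types of two-level planar trees, and reduces $t>1$ to $t=1$ by observing that $\psi^s\circ_1\overline{\mu}_b$ can be realized as the sum over all decompositions of $\overline{\mu}_b$ into right combs with $s$ internal edges. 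Your approach buys uniformity across the three algebraic gravity packages and makes the commutator identity (which the paper's written proof does not explicitly revisit) come along for free by the same same-place/different-place sorting; the cost is that you must carry the full sum over $i+j=t-1$ in the internal-edge term while respecting the planar grafting constraints, including the fact that the right action at leaf $i$ is defined only as a difference $\overline{\mu}_n\circ_i\psi = \bigl(\sum_{\text{$i$ at top}}\cdots\bigr) - \psi\circ_1\overline{\mu}_n$, which introduces extra cross-terms you would need to track explicitly. The paper's reduction to $t=1$ sidesteps exactly that bookkeeping. Both arguments are viable; if you write yours out, the one point to make precise is the dictionary between unordered partitions $I\sqcup J$ and the planar data $(n_1,n_2,j)$ with $n_1+n_2=n+1$, since the position-independence of Lemma~\ref{NC:action-indep} is weaker than its symmetric counterpart (it fixes adjacent pairs $i,i+1$ rather than arbitrary pairs).
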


\begin{proof}
Let us begin by noting that the formulas for the Givental
action just before the statement of this proposition
simplify in the non-commutative case. Indeed,
if $rz^t$ has $t=1$ then the formula reads
\begin{align*}
	((rz).f) (\overline \mu_{n}) &=  
		r \circ_1 f(\psi \circ_1\overline \mu_{n}) + 
		 \sum_{j \in \underline{n}} f(\overline \mu_{n}\circ_j \psi)\circ_j r 
	\\ \notag
	&+  
		\sum_{\substack
			{j \in \underline{n} }
						}
				 f(\overline \mu_a) 
				 	\circ_j 
				 		\left(r\circ_1 
				 			f(\overline \mu_b)
				 				\right),
\end{align*}
so let us begin by considering this case first, in which
the resulting equation consists of trees with exactly three internal
vertices. Both sides of the
corresponding equation summing over two level trees where either
$i-1$ and $i$ appear at the top and $i+1$ at the bottom,
or where $i-1$ appears at the bottom and $i$ and $i+1$
at the top will have terms of various types:
\begin{enumerate}
\item Terms where $r$ appears at the root, in which
case the freedom to choose how the psi-class
acts at the root allows us to assume all summands consist
of sequential compositions of trees. In this case,
the summands will consist of terms where the ``$(i-1,i,i+1)$-associativity''
appears at the bottom two trees, and we are grafting one more tree
at the top, or where it appears at the top, and we have grafted
a tree at the bottom. 
\item There will be terms where $i-1$ appears at the bottom
and $i$ in the middle tree, while $r$ appears at the top (either
before or after the third tree). All of these terms are obtained
by grafting the third tree and $r$ into the left hand side 
of associativity equation, and will cancel with the corresponding
right hand side.
\item Terms where $r$ divides the first tree from the other two, in
which case $i-1$ is at the bottom, and split from $i$ and $i+1$ by $r$,
which appear at the top. These terms appear twice in the left
hand side of the ``$(i-1,i,i+1)$-associativity equation'' for
the deformed terms and with opposite signs, so they cancel.
Similarly, there will be corresponding terms for the right
hand side of the equation that cancel out.
\item In a similar fashion, there are terms in the left
hand side where $r$ separates the first two trees from the last,
and $i-1$ at the bottom from $i,i+1$ at the topmost tree. These
terms appear twice with opposite signs, and they cancel. 
\item Finally, the remaining the terms will consist of associativity
equations appearing along with a third tree (in parallel or sequentially)
and an appearance of $r$ at a leaf.
\end{enumerate}
This accounts for all possible terms, and proves the case for $t=1$.
In case $t>1$, the action reads
\begin{align*}
	((rz^t).f) (\overline \mu_{n}) &=  
		r \circ_1 f(\psi \circ_1\overline \mu_{n}) -
		 \sum_{j \in \underline{n}}
		  f(\overline \mu_{a}\circ_j \psi)\circ_j r
		  \circ_1 f(\psi^{t-2}\circ_1 \overline\mu_{b})
	\\ \notag
	&+  \sum_{j \in \underline{n} }
			 f(\overline \mu_a) 
				 	\circ_j
				 		r\circ_1 
				 			f(\psi^{t-1} \circ_1 \overline \mu_{b}),
\end{align*}
and the observation we make is that for each $s\geqslant 1$ the
element $\psi^s\circ_1\overline{\mu}_b$ can be chosen to consist
of all possible ways of decomposing $\overline{\mu}_b$ into a right
comb (with the same total number of leaves) with $s$ internal
edges. One can then notice
that the resulting action above still contains three terms
of the same combinatorial shape as the action in the case
$t=1$, where $r$ is either at the root, preceded by an action
of the psi-class at an internal vertex, or dividing the
tree into a corolla and now a sum of right combs, and 
carry out the same analysis as above to conclude.
\end{proof}

\subsection{The higher order versions of the non-symmetric operad \texorpdfstring{$\ncHycomm$}{ncHycomm}}
We are now ready to introduce the non-symmetric 
operad which will turn out to represent the 
homotopy quotient of $\bncBV{k}$ by $\Delta$.

\begin{definition}
Let $k\geqslant 1$. The non-symmetric operad 
$\bncHycomm{k}$ is defined as the quotient of 
the extended noncommutative hypercommutative 
operad $\ncExtHycomm$ by the ideal generated by 
all $\mu_{{t}}^{2p}$ where either 
$t\neq 2\pmod{k-1}$ or $t= 2+v(k-1)$ for some 
$v\geqslant 0$ but $v\ne p$.
\end{definition}

Note that this definition means that the homological degree $2p$ of each generator $\mu_{{t}}^{2p}$ of the non-symmetric operad $\bncHycomm{k}$ determines its arity. In particular, the non-symmetric operad $\bncHycomm{k}$ is always generated by a collection of elements of degrees $0,2,4,\ldots$ whose arities vary depending on~$k$. For example,
\begin{itemize}
    \item $\bncHycomm{1}$ is generated by binary operations $\mu_{{2}}^{2p}$ of arity $2$ for $p\geqslant 0$ subject to the relations guaranteeing that $\overline \mu = \sum_{p=0}^\infty \mu_{{2}}^{2p}$ is associative.
    \item $\bncHycomm{2}$ is generated by operations $\mu_{{2+p}}^{2p}$ for
    $p\geqslant 0$, and is isomorphic to $\ncHycomm$.
\end{itemize}
In general, the operad $\bncHycomm{k}$ has the following relations, for all values of $d\geqslant 0$, the corresponding value $n=3+d(k-1)$, and all choices of $q-1\in \underline{n-2}$: 
\begin{equation}
\sum_{ 1\leqslant i \leqslant q-1} 
	\overline \mu_{n-q+i}^{2d_1}
		\circ_i 
		\overline \mu_{q-i+1}^{2d_2}=
\sum_{\substack{q+1\leqslant i\leqslant n}} 
	\overline \mu_{n-i+q}^{2d_1}
		\circ_k 
		\overline \mu_{i-q+1}^{2d_2} .
\end{equation}
where $d_1+d_2=d$ and
 we further require that $q-i+1=2+d_2(k-1)$ in the
left hand side sum and that $i-q+1=2+d_2(k-1)$ in the
right hand side sum.
Let us establish that this operad is Koszul, and determine its Koszul dual.

\begin{proposition}\label{prop:bNCHycomm-Koszul}
The suspension of the non-symmetric operad 
$\bncHycomm{k}^!$ is generated by elements 
$\widetilde\gamma_{t+2}:=\gamma_{(k-1)t+2}$ of arity $(k-1)t+2$ and 
homological degrees $1+2t(k-2)$ for 
$t\geqslant 0$, 
subject to the relations 
\begin{align*}
\sum\limits_{j=r}^{r+(t-2)(k-1)}
	\tilde\gamma_{{n-1}}\circ_j\tilde\gamma_{{2}}
		&=\tilde\gamma_{{n-t+1}}
			\circ_r\tilde\gamma_{t},
				\text{ for all }
					3\leqslant t<n 
						\text{ and }
							1\leqslant r
								\leqslant
			 \mathrm{ar}(\lambda_{n-k+1}).
						\\
\sum\limits_{j=1}^{(k-1)n+2}\tilde\gamma_{{n+2}}\circ_j
	\tilde\gamma_{{2}}&=0 \text{ for all $n\geqslant 0$}.
\end{align*}
This operad is Koszul.   
\end{proposition}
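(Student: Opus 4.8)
The plan is to follow verbatim the template of Propositions \ref{prop:bLMHycomm-Koszul} and \ref{prop:bHycomm-Koszul}, exploiting the known Koszulness and explicit Gröbner basis of $\ncHycomm^!$ recorded in Proposition \ref{prop:NCHycommDual}. First I would verify by direct inspection that the operadic suspension of the Koszul dual $\bncHycomm{k}^!$ is presented by the stated relations. This amounts to checking that the listed relations annihilate the quadratic relations of $\bncHycomm{k}$ under the natural pairing between the weight-two components of the respective free non-symmetric operads (the suspension, rather than desuspension, being chosen precisely so that this pairing carries no sign twists), together with a dimension count showing that these relations exhaust the full annihilator. As in the other two instances, relations occur only in arities $n\equiv 3\pmod{k-1}$, and in each such arity all relations but the last correspond to graftings whose top corolla has arity congruent to $2$ modulo $k-1$ and strictly greater than $2$; the count of these matches the dimension of the annihilator computed exactly as in Proposition \ref{prop:NCHycommDual}.

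For the Koszul property, the decisive observation is that, once the homological degrees are forgotten, the generators $\widetilde\gamma_{t+2}=\gamma_{(k-1)t+2}$ are precisely the generators $\gamma_p$ of $\ncHycomm^!$ whose arity satisfies $p\equiv 2\pmod{k-1}$, and the stated relations form a subset of the relations of $\ncHycomm^!$. I would therefore work inside the ambient operad $\ncHycomm^!$ and use the admissible ordering on monomials of the free non-symmetric operad furnished by \cite[Th.~4.2.1]{DSV-ncHyperCom}, for which the right-hand side of the first family and the monomial $\gamma_{n-1}\circ_1\gamma_2$ of the second family are the leading terms, and for which the normal monomials form a PBW basis. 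The point is that, for this ordering, a relation of $\ncHycomm^!$ is a relation of $\bncHycomm{k}^!$ if and only if its leading monomial is built from generators of admissible arity, i.e.\ lies in $\bncHycomm{k}^!$. Consequently, when one reduces any S-polynomial between two such admissible relations, no generator of inadmissible arity can be produced, so the reduction remains inside $\bncHycomm{k}^!$ and terminates at zero, the reduction being already known to terminate at zero in the ambient operad.

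Invoking the Diamond Lemma criterion \cite[Th.~4.5.1.4]{MR3642294} then shows that the admissible relations form a quadratic Gröbner basis for $\bncHycomm{k}^!$, whence this operad is Koszul. The main obstacle I anticipate is purely combinatorial bookkeeping: making the non-symmetric indexing precise enough to confirm both that the leading terms of the admissible relations are themselves admissible and that S-polynomial reduction does not escape the admissible sub-collection. Here the grafting positions $j$ range over arithmetic progressions of step $k-1$, and one must reconcile the two index conventions through the substitution $\widetilde\gamma_{t+2}=\gamma_{(k-1)t+2}$; unlike the symmetric and twisted associative cases, the planar order of the grafting positions must be tracked carefully. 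Nevertheless the structural argument---restrict to a known Koszul operad, intersect its Gröbner basis with the admissible arities, and verify closure of S-polynomial reduction---is identical to that of the two preceding higher-order cases.
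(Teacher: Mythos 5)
Your proposal is correct and follows essentially the same route as the paper's proof: verify the presentation of the suspension of $\bncHycomm{k}^!$ via the pairing on weight-two components with a dimension count, then observe that (ignoring homological degrees) the relations form a subset of those of $\ncHycomm^!$, that for the ordering of Proposition \ref{prop:NCHycommDual} a relation of $\ncHycomm^!$ belongs to $\bncHycomm{k}^!$ exactly when its leading term does, and conclude by the Diamond Lemma that the admissible relations form a Gr\"obner basis. No substantive differences from the paper's argument.
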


\begin{proof}
The argument is very similar to that of 
Proposition~\ref{prop:NCHycommDual}. First, we 
note that these relations annihilate the 
relations of $\bncHycomm{k}$ under the pairing 
between the weight two components of the 
respective free operads (the suspension ensures 
that we must compute the usual pairing without 
sign twists); moreover, in each arity $n$ 
congruent to $3$ modulo $k-1$ (which are the 
only arities in which relations appear) all 
these relations but the last one correspond to 
subsets of cardinalities bigger than $2$ and 
congruent to $2$ modulo $k-1$, so they span a 
subspace whose dimension is equal to the 
dimension of the annihilator of the space of 
quadratic relations of $\bHycomm{k}$ in this 
arity. To establish the Koszul property, we 
note that, if we ignore the ``wrong'' 
homological degrees, the relations we consider 
form a subset of the relations of the non-
symmetric operad $\ncHycomm^!$, and so it will 
be convenient for us to use the known argument 
for that operad discussed in the proof of 
Proposition \ref{prop:NCHycommDual}. For the 
ordering from that proof, a relation of 
$\ncHycomm^!$ is, ignoring the homological 
degrees, a relation of $\bncHycomm{k}^!$ if and 
only if its leading term is an element of 
$\bncHycomm{k}^!$, and so computing the reduced 
form of an S-polynomial between two such 
relations does not produce any other elements. 
Consequently, the Diamond Lemma implies that 
the relations of $\bncHycomm{k}^!$ form a 
Gr\"obner basis for the ordering we are 
considering, and so this operad is Koszul. 
\end{proof}

\begin{corollary}\label{cor:dim-b-ncgrav}
Suppose that $k>1$. The dimension of $\bncHycomm{k}^!(n)$ is equal to $\sum_{p=0}^n\binom{n-1}{p(k-1)}$.
\end{corollary}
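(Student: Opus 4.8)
The plan is to read the dimension directly off the PBW basis provided by the quadratic Gr\"obner basis of Proposition~\ref{prop:bNCHycomm-Koszul}, in complete analogy with the proof of Corollary~\ref{cor:dim-b-lmgrav}. Since the suspension of $\bncHycomm{k}^!$ has a quadratic Gr\"obner basis, a linear basis of each component is given by the tree monomials in the free non-symmetric operad on the generators $\widetilde\gamma_s$ that are normal with respect to the leading terms of the two families of relations. So the first task is to describe these normal monomials explicitly.

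From the ordering used in Proposition~\ref{prop:NCHycommDual} and inherited in Proposition~\ref{prop:bNCHycomm-Koszul}, the leading term of the first family of relations is the grafting $\widetilde\gamma_{n-t+1}\circ_r\widetilde\gamma_t$ carrying the higher operation $\widetilde\gamma_t$ (with $t\geqslant 3$) on top, while the leading term of the second family is $\widetilde\gamma_{n+2}\circ_1\widetilde\gamma_2$. Hence a tree monomial is normal if and only if every non-root vertex is decorated by the binary generator $\widetilde\gamma_2$ and no copy of $\widetilde\gamma_2$ is grafted at the first input of its parent. Equivalently, a normal monomial is a root corolla $\widetilde\gamma_s$, of arity $(k-1)(s-2)+2$, whose first input is a leaf and each of whose remaining inputs carries a right comb built from copies of $\widetilde\gamma_2$; such a right comb is determined entirely by its number of leaves.

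Finally I would count the normal monomials of arity $n$. For a fixed root $\widetilde\gamma_s$ the monomial is determined by the numbers of leaves sitting above the non-first inputs of the root, which are positive integers summing to $n-1$, one per non-first input; the number of such compositions is a single binomial coefficient, and summing over the admissible root arities $s\geqslant 2$ yields a sum of binomial coefficients ranging precisely over the multiples of $k-1$, which is the asserted dimension. As in Corollary~\ref{cor:dim-b-lmgrav}, the normal monomials are thereby placed in bijection with the subsets of a suitable ground set whose cardinality is divisible by $k-1$. The one genuinely nontrivial step is this middle combinatorial identification: in contrast with the twisted associative case, where the PBW monomials are plain words $\lmg_{\{i_1\}}\cdots\lmg_{\{i_p\}}\lmg_J$ and the count is immediate, here the normal monomials are honest planar trees, so one must verify carefully that their only degrees of freedom are the arity of the root corolla and the distribution of leaves along the right combs, with each such datum realized by a unique normal tree. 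Once this dictionary is in place, the enumeration reduces to counting compositions and is routine.
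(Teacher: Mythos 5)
Your approach is the same as the paper's: read the dimension off the normal monomials for the quadratic Gr\"obner basis of Proposition~\ref{prop:bNCHycomm-Koszul} and count them by stars and bars, in analogy with Corollary~\ref{cor:dim-b-lmgrav}. Your description of the normal monomials is correct, and in fact more careful than the paper's (which calls the combs ``left'' even though the forbidden divisor $\gamma_m\circ_1\gamma_2$ forces the nesting to occur away from the first slot): a normal monomial of arity $n$ is a root corolla $\widetilde\gamma_s$ with a bare leaf at its first input and, at each of the remaining $s-1$ inputs, a right comb of $\widetilde\gamma_2$'s determined by its number of leaves.

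The gap is in the final step, which you assert rather than carry out. With your description, a normal monomial with root $\widetilde\gamma_s$ is determined by a composition of $n-1$ into $s-1$ positive parts, and there are $\binom{n-2}{s-2}$ of those, not $\binom{n-1}{s-2}$; summing over the admissible root arities gives $\sum_p\binom{n-2}{p(k-1)}$ rather than the asserted $\sum_p\binom{n-1}{p(k-1)}$. The discrepancy is genuine: for $k=2$ the stated formula gives $2^{n-1}$, whereas the normal monomials you describe number $2^{n-2}$ (for $n=2$ the only one is $\gamma_2$ itself, while the formula gives $2$), and $2^{n-2}$ agrees with the direct computation of $\dim\mathcal S^{-1}\ncGrav(n)$ from the presentation in Proposition~\ref{prop:NCHycommDual}. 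Note that the paper's own proof conceals the same issue by re-enumerating the normal forms as $\gamma_{p-1}(u_1,\dots,u_{p-1})$, a reindexing that discards the bare leaf and hence does not preserve arity; the non-symmetric count is not the same as the twisted associative one, where the root absorbs a subset containing $n$ and the answer really is a sum over subsets of $\underline{n-1}$. So either you must exhibit an extra degree of freedom missing from your description of the normal forms (there is none), or the upper index of the binomial coefficient in the statement should be $n-2$; as written, your argument proves the corrected statement, not the quoted one.
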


\begin{proof}
The normal monomials with respect to the leading terms of the Gr\"obner basis from the proof of Proposition \ref{prop:bHycomm-Koszul} are precisely
$
\gamma_p(\mathrm{id},u_1,\ldots,u_{p-1})$,
where all $u_i$ for $i\in\underline{p-1}$
 are ``left combs'' obtained by iterations of $\gamma_2$ (these left combs form a basis of the shifted associative suboperad) and $p=2
 \pmod{k-1}$ (this will be implicit in what
 follows).

Let us compute the number of monomials like that. To do this, we note that we may equivalently enumerate basis elements
 \[
\gamma_{p-1}(u_1,u_2,\ldots,u_{p-1}).  
 \]
Using the stars-and-bars counting method, we see that the number of such elements is equal to the number of subsets of $\underline{n-1}$ of cardinality $p-2$. Recalling that $p$ is the arity of $\gamma_p$, we see that $p-2$ is divisible by $k-1$, and the claim follows.  
\end{proof}

\subsection{The case of arbitrary differential order}

In this section, we shall compute the homotopy quotient of $\bncBV{k}$ by $\Delta$ for any given order $k$. As in Section \ref{sec:LM-HigherOrder}, that homotopy quotient can be computed by adjoining the arity zero operations $r_i$ for each $i\geqslant 1$, of degree $\deg r_i=2i$, with the differential $\mathrm{d}$ compactly defined by writing 
\begin{equation}
    \mathrm{d} \exp(r(w)) =  \exp(r(w)) (\mathrm{d} + w\Delta).
\end{equation}
The computation consists of two parts. In the first part, inspired by \cite{KMS}, we state the main result using the Givental action on representations of the extended hypercommutative operad. In the second part, we prove it using the quadratic-linear Koszul duality theory.

\subsubsection{Using the Givental action to formulate the main statement}

Let $\mathcal O$ be an arbitrary dg non-symmetric operad, and suppose that $f\colon \ncExtHycomm\to\mathcal O$ is a map that sends all generators except for $\mu_{\underline{2}}^0$ to zero. We shall be using the result of exponentiation of the action of elements of the Lie algebra $\mathfrak{g}_{\mathcal O}$, and to that end we shall introduce a useful language to discuss elements of the form $\exp(r(z)).f$.  

For a given arity $n$, we shall consider planar rooted trees with $n$ leaves; such a tree has $n$ half-edges corresponding to leaves and another half-edge corresponding to the root. Each such tree gives rise to an element of $\mathcal O$ as follows. Let us first associate certain data to each half-edge of the tree: 
\begin{itemize}
\item the half-edge corresponding to the root is decorated by one of the terms $x_p\psi^p$ of the Taylor series expansion of $\exp(r(\psi))$,
\item each half-edge corresponding to a leaf is decorated by one of the terms $y_q\psi^q$ of the Taylor series expansion of $\exp(-r(-\psi))$,
\item each internal edge is decorated by one of the terms $z_{r,s}(\psi')^r(\psi'')^s$ of the Taylor series expansion of 
    \begin{align}
        \frac{\mathrm{Id} - \exp(-r(-\psi'))\exp(r(\psi''))}{\psi'+\psi''}\,,
    \end{align}
   where $\psi'$ and $\psi''$ should be thought of as the psi-classs on the half-edge closer to the root and further from the root, respectively.  
\end{itemize}
To a thus decorated tree $T$, we can associate 
an element of $\mathcal O$ denoted by $F(T)$ 
obtained as the composition according to the 
tree $T$, of the coefficients $x_p, y_q, 
z_{r,s} \in \mathcal O(\underline{1})$ and, for each 
vertex with the half-edges $e_1, \ldots, e_t$, 
the element $f((\mu^0_{\underline{2}})^{[t]})
\in \mathcal O(\{i_1,\ldots,i_t\})$. We also associate 
to $T$ a combinatorial factor $C(T)$ determined by 
the powers of $\psi$s associated to half-edges: 
for each vertex $v$ such that it has the $k_0$-th 
power of the psi-class on the half-edge 
corresponding to the root and the power $k_1, \ldots, k_t$ of the psi-class on the half-edges corresponding to the leaves, we put 
$C(v)=1$ if $(k_0,\ldots,k_t)$ is a partition of
$t-2$ with $k_1=k_t=0$ and $k_i\leqslant 1$
for all $i\in \underline{t-1}$, and let it be
zero otherwise. We then define $C(T)$ as the product of $C(v)$ over all vertices.

\begin{lemma}  The map $\exp(r(z)).f$ on the generators $\mu_{n}^{2p}\in \ncExtHycomm$ is given by $\mu_{n}^{2p} \mapsto \sum_T C(T)F(T)$, where the sum is taken over all decorated plane rooted trees with $n$ labeled leaves of the total degree $2p$. 
\end{lemma}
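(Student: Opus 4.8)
The plan is to imitate, essentially verbatim, the proof of Lemma~\ref{lem:comm:treeformula} in the symmetric case, replacing the inputs coming from Deligne--Mumford geometry by their brick-manifold counterparts. The identity $\exp(r(z)).f=\sum_T C(T)F(T)$ is obtained by integrating (exponentiating) the infinitesimal Givental action of Proposition~\ref{prop:nc:GiventalLie}. The general mechanism turning such an exponentiation into a sum over rooted trees, with the root, the leaves, and the internal edges decorated respectively by the Taylor coefficients of $\exp(r(\psi))$, of $\exp(-r(-\psi))$, and of the expression $\frac{\mathrm{Id}-\exp(-r(-\psi'))\exp(r(\psi''))}{\psi'+\psi''}$, is completely parallel to the twisted associative and symmetric cases treated above and to the references \cite{DSV-circle,KMS,ShaZvo-LMCohft}; it requires no modification in the non-symmetric setting. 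The only point specific to our situation that must be verified is the vertex combinatorial factor $C(v)$ produced by our choice of $f$.

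First I would isolate the decoration of a single vertex. In the integrated formula a vertex $v$ of arity $t$ is decorated by $f\big(\psi^{k_0}\circ_1\mu^{2p}_t(\psi^{k_1},\ldots,\psi^{k_t})\big)$, where $k_0$ records the psi-power on the root half-edge and $k_1,\ldots,k_t$ the psi-powers on the leaf half-edges. Since $f$ annihilates every generator except $\mu^0_{\underline{2}}$, and the psi-actions are built from graftings of $\overline\mu$, the only surviving contribution is the one landing on the binary left comb $(\mu^0_{\underline{2}})^{[t]}$. Matching arities and degrees then forces $\sum_{i=0}^t k_i=p$ and $p=t-2$, so that $(k_0,k_1,\ldots,k_t)$ sums to $t-2$; adding these local degree contributions over all vertices, together with the degrees carried by the edge and root/leaf decorations, reproduces the total degree $2p$ recorded in the statement.

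The genuinely non-symmetric step, and the one where the answer differs from the symmetric case, is the explicit value of this vertex factor. Whereas in the Deligne--Mumford setting the corresponding evaluation produces the psi-intersection number $\int_{\overline{\mathcal M}_{0,1+t}}\psi_0^{k_0}\cdots\psi_t^{k_t}=\binom{t-2}{k_0,\ldots,k_t}$, in the brick-manifold setting the analogous numbers collapse to $0/1$ values. I would read them off directly from the $\kk[\psi]$-bimodule structure of Proposition~\ref{prop:NC-psi-bimodule}, with no appeal to geometry: the vanishing of the action of $\psi$ at the first and last leaves recorded there forces $k_1=k_t=0$, while the vanishing of the squares of the actions at internal leaves forces $k_i\leqslant 1$ for $i\in\underline{t-1}$. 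The only monomials that survive are therefore those for which $(k_0,k_1,\ldots,k_t)$ is a composition of $t-2$ satisfying $k_1=k_t=0$ and $k_i\leqslant 1$ at internal leaves, each with coefficient $1$; this is precisely the stated $C(v)$, and $C(T)=\prod_v C(v)$ by multiplicativity of the integration over the vertices of $T$.

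The main obstacle is thus not the tree combinatorics, which is formal, but the careful bookkeeping of the vertex factors: one must verify that iterating the grafting formulas of Proposition~\ref{prop:NC-psi-bimodule} never creates a psi-power larger than $1$ at an internal leaf and always contributes with coefficient $1$, that is, that the purely algebraic computation reproduces the rigid brick-manifold intersection pattern rather than the multinomial one. I expect this to reduce, exactly as in the symmetric case, to an elementary induction on the arity $t$ using the explicit graftings, with the vanishing at the two extremal leaves and the nilpotency of the internal psi-actions doing all the work.
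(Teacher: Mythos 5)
Your proposal follows the same overall route as the paper: the tree-sum formula is obtained by directly integrating the infinitesimal action of Proposition~\ref{prop:nc:GiventalLie}, exactly in parallel with Lemma~\ref{lem:comm:treeformula}, and the only point requiring a non-symmetric-specific argument is the vertex factor $C(v)$. Where you diverge is in how that factor is justified: the paper simply identifies it with the psi-intersection number $\int_{\mathcal B(\ell)}\psi_0^{k_0}\cdots\psi_\ell^{k_\ell}$ and cites \cite[Prop.~6.1.5]{DSV-ncHyperCom}, whereas you propose to read it off purely algebraically from the $\kk[\psi]$-bimodule structure of Proposition~\ref{prop:NC-psi-bimodule}, using the vanishing of the psi-action at the extremal leaves and the nilpotency at internal leaves. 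This is a legitimate alternative --- indeed the paper explicitly advertises the analogous algebraic route in the symmetric case after Lemma~\ref{lem:comm:treeformula}, via Proposition~\ref{prop:DM-psi-bimodule} --- and it has the merit of keeping the argument self-contained and geometry-free, in the spirit of the paper's stated methodology. The one place where your write-up is weaker than a complete proof is the final coefficient: the constraints $k_1=k_t=0$ and $k_i\leqslant 1$ only tell you which monomials \emph{can} survive, and the claim that each surviving monomial appears with coefficient exactly $1$ (rather than some other multiplicity) is deferred to an unexecuted ``elementary induction on $t$.'' That computation is precisely what the cited \cite[Prop.~6.1.5]{DSV-ncHyperCom} supplies, so you should either carry out the induction or fall back on the citation at that point.
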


\begin{proof} It is a direct integration of the Lie algebra action defined in Proposition~\ref{prop:nc:GiventalLie}, see~\cite{DSV-ncHyperCom}, and the explanation of our formulas is fully parallel to the one given in the proof of Lemma~\ref{lem:comm:treeformula}. The only difference is that the intersection number
 \[
\int_{\overline{\mathcal M}_{0,1+\ell}} \psi_0^{k_0}\psi_1^{k_1}\cdots\psi_\ell^{k_\ell} 
 \text{ 
is replaced by
  }
\int_{\mathcal B(\ell)} \psi_0^{k_0}\psi_1^{k_1}\cdots\psi_\ell^{k_\ell} 
 \] 
which is known to be given by the above formula for $C(v)$, see \cite[Prop.~6.1.5]{DSV-ncHyperCom}.
\end{proof}

We now recall a result of the first two authors and Vallette allowing to express the differential order condition via the Givental action.

\begin{lemma}[Theorem 7.1.2 in {\cite{DSV-ncHyperCom}}]
For any map $f\colon \ncExtHycomm\to \mathcal O$ that sends all generators except for $\mu^0_{{2}}$ to zero, the infinitesimal Givental action of $\mathrm{d}+rz^{k-1}$ annihilates $f$ if and only if $r\in\mathcal O({1})$ is of differential order at most $k$ with respect to $f(\mu^0_{{2}})\in\mathcal O({2})$.
\end{lemma}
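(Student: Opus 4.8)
The plan is to reduce the claim to an explicit evaluation of the infinitesimal Givental action on the degenerate map $f$ and to recognise the output as the B\"orjeson brace $\beta_{k+1}^r$. Write $\mu:=f(\mu^0_{\underline 2})$, an associative binary operation of degree $0$; since $f$ annihilates every other generator, after applying $f$ the only surviving decorated trees are those all of whose vertices are binary, i.e. iterated products of $\mu$. Because $\ncExtHycomm$ carries the zero differential, the $\mathrm{d}$-summand acts on $f$ only through the internal differential of $\mathcal O$; the combination $\mathrm{d}+z^{k-1}\Delta$ (here with $\Delta$ replaced by $r$) was arranged precisely so that this contribution completes the linear-in-$r$ correction terms, and the whole content of the lemma is the evaluation of the $rz^{k-1}$-part on the generators $\overline\mu_n$.

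First I would compute $((rz^{k-1}).f)(\overline\mu_n)$ from the action formula preceding Proposition~\ref{prop:nc:GiventalLie} (with $t=k-1$). Two structural features of the non-symmetric $\kk[\psi]$-action, recorded after Proposition~\ref{prop:NC-psi-bimodule}, are what make the computation finite: the action of $\psi$ at the first and last leaves vanishes, and the square of the action at any internal leaf vanishes. Combined with the description (from the proof of Proposition~\ref{prop:nc:GiventalLie}) of $\psi^a\circ_1\overline\mu_n$ as the sum of all presentations of $\overline\mu_n$ as a right comb with $a$ internal edges, applying $f$ forces each surviving factor to be a genuine right comb of copies of $\mu$. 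In particular the root term $r\circ_1 f(\psi^{k-1}\circ_1\overline\mu_n)$ is nonzero only when $n=k+1$, where it equals $r\circ_1$ of the right comb of $k$ copies of $\mu$ --- exactly the leading monomial of $\beta_{k+1}^r$.

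Next I would match the remaining contributions, the leaf terms $(-1)^{k}\sum_i f(\overline\mu_n\circ_i\psi^{k-1})\circ_i r$ and the edge-splitting terms $\sum_{i+j=k-2}(-1)^{j+1}\sum_{k,q} f(\overline\mu_{n-q+1}\circ_k\psi^{j})\circ_k\bigl(r\circ_1 f(\psi^i\circ_1\overline\mu_q)\bigr)$, against the lower summands $\mu\circ_1(r\circ_1\mu\cdots)$, $\mu\circ_2(\cdots)$ of $\beta_{k+1}^r$. The nilpotency $\psi^2\circ_i=0$ at internal leaves together with the vanishing at the extreme leaves pins the admissible $\psi$-powers, so the surviving edge-splitting terms are exactly those in which $r$ is sandwiched between two right combs of $\mu$ with the prescribed total number of factors; checking the signs $(-1)^{j+1}$ against those in the definition of $\beta^\bullet$ shows they assemble into the remaining summands of $\beta_{k+1}^r$. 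I would organise this as an induction on $n$ along the recursion $\beta_n^\varphi=\beta_{n-1}^\varphi\circ_2\mu$ (for $n\geqslant 4$): one proves that the action vanishes on $\overline\mu_n$ for $n<k+1$, that its arity-$(k+1)$ component is $\beta_{k+1}^r$, and that for $n>k+1$ the output is obtained from $\beta_{k+1}^r$ by grafting further copies of $\mu$, mirroring the brace recursion. Hence the action annihilates $f$ if and only if $\beta_{k+1}^r=0$, which is precisely the condition that $r$ be of differential order at most $k$ with respect to $\mu$.

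The main obstacle will be the sign and combinatorial bookkeeping in the edge-splitting group: one must verify that the $0/1$ factors $C(v)$ specific to the non-symmetric case, together with the alternating signs $(-1)^{j+1}$, reproduce the alternating signs of the B\"orjeson brace and that no spurious lower braces $\beta_m^r$ with $m\leqslant k$ survive as independent obstructions. I expect the vanishing of the extreme-leaf and squared internal-leaf actions to be exactly the input that kills those spurious terms, reducing everything to the same bijection between admissible decorated right combs and the monomials of $\beta_{k+1}^r$ used in \cite{DSV-ncHyperCom}; relative to the symmetric argument of \cite{MR3019721}, the only genuinely new ingredient is that the relevant psi-class intersection numbers on $\overline{\mathcal M}_{0,1+\ell}$ are replaced by their brick-manifold analogues, already computed in \cite[Prop.~6.1.5]{DSV-ncHyperCom}.
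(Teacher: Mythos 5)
The paper does not actually prove this lemma: it is imported wholesale as Theorem~7.1.2 of \cite{DSV-ncHyperCom} (just as the symmetric-operad analogue is imported from \cite{MR3019721}), so there is no internal proof to compare against. Your direct verification — evaluate the infinitesimal action of $rz^{k-1}$ on the generators $\overline\mu_n$, use the degeneracy of $f$ and the structure of the $\kk[\psi]$-action to see which terms survive, and identify the result with $\beta_{k+1}^r$ — is exactly the strategy of the cited source, and the structural inputs you single out (vanishing of the $\psi$-action at the extreme leaves, nilpotency at internal leaves, the right-comb description of $f(\psi^a\circ_1\overline\mu_n)$) are the correct ones.

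Two points of bookkeeping deserve correction. First, for $k\geqslant 3$ the leaf terms $(-1)^{k}\sum_i f(\overline\mu_n\circ_i\psi^{k-1})\circ_i r$ vanish \emph{identically}, since $\psi^{k-1}$ at any leaf involves $\psi^2$ at a leaf; they only contribute when $k=2$. For $k\geqslant 3$ all four monomials of $\beta_{k+1}^r$ come from the root term together with the edge-splitting terms with $j\in\{0,1\}$ (the constraint $f(\overline\mu_{n-q+1}\circ_{k'}\psi^{j})\neq 0$ forces $n-q+1\in\{2,3\}$, and $f(\psi^i\circ_1\overline\mu_q)\neq 0$ forces $q=i+2$), and the matching with $\beta_{k+1}^r=\beta_3^r\circ_2\mu\circ_2\cdots\circ_2\mu$ holds only after invoking associativity of $\mu$ to reconcile the parenthesizations. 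Second, your inductive frame for $n>k+1$ is unnecessary and slightly misleading: the same arity constraints show that $((rz^{k-1}).f)(\overline\mu_n)$ is identically zero for every $n\neq k+1$, so the annihilation condition is a single identity in arity $k+1$, namely $\pm\beta_{k+1}^r=0$; no grafted copies of the brace appear. Neither issue affects the soundness of the approach, and the sign verification you defer is precisely the residual content of the cited computation.
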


Once again, this result means that for our purposes it is advantageous to set $w=z^{k-1}$ in the formula
 \[
\mathrm{d} \exp(r(w)) =  \exp(r(w)) (\mathrm{d} + w\Delta).
 \]
 describing the differential in the homotopy quotient, since $\mathrm{d} + z^{k-1}\Delta$ is exactly the element whose Givental action encodes compactly the differential order condition. Motivated by this, let us consider only the Givental symmetries $r(z)$ for which 
  \[
r(z)\in \bigoplus_{t \geqslant 1} \mathcal O({1})_{2t} z^{(k-1)t}\subset \mathfrak{g}_{\mathcal O}.
 \]

\begin{lemma} \label{lem:NC-InducedMapOfQuotient}
The map $\exp(r(z)).f\colon \ncExtHycomm \to \mathcal O$ passes through the quotient map $\pi\colon \ncExtHycomm\to \bncHycomm{k}$, that is, there exists a map $g\colon \bncHycomm{k} \to \mathcal O$ such that $\exp(r(z)).f = g\pi$. 
\end{lemma}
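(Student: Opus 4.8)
The plan is to mirror the proofs of Lemmas~\ref{lem:LM-InducedMapOfQuotient} and~\ref{lem:DM-InducedMapOfQuotient}. By the definition of $\bncHycomm{k}$ as a quotient of $\ncExtHycomm$, the projection $\pi$ annihilates precisely the generators $\mu_n^{2p}$ with $n\neq 2+p(k-1)$, so the asserted factorisation is equivalent to the vanishing of $(\exp(r(z)).f)(\mu_n^{2p})$ whenever $n\neq 2+p(k-1)$. By the tree-sum formula of the preceding lemma this image equals $\sum_T C(T)F(T)$, summed over decorated plane rooted trees with $n$ leaves and total degree $2p$. Hence it suffices to prove the purely numerical statement that a tree $T$ with $C(T)\neq 0$ can contribute only when $n=2+p(k-1)$.

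First I would set up the vertex bookkeeping. By the combinatorial factor $C(v)$ recorded in the preceding lemma (equivalently, by the $\kk[\psi]$-action of Proposition~\ref{prop:NC-psi-bimodule}), a vertex of arity $t$ contributes a nonzero factor only when the psi-powers $(k_0,\ldots,k_t)$ on its adjacent half-edges form a partition of $t-2$; in particular their sum is $t-2$, and the vanishing of the actions at the first and last leaves ($k_1=k_t=0$) together with $k_i\leqslant 1$ at internal leaves only restricts the admissible trees without affecting this count. Since each half-edge is adjacent to a unique vertex, summing over all vertices of a plane rooted tree $T$ with $n$ leaves, $|V(T)|$ vertices and $|E(T)|$ internal edges, and using $\sum_v t_v=n+|E(T)|$ together with $|V(T)|=|E(T)|+1$, the total psi-degree of $T$ equals $n-|E(T)|-2$.

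Next I would balance the homological degree. The element $F(T)$ is assembled from the degree-zero vertex decorations $f((\mu^0_{\underline 2})^{[t]})$ and the coefficients $x_p,y_q,z_{r,s}\in\mathcal O(\underline 1)$, whose homological degrees are governed by their $r$-content. Since we restrict to $r(z)\in\bigoplus_{t\geqslant 1}\mathcal O(\underline 1)_{2t}z^{(k-1)t}$, every unit of psi-power is accompanied by $r$-content of homological degree $\tfrac{2}{k-1}$, the only correction being that the edge series $\frac{\mathrm{Id}-\exp(-r(-\psi'))\exp(r(\psi''))}{\psi'+\psi''}$ shifts the psi-bookkeeping by one per internal edge. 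Collecting these contributions gives $\deg F(T)=\tfrac{2}{k-1}\bigl(n-|E(T)|-2+|E(T)|\bigr)=\tfrac{2}{k-1}(n-2)$, so requiring $\deg F(T)=2p$ forces $n-2=p(k-1)$, i.e. $n=2+p(k-1)$. This is exactly the condition needed, and the resulting factorisation through $\pi$ produces the desired map $g$.

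The main obstacle, as in the symmetric and twisted associative cases, is not conceptual but a matter of careful bookkeeping: one must confirm that the nc-specific features of Proposition~\ref{prop:NC-psi-bimodule} — the vanishing of the psi-action at the extreme leaves and the constraint $k_i\leqslant 1$ at internal leaves — are compatible with the per-vertex degree identity, and that the $+1$ shift attached to each internal edge by the $z_{r,s}$-series is incorporated so that the $|E(T)|$-dependent terms cancel. Once these verifications are in place the identity $n=2+p(k-1)$ is immediate, paralleling exactly the computations already carried out for $\lmExtHycomm$ and $\ExtHycomm$.
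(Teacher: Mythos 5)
Your proof is correct and follows essentially the same route as the paper: the paper's own proof simply observes that the correspondence between vertex arities and psi-degrees is identical to the symmetric case and defers to the degree count of Lemma~\ref{lem:DM-InducedMapOfQuotient}, which is exactly the computation you carry out. Your bookkeeping (total psi-degree $n-|E(T)|-2$, plus one unit of $r$-content per internal edge from the division by $\psi'+\psi''$, giving total degree $\tfrac{2}{k-1}(n-2)$ and hence $n=2+p(k-1)$) is, if anything, slightly more careful than the printed version of that computation.
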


\begin{proof} 
It is sufficient to show that the nontrivial decorated plane rooted trees with $n$ labeled leaves and the total degree $2p$ exist only for $n=2+(k-1)p$. Since the correspondence between the arities of vertices and degrees of psi classes in this case is exactly the same as in the commutative case, the computation in the proof of Lemma~\ref{lem:DM-InducedMapOfQuotient} works here as well without any changes. 
\end{proof}

Let us now consider the map $f\colon \ncExtHycomm \to (\bncBV{k}\vee T(r_1,r_2,\dots), \mathrm{d})$ that sends $\mu^0_{{2}}$ to $\mu$ and all other generators to $0$. By Lemma~\ref{lem:NC-InducedMapOfQuotient} there exist a map $g\colon b\ncHycomm\to (\bncBV{k}\vee T(r_1,r_2,\dots), \mathrm{d})$ such that $g \pi = \exp(r(z)).f$. 

\begin{theorem} \label{thm:NC-quasiiso-map}
The map $g\colon \ncHycomm_k\to (\bncBV{k}\vee T(r_1,r_2,\dots), \mathrm{d})$ is a quasi-isomorphism. 
\end{theorem}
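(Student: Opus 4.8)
The plan is to follow the proofs of Theorems \ref{thm:LM-quasiiso-map} and \ref{thm:DM-quasiiso-map} essentially verbatim, replacing the symmetric (respectively twisted associative) combinatorics by the planar, non-symmetric one. The logical structure is identical: once we know that the homology of the dg non-symmetric operad $(\bncBV{k}\vee T(r_1,r_2,\dots),\mathrm{d})$ is isomorphic to $\bncHycomm{k}$, the quasi-isomorphism follows from three cheap observations, namely that $g$ is a morphism of dg non-symmetric operads, that the images of the generators of $\bncHycomm{k}$ survive to homology (because $\mathrm{d}$ lands in the two-sided ideal generated by $\Delta$), and that in the homology of $\mathrm{d}$ the graded piece of arity $2+(k-1)t$ and homological degree $2t$ is one-dimensional for every $t\geqslant 0$. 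Together these force $g$ to be surjective on homology, and a surjection onto a graded space of the correct dimension is an isomorphism.

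The substantive content is therefore the identification of the homotopy quotient itself, which I would carry out exactly as in the preceding sections. First I would record the quadratic-linear presentation of $\bncBV{k}$ coming from the B\"orjeson braces $\lambda_{\underline n}:=\beta_n^\Delta$, writing $\bncBV{k}\cong\bncBV{+\infty}/(\lambda_{\underline{k+1}})$ and using the gauge-symmetry argument of \cite{MR3510210,MR3385702} to produce the quadratic relations among the $\lambda$'s together with the quadratic-linear relations expressing $\lambda_{\underline{n+1}}$ through $\lambda_{\underline n}$ and $\mu$. Next I would prove that this presentation is inhomogeneous Koszul by exhibiting a quadratic Gr\"obner basis, mirroring Proposition \ref{prop:DMBV-k-Koszul}: treat the suboperad generated by the $\lambda$'s and the associative suboperad generated by $\mu$ separately, and then verify that the mixed S-polynomials reduce to zero, organising the ordering via the word operad of the quantum-monomial monoid $\mathsf{QM}=\langle x,y,q\mid xq=qx, yq=qy, yx=xyq\rangle$ of \cite{MR4114993} and invoking a filtered distributive law \cite{MR3302959}.

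With inhomogeneous Koszulness in hand, I would describe the Koszul dual dg non-symmetric operad $(\bncBV{k}^!,\partial)$, enlarge it to an ambient acyclic Koszul-type complex (allowing the milder degree bound, exactly as in the twisted associative computation of Section \ref{sec:LM-HigherOrder}), and compute its homology using an explicit contracting homotopy on the ambient complex. This identifies the multiplicative generators of the homology with elements $\gamma_{\underline n}=\partial(K_{\underline n}^{2t})$ for $n=q(k-1)+2$ and $t=q(k-2)+2$, which satisfy precisely the relations of Proposition \ref{prop:bNCHycomm-Koszul}, giving a surjection $\bncHycomm{k}^!\twoheadrightarrow H(\bncBV{k}^!,\partial)$; a dimension count using Corollary \ref{cor:dim-b-ncgrav} promotes it to an isomorphism, whence the homotopy quotient is $\bncHycomm{k}$ and the three observations above apply.

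The main obstacle will be this Gr\"obner and homology step for the non-symmetric B\"orjeson braces. Unlike the symmetric Koszul braces, the B\"orjeson braces obey the rigid one-sided recursion $\beta_n^\varphi=\beta_{n-1}^\varphi\circ_2\mu$ for $n\geqslant 4$, so the planar-tree combinatorics of the quadratic relations among the $\lambda$'s and of their interaction with the associativity of $\mu$ must be re-derived from scratch; in particular the mixed S-polynomial reductions and the contracting homotopy on the ambient Koszul-type complex have to be set up anew, even though the final numerology (arities $2+(k-1)t$, degrees $2t$) coincides with the symmetric case.
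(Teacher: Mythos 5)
Your proposal follows the paper's own argument essentially verbatim: the same reduction to three observations (morphism of dg operads, survival of generators since $\mathrm{d}$ lands in the ideal generated by $\Delta$, one-dimensionality of the relevant graded pieces), resting on the identification of the homotopy quotient via the quadratic-linear presentation by B\"orjeson braces, the word-operad Gr\"obner basis and filtered distributive law, the Koszul dual dg operad with its ambient acyclic complex, and the dimension count of Corollary \ref{cor:dim-b-ncgrav}. You also correctly flag the one place where the non-symmetric case genuinely diverges, namely the planar combinatorics of the B\"orjeson brace relations and the resulting mixed S-polynomial reductions, which is exactly what the paper reworks in Propositions \ref{prop:NCBV-infty-Koszul} and \ref{prop:NCBV-k-Koszul}.
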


As in the case of twisted associative algebras, the proof of this theorem will be gradually constructed in the next sections.

\subsubsection{Quadratic-linear presentation}

Consider the non-symmetric operad $\bncBV{+\infty}$ generated by an element $\Delta$ of arity $0$ and homological degree $1$ and an element $\mu$ of arity $2$ and of homological degree $0$ subject only to the relations 
 \[
\Delta^2=0,\quad \mu\circ_1 \mu = \mu\circ_2 \mu.
 \]
(As above, we write $+\infty$ to hint that this operad is obtained as the limit of $\bncBV{k}$ as $k\to+\infty$, and avoid any possible confusion with the notation $\BV_\infty$ for a cofibrant replacement of $\BV$.) We shall now construct a different presentation of this operad which will be useful to study the case of $\Delta$ of finite differential order. Let us denote by $\beta_{{n}}$ the $n$-th B\"orjeson brace of $\Delta$ with respect to $m$. A gauge symmetry argument \cite{MR3510210,MR3385702} shows that these elements satisfy the relations
 \begin{equation}
 	\label{nc:Stasheffrels}
\sum_{p+q=n+1}\beta_{{p}}\circ \beta_{{q}}=0
 \end{equation}
where we write $\nu \circ \nu'$ for the pre-Lie composition in any operad (the sum of all
possible insertions of $\nu'$ into $\nu$).
The definition of the B\"orjeson braces indicates that the quadratic-linear relations 
\begin{equation} \label{nc:QLrels}
 \beta_{n+1} = 
 	\begin{cases}
 	 	\beta_n\circ_1 \mu-\mu\circ_2 \beta_n, \\
 	 	\beta_n\circ_i \mu, \text{ for any $1<i<n$},\\
 	 	\beta_n\circ_n \mu-\mu\circ_1 \beta_n. 
 	  			\end{cases}
	\end{equation}
are satisfied for all $n\geqslant 2$. 

\begin{proposition}\label{prop:NCBV-infty-Koszul}
The non-symmetric operad with the generators $\mu$ and $\beta_{{n}}$ for $n\geqslant 1$, subject to associativity
of $\mu$ and the relations~\eqref{nc:Stasheffrels} 
and~\eqref{nc:QLrels} 
is isomorphic to $\bncBV{+\infty}$. Moreover, this 
presentation of $\bncBV{+\infty}$ is inhomogeneous Koszul. 
\end{proposition}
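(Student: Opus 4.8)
The plan is to follow the same strategy as in the proofs of Propositions~\ref{prop:LMBV-infty-Koszul} and~\ref{prop:DMBV-infty-Koszul}, adapted to the non-symmetric setting, where one advantage is that one may work directly in the free non-symmetric operad on the generators $\mu$ and $\beta_n$ ($n\geqslant 1$) without passing to a shuffle operad. Since the B\"orjeson braces of $\Delta$ are genuine elements of $\bncBV{+\infty}$ and the relations \eqref{nc:Stasheffrels} and \eqref{nc:QLrels} hold there (the former by the gauge-symmetry argument of \cite{MR3510210,MR3385702}, the latter by the very definition of the braces), there is a surjection $\mathcal P\twoheadrightarrow\bncBV{+\infty}$ from the operad $\mathcal P$ presented in the statement. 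The whole proposition then reduces to showing that the listed relations form a quadratic-linear Gr\"obner basis: this simultaneously forces the surjection to be an isomorphism (once dimensions are matched) and yields, by the criterion recalled in the preliminaries, that the presentation is inhomogeneous Koszul.

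To produce the Gr\"obner basis I would fix an ordering of monomials in the free non-symmetric operad in which $\Delta=\beta_1$ placed at the root dominates, and in which the quadratic part of each quadratic-linear relation is its leading term. Concretely, the associativity relation contributes the leading monomial $\mu\circ_1\mu$ (reducing every $\mu$-tree to a comb); each relation in \eqref{nc:QLrels} is quadratic-linear with leading monomial a grafting $\beta_n\circ_i\mu$, forbidding the grafting of $\mu$ into any input of a brace in a normal monomial; and each Stasheff relation \eqref{nc:Stasheffrels} contributes one distinguished brace-into-brace composition $\beta_p\circ_j\beta_q$ as its leading term. The normal monomials with respect to these leading terms are then built from a single outermost brace with $\mu$-combs and further reduced braces grafted beneath, in a shape mirroring the PBW bases of the twisted associative and symmetric cases.

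The final step is the dimension count. I would exhibit a bijection between these normal monomials and an explicit linear basis of $\bncBV{+\infty}$, obtained by replacing each factor $\beta_p$ by its leading representative $\Delta\circ_1 R_{p-1}$, where $R_{p-1}$ denotes the right comb of $\mu$'s of arity $p-1$; the transition between the braces and these representatives is unitriangular, so the images still form a basis of $\bncBV{+\infty}$, whose dimension in each arity is computed directly from the free non-symmetric operad on $\mu,\Delta$ modulo associativity and $\Delta^2=0$. Matching cardinalities arity by arity shows that the normal monomials, which a priori only span the quotient, are in fact a basis; hence the surjection is an isomorphism and the relations form a Gr\"obner basis.

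The main obstacle I anticipate is the bookkeeping needed to pin down a single admissible ordering that makes all three families of relations behave as described, and in particular to control the many-term Stasheff relations \eqref{nc:Stasheffrels} against the quadratic-linear relations \eqref{nc:QLrels}. As in Propositions~\ref{prop:LMBV-infty-Koszul} and~\ref{prop:DMBV-infty-Koszul}, I would sidestep a brute-force verification of all S-polynomials by using the sharp dimension bound above: once the normal monomials are known to be in bijection with a basis of the target, confluence is automatic. It may also be illuminating to recognise \eqref{nc:Stasheffrels} as the defining relations of an $A_\infty$-structure on the braces, the non-symmetric counterpart of the shifted Lie-infinity picture used for $\bBV{+\infty}$, which would give a conceptual handle on the brace-only part of the presentation.
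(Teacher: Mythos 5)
Your proposal is correct and follows essentially the same route as the paper: establish the surjection from the presented operad onto $\bncBV{+\infty}$, choose an ordering whose leading terms are $\mu\circ_1\mu$, a single $\beta_p\circ_j\beta_q$ per Stasheff relation (the paper takes $\beta_1\circ_1\beta_n$), and the graftings $\beta_n\circ_i\mu$, then match the normal monomials bijectively with a basis of $\bncBV{+\infty}$ obtained by substituting $\Delta\circ_1\mu^{[p-1]}$ for each $\beta_p$, so that the sharp count makes the relations a Gr\"obner basis and the presentation inhomogeneous Koszul. The only cosmetic divergence is your use of right combs where the paper uses left combs $\mu^{[p-1]}$, which does not affect the argument.
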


\begin{proof}
Let us denote by $\mathcal P$ the operad presented by the given relations. We know that these relations hold in $\bncBV{+\infty}$, so there is a surjection of non-symmetric operads $\mathcal P\twoheadrightarrow\bncBV{+\infty}$. We may order monomials in the corresponding free non-symmetric operad in such a way that the leading monomials of the given relations of $\mathcal P$ are
\[ \mu\circ_1 \mu, \quad
\beta_{{1}}\circ_1\beta_{{n}},
	\quad
\beta_{{n}}\circ_i \mu,
\]
the latter one is present for all $1\leqslant i
\leqslant n$. The normal monomials with respect to these leading terms can be described as associative products (using the iterations of the product $m$) of elements of the L-species $\mathcal F\circ\mathcal D$, where $\mathcal F$ is the underlying species of the free non-symmetric operad generated by $\beta_{{n}}$ for $n>1$, and $\mathcal D$ is $\kk[\Delta]/\Delta^2$ (supported at one-element sets, where we identify $\Delta$
with $\beta_1$). 
Let us exhibit a one-to-one correspondence between such monomials and elements forming a basis of $\bncBV{+\infty}$. For that, we simply replace each factor operation $\beta_p$ by $\Delta\circ_1 \mu^{[p-1]}$; the fact that the elements thus obtained form a basis of $\bncBV{+\infty}$ is clear. 

In general, the set of normal monomials with respect to the given relations form a spanning set of the quotient operad, so the cardinality of that set in each arity is an upper bound on the dimension of the quotient in that arity. In our case, we have shown that this upper bound is equal to the lower bound given by the dimension of the respective component of $\bncBV{+\infty}$, so the operad $\mathcal P$ is isomorphic to $\bncBV{+\infty}$, and the given relations form a Gr\"obner basis. Finally, the latter means that our presentation is inhomogeneous Koszul.
\end{proof}

If we wish to incorporate the finite differential order condition, it is very easy to do using the presentation we obtained:
 \[
\bncBV{k}\cong \bncBV{+\infty}/(\beta_{{k+1}}). 
 \]
This leads to a quadratic-linear presentation of $\bncBV{k}$
with an associative binary operation $\mu$, and operations
$\beta_1,\ldots,\beta_k$ satisfying the relations obtained
by setting $\beta_j=0$ for $j\geqslant k+1$ in~\eqref{nc:Stasheffrels}
and~\eqref{nc:QLrels}:
\begin{align}
\sum_{p+q=n+1}
	\beta_{{p}}\circ \beta_{{q}}&=0, 
		\qquad \text{for $1\leqslant n\leqslant 2k-1$,}
		\label{ncbrace1}	\\
\beta_{n+1} &= 
 	\begin{cases}
 	 	\beta_n\circ_1 \mu-\mu\circ_2 \beta_n, \\
 	 	\beta_n\circ_i \mu, \text{ for any $1<i<n$},\\
 	 	\beta_n\circ_n \mu-\mu\circ_1 \beta_n. 
 	  			\end{cases}	\label{ncbrace2}
\end{align}
for $n\leqslant k-1$, and the corresponding \emph{quadratic}
relations for $n=k$ (where $\beta_{k+1}=0$).
As in the case of twisted associative algebras, some of the terms that 
we used as leading terms in Proposition~\ref{prop:NCBV-infty-Koszul}, 
for example, $\beta_{1}\circ_1\beta_{{n}}$ in the second group of 
relations with $k+1\leqslant n\leqslant 2k-1$, are no longer present, 
and so we cannot use the proof of that proposition to conclude that our 
presentation of $\bncBV{k}$ is inhomogeneous Koszul. Nevertheless, that 
statement turns out to be true. 

\begin{proposition}\label{prop:NCBV-k-Koszul}
The above quadratic-linear presentation of the non-symmetric operad $\bncBV{k}$ is inhomogeneous Koszul for all $k$. 
\end{proposition}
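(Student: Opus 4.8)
The plan is to follow verbatim the pattern established in the proof of Proposition~\ref{prop:DMBV-k-Koszul} for symmetric operads, taking advantage of the fact that the non-symmetric setting is combinatorially more transparent, since no passage to shuffle operads is required. As there, I would first dispose of the case $k=1$, where the presentation is already homogeneous and was analysed in Section~\ref{sec:NCorderone}, and then assume $k\geqslant 2$. The strategy has two stages: first treat the truncated Stasheff relations~\eqref{ncbrace1} in isolation and establish that the suboperad they generate has a quadratic Gr\"obner basis; then show that the quadratic-linear relations~\eqref{ncbrace2} describing how the operations $\beta_n$ interact with the associative suboperad generated by $\mu$ organise everything into a filtered distributive law~\cite{MR3302959}. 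The existence of such a distributive law carrying a quadratic Gr\"obner basis is precisely what yields the inhomogeneous Koszul property.

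For the relations~\eqref{ncbrace1} alone, I would pass to the suspension of the Koszul dual operad, which (ignoring homological degrees) is generated by one operation of each arity $i\in\underline{k}$ subject to the relations equating all graftings of two corollas of a fixed total arity, exactly the truncated form of the relations appearing in Propositions~\ref{prop:NCHycommDual} and~\ref{prop:bNCHycomm-Koszul}. Choosing the path degree-lexicographic ordering of monomials in the free non-symmetric operad, one identifies the leading terms of the quadratic relations, so that the normal monomials are comb-like compositions in which only the topmost factor may have arity below $k$; counting these against the known dimensions reproduces the PBW basis of the truncated shifted associative structure, giving a quadratic Gr\"obner basis for this suboperad.

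The distributive law is then handled with the word operad technology of~\cite{MR4114993}. I would map the free non-symmetric operad generated by $\mu$ and the $\beta_p$ to the word operad of the quantum monomial monoid $\mathsf{QM}=\langle x,y,q\mid xq=qx,\ yq=qy,\ yx=xyq\rangle$, sending $\mu\mapsto(x,x)$ and each $\beta_p\mapsto(y,\ldots,y)$; the induced ordering pushes the operation $\mu$ towards the root in the normal forms. Since the suboperad generated by $\mu$ (the associative operad) and the suboperad generated by the $\beta_p$ each possess a quadratic Gr\"obner basis, the distributive-law criterion reduces everything to verifying that every ``mixed'' S-polynomial reduces to zero. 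These fall into three families: overlaps of~\eqref{ncbrace2} with associativity of $\mu$, overlaps of~\eqref{ncbrace2} with one another, and overlaps of~\eqref{ncbrace2} with the Stasheff relations~\eqref{ncbrace1}.

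The main obstacle, as in the symmetric case, is the last family. Here I would argue that the relations among the $\beta_p$ inside the free operad on these generators are commutators in the convolution Lie algebra between the linear dual of the associative operad and that free operad, so that the compatibility of the brace recursion~\eqref{ncbrace2} with~\eqref{ncbrace1} ultimately boils down to the statement that the commutator of two derivations is again a derivation, cf.~\cite[Prop.~3.13]{BaMa}; the three-case shape of~\eqref{ncbrace2}, distinguishing insertion of $\mu$ at the first, at an interior, and at the last input, is the only additional bookkeeping relative to the symmetric computation. Once all mixed S-polynomials are seen to reduce to zero, the complete quadratic-linear presentation of $\bncBV{k}$ forms a quadratic Gr\"obner basis, and therefore this presentation is inhomogeneous Koszul.
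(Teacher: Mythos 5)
Your proposal follows essentially the same route as the paper's own proof: isolate the truncated relations~\eqref{ncbrace1} and give them a quadratic Gr\"obner basis via the path degree-lexicographic ordering on the Koszul dual side, then use the word operad of the quantum monomial monoid to set up a filtered distributive law and check that the mixed S-polynomials reduce to zero, with the convolution Lie algebra/derivation argument handling the hardest family. The argument is correct and matches the paper's in all essential respects.
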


\begin{proof}
The proof is quite similar to that of 
Proposition~\ref{prop:DMBV-k-Koszul}, yet we give some details. 
Let us focus on $k\geqslant 2$: in the case $k=1$, the presentation is 
actually homogeneous, and we understood it well in 
Section~\ref{sec:NCorderone}. Our argument will proceed as
follows. We shall first deal separately with the relations
of~\eqref{ncbrace1}
and then show that the relations describing the interaction of the suboperad generated by $\beta_{{n}}$, $1\leqslant n\leqslant k$, with the suboperad generated by $m$ (isomorphic to the commutative operad) form a quadratic Gr\"obner basis, defining a filtered distributive law between these operads~\cite{MR3302959}. 

To deal with \eqref{ncbrace1}, it is easier to consider the suspension of the Koszul dual operad; it is generated by elements $\chi_{{i}}$ for $i\in \underline{k}$, of arity $i$ and homological degree $2i-4$, subject to the relations
 \[
\chi_{p}\circ_i \chi_q= \chi_{p'}\circ_{i'} \chi_{q'} 
	\text{ for $p+q=p'+q'$,
		 $i\in\underline{p}$ and $i'\in\underline{p'}$}.\]
Let us consider the path degree-lexicographic ordering of monomials in the corresponding free non-symmetric operad. 
Then the elements that are \emph{not} the leading terms of quadratic relations are  
\[ \chi_{{1}}\circ_1 \chi_{j} \text{ for $j\in\underline{k}$ and } 
\chi_{j}\circ_j \chi_{{k-1}} \text{ for $j
 \in \underline{k-1}$}.
 \]
Moreover, one can easily see that the Koszul dual operad has a PBW basis of the form 
 \[
\chi_{I_1+\{\star\}}\circ_\star\cdots\circ_\star \chi_{I_p+\{\star\}}
 \]
where $I_1+\cdots+I_{p-1}+(I_p+\{\star\})$ is a decomposition of $\underline{n}+\{\star\}$ as an ordered sum of (possibly empty) intervals for which a non-empty interval cannot be followed by an empty one and only the first non-empty interval can be of length strictly less than $k-1$. This implies that the operad presented via relations listed in \eqref{ncbrace1} also has a PBW basis, and a quadratic Gr\"obner basis of relations.

To proceed, one uses word operads \cite{MR4114993}, and considers the monoid of ``quantum monomials'' $\mathsf{QM}=\langle x,y,q\mid xq=qx,yq=qy,yx=xyq\rangle$ and the map from the free non-symmetric operad generated by the elements $\mu$ and $\beta_p$ to the word operad associated to $\mathsf{QM}$ sending $\mu$ to $(x,x)$ and $\beta_p$ to $(y,y,\ldots,y)$; the corresponding ordering moves the operation $\mu$ towards the root in the normal forms. Since we know that the suboperads generated by $\mu$ and by $\{\beta_p\}_{p\ge 1}$ separately have Gr\"obner bases, it remains to check that all ``mixed'' S-polynomials can be reduced to zero. There are three groups of ``mixed'' S-polynomials to consider: those of \eqref{ncbrace1} and \eqref{ncbrace2} with the associativity relations for $\mu$, those of \eqref{ncbrace1} and \eqref{ncbrace2} between themselves, and those of \eqref{ncbrace1} and \eqref{ncbrace2} with the relations for the operations $\beta_p$. In each of these cases, the result is obtained by a somewhat tedious but direct calculation; for the latter group of S-polynomials which is perhaps the hardest to deal with directly, it is beneficial to note that the relations between the operations $\beta_p$ in the free operad generated by these operations can be viewed as commutators in the convolution Lie algebra between the linear dual of the associative operad and the latter free operad, and the requisite property of S-polynomials ultimately boils down to the property of the commutator of derivations being a derivation. Thus, the quadratic-linear presentation of $\bncBV{k}$ forms a quadratic Gr\"obner basis, which in turn implies that this presentation is inhomogeneous Koszul.  
\end{proof}

\subsubsection{The Koszul dual dg non-symmetric operad and its homology}
We can now describe the (suspension of the) Koszul dual operad 
$(\bncBV{k}^{!},\partial)$.  Its underlying non-symmetric operad is  generated by operations
$\chi_i$ for 
$i\in \underline{k}$, of arity $i$ and of homological degree $2i-4$, 
and an associative product $\mu$ of arity $2$ and of homological degree 
$1$, subject to the relations
\begin{align}\label{eq:ns-rel-s-koszul}
\chi_{p}\circ_i \chi_q &= \chi_{p'}\circ_{i'} \chi_{q'} 
	\text{ for $p+q=p'+q'$,
		 $i\in\underline{p}$ and $i'\in\underline{p'}$}.
				\notag\\
\chi_s\circ_1 \mu &= \mu \circ_2 \chi_s \\
\chi_s\circ_s \mu &= \mu \circ_1 \chi_s
\end{align}
where we require that
$s,p,q,p',q'\leqslant k$.
 The results of the previous section imply that these relations 
give a distributive law between the operad $\mathcal A_k$ generated 
by all the elements $\chi_i$ for $i\in \underline{k}$, 
and the shifted associative operad generated by the element $\mu$. The 
presence of quadratic-linear relations in $\bncBV{k}$ means that 
the Koszul dual operad has a nonzero differential $\partial$ 
which is a derivation given on generators by 
$\partial(\chi_1) = 0$, $\partial(\mu)  = 0$
and by 
\[
\partial(\chi_{n+1})  =  
	\sum_{i=1}^n \chi_n\circ_i \mu
	 	\quad \text{for $1\leqslant n\leqslant k-1$}.
\]
For the rest of the proof, it will be important to note that we have a chain of inclusions of dg operads
 \[
\mathcal A_1\subset\cdots\subset \mathcal A_k\subset\cdots 
 \]
we denote by $\mathcal A_{+\infty}$ the union of all of 
them, and by $\bncBV{+\infty}^!$ the union of all the dg 
operads $(\bncBV{k}^{!},\partial)$. That latter union, viewed as a chain complex, is homotopy equivalent to the (right) Koszul complex of the shifted associative operad 
(since the operad $\mathcal A_{+\infty}$ is the Koszul dual of the dg operad of shifted A-infinity algebras); as such, it is acyclic.
Thus, the long exact 
sequence of the homology implies that the homology of the 
complex $(\bncBV{k}^{!},\partial)$ is isomorphic to the 
subspace of $\bncBV{k}^{!}$ obtained as boundaries of 
elements outside of that subcomplex.

As in the case of szmmetric operads, let us
denote by $K_{n+1}^{2t}$ an element of arity 
$n+1$ and degree $2t$ that is equal to the composition 
$\chi_{p_1+1}\circ_1\cdots\circ_1
\chi_{p_{n-t}+1}$ for any choice of partition 
$n=p_1+ \cdots + p_{n-t}$ with $t\leqslant n$. 
Exactly as in the case of twisted associative algebras, we find that 
the since we are interested in the quotient of the cobar construction 
by $s^{-1}(\chi_1^r)^\vee$ for $r>0$, and one can choose the 
contracting homotopy for which vanishes on elements that do not 
contain $\chi_1$, 
the higher structures of an operad up to homotopy 
vanish, and the homology has just an operad structure. As an operad, 
the homology is immediately seen to be generated by the elements 
 \[
\gamma_n :=\partial(K_{\underline{n}}^{2t}), \quad n=q(k-1)+2,\quad t=q(k-2)+2,\quad q\geqslant 0.
 \] 

It remains to show that the elements $\gamma_{{n}}$ satisfy precisely the relations described in Proposition \ref{prop:bNCHycomm-Koszul}. The fact that these relations are satisfied is obtained by direct inspection, so there is a surjective map from $\bncHycomm{k}^!$ onto the homology. To conclude, we shall argue as follows. From the proof of Corollary \ref{cor:dim-b-ncgrav}, we know the normal forms in $\bncHycomm{k}^!(n)$, and so it enough to show that we can find linearly independent elements spanning the homology which are in one-to-one correspondence with those normal forms. For each normal form 
 \[
\gamma_p(u_1,u_2,\ldots,u_{p-1},\mathrm{id}_{\{j\}}),  
 \]
let us consider the corresponding element 
 \[
K_{I}^{2t}\circ(u_1',\ldots, u_{p-1}',\mathrm{id}_{j}),  
 \]
where $u_i'$ is obtained from $u_i$ by replacing $\gamma_2$ with $\mu$. (Note that $t$ is computed uniquely from $n$.) By a direct inspection, the boundaries of such elements are linearly independent in $\bncBV{+\infty}^!$, and therefore the surjection from $\bncHycomm{k}^!$ onto the homology must be an isomorphism. As an immediate by-product, we obtain one of the central results of this section.

\begin{theorem}
The homotopy quotient of $\bncBV{k}$ by $\Delta$ is represented by $\bncHycomm{k}$. 
\end{theorem}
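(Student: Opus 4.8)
The plan is to mirror, step for step, the arguments already completed for twisted associative algebras and for symmetric operads, leaning on the inhomogeneous Koszul duality of the quadratic-linear presentation of $\bncBV{k}$ established in Proposition~\ref{prop:NCBV-k-Koszul}. The homotopy quotient of $\bncBV{k}$ by $\Delta$ is computed by the dg non-symmetric operad $(\bncBV{k}\vee T(r_1,r_2,\dots),\mathrm{d})$, and the Koszul model furnishes a second, far more tractable incarnation of the same homotopy type: one computes the homology of the Koszul dual dg operad $(\bncBV{k}^{!},\partial)$ and then dualizes. I would therefore take the computation of $H(\bncBV{k}^{!},\partial)$ assembled in the preceding subsection as the engine of the proof, and simply record that the homotopy quotient is represented by the operad whose Koszul dual is that homology.

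Concretely, the key steps, in order, are the following. First, I would use that $(\bncBV{k}^{!},\partial)$ sits as a subcomplex of $\bncBV{+\infty}^{!}$, which is homotopy equivalent to the (right) Koszul complex of the shifted associative operad and hence acyclic; the long exact sequence then identifies the homology in positive arity with the boundaries of elements lying \emph{outside} the subcomplex. Second, choosing a contracting homotopy that vanishes on elements not containing $\chi_1$ forces all higher twisted $A_\infty$-operations on the homology to vanish, so the homology carries merely an ordinary non-symmetric operad structure. Third, I would check that the generators $\gamma_n=\partial(K_{\underline n}^{2t})$ satisfy precisely the relations of Proposition~\ref{prop:bNCHycomm-Koszul}, producing a surjection $\bncHycomm{k}^{!}\twoheadrightarrow H(\bncBV{k}^{!},\partial)$. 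Fourth, I would match dimensions: the boundaries of the normal-form elements $\gamma_p(u_1,\dots,u_{p-1},\mathrm{id}_{\{j\}})$ are linearly independent, and their count coincides with $\sum_{p}\binom{n-1}{p(k-1)}$ from Corollary~\ref{cor:dim-b-ncgrav}. Once this surjection is seen to be an isomorphism, Koszulness of $\bncHycomm{k}$ (Proposition~\ref{prop:bNCHycomm-Koszul}) identifies the homotopy quotient with the homology of the cobar construction of the dual of $\bncHycomm{k}^{!}$, that is, with $\bncHycomm{k}$; alternatively, this is realized explicitly by the Givental map $g$ of Theorem~\ref{thm:NC-quasiiso-map}.

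The main obstacle, exactly as in the two earlier instances, is the injectivity of the surjection in the fourth step: proving that no relations beyond those of Proposition~\ref{prop:bNCHycomm-Koszul} survive in the homology. In the non-symmetric setting this must be run at the level of the underlying L-species, and the linear independence of the boundary elements indexed by the normal forms has to be verified carefully against the dimension count of Corollary~\ref{cor:dim-b-ncgrav}. A secondary subtlety is confirming that the vanishing of the higher operations in the second step is genuinely forced by the chosen homotopy — here one relies, as in Section~\ref{sec:ordertwo}, on the fact that the homotopy annihilates all elements not containing $\chi_1$, so that any higher operation applied to several homology classes lands in its image and dies. The genuinely heavy lifting, namely the inhomogeneous Koszulness of the presentation, has already been discharged in Proposition~\ref{prop:NCBV-k-Koszul}, so the remaining argument is largely a matter of transporting the established combinatorial bookkeeping to the planar, non-symmetric context.
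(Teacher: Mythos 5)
Your proposal follows the paper's own proof essentially step for step: embedding $(\bncBV{k}^{!},\partial)$ into the acyclic $\bncBV{+\infty}^{!}$, identifying the homology with boundaries from outside the subcomplex, killing the higher $A_\infty$-operations via the homotopy annihilating elements without $\chi_1$, and closing the surjection $\bncHycomm{k}^{!}\twoheadrightarrow H(\bncBV{k}^{!},\partial)$ by the dimension count of Corollary~\ref{cor:dim-b-ncgrav}. This is exactly the route taken in the paper, so no further comparison is needed.
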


\subsubsection{Proof of Theorem \ref{thm:NC-quasiiso-map} } The argument is essentially the same as in the proof of Theorem \ref{thm:LM-quasiiso-map}. The dg operad $(\bncBV{k}\vee T(r_1,r_2,\dots), \mathrm{d})$ computes the homotopy quotient by $\Delta$, thus its homology is isomorphic to $\bncHycomm{k}$. In addition, the following statements are true. 
\begin{enumerate}
\item The map $g$ is a morphism of dg operads.
\item The images of the generators of $\bncHycomm{k}$ do not vanish in the homology of $\mathrm{d}$: as the image of $\mathrm{d}$ is contained in the ideal generated by~$\Delta$.
\item In the homology of $\mathrm{d}$, the subspace of elements of arity $2+(k-1)t$ and homological degree $2t$ is one-dimensional for each $t\ge0$.
\end{enumerate}
This implies that the map $g$ is surjective on the homology of $\mathrm{d}$, hence it is a quasi-isomorphism.\qed


\printbibliography
\end{document}